\documentclass[a4paper,10pt]{article}
\usepackage{amssymb,amsmath,amsthm}
\usepackage{graphicx}

\usepackage{amsfonts}
\usepackage{latexsym}
\usepackage[english]{babel}
\usepackage{hyperref}
\usepackage{xcolor}
\usepackage{todonotes}

\textwidth 19cm
\textheight 22.5cm
\evensidemargin 0cm
\topmargin 0.05cm
\headheight 0.38cm
\headsep 0.85cm
\footskip 2.35cm

\numberwithin{equation}{section}

\oddsidemargin  0cm 
\evensidemargin 0cm 
\topmargin  0.05cm      
\headheight 0.37cm      
\headsep    0.38cm      
\textwidth  16.5cm      
\textheight 22.5cm     
\footskip   1.5cm      



\newtheorem{theorem}{Theorem}[section]
\newtheorem{proposition}{Proposition}[section]
\newtheorem{lemma}{Lemma}[section]
\newtheorem{corollary}{Corollary}[section]

\newtheorem{remark}{Remark}[section]
\newtheorem{remarks}{Remark}[section]
\newtheorem{definition}{Definition}[section]
\newcommand{\be}{\begin{equation}}
\newcommand{\ee}{\end{equation}}

\newcommand{\e}{\varepsilon}

\newcommand{\R}{\mathbb R}
\newcommand{\C}{\mathbb C}

\newcommand{\Z}{\mathbb Z}

\newcommand{\N}{\mathbb N}
\newcommand{\T}{\mathbb T}

\renewcommand{\b }{\beta }

\newcommand{\ii }{{\rm i} }

\renewcommand{\l }{\lambda }





\begin{document}

\title{{\bf Normal form coordinates for the Benjamin-Ono equation 
having expansions in terms of pseudo-differential operators
}}



\author{
Thomas Kappeler\footnote{Supported in part by the Swiss National Science Foundation.}  ,
Riccardo Montalto\footnote{Supported in part by the Swiss National Science Foundation and INDAM-GNFM.} 
}

\maketitle

\noindent
{\bf Abstract.} 
Near an arbitrary finite gap potential we construct real analytic, canonical coordinates
for the Benjamin-Ono equation on the torus having the following two main properties:  (1) up to a remainder term, which is smoothing to any given order, 
the coordinate transformation is a pseudo-differential operator of order 0 with principal part given by a modified Fourier transform (modification by a phase factor)
and  (2) the pullback of the Hamiltonian of the Benjamin-Ono is in normal form up to order three and
the corresponding Hamiltonian vector field admits an expansion in terms of para-differential operators.
Such coordinates are a key ingredient for studying the stability of finite gap solutions
of the Benjamin-Ono equation under small, quasi-linear perturbations.

\medskip

\noindent
{\em Keywords:} Normal form, Benjamin-Ono equation, finite gap potentials, pseudo-differential operators.

\medskip

\noindent
{\em MSC 2010:} 37K10, 35Q55


\tableofcontents


\section{Introduction}\label{introduzione paper}
\label{1. Introduction}
The goal of this paper is to construct canonical coordinates for the Benjamin-Ono (BO) equation on the torus $\T := \R/ 2\pi \Z$,
   \begin{equation}\label{1.1}
    \partial_t u =  \partial_x ( |\partial_x| u - u^2) \, ,    \qquad u = u(t, x) \in \R\, ,
   \end{equation}
   which are well suited for studying the stability of finite gap solutions of \eqref{1.1} 
   under semilinear and quasilinear perturbations. 
   Here $|\partial_x|$ denotes the Fourier multiplier, defined by 
   $$
   |\partial_x| : \sum_{n\in\Z} q_n e^{\\i n x} \mapsto  \sum_{n\in\Z}|n| q_n e^{\\i n x} \, . 
   $$
   Equation \eqref{1.1} was introduced  by Benjamin \cite{Ben} and Davis $\&$ Acrivos \cite{DA} as 
a model for internal gravity waves at the interface of two fluids in a special regime. 
We refer to \cite{S} for a derivation of  \eqref{1.1} and a comprehensive survey about results of its solutions.
Optimal results on the well-posedness of 
 \eqref{1.1} on the Sobolev spaces $H^s = \{ q \in H^s_\C: \, q \mbox{ real valued}\}$  
have been recently obtained in \cite{GKT1}, saying that 
   equation \eqref{1.1} is well-posed on $H^s$ for $s >  - 1/2$ and ill-posed for $s \le  -1/2$.
 Here,  for any $s \in \R,$
   \begin{equation}\label{def Hs intro}
   H^s_\C \equiv H^s(\T, \C) : = \big\{ q = \sum_{n \in \Z} q_n e^{ \ii n x} :  \,
   \| q \|_s < \infty\big\} \,,
  \quad \| q \|_s = \big( \sum_{n \in \Z} \langle n \rangle^{2 s} |q_n|^2 \big)^{\frac12}\,,
  \,\,\, \langle n \rangle := {\rm max}\{1, |n| \}\,.
   \end{equation}
   To state our main results, we first need to make preliminary considerations and introduce some notations. 
   It is well known that equation \eqref{1.1} can be written in Hamiltonian form, $\partial_t u = \partial_x \nabla { H}^{bo}$ 
   where $\partial_x $ is the Gardner Poisson structure 
   and where by \eqref{def gradient} below, $\nabla { H}^{bo}$ denotes the $L^2$-gradient of the BO Hamiltonian
   \begin{equation}\label{BO Hamiltonian}
   H^{bo}(q) := \frac{1}{2 \pi}\int_0^{2\pi} \big(\,  \frac12 (|\partial_x|^{\frac 12} q)^2 -  \frac13q^3 \big) d x \, ,
   \end{equation}
   whose domain of definition is the energy space $H^{1/2}$.
   One verifies that $\int_0^{2\pi} u(t, x)\, d x$ is a prime integral for equation \eqref{1.1}. Without loss of generality, we restrict
   our attention to the case where $u$ has zero mean value (cf. Appendix \ref{Birkhoff map for BO}), i.e., we consider solutions $u(t, x)$ of \eqref{1.1} in 
   $H_0^s$ with $s >  -1/2$ where for any $s \in \R,$
   \begin{equation}
   H_0^s = \big\{ q \in H^s : \,\,  \int_0^{2\pi} q(x)\, d x = 0 \big\}\,.
   \end{equation} 
   An important property of equation \eqref{1.1} is that it admits Birkhoff coordinates -- see Appendix \ref{Birkhoff map for BO} for a review. 
   It means that there are (complex) coordinates $\zeta_n = \zeta_n(q)$, $n \ge 1$, defined on $L^2_0 \equiv H_0^0$, 
   so that when expressed in these coordinates, equation \eqref{1.1} takes the form 
   \begin{equation}\label{BO for zeta_n}
   \dot \zeta_n = \ii \, \omega_n^{bo} \zeta_n \,, \qquad \forall n \ge 1\,,
   \end{equation}
   where $\omega_n^{bo}$, $n \ge 1$, denote the BO frequencies (cf. \eqref{BO frequencies} below).
   To be more precise, introduce for any $s \in \R$ the sequence space,
   $$
   h^s_{0,\C} \equiv h^s(\Z \setminus \{ 0 \}, \C) = \big\{ w = (w_n)_{n \neq 0} \subset \C : \| w \|_s < \infty \big\}\,, \qquad
     \| w \|_s := \big( \sum_{n \neq 0} |n|^{2 s} |w_n|^2 \big)^{\frac12}
   $$
   and its real subspace  $ h_0^s := \big\{ (w_n)_{n \neq 0} \in h^s_{0,\C} : w_{- n} = \overline{w}_n \,\, \forall n \geq 1 \big\}$
   and define the weighted complex coordinates 
   \begin{equation}\label{definition z_n}
   z_n(q)  := \sqrt{n} \,  \zeta_n(q) \, ,  \quad  z_{-n}(q) := \sqrt{n} \, \overline{\zeta_n}(q) \, , \qquad \forall n \ge 1\,, 
   \end{equation}
   where $\sqrt{\cdot} \equiv \sqrt[+]{\cdot}\,$ denotes the principal branch of the square root. 
   It then follows from \eqref{BO for zeta_n} that equation \eqref{1.1}, 
   when expressed in the coordinates $z_n$, $n \ne 0$, takes the form
   \begin{equation}\label{BO in Birkhoff}
   \dot z_n = \ii \,  \omega_n^{bo} z_n, \quad \forall n \ne 0, \qquad \qquad
    \omega_{-n}^{bo} :=  - \omega_n^{bo} , \quad \forall \, n \ge 1.
   \end{equation}
   It folllows from Theorem \ref{Theorem Birkhoff coordinates BO} in Appendix \ref{Birkhoff map for BO} that the transformation
   \begin{equation}\label{def Phi^bo}
   \Phi^{bo} : L^2_0 \equiv H^0_0 \to h^0_0, \quad q \mapsto (z_n(q))_{n \neq 0}\,,
   \end{equation}
   is canonical in the sense that 
   $$
   \{ z_n, z_{- n} \}(q) = \frac{1}{2\pi}\int_0^{2 \pi}  \partial_x (\nabla z_n)  \, \nabla z_{- n}\, d x =  - \ii n, \qquad \forall n \ne 0\,,
   $$
   whereas the brackets between all other coordinate functions vanish, and has the property that for any $s \geq 0$, 
   its restriction to $H^s_0$ is a real analytic diffeomorphism with range $h^s_0$, $\Phi^{bo} : H^s_0 \to h^s_0$. 
   Here $\nabla z_n$ denotes the $L^2_0$-gradient of $z_n: L^2_0 \to \C$.\footnote{For notational convenience, whenever the context allows, we  denote both the $L^2_0$-gradient and the $L^2$-gradient by $\nabla$.}
   For notational convenience, we refer to $\Phi^{bo}$ as well as to the map $\Phi$ of Theorem \ref{Theorem Birkhoff coordinates BO} as Birkhoff maps.
   In terms of the coordinates $z_n(q), n \ne 0,$ also referred to as complex Birkhoff coordinates, the action variables
   $ I_n(q)$ are defined by
   \begin{equation}\label{definition actions}
  I_n(q)  := | \zeta_n(q)|^2 = \frac{1}{n} z_n(q) z_{- n}(q) \ge 0\,, \quad \forall n \ge 1\,.
   \end{equation}
   The sequences $I(q)= (I_n(q))_{n \ge1}$ fill out the whole positive quadrant $\mathcal Q_+(\ell^{1,1})$ of  $\ell^{1,1}$,
   where for any $r \ge 0$,
   $$
   \mathcal Q_+(\ell^{1,r}) :=  \{ (x_n)_{n \geq 1} \in \ell^{1,r} : \, x_n \ge 0 \ \forall n \ge 1 \} 
   $$ 
   and $\ell^{1,r}$ denotes the weighted  $\ell^1$-space,
   \begin{equation}\label{weighted ell_1}
   \ell^{1,r} \equiv \ell^{1,r}(\N, \R) := \{ (x_n)_{n \geq 1} \subset \R : \, \sum_{n =1}^\infty n^r |x_n| < \infty \}\,, \qquad \N \equiv \Z_{\geq 1}.
   \end{equation}
    A key feature of the Birkhoff map $\Phi^{bo}$ is that the BO Hamiltonian, expressed in the coordinates $z_n,$ $n \ne 0,$
   $$
    {H}^{bo} \circ \Psi^{bo} : h^1_0 \to \R\,, \qquad \Psi^{bo} := (\Phi^{bo})^{- 1}\,,
   $$
   is a real analytic function ${\cal H}^{bo}$ of the actions $I$ alone. More precisely (cf. Theorem \ref{Theorem Birkhoff coordinates BO},  \cite[Proposition 8.1]{GK1}), 
   $$
   {\cal H}^{bo } : \mathcal Q_+(\ell^{1, 3}) \to \R , \  I= (I_n)_{n \ge1} \mapsto \sum_{n=1}^\infty n^2 I_n-\sum_{n=1}^\infty\big(\sum_{k=n}^\infty I_k\big)^2 \, .
   $$
   The BO frequencies are then defined by (cf. Theorem \ref{Theorem Birkhoff coordinates BO})
   \begin{equation}\label{BO frequencies}
   \omega_n^{bo}(I) := \partial_{I_n} {\cal H}^{bo}(I) = n^2 -2 \sum_{k=1}^\infty \min \{n, k \} I_k , 
   \qquad   \omega_{-n}^{bo}(I) := -  \omega_n^{bo}(I) ,  \qquad \qquad \forall \, n \ge 1.
   \end{equation}
   Furthermore, it is well known that the Hamiltonian $H^{mo}(q):= \frac{1}{2\pi} \int_0^{2\pi} \frac 12 q(x)^2 d x$, referred to as the moment, 
   is a prime integral of \eqref{1.1}.
   When expressed in the coordinates $z_n$, $n \ne 0$, it is a real analytic function of the actions alone, 
   given by (cf. Theorem \ref{spec Lax operator}(i))
   \begin{equation}\label{formula moment}
  H^{mo} \circ \Psi^{bo} =    \sum_{n \ge 1} n \, I_n =    \sum_{n \ge 1}  z_n z_{-n}\, .
   \end{equation}
   Finally,  the differential  $d_0 \Phi^{bo} : L^2_0 \to h^0_0$ of  $\Phi^{bo}$ at $q = 0$ and its inverse are given by 
   (cf. Lemma \ref{results finite gap potentials}(iii), Theorem \ref{Theorem Birkhoff coordinates BO} (NF4)),
   \begin{equation}\label{differential at 0}
   d_0 \Phi^{bo} =   - \mathcal F  \, , \qquad
  d_0 \Psi^{bo} = (d_0 \Phi^{bo})^{-1} =- {\cal F}^{- 1} \, ,
  \end{equation}
  where $\mathcal F$ denotes the Fourier transform, defined for any $s  \in \R$ by
   \begin{equation}\label{Fourier transform}
   {\cal F} : H^s_{0} \to h^s_0,  \quad q \mapsto (q_n)_{n \neq 0}, \qquad q_n :=   \frac{1}{2\pi} \int_0^{2\pi} q(x) e^{-  \ii n x}\, d x\,,
   \quad \forall \, n \ne 0.
   \end{equation} 
For any nonempty, finite subset $S_+ \subseteq \N$, let $ S_+^\bot := \N \setminus S_+$ and define
   \begin{equation}\label{definition S}
    N_S:= \max S , \qquad  S := S_+ \cup (- S_+) ,  \qquad S^\bot := S_+^\bot \cup (-S^\bot_+)\,.
   \end{equation}
    We denote by $M_S \subset L^2_0$ the real analytic manifold of $S$-gap potentials (cf. Appendix \ref{spectral theory}),
   $$
   M_S := \big\{ q \in L^2_0 \, :  \, z_n(q) = 0 \,\, \forall \, n \in S^\bot  \big\}, \quad
   $$
   and by $M_S^o$ the open subset of $M_S$, consisting of proper $S$-gap potentials,
   $$
   M_S^o := \{ q \in M_S \, : \, z_n(q) \ne 0 \ \, \forall \, n \in S \}\,.
   $$
   The manifold $M_S$ is contained in $\cap_{s \geq 0} H^s_0$ and hence consists of $C^\infty$-smooth potentials  
   and $M_S^o$ can be parametrized by the action-angle coordinates 
   $ \theta_S = (\theta_k)_{k \in S_+} \in \T^{S_+}$,  $I_S = (I_k)_{k \in S_+} \in \R^{S_+}$, 
   \begin{equation}\label{definition Psi_S}
    \Psi_S: {\cal M}_S^o  \to M_S^o, \,\, (\theta_S, I_S) \mapsto \Psi^{bo}(z(\theta_S, I_S))\,,
   \qquad {\cal M}_S^o := \T^{S_+} \times \R^{S_+}_{> 0},
   \end{equation}
   where $z(\theta_S, I_S) = (z_n(\theta_S, I_S))_{n \ne 0}$ is given by 
   $$
    z_{\pm n}(\theta_S, I_S) := \sqrt{n I_n} e^{\pm \ii \theta_n}, \quad \forall n \in S_+, \qquad 
   z_n(\theta_S, I_S) = 0, \quad \forall n \in S^\bot .
   $$
 Introduce for any $s \in \R$,
   $$
  h^s_{\bot, \C} := h^s(S^\bot, \C)\,, \qquad  
  h^s_\bot := \big\{ z_\bot = (z_n)_{n \in S^\bot} \in h^s_{\bot, \C} : \, z_{- n} = \overline{z}_n, \quad \forall n \in S^\bot \big\}, 
   $$
and  the partial Fourier transforms $\mathcal F^\pm_{N_S} : L^2_\C \mapsto h^0_{\bot, \C}$, defined for $q \in L^2_\C$ and $n \in S^\bot$ by
\begin{equation}\label{def partial Fourier}
(\mathcal F^+_{N_S}[q])_n := 
\begin{cases}  q_n  \quad \text{if } n \ge  N_S + 1\\ 0 \ \,  \quad \text{if }  n \in S^\bot\setminus [N_S + 1, \infty) 
\end{cases},
\ \ 
(\mathcal F^-_{N_S}[q])_n := 
\begin{cases}  q_{n}  \quad \text{if } n \le  - N_S - 1  \\ 0 \ \,  \quad \text{if }  n \in S^\bot\setminus (- \infty, - N_S - 1] 
\end{cases}.
\end{equation}
For notational convenience, we denote by $( \mathcal F^\pm_{N_S})^{-1}: h^0_{\bot, \C} \to L^2_\C$ the maps
\begin{equation}\label{def inverse partial Fourier}
( \mathcal F^+_{N_S})^{-1} :  z_\bot  \mapsto \sum_{n \ge N_S + 1}  z_n e^{\ii nx}\,, \qquad
( \mathcal F^-_{N_S})^{-1} :  z_\bot  \mapsto \sum_{n \ge N_S + 1} z_{-n} e^{- \ii nx} 
\end{equation} 
   and view $ {\cal M}_S^o \times h^s_\bot$, $s \in \R$, as a subset of $h^s_0$.
   The elements of $ {\cal M}_S^o \times h^s_\bot$ are denoted by
   $$
   {\frak x} := (\theta_S, I_S, z_\bot) , \qquad \quad  \theta_S := (\theta_n)_{n \in S_+}, \quad I_S := (I_n)_{n \in S_+}, \quad z_\bot := (z_n)_{n \in S^\bot}\,. 
   $$
   It is endowed by the standard Poisson bracket (cf. \cite[(6.9)]{GK1}), given by 
   $$
   \{ I_n, \theta_n \} = 1, \,\,\, \forall n \in S_+, \qquad \{ z_n, z_{- n} \} = -  \ii n, \,\,\, \forall n \in S^\bot,
   $$
   whereas the brackets between all other coordinate functions vanish.
   For any $s \in \R$ and any given point $\frak x \in {\cal M}_S^o \times h^s_\bot$, denote by $E_s$  the tangent space of ${\cal M}_S^o \times h^s_\bot$. 
   Note that $E_s$ is independent of $\frak x$ and given by
   $E_s = \R^{S_+} \times \R^{S_+} \times h^s_\bot$. Its elements are denoted by  $\widehat{\frak x} = (\widehat \theta_S, \widehat I_S, \widehat z_\bot)$.
    Furthermore, for any $k \ge 1, $ $\partial_x^{-k} : H^s_{ \C} \to H^{s+k}_{0,\C}$ is  the bounded linear operator, defined by
$$
\partial_x^{-k}[e^{ \ii nx}] = \frac{1}{( \ii n)^k} e^{\ii nx}\, , \quad \forall n \ne 0\,,  
\qquad \quad   \partial_x^{-k}[1] = 0\,.
$$   
With $D:= -\ii \partial_x $, one then obtains for any $n \ge 1$,
$$
D^{-k} [e^{ \ii nx}] = |\partial_x|^{-k}[e^{ \ii nx}] \, , \qquad (-D)^{-k} [e^{ -\ii nx}] = |\partial_x|^{-k}[e^{ - \ii nx}] \, .
$$
Finally, the standard inner products on $L^2_\C$ and on $h^0_{0, \C}$ are defined for any $f, g \in L^2_\C$ and $z, w \in h^0_{0, \C}$ by
\begin{equation}\label{standard inner products}
\langle f | g \rangle \equiv \langle f | g \rangle_{L^2_\C} = \frac{1}{2\pi} \int_0^{2\pi} f(x) \overline{g(x)} d x \, , \qquad 
\langle z | w \rangle \equiv \langle z | w \rangle_{h^0_{0, \C}}  = \sum_{n \ne 0} z_n \overline w_n  \, .
\end{equation}
In addition, we introduce bilinear forms on $L^2_\C$ and on $h^0_{0, C}$, defined for any $f, g \in L^2_\C$
and $z, w \in h^0_{0, \C}$ by
\begin{equation}\label{complex bilinear forms}
\langle f , g \rangle \equiv \langle f , g \rangle_{L^2_\C} = \frac{1}{2\pi} \int_0^{2\pi} f(x) g(x) d x\,,  \qquad 
\langle z , w \rangle \equiv \langle z , w \rangle_{h^0_0}  = \sum_{n \ne 0} z_n w_{-n} \,.
\end{equation}
Note that $\langle \cdot \, , \, \cdot \rangle $ and $\langle \cdot \,  | \, \cdot \rangle$ coincide on the real Hilbert spaces $L^2$ and $h^0_{0}$.
In the sequel, restrictions of $\langle \cdot \, , \, \cdot \rangle $ and $\langle \cdot \, | \, \cdot \rangle$ to subspaces  and extensions
as dual pairings will be denoted in the same way. The gradient of $C^1$-functionals $F: L^2_\C \to \C$ and $G: h^0_{0,\C} \to \C$, 
corresponding to $\langle \cdot \, , \, \cdot \rangle $, 
are denoted by $\nabla F$, respectively $\nabla G$. They are defined by
\begin{equation}\label{def gradient}
dF[\widehat f] = \langle \nabla F , \widehat f \rangle \, , \quad \forall \, \widehat f \in L^2_\C\, , \qquad
dG[\widehat z] = \sum_{n \ne 0} \widehat z_n \partial_{z_n} G = \langle  \nabla G, \widehat z \rangle \, , \quad \forall \, \widehat z \in h^0_{0, \C} \, ,
\end{equation}
where the nth component of $\nabla G $ is given by $(\nabla G)_n = \partial_{z_{-n}}  G$.
Finally, for given Banach spaces $Y_1,$ $Y_2$, we denote by ${\cal B}( Y_1 , Y_2)$ the Banach space of bounded linear operators from $Y_1$ to $Y_2$, endowed with the operator norm. 
\begin{theorem}\label{modified Birkhoff map}
Let $S_+$ be a finite, nonempty subset of $\N$
and let ${\cal K}$ be a subset of ${\cal M}_S^o$ of the form $\T^{S_+} \times {\cal K}_1$ where  ${\cal K}_1$ is a compact subset of $\R^{S_+}_{> 0}$. 
Then there exists an open bounded neighbourhood ${\cal V}$ of ${\cal K} \times \{ 0 \}$ in ${\cal M}_S^o \times h^0_\bot$ 
and a canonical real analytic diffeomorphism 
$\Psi : {\cal V} \to \Psi({\cal V}) \subseteq L^2_0$, satisfying
$$
\Psi(\theta_S, I_S, 0) = \, \Psi_S(\theta_S, I_S), \qquad \forall (\theta_S, I_S, 0) \in {\cal V} \, ,
$$
(with $ \Psi_S$ given by \eqref{definition Psi_S})
and having the property that for any $s \in \Z_{\geq 0}$, $\Psi : {\cal V} \cap ({\cal M}_S^o \times h^s_\bot) \to H^s_0$ 
is a real analytic diffeomorphism onto its image in $H^s_0$, so that the following holds:
\begin{description}
\item[({\bf AE1})] For any $\frak x = (\theta_S, I_S, z_\bot) \in \mathcal V$ and any integer $N \ge 1$, $\Psi(\frak x)$ 
admits an asymptotic expansion  of the form  
$ \Psi_S(\theta_S, I_S) + \mathcal {OP}_N( \frak x; \Psi)+ {\cal R}_{N}(\frak x; \Psi) $ 
where $\mathcal {OP}_N( \frak x; \Psi)$ is given by 
\begin{equation}\label{expansion Psi bo}
\begin{aligned}
 \big( - g_\infty   +  \sum_{k = 1}^N  a^+_{k}(\frak x; \Psi) D^{- k} \, \big) [ (\mathcal F^+_{N_S})^{-1} z_\bot]
 \, + \,  \big( - {\overline{g_\infty}}  +   \sum_{k = 1}^N  a^-_{k}(\frak x; \Psi) (-D)^{- k} \, \big)  [ (\mathcal F^-_{N_S})^{-1} z_\bot] \, ,
\end{aligned}
\end{equation}
with $g_\infty (x) \equiv g_\infty (x ; q)$ defined by
$$
g_\infty (x) := e^{D^{-1}q(x)} =  e^{\ii \partial_x^{-1}q(x)}  \, , \qquad q(x) :=  \Psi_S(\theta_S, I_S)\, .
$$ 
The coefficients $ a^\pm_{k}(\frak x; \Psi) $ and the remainder ${\cal R}_{N}(\frak x; \Psi) $ satisfy
$$
 a^-_{k}(\frak x; \Psi) = \overline{a^+_{k}(\frak x; \Psi)} \, , \quad  \forall \, k \ge 1\, , \qquad  {\cal R}_{N}(\theta_S, I_S, 0; \Psi) = 0\, ,
$$
and for any $s \in \Z_{\geq 0}$  and $ k \ge 1$, 
$$
a_k^{\pm}(\cdot \, ; \, \Psi) : {\cal V} \to H^s_\C \, ,
\qquad {\cal R}_N(\cdot \, ; \, \Psi) : {\cal V} \cap \big( {\cal M}_S^o \times h^s_\bot \big) \to H^{s + N +1}\, , 
$$
are real analytic maps\footnote{In this context, $ H^s_\C$ is considered as a $\R$-Hilbert space. This convention for real analytic maps is used throughout the paper.}
satisfying the tame estimates of Theorem \ref{modified Birkhoff map 2} below. 
\item[({\bf AE2})]
For any $\frak x  \in {\cal V}$,
the transpose $d \Psi(\frak x )^\top$ 
(with respect to the standard inner products) of the differential
$d \Psi(\frak x) : E_1 \to H^1_0$ is a bounded operator $d \Psi(\frak x )^\top : H^1_0 \to E_1$. 
For any integer $N \ge 1$, $d \Psi(\frak x )^\top$ admits an expansion of the form 
$\mathcal {OP}_N(\frak x; d\Psi^\top) +  {\cal R}_{N}(z; d\Psi^\top) $
with 
\begin{equation}\label{expansion d Psi top bo}
\mathcal {OP}_N(\frak x; d\Psi^\top) = \big( 0, \, 0, \,  \mathcal {OP}^+_N(\frak x; d\Psi^\top) + \mathcal {OP}^-_N(\frak x; d\Psi^\top) \big) \, ,
\end{equation}
where for any $\widehat q \in H^1_0$, $\mathcal {OP}^\pm_N(\frak x; d\Psi^\top)[\widehat q]$ are defined  by
$$
\begin{aligned}
\mathcal {OP}^+_N(\frak x; d\Psi^\top) [\widehat q]  
&=    {\cal F}^+_{N_S} \circ \big( - \overline{g_\infty}   +  \, \sum_{k = 1}^N  a^+_{k}(\frak x; d\Psi^\top) D^{- k} \, \big) [\widehat q] \\
&\, + \, {\cal F}^+_{N_S} \circ \sum_{k = 0}^N  \mathcal A^+_k(\frak x; d \Psi^\top)[\widehat q] \, D^{- k} [( \mathcal F^+_{N_S})^{-1} z_\bot] \, , 
\end{aligned}
$$
$$
\begin{aligned}
\mathcal {OP}^-_N(\frak x; d\Psi^\top) [\widehat q]  &= 
   {\cal F}^-_{N_S} \circ \big( - g_\infty   + \,  \sum_{k = 1}^N  a^-_{k}(\frak x; d\Psi^\top) (-D)^{- k} \, \big) [\widehat q]  \\
& \, + \, {\cal F}^-_{N_S} \circ \sum_{k = 0}^N  \mathcal A^-_k(\frak x; d \Psi^\top)[\widehat q] \, (-D)^{- k} [( \mathcal F^-_{N_S})^{-1} z_\bot]  \, .\\
\end{aligned}
$$
Furthermore, for any $\widehat q \in H^1_0$,
$$
a^-_{k}(\frak x; d\Psi^\top) = \overline{a^+_{k}(\frak x; d\Psi^\top)}\, , \quad \forall \, k \ge 1, \qquad
 \mathcal A^-_k(\frak x; d \Psi^\top)[\widehat q]  = \overline{ \mathcal A^+_k(\frak x; d \Psi^\top)[\widehat q] } \, , \quad \forall \, k \ge 0 ,
$$
and for any $s \in \N$,
$$
a_k^\pm (\cdot \, ; \,  {d \Psi^\top}) : {\cal V} 
\to H^s_\C\,, \qquad
\mathcal A_k^\pm (\cdot \, ; \,  {d \Psi^\top}) : {\cal V} 
\to  {\cal B}(H^1_0, H^s_\C)\,, 
$$
$$
 {\cal R}_N( \cdot\, ; \, d \Psi^\top) : {\cal V} \cap ({\cal M}_S^o \times h^s_\bot) \to {\cal B}(H^s_0, E_{s + N +1}), 
$$
are real analytic maps, satisfying the tame estimates of Theorem \ref{modified Birkhoff map 2} below. 
\item[({\bf AE3})] The Hamiltonians ${\cal H} := H^{bo} \circ \Psi$ (cf. \eqref{BO Hamiltonian}) and ${\cal H}^{mo} := H^{mo} \circ \Psi$ (cf. \eqref{formula moment}), 
$$
{\cal H} : {\cal V} \cap (\mathcal M_S^o \times h^1_\bot) \to \R \, , \qquad 
{\cal H}^{mo} : {\cal V} \cap (\mathcal M_S^o \times h^0_\bot) \to \R \, , 
$$  
are in normal form up to order three. More precisely,  
$$
\begin{aligned}
{\cal H}(\theta_S, I_S, z_\bot) & = {\cal H}^{bo}(I_S, 0) + \sum_{n \in S_+^\bot} \Omega_n(I_S) z_n z_{- n} + {\cal P}(\theta_S, I_S, z_\bot) \, , \\
{\cal H}^{mo}(\theta_S, I_S, z_\bot) & =  \sum_{n \in S_+} n I_n+   \sum_{n \in S_+^\bot} z_n z_{- n} + {\cal P}^{mo}(\theta_S, I_S, z_\bot),
\end{aligned}
$$
where for any $n \in S^\bot$, $\Omega_n(I_S) : = \frac{1}{ n} \omega_n^{bo}(I_S, 0)$,  and where
 $ {\cal P}$, $\mathcal P^{mo} : {\cal V} \cap (\mathcal M_S^o \times h^1_\bot) \to \R$ are real analytic 
and satisfy 
$$
{\cal P}(\theta_S, I_S, z_\bot) \, ,  \   {\cal P}^{mo}(\theta_S, I_S, z_\bot)  \ = \, O( \| z_\bot \|_1 \| z_\bot \|_0^2) , 
$$ 
and where $ \omega_n^{bo}$, $n \ne 0$, denote the BO frequencies, introduced in \eqref{BO frequencies}. 

\smallskip
\noindent
{\em Expansion  of $\nabla {\cal P}(\frak x)$:} for any integer $N \ge 1$, 
there exists an integer $\sigma_N \ge N$ (loss of regularity) so that the $L^2$-gradient $\nabla {\cal P}(\frak x)$ of ${\cal P}$ with components 
$\nabla_{\theta_S} {\cal P}$, $\nabla_{I_S} {\cal P}$, and $\nabla_{z_\bot} {\cal P}$ admits an expansion of the form 
$\nabla {\cal P}(\frak x) 
= \big(\, 0, \, 0, \, \mathcal {OP}_N(\frak x; \nabla \mathcal P) \big)  \, + \,  {\cal R}_N(\frak x; \nabla {\cal P}) $, 
$$
\mathcal {OP}_N(\frak x; \nabla \mathcal P) = 
 {\cal F}^+_{N_S} \circ \big(\, \sum_{k = 0}^N  T_{a_k^+( \frak x; {\nabla {\cal P})}} \, D^{- k} \, \big)  [({\cal F}^+_{N_S})^{- 1}z_\bot]   
 \, + \,  {\cal F}^-_{N_S} \circ \big( \, \sum_{k = 0}^N  T_{a_k^-( \frak x; {\nabla {\cal P})}} \, (-D)^{- k}\, \big)  [({\cal F}^-_{N_S})^{- 1}z_\bot]   
$$
where for any $  k \ge 0$, $a_k^-( \frak x \, ; \, {\nabla {\cal P}}) = \overline{a_k^+( \frak x \, ; \, {\nabla {\cal P}})}$
and for any integer $s \ge 0$,
$$
\begin{aligned}
& a_k^\pm( \cdot \, ; \, {\nabla {\cal P}}) : {\cal V} \cap ({\cal M}_S^o \times h^{s + \sigma_N}_\bot) \to H^s_\C \,,  \qquad
 {\cal R}_N( \cdot\, ; \, {\nabla {\cal P}}) : {\cal V} \cap \big( {\cal M}_S^o \times h^{s \lor \sigma_N}_\bot \big) \to E_{s + N + 1} \, ,
\end{aligned}
$$
are real analytic and satisfy the tame estimates of Theorem \ref{modified Birkhoff map 2} below.
Here $T_{a_k^\pm(\cdot \, ; \, \nabla {\cal P})}$ 
denotes the operator of para-multiplication with the function $a_k^\pm( \cdot \, ; \, \nabla {\cal P})$ (cf. Appendix \ref{appendice B}). \\
{\em Expansion  for $\nabla {\cal P}^{mo}(\frak x)$:}  the $L^2$-gradient $\nabla {\cal P}^{mo}(\frak x)$ admits an expansion
similar to the one of $\nabla {\cal P}(\frak x)$ with corresponding coefficients  $a_k^\pm ( \frak x \, ; \, \nabla {\cal P}^{mo})$ and remainder 
 ${\cal R}_N( \frak x \, ; \, \nabla {\cal P}^{mo})$.
\end{description}
\end{theorem}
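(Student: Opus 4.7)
The plan is to construct $\Psi$ by composing the Birkhoff map $\Psi^{bo}$ from Theorem \ref{Theorem Birkhoff coordinates BO} with a canonical change of the transverse coordinates designed to expose a clean pseudo-differential expansion. Since $d_0 \Psi^{bo} = -\mathcal F^{-1}$ and $g_\infty(\cdot\,;\,0) \equiv 1$, the principal term of \eqref{expansion Psi bo} is correct at the origin; the task is to build an analogous local description around an arbitrary proper finite-gap potential. The natural tool is the spectral theory of the Lax operator $L_q = D - T_q$ for BO (Appendix \ref{spectral theory}), whose high-frequency eigenfunctions $f_n(\cdot\,;\,q)$ admit a WKB-type asymptotics
$$
f_n(x;q) \, \sim \, g_\infty(x;q)\, e^{\ii n x}\,\bigl( 1 \, + \, a_1^+(q)/(\ii n) \, + \, a_2^+(q)/(\ii n)^2 \, + \, \cdots \bigr), \qquad n \to +\infty,
$$
with conjugate expansion as $n \to -\infty$. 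The phase $g_\infty(x;q) = e^{\ii\partial_x^{-1}q(x)}$ is precisely the leading term of this expansion, and each successive symbol $a_k^+$ is obtained by inserting the ansatz into $L_q f_n = \lambda_n f_n$ and solving order by order.

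I would define $\Psi$ on a neighbourhood $\mathcal V$ of $\mathcal K\times\{0\}$ by
$$
\Psi(\frak x) \, := \, \Psi_S(\theta_S, I_S) \, + \, \Upsilon(\theta_S, I_S; z_\bot),
$$
where $\Upsilon$ is assembled from the eigenfunctions $f_n$ so that its linearization in $z_\bot$ reproduces the modified Fourier expansion of ({\bf AE1}); one then adjusts finitely many $z_\bot$-quadratic corrections to make $\Psi$ canonical with respect to the product bracket on $\mathcal M_S^o \times h^0_\bot$ and the Gardner bracket on $L^2_0$. Equivalently, one may view $\Psi$ as $\Psi^{bo}$ composed with a canonical near-identity diffeomorphism of $\mathcal M_S^o\times h^0_\bot$ whose symbol is the asymptotic series above. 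The expansion in ({\bf AE1}) then follows by truncating at order $N$ and estimating the tail using the mapping properties of $L_q$ and of the cut-off operators $\mathcal F^\pm_{N_S}$; real analyticity and tame bounds on $a_k^\pm$ and $\mathcal R_N$ come from the analyticity of the spectral data in $q$ (Appendix \ref{spectral theory}).

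For ({\bf AE2}), I would compute $d\Psi(\frak x)^\top$ block by block with respect to $E_1 = \R^{S_+}\oplus\R^{S_+}\oplus h^1_\bot$, using the pairings \eqref{standard inner products}. Differentiating \eqref{expansion Psi bo} in $\frak x$ and applying the transpose rule $(aD^{-k})^\top = D^{-k}\overline{a} + \text{lower order}$ gives the two families of symbols: $a_k^\pm(\frak x;d\Psi^\top)$, which come from the $q$-independent transposition of the principal symbols, and $\mathcal A_k^\pm(\frak x;d\Psi^\top)[\widehat q]$, which capture the action of $\partial_{\frak x} a_k^\pm$ (including the $(\theta_S, I_S)$-variations) on the test vector $\widehat q$, composed with $D^{-k}(\mathcal F^\pm_{N_S})^{-1}z_\bot$. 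The conjugation symmetries $a_k^- = \overline{a_k^+}$, $\mathcal A_k^- = \overline{\mathcal A_k^+}$ are inherited from the reality of $\Psi$ and the Birkhoff coordinates. For ({\bf AE3}), the decisive fact is that $H^{bo}\circ\Psi^{bo} = \mathcal H^{bo}(I)$ and $H^{mo}\circ\Psi^{bo} = \sum n I_n$ depend only on the actions. Taylor expanding in $z_\bot$ around the finite-gap torus and using $I_n = z_n z_{-n}/n$ for $n\in S_+^\bot$, together with $\omega_n^{bo} = \partial_{I_n}\mathcal H^{bo}$, yields the announced quadratic normal form with frequencies $\Omega_n = \omega_n^{bo}(I_S,0)/n$. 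The modification from $\Psi^{bo}$ to $\Psi$ is chosen to be tangent to the identity in $z_\bot$ at the torus so as to preserve the constant and diagonal quadratic terms; the residual $\mathcal P, \mathcal P^{mo}$ are then of order $\|z_\bot\|_1\|z_\bot\|_0^2$, and the para-differential expansion of their gradients follows from ({\bf AE1})--({\bf AE2}) combined with Bony's para-linearization of the resulting smooth cubic nonlinearity (Appendix \ref{appendice B}).

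The hard part will be to verify all four requirements \emph{simultaneously}: canonicity, the pseudo-differential expansion to arbitrary order $N$, the cubic normal form, and the quantitative tame estimates on $\mathcal V$ announced in Theorem \ref{modified Birkhoff map 2}. In particular, producing the Lax-eigenfunction symbols $a_k^\pm$ with uniform analytic dependence on $\frak x$, tame $H^s_\C$-bounds on the coefficients, and Sobolev tame estimates on the remainders $\mathcal R_N$ -- while keeping track of the canonical structure under the modification -- constitutes the core technical work, to be carried out in the sections that follow.
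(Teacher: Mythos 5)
Your outline correctly identifies two of the paper's three building blocks: the WKB-type asymptotics of the Lax eigenfunctions $f_n(\cdot,q)$ for finite-gap $q$ (with leading symbol $g_\infty$), and the partial linearization of $\Psi^{bo}$ around $(z_S,0)$ in the normal direction as the source of the pseudo-differential expansion in ({\bf AE1}). Both match the paper's $\Psi_L$ of Section \ref{sezione mappa Psi L BO}.

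The gap is in your treatment of the symplectic correction. You propose to ``adjust finitely many $z_\bot$-quadratic corrections to make $\Psi$ canonical.'' This will not work as stated, and it misdescribes what has to happen. The symplectic defect $\Psi_L^*\Lambda_G - \Lambda = \Lambda_L$ is governed by an operator $\mathcal L(z)$ on $h_S\times h^0_\bot$ with nonzero off-diagonal blocks $\mathcal L_S^\bot(z): h^0_\bot \to h_S$ and $\mathcal L^S_\bot(z): h_S \to h^0_\bot$ (cf.\ \eqref{L SS botS Sbot}); in particular $\mathcal L^S_\bot$ has infinite-dimensional range in $h^0_\bot$, so the correction cannot be a finite-rank or finite-order modification in $z_\bot$. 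What the paper actually does (Section \ref{sezione Psi C}) is run a Moser--Weinstein homotopy: it solves a Moser equation for a non-autonomous vector field $X(\tau,\cdot)$ (using that the operator $\mathrm{Id}+\tau J\mathcal L(z)$ is invertible in block-triangular form), and takes $\Psi_C = \Psi_X^{0,1}$, the time-one flow. The key technical content of the section is Theorem \ref{espansione flusso per correttore}: a parametrix for this flow, proved by an inductive transport-equation argument, yielding the pseudo-differential expansion and tame estimates for $\Psi_C$. The flow map has nonzero Taylor coefficients in $z_\bot$ to all orders, so it is not a ``finitely many quadratic corrections'' object; and the $(N+1)$-smoothing of $\mathcal R_N$ rests on this parametrix plus the infinite-smoothing of Hankel operators (Corollary \ref{Hankel infinitely smoothing}), none of which your sketch touches.

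A second, milder, gap: for ({\bf AE3}) you assert that because the correction is ``tangent to the identity in $z_\bot$ at the torus'' the constant and diagonal quadratic terms are preserved, so $\mathcal P = O(\|z_\bot\|_1\|z_\bot\|_0^2)$. That tangency does not by itself kill the off-diagonal quadratic contributions coming from $\mathcal H_S^{bo}\circ\Psi_C$ and from $\mathcal P_2^{(1)}$: the paper isolates a genuinely quadratic-in-$z_\bot$ remainder $\mathcal P_2^{(2)}$ (cf.\ \eqref{definizione cap P2 (3)}), and its vanishing is the content of Lemma \ref{cancellazione finale termini quadratici}, established by a dynamical argument comparing the linearization of the BO flow along a torus solution with that of $J\nabla\mathcal H^{(2)}$. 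Your proposal needs this (or an equivalent) cancellation argument, rather than an appeal to tangency alone.
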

\begin{remark}
It is a remarkable feature
of the expansions of $\Psi$ and $d\Psi^\top$, which turns out to be relevant for applications, that  for any $k \ge 1$,
the coefficients $a^\pm_k(\frak x; \Psi)$ and $a_k^\pm (\frak x ; d\Psi^\top)$ are $C^\infty$-smooth functions on $\T$ and that the linear operators
$\mathcal A_k^\pm( \frak x; d\Psi^\top)$ are $C^\infty$-smoothing. 
These regularity properties of the expansions of $\Psi$ and $d\Psi^\top$ are a consequence 
of the fact that the map 
$\Psi_L$, introduced in \eqref{definition Psi_L bo} below as the basic ingredient for the construction of the map $\Psi$, is given by the linearization of the Birkhoff map
in normal direction at finite gap potentials and that such potentials are $C^\infty$-smooth (actually, even real analytic).
\end{remark}

In applications, it is of interest to know whether the coordinate transformation $\Psi$ preserves the reversible structure, defined by the maps
$S_{rev} : L^2_0 \to L^2_0$,  $(S_{rev} q)(x) := q(- x)$, and
${\cal S}_{rev} : {\cal M}_S^o \times h^0_\bot \to {\cal M}_S^o \times h^0_\bot$ where 
\begin{equation}\label{definition reversible structure for actions angles}
 {\cal S}_{rev}(\theta_S, I_S, z_\bot) := (\theta^{rev}_S, I_S^{rev}, z_\bot^{rev})\,, \quad
\theta_n^{rev} = - \theta_n , \,\,\, I_n^{rev} = I_n, \,\,\, \forall n \in S_+\,, \quad z_n^{rev} = z_{- n}, \,\,\, \forall n \in S^\bot\,. 
\end{equation}
Note that for any $s \in \R_{\ge 0}$, $S_{rev} : H^s_0 \to H^s_0$ and ${\cal S}_{rev} : {\cal M}_S^o \times h^s_\bot \to {\cal M}_S^o \times h^s_\bot$ 
are linear involutions and that without loss of generality, the neighbourhood ${\cal V}$ of Theorem \ref{modified Birkhoff map} can 
be chosen to be invariant under the map ${\cal S}_{rev}$, i.e., ${\cal S}_{rev} ({\cal V}) = {\cal V}$. 

\medskip

\noindent 
 {\bf Addendum to Theorem \ref{modified Birkhoff map}.} \label{teo reversibilita}
 {\em  The maps $\Psi : {\cal V} \to L^2_0$,  $\Psi^{bo} : h^0_0 \to L^2_0$,  and  $({\cal F}^\pm_{N_S})^{- 1} : h^0_{\bot, \C} \to L^2_{0, \C}$ preserve
 the  reversible structure, i.e.,
  $$
  \Psi \circ {\cal S}_{rev} = S_{rev} \circ \Psi, \qquad \Psi^{bo} \circ {\cal S}_{rev} = S_{rev} \circ \Psi^{bo}\,, 
  \qquad ({\cal F}^-_{N_S})^{- 1} \circ {\cal S}_{rev} = S_{rev} \circ ({\cal F}^+_{N_S})^{- 1}\, ,
  $$
  and so do the maps in the asymptotic expansions {\bf (AE1)} ($\frak x \in \mathcal V$),
  $$
  a^+_k ( {\cal S}_{rev} {\frak x}; \Psi) =   S_{rev}  (a^-_k({\frak x}; \Psi) ) \,, \qquad 
  {\cal R}_N ( {\cal S}_{rev} {\frak x}; \Psi) =  S_{rev} ({\cal R}_N ({\frak x}; \Psi))\,,
  $$
  and the ones in the asymptotic expansions  {\bf (AE2)} ($\frak x \in \mathcal V \cap (\mathcal M_S^o \times h^1_\bot)$, $\widehat q \in H^1_0$), 
  $$
  a^+_k({\cal S}_{rev} \frak x; \, d \Psi^\top) =   { S}_{rev}  (a^-_k(\frak x; \, d \Psi^\top))\,, \qquad
  \mathcal  A^+_k({\cal S}_{rev} \frak x; \, d \Psi^\top) [ S_{rev} \, \widehat q ] =   { S}_{rev}  (\mathcal A^-_k (\frak x; \, d \Psi^\top) [ \widehat q ] ) \, , 
  $$
  $$
  {\cal R}_N({\cal S}_{rev} \frak x ; \, d \Psi^\top) [ S_{rev} \, \widehat q ] =  {\cal S}_{rev} ( {\cal R}_N( \frak x ; \, d \Psi^\top) [ \, \widehat q ] ) \,.
  $$
  Furthermore, the Hamiltonians $H^{bo}$, ${\cal H} = H^{bo} \circ \Psi$, and ${\cal P}$ are reversible, meaning that 
  $$
  H^{bo} \circ S_{rev} = H^{bo}\,, \qquad {\cal H} \circ {\cal S}_{rev} = {\cal H}\,, \qquad {\cal P} \circ {\cal S}_{rev} = {\cal P},
  $$
  and the maps in the asymptotic expansion in {\bf (AE3)} preserve the reversible structure,
   $$
 a_k^+ ( {\cal S}_{rev} {\frak x}; {\nabla \cal P}) =  S_{rev} (a^-_k(\frak x; {\nabla \cal P}) )\,, \qquad \forall \, \frak x \in \mathcal V \cap ({\cal M}_S^o \times h^{1 + \sigma_N}_\bot)\,,
 $$
 $$
  {\cal R}_N ( {\cal S}_{rev} \frak x; {\nabla \cal P}) =  {\cal S}_{rev} ( {\cal R}_{N} (\frak x; {\nabla \cal P}) )\, , \   \quad \forall \, \frak x \in \mathcal V \cap ({\cal M}_S^o \times h^{1 \lor \sigma_N}_\bot)\, . 
  $$
  Corresponding results hold for the Hamiltonians $ H^{mo}$, ${\cal H}^{mo} = H^{mo} \circ \Psi$, and ${\cal P}^{mo}$.
  }
  
  \medskip
  
 \noindent
 Theorem \ref{modified Birkhoff map 2} below states tame estimates for the map $\Psi $ as well as the gradient $\nabla {\cal P}$ of the remainder term ${\cal P}$ 
 in the expansion of ${\cal H}$ and the one of the remainder ${\cal P}^{mo}$ in the expansion of ${\cal H}^{mo}$. 
 Throughout the paper, the stated estimates for maps hold locally uniformly with respect to their arguments.
  \begin{theorem}\label{modified Birkhoff map 2}
  Let $N, l \in \N$. Then
  under the same assumptions as in Theorem \ref{modified Birkhoff map}, the following estimates hold:
  \begin{description}
  \item[(Est1)] For any $\frak x = (\theta_S, I_S, z_\bot) \in {\cal V}$, $k \ge 1$, \, $\widehat{\frak x}_1, \ldots, \widehat{\frak x}_l \in E_0$, $s \in \Z_{\geq 0}$,
  $$
  \begin{aligned}
  & \| a_k^\pm(\frak x; \Psi) \|_s \lesssim_{s, k} 1 
  \,,  \qquad
    \| d^l a_k^\pm(\frak x; \Psi)[\widehat{\frak x}_1, \ldots, \widehat{\frak x}_l]\|_s \lesssim_{s, k, l} \prod_{j = 1}^l \| \widehat{\frak x}_j\|_0\,. 
  \end{aligned}
  $$
  Similarly, for any $\frak x  \in {\cal V} \cap \big( {\cal M}_S^o \times h^s_\bot \big)$, \, $\widehat{\frak x}_1, \ldots, \widehat{\frak x}_l \in E_s$, $s \in \Z_{\geq 0}$,
  $$
  \| {\cal R}_N(\frak x; \Psi)\|_{s + N + 1} \lesssim_{s, N}  \| z_\bot \|_s\, , \qquad \qquad \qquad
  $$
  $$
   \| d^l {\cal R}_N(\frak x; \Psi)[\widehat{\frak x}_1, \ldots, \widehat{\frak x}_l]\|_{s + N + 1} 
   \lesssim_{s, N, l} \sum_{j = 1}^l \| \widehat{\frak x}_j\|_s \prod_{i \neq j} \| \widehat{\frak x}_i\|_0 + \| z_\bot \|_s \prod_{j = 1}^l \| \widehat{\frak x}_j\|_0\,.
  $$
  \item[(Est2)] For any $\frak x = (\theta_S, I_S, z_\bot) \in {\cal V} \cap (\mathcal M_S^o \times h^1_\bot)$, $ k \ge 1$, \, 
  $\widehat{\frak x}_1, \ldots, \widehat{\frak x}_l \in E_1$, $s \in \N$,
  $$
  \begin{aligned}
  &  \| a_k^\pm(\frak x; {d \Psi}^\top)\|_s \lesssim_{s, k} 1 +  \| z_\bot \|_1 \,, \qquad
   \| d^l a_k^\pm(\frak x; {d \Psi}^\top)[\widehat{\frak x}_1, \ldots, \widehat{\frak x}_l]\|_s 
  \lesssim_{s, k, l}  \prod_{j = 1}^l \| \widehat{\frak x}_j\|_1\, ,
  \end{aligned}
  $$
  $$
  \begin{aligned}
  &  \| \mathcal A_k^\pm(\frak x; {d \Psi}^\top)\|_s \lesssim_{s, k} 1 +  \| z_\bot \|_1 \,, \qquad
   \| d^l \mathcal A_k^\pm(\frak x; {d \Psi}^\top)[\widehat{\frak x}_1, \ldots, \widehat{\frak x}_l]\|_s 
  \lesssim_{s, k, l}  \prod_{j = 1}^l \| \widehat{\frak x}_j\|_1\,. 
  \end{aligned}
  $$
  Similarly, for any $\frak x \in {\cal V} \cap \big( {\cal M}_S^o \times h^s_\bot \big)$,  \, $\widehat{\frak x}_1, \ldots, \widehat{\frak x}_l \in E_s$, \, $\widehat q \in H^s_0$, $s \in \N$,
  $$
  \begin{aligned}
 &  \| {\cal R}_N (\frak x; {d \Psi}^\top) [\widehat q]\|_{s + N + 1}  \lesssim_{s, N} \| \widehat q\|_s + \| z_\bot \|_s \| \widehat q\|_1\, , \\
 & \| d^l \big( {\cal R}_N  (\frak x; {d \Psi}^\top) [\widehat q] \big)  [\widehat{\frak x}_1, \ldots, \widehat{\frak x}_l]\|_{s + N + 1} \\
 & \qquad \lesssim_{s, N, l} \| \widehat q\|_s \prod_{j = 1}^l \| \widehat{\frak x}_j\|_1 + \| \widehat q\|_1 \sum_{j = 1}^l \| \widehat{\frak x}_j\|_s \prod_{i \neq j} \| \widehat{\frak x}_i\|_1 + 
  \| \widehat q\|_1 \| z_\bot\|_s \prod_{j = 1}^l \| \widehat{\frak x}_j \|_1\,.
    \end{aligned}
  $$
\item[(Est3)] For any $s \in \Z_{\ge 0}$, $\frak x = (\theta_S, I_S, z_\bot) \in {\cal V} \cap \big( {\cal M}_S^o \times h^{s + \sigma_N}_\bot\big)$, 
$\| z_\bot \|_{\sigma_N} \leq 1$, $1 \le k \le N$,
\, $\widehat{\frak x}_1, \ldots, \widehat{\frak x}_l \in E_{s + \sigma_N}$,
$$
  \| a_k^\pm (\frak x; {\nabla \cal P})\|_{s} \lesssim_{s, k} \| z_\bot \|_{s + \sigma_N}\,, 
  $$
  $$
  \| d^l a_k^\pm(\frak x; {\nabla \cal P})[\widehat{\frak x}_1, \ldots, \widehat{\frak x}_l]\|_s 
\lesssim_{s, k, l} \sum_{j = 1}^l \| \widehat{\frak x}_j\|_{s + \sigma_N} \prod_{n \neq j} \| \widehat{\frak x}_n\|_{\sigma_N} +
 \| z_\bot \|_{s + \sigma_N} \prod_{j = 1}^l \| \widehat{\frak x}_j\|_{\sigma_N}\,. 
$$
For any $s \in \Z_{\ge 0}$, $\frak x \in {\cal V} \cap \big( {\cal M}_S^o \times h_\bot^{s \lor\sigma_N}\big)$ with $\| z_\bot \|_{\sigma_N} \leq 1$,
  \, $ \widehat{\frak x} \in E_{s \lor\sigma_N}$, 
$$
\begin{aligned}
 \| {\cal R}_N(\frak x; {\nabla \cal P})\|_{s + N + 1} & \lesssim_{s, N} \| z_\bot \|_{\sigma_N} \| z_\bot \|_{s \lor\sigma_N}\,,  \\
 \| d {\cal R}_N(\frak x; {\nabla \cal P})[\widehat{\frak x}] \|_{s + N + 1} 
& \lesssim_{s, N} \| z_\bot \|_{\sigma_N} \| \widehat{\frak x}\|_{s \lor \sigma_N} + \| z_\bot \|_{s \lor \sigma_N} \| \widehat{\frak x}\|_{\sigma_N}\, .
\end{aligned}
$$
If in addition $ \widehat{\frak x}_1, \ldots, \widehat{\frak x}_l \in E_{s \lor \sigma_N}$, $l \ge 2,$ then
$$
 \| d^l  {\cal R}_N(\frak x; {\nabla \cal P})[\widehat{\frak x}_1, \ldots, \widehat{\frak x}_l]\|_{s + N + 1} 
\lesssim_{s, N, l} \sum_{j = 1}^l \| \widehat{\frak x}_j\|_{s \lor \sigma_N} \prod_{n \neq j} \| \widehat{\frak x}_n\|_{\sigma_N} + 
\| z_\bot \|_{s \lor \sigma_N} \prod_{j = 1}^l \|\widehat{\frak x}_j \|_{\sigma_N}\,. 
$$
Corresponding estimates hold for the maps $ a_k^\pm (\cdot \, ; \, {\nabla \mathcal P^{mo}})$ and ${\cal R}_N(\cdot \, ; \, {\nabla \mathcal P^{mo}})$.
  \end{description}   
Here and throughout the paper, the meaning of the symbol $\, \lesssim \, $
  with various subindices is the standard one. So for example, the estimate
 $ \| a_k^\pm (\frak x; {\nabla \cal P})\|_{s} 
 \lesssim_{s, k} \| z_\bot \|_{s + \sigma_N}$ 
 in $\bf (Est3)$ means that for any $1 \le k \le N$, there exists  a constant $C(s,k) > 0$, depending on $s$ and $k$, so that 
 $\| a_k^\pm(\frak x; {\nabla \cal P})\|_{s} \le C(s,k) \| z_\bot \|_{s + \sigma_N}$ 
for any $ \frak x $ as indicated in the statement of $\bf (Est3)$, i.e., for any
$ \frak x = (\theta_S, I_S, z_\bot) \in {\cal V} \cap \big( {\cal M}_S^o \times h^{s + \sigma_N}_\bot\big)$ with $\| z_\bot \|_{\sigma_N} \leq 1$.  
  \end{theorem}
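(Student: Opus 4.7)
The estimates of Theorem \ref{modified Birkhoff map 2} are a consequence of the explicit construction of $\Psi$, which would be carried out in the main body of the paper, and of standard tame estimates for products, compositions, and para-products in Sobolev spaces. The strategy is to first establish the estimates at the level of the basic building blocks of $\Psi$ -- the parametrization $\Psi_S$ of $M_S^o$ and the linearization $\Psi_L$ of the Birkhoff map $\Psi^{bo}$ in the normal direction at finite gap potentials -- and then propagate them through the asymptotic expansions (AE1)--(AE3). Throughout, real analyticity of each ingredient on a complex neighbourhood of $\mathcal V$ (together with Cauchy estimates) reduces multilinear derivative bounds to uniform bounds for the maps themselves.

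For $\bf (Est1)$, the key input is that on the compact set $\mathcal K\subset \mathcal M_S^o$ the finite gap potentials $\Psi_S(\theta_S,I_S)$ form a compact family of real-analytic functions on $\T$. Consequently, the phase factor $g_\infty(x)=e^{\ii\partial_x^{-1}\Psi_S(\theta_S,I_S)}$ and, more generally, any coefficient built from finitely many algebraic and differential operations on $\Psi_S$ is bounded in $H^s$ uniformly in $s\ge 0$. Since $a_k^\pm(\frak x;\Psi)$ is real analytic as a map $\mathcal V\to H^s_\C$, the uniform bound together with Cauchy estimates on the complex neighbourhood yields the multilinear derivative bounds. For the remainder, we exploit $\mathcal R_N(\theta_S,I_S,0;\Psi)=0$ and write $\mathcal R_N(\frak x;\Psi)=\int_0^1 d_{z_\bot}\mathcal R_N(\theta_S,I_S,tz_\bot;\Psi)[z_\bot]\,dt$; real analyticity and a Moser-type tame estimate for the integrand give the linear-in-$\|z_\bot\|_s$ bound, while higher-order derivatives are handled similarly via the Leibniz rule.

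For $\bf (Est2)$, we formally transpose the expansion (AE1). The coefficients $a_k^\pm(\frak x;d\Psi^\top)$ and the smoothing operators $\mathcal A_k^\pm(\frak x;d\Psi^\top)$ are expressible in terms of $a_k^\pm(\frak x;\Psi)$, their $\frak x$-derivatives, and the pseudo-differential adjoints thereof. Differentiating the linear-in-$z_\bot$ part of $\Psi$ in the $\frak x$-variable produces contributions linear in $z_\bot$, whence the bound $\lesssim 1+\|z_\bot\|_1$; the remaining contributions are controlled by the compactness argument of $\bf (Est1)$. The $C^\infty$-smoothing character of $\mathcal A_k^\pm$ is inherited from the fact that $\Psi_L$ is defined via spectral projections associated with the Lax operator at finite gap potentials, whose range consists of smooth symbols. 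The remainder estimates follow by transposing those on $\mathcal R_N(\frak x;\Psi)$ and applying standard tame interpolation for bilinear products.

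For $\bf (Est3)$, we use that $\mathcal P(\frak x)=O(\|z_\bot\|_1\|z_\bot\|_0^2)$, so that $\nabla\mathcal P(\frak x)$ is (at least) quadratic in $z_\bot$ in low norms: this yields both the factor $\|z_\bot\|_{s+\sigma_N}$ in the bounds on $a_k^\pm(\frak x;\nabla\mathcal P)$ and the product $\|z_\bot\|_{\sigma_N}\|z_\bot\|_{s\lor\sigma_N}$ in the bounds on $\mathcal R_N(\frak x;\nabla\mathcal P)$. The loss of regularity $\sigma_N$ originates in the identity $\nabla\mathcal H=d\Psi^\top\,\nabla H^{bo}(\Psi)$: extracting the para-multiplication part requires symbolic calculus for pseudo-differential operators up to order $N$, and each composition introduces a controlled finite number of derivatives. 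Standard para-product estimates from Appendix \ref{appendice B} then deliver the claimed tame bounds on $a_k^\pm(\frak x;\nabla\mathcal P)$, while the remainder bounds combine the smoothing of $\mathcal R_N$ with Moser-type product estimates to produce the stated multilinear inequalities. The identical reasoning applies to $\nabla\mathcal P^{mo}$ since $H^{mo}$ is quadratic in $q$ and its composition with $\Psi$ admits an analogous expansion.

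The main obstacle will be in $\bf (Est3)$: tracking the exact loss $\sigma_N$ and ensuring the quadratic-in-$z_\bot$ dependence of $\nabla\mathcal P$ survives the para-differential expansion, since one must carefully separate the resonant cubic-in-$z$ terms (which contribute to the normal form part $\sum\Omega_n z_nz_{-n}$) from the non-resonant ones (which are absorbed into $\mathcal P$), and verify that the pseudo-differential symbolic calculus preserves this splitting order by order.
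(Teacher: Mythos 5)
Your plan correctly identifies $\Psi_L$ (the linearization of the Birkhoff map in the normal direction at finite-gap potentials) as a building block and correctly flags analyticity and para-differential calculus as the main technical tools. However, there is a genuine structural gap: you describe the building blocks of $\Psi$ as $\Psi_S$ and $\Psi_L$ and never mention the \emph{symplectic corrector} $\Psi_C$. In the paper, $\Psi$ is the composition $\Psi_L\circ\Psi_C\circ\Xi$, where $\Psi_C$ is obtained via a Moser--Weinstein argument as the time-one flow $\Psi_X^{0,1}$ of a non-autonomous vector field solving \eqref{second equation for X}; without it, $\Psi$ is not canonical (the pullback $\Psi_L^*\Lambda_G$ differs from $\Lambda$ by the two-form $\Lambda_L$ of \eqref{Pull back of LambdaG}), so \textbf{(AE3)} — and hence \textbf{(Est3)} — cannot hold. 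Theorem \ref{espansione flusso per correttore} derives a parametrix for $\Psi_X^{\tau_0,\tau}$ by solving transport equations for the coefficients $a_k^\pm(z;\Psi_X^{\tau_0,\tau})$ and by a Gronwall argument for the remainder $\mathcal R_N(z;\Psi_X^{\tau_0,\tau})$; your fundamental-theorem-of-calculus argument for $\mathcal R_N(\frak x;\Psi)$ in \textbf{(Est1)} does not replace this, because the remainder of the flow map is governed by a fixed-point/integral equation \eqref{equazione integrale resto} rather than by a single integration in $z_\bot$.

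For \textbf{(Est2)}, ``formally transposing the expansion (AE1)'' is not a well-defined operation on a nonlinear map's Taylor expansion. The paper instead uses the structural symplectic identity $d\Psi(\frak x)^\top = J^{-1}(d\Psi(\frak x))^{-1}\partial_x$, which reduces (AE2)/(Est2) to controlling $(d\Psi)^{-1}$; the transpositions that do appear are of the \emph{linear} operators $\Psi_1(z_S)^\top$ and $(d\Psi_X^{0,\tau})^\top$ (Corollaries \ref{lemma asintotiche Phi 1 q} and \ref{expansion of  of differential of Psi}), where the latter is again derived from the symplectic pullback relation $(d\Psi_X^{0,\tau})^\top J^{-1}\mathcal L_\tau(\Psi_X^{0,\tau})\,d\Psi_X^{0,\tau} = J^{-1}$.

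For \textbf{(Est3)}, you correctly sense that a cancellation must enforce $\nabla\mathcal P = O(\|z_\bot\|^2)$, but you attribute it to separating resonant from non-resonant cubic terms, which is not the mechanism. The paper proves (Lemma \ref{cancellazione finale termini quadratici}, following Kuksin) that the quadratic-in-$z_\bot$ Hamiltonian $\mathcal P_2^{(2)}$ vanishes identically by a \emph{dynamical} argument: one linearizes the BO flow along a finite-gap solution in both the original and the transformed coordinates, uses uniqueness for the linearized equation, and concludes $d_\bot\nabla_\bot\mathcal P_2^{(2)}(z_S,0)=0$. Without this step the coefficient bound $\|a_k^\pm(\frak x;\nabla\mathcal P)\|_s\lesssim\|z_\bot\|_{s+\sigma_N}$ (as opposed to $1+\|z_\bot\|_{s+\sigma_N}$) is not justified, and the cubic order of $\mathcal P$ would be assumed rather than proved.
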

\begin{remark}
In the case $S_+ = \emptyset$, one has $M_{\emptyset} = \{ 0 \}$, $S^\bot = \Z\setminus \{0\}$, and $h^0_\bot = h^0_0$.
Since the differential $d \Phi^{bo}$ at $q=0$ is given by $- \mathcal F : L^2_0 \to h^0_0$ (cf. \eqref{differential at 0}), 
one easily verifies that in this case, the map $\Psi$ is given by  $- \mathcal F^{-1}$, hence linear and defined on all of $h^0_0$.
(According to Section \ref{sezione mappa Psi L BO}, $\Psi = \Psi_L \circ \Psi_C$ and in the case at hand, $\Psi_L = - \mathcal F^{-1}$, $\Psi_C = {\rm Id}$.)
As a consequence, $d\Psi^\top = - \mathcal F$. Furthermore, $H^{bo} \circ \Psi : h^1_0 \to \R$ is given by
$$
H^{bo} \circ \Psi (z) = \sum_{n \ge 1} \Omega_n^{bo}(0) z_n z_{- n} + {\cal P}(z)\, , \qquad
\Omega_n^{bo}(0) = n \, ,  \quad \forall \, n \ge 1 \, ,
$$
and ${\cal P}(z) =  \frac{1}{2\pi} \int_0^{2\pi} - \frac 13 (\mathcal F^{-1} z)^3 d x$. Hence by the Bony decomposition (cf. Lemma \ref{primo lemma paraprodotti}(i)), 
the gradient $\nabla \mathcal P$ admits for any $N \ge 1$, $z \in h^{s + \sigma_N}_0$ with $s \ge 0$ and $\sigma_N :=  N+2$, an expansion of the form
$$
\nabla {\cal P}(z)  = 2 T_{ \mathcal F^{-1} z} [ \mathcal F^{-1} z] + \mathcal R^{(B)}(z)\, , \qquad
$$
where for any $s \ge 0$, the Bony remainder $\mathcal R^{(B)} : h^{s \lor \sigma_N}_0 \mapsto h^{s+N + 1}_0$ is real analytic and satisfies the estimate
$\| \mathcal R^{(B)} (z)\|_{s+ N +1} \lesssim_{s, N} \|z\|_{s \lor \sigma_N}  \|z\|_{\sigma_N}$.
Furthermore,
$$
{\cal H}^{mo}(z) = H^{mo} \circ \Psi(z) =   \sum_{n \ge 1} z_n z_{- n} \, .
$$
\end{remark}

 \noindent 
{\em Applications.} The Birkhoff coordinates are well suited to study the initial value problem of \eqref{1.1} (cf. \cite{GKT1} and references therein).
Using the arguments, developed in the case of the Korteweg-de Vries (KdV) equation (cf. e.g. \cite{KP}, \cite{K}), 
one can obtain KAM type results for  semilinear perturbations of \eqref{1.1}. 
However, when equation \eqref{1.1} is expressed in Birkhoff coordinates, 
various features of the BO equation and its perturbations such as being pseudo-differential equations, get lost. 
On the other hand, due to the expansions $({\bf AE1}) - ({\bf AE3})$, the coordinates of Theorem \ref{modified Birkhoff map} allow 
to preserve the essence of such features and in the form stated turn out to be well suited to study quasilinear perturbations 
of the BO equation as well as questions of stability of finite gap solutions.    
We plan to investigate the stability of finite gap solutions of the BO equation
under quasi-linear Hamiltonian perturbations
\begin{equation}\label{main equation}
\partial_t u  =  \partial_x  \big( |\partial_x| u -  u^2 + \e  \nabla { F}(u) \big)   , \qquad x \in \T \, , 
\end{equation}
where $0 < \e <  1$ is a small parameter  
and  $\nabla F $ the $L^2$-gradient  of Hamiltonian $F$ such as
\begin{equation}\label{hamiltoniana perturbazione}
F (u) := \frac{1}{2\pi}\int_{0}^{2\pi} 
f(u(x), |\partial_x|^{1/2} u(x))\, d x \, , 
\qquad f \in {\cal C}^{\infty}( \R \times \R, \R) \, .
\end{equation}
Our goal is to use Theorem \ref{modified Birkhoff map} and Theorem \ref{modified Birkhoff map 2} to prove that for $\e$ small enough, 
there is a large set of finite gap solutions of \eqref{1.1}
with the property that for any initial data, $\e$-close to one of them, the corresponding solution of 
the perturbed equation exists for large  time intervals and stays $\e-$close
to the finite gap solution considered.
  
   \smallskip
   
   \noindent 
   {\em Outline of the construction of the coordinates of Theorem \ref{modified Birkhoff map}.} 
   In \cite{K}, Kuksin presents a general scheme for proving KAM-type theorems 
 for integrable PDEs in one space dimension such as the KdV or the sine-Gordon (sG) equations, which possess a Lax pair formulation 
 and admit finite dimensional integrable subsystems foliated by invariant tori. 
 The starting point is to construct local canonical coordinates, suitable to apply KAM methods.
 Expanding on work of Krichever \cite{Krichever}, Kuksin considers bounded, finite dimensional integrable subsystems of such a PDE 
 which admit action-angle coordinates. The latter are complemented by infinitely many coordinates 
 whose construction is based on a set of time periodic solutions, referred to as Floquet solutions of the PDE, obtained by linearizing 
 the PDE under consideration along a solution evolving in the integrable subsystem. It turns out that the resulting coordinate transformation 
 is typically not symplectic. Extending arguments of Moser and Weinstein to the given infinite dimensional setup (see \cite{K}, Lemma 1.4 and Section 1.7), 
 he constructs a second coordinate transformation so that the composition of the two transformations become symplectic. 
 In our previous work \cite{KM1}, we follow the general scheme of the construction in \cite{K} to construct canonical coordinates for the KdV equation
 near finite dimensional invariant tori,
 which have the property that they admit an expansion  as described in Theorem \ref{modified Birkhoff map} and Theorem \ref{modified Birkhoff map 2}.
Following the arguments developed of  \cite{KM1}, we construct the transformation  $\Psi$ 
 as the composition of $\Psi_L \circ \Psi_C$ of two transformations. 
 The transformation $\Psi_L$ is given by the Taylor expansion of the Birkhoff map $\Psi^{bo}$ of order one in the normal direction $z_\bot$ around $(z_S, 0)$,
$$
   \Psi^{bo}(z_S, 0) + d \Psi^{bo}(z_S, 0)[(0, z_\bot)]\, ,
 $$
 where we write $z$ as $(z_S,  z_\bot)$ with $z_S= (z_n)_{n \in S}$ and $z_\bot = (z_n)_{n \in S^\bot}$.
   The neighbourhood ${\cal V}$ of ${\cal K} \times \{ 0 \}$ (cf. Theorem \ref{modified Birkhoff map}) is chosen sufficiently small so that by the inverse function theorem,
    $\Psi_L$ is a real analytic diffeomorphism onto its image. 
    Using that $\Psi_L$ is given in terms of the Birkhoff map $\Psi^{bo}$ and taking advantage of features of the spectral theory of the Lax operator, 
    we prove that $\Psi_L$ admits a high frequency expansion and tame estimates corresponding to the ones of Theorems \ref{modified Birkhoff map}, \ref{modified Birkhoff map 2}. 
    In a second step we establish the corresponding results for the symplectic corrector $\Psi_C$. 
    
   In comparison with the treatment of the KdV equation, the main differences in the case of the Benjamin-Ono equation consist in the facts
   that the Lax operator of the Benjamin-Ono equation is a pseudo-differential operator (of first order) and not a differential one (of second order)
   and that the Birkhoff map of the Benjamin-Ono equation is a 'quasi-linear' perturbation of the Fourier transform and not a semilinear one
    (cf. Theorem \ref{pseudodiff expansion Psi_L}), making our constructions more challenging.
    

   \smallskip
   
   \noindent
   {\em Comments.} In view of the definition of $\Psi_L$, the map $\Psi$ can be considered as a symplectic version of the Taylor expansion of $\Psi^{bo}$ of order $1$ 
   in normal directions at points in ${\cal M}_S^o \times \{ 0 \}$ and hence as a locally defined symplectic approximation of $\Psi^{bo}$. 
   In the special case $N=1$, Theorem \ref{modified Birkhoff map} implies that 
   $$
    -  g_\infty \cdot ({\cal F}^+_{N_S})^{- 1}[z_\bot] - \overline{g_\infty } \cdot ({\cal F}^-_{N_S})^{- 1}[z_\bot]
    $$ 
    is a high frequency approximation of $\Psi$. More precisely, 
   $$
   \Psi(\theta_S, I_S, z_\bot) + g_\infty \cdot ({\cal F}^+_{N_S})^{- 1}[z_\bot] + \overline{g_\infty } \cdot ({\cal F}^-_{N_S})^{- 1}[z_\bot]
   $$ 
   maps ${\cal V} \cap ({\cal M}_S^o \times h^s_\bot)$ into $H^{s + 1}$ for any $s \geq 0$, i.e., it is one-smoothing. 
   In \cite{GKT5}, such a property has been established  for the differential of $\Psi^{bo}$ 
   and the one of $\Phi^{bo}$.{\footnote{ In the case of the KdV equation (cf. \cite{KST2}, \cite{Kuksin-Perelman}) and the defocusing NLS equation (cf. \cite{KST3}),
   such type of results have been proved for the corresponding differentials of the Birkhoff maps (and their inverses), as well as for the Birkhoff maps themselvees.}}
   In contrast, Theorem \ref{modified Birkhoff map} says that for the map $\Psi$, a much stronger property holds: 
   up to a remainder term, which is $(N+1)$-smoothing, $\Psi$ is a (nonlinear) pseudo-differential operator acting on ${\cal F}_\bot^{- 1}(h^0_\bot)$. 
   
   \smallskip
   
   \noindent
   {\em Notations.}
   The standard inner product on $L^2_\C \equiv H^0_\C$ and the corresponding norm are defined by
$$
\langle f | g \rangle \equiv \langle f | g \rangle_{L^2_\C} = \frac{1}{2\pi} \int_0^{2\pi} f(x) \overline{g(x)} d x\,, 
\quad \| f\| = \langle f | f \rangle^{1/2} \, ,
\qquad \forall \, f, g \in L^2_\C\, .
$$
Restrictions of this inner product to various subspaces and extensions as dual pairings will be denoted in the same way.

Let $h$ and $g$ be real valued functions, depending on various variables. In addition, $h$ might depend on parameters $\alpha, \, \ldots$ .
The notation $ h \lesssim_{\alpha, \, \ldots} g$ means that $h$
satisfies an estimate of the form $h \le C g$ where the constant $C>0$ only depends on the parameters $\alpha, \, \ldots$ .
   
\smallskip
   
   \noindent
   {\em Organization.} 
   The maps $\Psi_L$ and $\Psi_C$ are studied in Section \ref{sezione mappa Psi L BO} and Section \ref{sezione Psi C}, respectively.
   The expansion of the BO Hamiltonian in the new coordinates is treated in Section \ref{Hamiltoniana trasformata} 
   and a summary of the proofs of Theorem \ref{modified Birkhoff map} and Theorem \ref{modified Birkhoff map 2} is given in Section \ref{synopsis of proof}. 
   In Appendix \ref{spectral theory}  - 
   Appendix \ref{AppendixReversability}, 
   we present results needed in the main body of the paper and 
   in Appendix \ref{appendice B} we review material from the pseudo-differential and para-differential calculus.
   
   \bigskip
   
   \noindent
   {\em Acknowledgements.} Thomas Kappeler is supported by the Swiss National Science Foundation. Riccardo Montalto is supported by INDAM-GNFM.


 \section{The map $\Psi_L$}\label{sezione mappa Psi L BO}
In this section we define and study the map $\Psi_L$ described in Section \ref{introduzione paper}. 
First let us introduce some more notation. For $S \subset \Z$ finite as in \eqref{definition S}, let
   \begin{equation}\label{definition h_S bo}
   h_S = \big\{ z_S = (z_n)_{n \in S} \in (\C^\ast)^S :  \,z_{- n} = \overline{z}_n \,\, \forall n \in S_+ \big\}\, , \qquad   \C^\ast :=\C \setminus \{0 \} \, ,
   \end{equation}
  endowed with the norm $\|z_S\| = (\sum_{n \in S} |z_n|^2)^{1/2}$.
   Recall that $N_S = \max\{n \, : \, n \in S \}$.
   For notational convenience, for any $s \in \Z$, we identify $h_S \times h^s_\bot$ with the subset $\{z= (z_n)_{n \ne 0} \in h^s_0 : \, z_n \ne 0 \ \forall n \in S \}$ of $h^s_0$ 
   and write $(z_S, z_\bot)$ for $z \in h^s_0.$
   
   The restriction of the Birkhoff map $\Phi^{bo}$ to the space $M_S^0$ of proper $S$-gap potentials yields a real analytic diffeomorphism,
   ${\Phi^{bo}}|_{ M^0_S} : M^0_S \to h_S$ (cf. Corollary \ref{restriction to finite gap} in Appendix \ref{Birkhoff map for BO}). 
   We endow 
   $h_S$ with the pull back of the standard Poisson 
   structure on $h_0^0$ by the natural embedding $h_S \hookrightarrow h_0^0$, where the standard Poisson structure 
   is the one for which $\{ z_n, z_{- n} \} = - \ii n$ 
   for any $n \ne 0$ and the Poisson brackets among all the other coordinates vanish. 
   By Corollary \ref{restriction to finite gap}, $ M^0_S$ is a real analytic submanifold of $\mathcal U_{N_S}$ 
   where $\mathcal U_{N_S}$ is the set of finite gap potentials $q$
   with the property that $\gamma_{N_S}(q) > 0$ and $\gamma_n(q) = 0$ for any $n \ge N_S +1$. 
%
By Appendix \ref{spectral theory},  any $u \in \mathcal U_{N_S}$ is of the form 
\begin{equation}\label{formula finite gap}
u(x) = \sum_{j=1}^{N_S} \big(  \frac{1 - r_j^2}{1 - 2r_j \cos(x +\alpha_j)+ r_j^2} -1  \big)\, , 
\qquad 0 < r_j < 1, \  0 \le \alpha_j < 2\pi \, , \quad \forall \, 1 \le j \le N_S \, .
\end{equation}

   \smallskip
   
   \noindent

   Consider the partial linearization of the inverse Birkhoff map  $\Psi^{bo} := (\Phi^{bo})^{- 1}$, defined as
   \begin{equation}\label{definition Psi_L bo}
   \Psi_L : h_S \times h_\bot^0 \to L^2_0\,, \,\, (z_S, z_\bot) \mapsto \Psi^{bo}(z_S, 0) +  \Psi_1(z_S)[z_\bot],
    \qquad   \Psi_1(z_S)[z_\bot] :=   d_\bot \Psi^{bo}(z_S, 0)[z_\bot],
   \end{equation}
   where 
   $d_\bot \Psi^{bo}(z_S, 0)$ denotes the Fr\'echet derivative of the map $z_\bot \mapsto \Psi^{bo}(z_S, z_\bot)$, evaluated at the point $(z_S, 0)$. 
  By Theorem \ref{Theorem Birkhoff coordinates BO},
  $\Psi_L$ is a real analytic map. 
   \begin{proposition}\label{prop Psi L Psi bo} The map $\Psi_L$ has the following properties:\\
    (i) For any $z_S \in h_S,$
   $$
   \Psi_L (z_S, 0) = \Psi^{bo}(z_S, 0)\, ,  \qquad \qquad d \Psi_L(z_S, 0)= d \Psi^{bo}(z_S, 0) \, .
   $$
   (ii) For any compact subset ${\mathcal K} \subseteq h_S$ there exists an open neighborhood ${\mathcal V}$ of ${\mathcal K} \times \{ 0 \}$ in $ h_S \times h_\bot^0$ 
   so that for any integer $s \ge 0$, the restriction $\Psi_L|_{\mathcal V \cap h_0^s}$ is a map $\mathcal V \cap h_0^s  \to H^s_0$
   which is a real analytic diffeomorphism onto its image. The neighborhood $\mathcal V$ is chosen of the form $\mathcal V_S \times \mathcal V_\bot$
    where $\mathcal V_S$ is an open, bounded neighborhood of $\mathcal K$ in $h_S$ and $\mathcal V_\bot$ is an open ball
    in $h^0_\bot$ of sufficiently small radius, centered at zero.\\
(iii) For any $z = (z_S, z_\bot) \in {\cal V}$ and  $\widehat z = (\widehat z_S,  \widehat z_\bot ) \in h_S \times h_\bot^0$, 
\begin{equation}\label{formula d Psi L}
d \Psi_L(z) [\widehat z]  = d \Psi_L(z_S, 0)[\widehat z] + d_S \big( d_\bot \Psi^{bo}(z_S, 0)[ z_\bot]  \big)[\widehat z_S] .
\end{equation}
For any $z_S \in \mathcal V_S$, the linear map $d \Psi_L(z_S, 0) =  d \Psi^{bo}( z_S, 0)$
is canonical and for any $z_\bot \in \mathcal V_\bot$, we denote by
 $d_S \big( d_\bot \Psi^{bo}(z_S, 0)[z_\bot]  \big)$ the Fr\'echet derivative
of the map 
$
\mathcal V_S \to L^2_0 ,\, w_S \mapsto d_\bot \Psi^{bo}(w_S, 0)[z_\bot]
$
at $w_S= z_S$.
\end{proposition}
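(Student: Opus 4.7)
The plan is to obtain parts (i) and (iii) by direct differentiation of the defining expression \eqref{definition Psi_L bo}, and to deduce part (ii) from the inverse function theorem applied at points of $\mathcal K\times\{0\}$. For (i), since $\Psi_1(z_S)[\,\cdot\,]$ is linear, $\Psi_1(z_S)[0]=0$ and therefore $\Psi_L(z_S,0)=\Psi^{bo}(z_S,0)$. At the point $(z_S,0)$ the $S$-derivative of $z_S\mapsto \Psi^{bo}(z_S,0)+\Psi_1(z_S)[z_\bot]$ reduces to $d_S\Psi^{bo}(z_S,0)[\widehat z_S]$, since the contribution from the second summand vanishes when $z_\bot=0$, while the $\bot$-derivative of $\Psi_1(z_S)[z_\bot]$ is $d_\bot\Psi^{bo}(z_S,0)[\widehat z_\bot]$; summing the two gives $d\Psi^{bo}(z_S,0)[\widehat z]$. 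Part (iii) is the same computation at a general $(z_S,z_\bot)$: the $\bot$-component of $d\Psi_L$ is still $d_\bot\Psi^{bo}(z_S,0)[\,\cdot\,]$, as $\Psi_1(z_S)[\,\cdot\,]$ is linear in $z_\bot$, but the $S$-component picks up the extra term $d_S\bigl(d_\bot\Psi^{bo}(z_S,0)[z_\bot]\bigr)[\widehat z_S]$ from the $z_S$-dependence of $\Psi_1$. Collecting the two $(z_S,0)$-differentials via (i) yields \eqref{formula d Psi L}, and the canonicity of $d\Psi_L(z_S,0)$ is inherited from that of the Birkhoff map.

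For (ii), the crucial input is that $d\Psi^{bo}(z_S,0):h^0_0\to L^2_0$ is a linear isomorphism at every $z_S\in h_S$, which holds because $\Psi^{bo}$ is a real analytic diffeomorphism of $h^0_0$ onto $L^2_0$ (Theorem~\ref{Theorem Birkhoff coordinates BO}). By (i), the same is true of $d\Psi_L(z_S,0)$. Applying the real analytic inverse function theorem pointwise on $\mathcal K\times\{0\}$ and covering $\mathcal K$ by finitely many of the resulting neighbourhoods, I will extract a uniform open neighbourhood of the form $\mathcal V=\mathcal V_S\times\mathcal V_\bot$ in $h_S\times h^0_\bot$, on which $\Psi_L$ is a real analytic diffeomorphism onto its image in $L^2_0$. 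The product structure with $\mathcal V_\bot$ a ball centered at $0$ is natural because $\Psi_L(z_S,\,\cdot\,)$ is affine in $z_\bot$; once $\mathcal V_S$ has been fixed so that $d\Psi^{bo}(z_S,0)^{-1}$ has uniformly bounded operator norm, the radius of $\mathcal V_\bot$ is then determined by a quantitative version of the inverse function theorem.

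The main obstacle is to ensure that the same neighbourhood $\mathcal V$ works simultaneously for every integer $s\ge 0$. Here I will use two ingredients: first, $\Psi^{bo}(z_S,0)$ is a finite gap potential and hence lies in $\cap_{s\ge 0}H^s_0$ by \eqref{formula finite gap}, with $H^s$-norms depending continuously on $z_S$; second, $d_\bot\Psi^{bo}(z_S,0)$ restricts to a bounded operator $h^s_\bot\to H^s_0$ for every $s$, as a consequence of the tame regularity assertions in Theorem~\ref{Theorem Birkhoff coordinates BO}. Together these two facts imply $\Psi_L(\mathcal V\cap h^s_0)\subseteq H^s_0$. For the local inverse, part (i) identifies $d\Psi_L(z_S,0)^{-1}$ with $d\Phi^{bo}(\Psi^{bo}(z_S,0))$, which is bounded between the $H^s$ and $h^s$ scales for every $s$ by the same theorem. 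A standard bootstrap—solving $\Psi_L(z_S,z_\bot)=q$ in $L^2_0$ by the already established case $s=0$ and then upgrading the regularity of $z_\bot$ from the tame estimates above—shows that $\Psi_L^{-1}$ preserves each Sobolev scale $h^s_0$, completing part (ii).
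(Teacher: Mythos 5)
Your proof is correct and follows essentially the same route as the paper: parts (i) and (iii) are direct computations from the definition \eqref{definition Psi_L bo}, and part (ii) is the inverse function theorem plus Theorem~\ref{Theorem Birkhoff coordinates BO}. The paper merely cites the analogous NLS argument in \cite[Proposition 3.1]{Kappeler-Montalto} for part (ii); you supply the missing details — in particular the observation that $\Psi^{bo}(z_S,0)$ is a $C^\infty$-smooth finite gap potential, the identification $d\Psi_L(z_S,0)^{-1}=d\Phi^{bo}(\Psi^{bo}(z_S,0))$ which is bounded on each Sobolev scale, and the bootstrap showing that a solution $z$ of $\Psi_L(z)=q$ found in $h^0_0$ actually lies in $h^s_0$ whenever $q\in H^s_0$, so that the same product neighbourhood $\mathcal V=\mathcal V_S\times\mathcal V_\bot$ works simultaneously for all integers $s\ge 0$.
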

 \begin{proof} (i) The stated formulas follow from the definition of $\Psi_L$. 
 (ii) 
The claimed statements follow from Theorem \ref{Theorem Birkhoff coordinates BO} and the inverse function theorem by arguing
 as in the proof of the corresponding results for the defocusing NLS equation in \cite[Proposition 3.1]{Kappeler-Montalto}. 
 Item (iii) is proved in a straightforward way.
 \end{proof}
 For any $z = (z_S, z_\bot) \in {\cal V} = \mathcal V_S \times \mathcal V_\bot$ and $n \ne 0$, let $q := \Psi^{bo}(z_S, 0) $ 
 and  
 \begin{equation}\label{def W_n bo}
    W_{n}(x) \equiv   W_{n}(x, q) :=  (d_q \Phi^{bo})^{- 1}[e^{(n)}]  , \qquad
     W_{-n}(x) \equiv   W_{-n}(x, q) :=  (d_q \Phi^{bo})^{- 1}[e^{(- n)}] ,
  \end{equation}
  where $e^{(k)}$, $k \ne 0$,  are the standard basis elements in the sequence space $h^0_{0, \C}$, 
   $e^{(k)} = (\delta_{k, j})_{j \neq 0}$.
    (Here we extended $(d_u \Phi^{bo})^{-1}: h^0_{0} \to  L^2_{0} $ as a $\C$-linear map $ h^0_{0, \C} \to L^2_{0, \C}$.) 
 Then $  \Psi_1(z_S)[z_\bot] =  \Psi_L(z_S, z_\bot) - q $ can be written as
   \begin{equation}\label{forma finale Psi L def}
     \Psi_1(z_S)[z_\bot](x)  = \sum_{n \in S^\bot} z_n W_n(x, q) \,.
   \end{equation}
In a next step, we want to analyze $\Psi_1(z_S)[z_\bot]$ further. 
Consider the Hamiltonian vector fields $ \partial_x \nabla z_{ n}$, $n \ne 0$, 
corresponding to the Hamiltonians given by the Birkhoff coordinates $z_{ n}: L^2_0 \to \C, u \mapsto z_n(u)$. 
Since $\Phi^{bo}$ is canonical in the sense that $\{ z_n, z_{- n} \} = - \ii n$ for any $n \ne 0$ 
and  the brackets among all the other coordinates vanish, it follows that for any $u \in L^2_{0}$ and $n \ne 0$, 
   $$
   d_u \Phi^{bo}[\partial_x \nabla z_{ n}] = X_{z_{ n}} \, ,
   $$
   where $X_{z_{ n}}$ is the constant Hamiltonian vector field on $h^0_{0, \C}$ given by (cf. \eqref{def gradient})
   $$
   X_{z_n} = -  \ii n e^{(- n)}\,.
   $$
  Hence for any $n \ne 0$, 
   \begin{equation}\label{formula vector field X z+- n bo}
   (d_u \Phi^{bo})^{- 1}[e^{(n)}] = \frac{1}{ \ii n} \partial_x \nabla z_{- n}  \, .
   \end{equation}   
We need to explicitly compute $\partial_x \nabla z_{n}$ at $q := \Psi^{bo}(z_S, 0)$, $z_S \in \mathcal V_S$, for $|n| \ge N_S+1$ (cf. \eqref{definition h_S bo}).
It requires some elements of the spectral theory of the Lax operator $L_u$, recorded in Appendix \ref{spectral theory}.
The operator $L_u$, $u \in L^2$, is given by $ D - T_u$ and acts on the Hardy space $L^2_+$. Here $D= -\ii \partial_x$ 
and $T_u$ is the Toeplitz operator $f \mapsto \Pi(uf)$ with symbol $u$.
The eigenvalues are listed in increasing order and denoted by $\lambda_n(u)$, $n \ge 0$. 
The corresponding $L^2$-normalized eigenfunctions $f_n(\cdot, u) \in H^1_+$ with the normalization conditions in \eqref{normalized ef}
form an orthonormal basis of $L^2_+$. First let us consider the case $n \ge N_S+1$.
For any $n \ge N_S+1$,
the eigenvalue $\lambda_n : L^2 \to \R$ (cf. Theorem \ref{spec Lax operator}(iv)) 
and the corresponding eigenfunction $f_n : L^2 \to H^1_+$ (cf. Corollary \ref{def kappa}(iii)) of the Lax operator are real analytic functions.
 Denote by $\delta \lambda_n$ and $\delta f_n$ the variation of $\lambda_n$ and respectively, $f_n$ at $q$ in direction $u\in L^2$,
 $$
 \delta \lambda_n = \frac{d}{d\e}|_{\e = 0} \lambda_n(q + \e u)\, , \qquad
  \delta f_n = \frac{d}{d\e}|_{\e = 0} f_n(\cdot, q + \e u)\, .
 $$
    Since $f_n$ is $L^2$-normalized, $\langle \delta f_n | f_n \rangle = \ii \xi_n $ for some $\xi_n \in \R$ and hence (cf. \cite[Section 5]{GK1})
   $$
    \delta f_n =  \ii \xi_n f_n +  \sum_{\ell \ne n}  \langle \delta f_n | f_\ell \rangle f_\ell .
   $$
   The real numbers $\xi_n$, $n \ge 1$, are determined by the normalization conditions \eqref{normalized ef} of $f_n$, $n \ge 0$.
   The derivative of $(D - T_{q+\e u} - \lambda_n(q + \e u))f_n(\cdot, q + \e  u) = 0$ with respect to $\e$ at $\e = 0$ can then be computed as 
   $$
    (- T_u - \delta \lambda_n)f_n + (D - T_q - \lambda_n) \delta f_n = 0.
   $$
   Since $\delta \lambda_n = - \langle u f_n | f_n \rangle$ and $T_uf_n = \sum_{\ell \ge 0} \langle uf_n | f_\ell \rangle f_\ell$ one infers that
   $$
   \delta f_n = \ii \xi_nf_n +  (D - T_q - \lambda_n)^{-1} \sum_{\ell \ne n} \langle u f_n | f_\ell \rangle f_\ell
   =  \ii \xi_nf_n + \sum_{\ell \ne n} \frac{ \langle u f_n | f_\ell \rangle}{\lambda_\ell - \lambda_n} f_\ell \, .
   $$
  By Lemma \ref{results finite gap potentials}(ii), one has
  for any $n \ge N_S+1$,
  $$
  \lambda_n \equiv \lambda(q) = n \, , \qquad f_n(x) \equiv f_n(x, q) = e^{\ii nx} g_\infty(x, q)\, , \qquad 
  g_\infty(x) \equiv g_\infty(x, q) = e^{\ii \partial_x^{-1}q(x)}  \, .
  $$ 
  Since $q$ is a $S$-gap potential and hence $\langle 1 | f_\ell \rangle = 0$ for any $\ell \in S^\bot_+$, it then follows that
   \begin{equation}\label{def S_0}
   1 =  \sum_{\ell \in S_0} \langle 1 | f_\ell \rangle f_\ell \, , \qquad  S_0 := S_+ \cup \{ 0 \} \, ,
   \end{equation}
   and 
   $$
   \langle 1 | \delta f_n \rangle = \sum_{\ell \in S_{0}} \overline{\langle u f_n | f_\ell \rangle} \frac{\langle 1 | f_\ell \rangle}{\lambda_\ell - n}
   =  \langle u , e^{-\ii n x} \overline{g_\infty}  \sum_{\ell \in S_0} \frac{\langle 1 | f_\ell \rangle}{\lambda_\ell - n} f_\ell  \rangle  \, .
   $$
  In view of the definition $z_n = n \frac{\langle 1 | f_n \rangle }{\sqrt{n \kappa_n}}$ and the one of the gradient $\nabla z_n$, 
  one then concludes that
   $$
   \partial_x \nabla z_n (\cdot, q)
   = - \frac{1}{\sqrt{n \kappa_n}} \sum_{\ell \in S_0} \frac{\langle 1 | f_\ell \rangle}{1 - \frac{\lambda_\ell}{n} }
   \partial_x\big(  e^{-\ii n x} \overline{g_\infty}f_\ell \big) \, , \qquad \forall n \ge N_S+1.
   $$
   Taking into account that $z_{-n}= \overline{z_n}$ and $\partial_x = i D$  we are led to the following
   \begin{lemma}\label{lemma zn nabla q bo}
   For any $q \in M^0_S$ and $n \ge N_S + 1$, 
   $$
   \partial_x \nabla z_n (\cdot, q) =  -   e^{-\ii n x} \frac{1}{\sqrt{n \kappa_n}} \sum_{\ell \in S_0} \frac{\langle 1 | f_\ell \rangle}{1 - \frac{\lambda_\ell}{n} }
   \big( - \ii n \, \overline{g_\infty}f_\ell  + \ii D ( \overline{g_\infty}f_\ell ) \big),
   $$
   $$
   \partial_x \nabla z_{- n}(\cdot, q) =  -   e^{ \ii n x} \frac{1}{\sqrt{n \kappa_n}} \sum_{\ell \in S_0} \frac{\overline{ \langle 1 | f_\ell \rangle} }{1 - \frac{\lambda_\ell}{n} }
   \big(  \ii n \, g_\infty \overline{f_\ell}  + \ii D ( g_\infty \overline{f_\ell} ) \big). \qquad
   $$
   Hence by \eqref{def W_n bo} and \eqref{formula vector field X z+- n bo},
  $$
  W_{-n}(x, q) =  \frac{1}{-\ii n} \partial_x \nabla z_n (\cdot, q)= 
    -   e^{- \ii n x} \frac{1}{\sqrt{n \kappa_n}} \sum_{\ell \in S_0} \frac{ \langle 1 | f_\ell \rangle }{1 - \frac{\lambda_\ell}{n} }
   \big( \overline{g_\infty} f_\ell  + \frac{1}{  n} q \overline{g_\infty} \, f_\ell -  \frac{1}{  n}  \overline{ g_\infty} \, D f_\ell   \big) ,
 $$
  $$
  W_n(x, q) = \frac{1}{\ii n} \partial_x \nabla z_{-n} (\cdot, q) = 
    -   e^{ \ii n x} \frac{1}{\sqrt{n \kappa_n}} \sum_{\ell \in S_0} \frac{\overline{ \langle 1 | f_\ell \rangle } }{1 - \frac{\lambda_\ell }{n} }
   \big(  g_\infty \overline{f_\ell}  + \frac{1}{ n } q g_\infty \overline{f_\ell}  - \frac{1}{ n }  g_\infty \overline{ D f_\ell} \big).  \qquad
   $$
   \end{lemma}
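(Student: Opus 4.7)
The plan is to read off the two gradient formulas directly from the first-order perturbation computation that precedes the lemma, and then obtain the $W_{\pm n}$ formulas by combining \eqref{formula vector field X z+- n bo} with an algebraic simplification based on the identity $D g_\infty = q g_\infty$.

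First I would complete the derivation already started: substitute $\lambda_n = n$, $f_n = e^{\ii n x} g_\infty$ (which hold for $n \geq N_S+1$ by Lemma \ref{results finite gap potentials}(ii)) into the expression $\langle u f_n | f_\ell \rangle$, use the expansion $1 = \sum_{\ell \in S_0} \langle 1 | f_\ell\rangle f_\ell$ from \eqref{def S_0} to express $\langle 1 | \delta f_n \rangle$ as an $L^2$-pairing against $u$, and read off $\nabla z_n$ from $\delta z_n = n \delta \langle 1 | f_n \rangle / \sqrt{n \kappa_n}$, which itself follows from the definition $z_n = n \langle 1 | f_n\rangle / \sqrt{n \kappa_n}$ together with the fact that for $n \geq N_S + 1$ the normalizing factor $\kappa_n$ contributes nothing to first order (its variation is killed in the final formula because $\langle 1 | f_n \rangle = 0$ at $q$ for $n \in S_+^\bot$). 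Applying $\partial_x = \ii D$ to the resulting gradient gives the first formula of the lemma.

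Next I would deduce the formula for $\partial_x \nabla z_{-n}$ by complex conjugation: since $z_{-n}(u) = \overline{z_n(u)}$ and the gradient is taken with respect to the real inner product, one has $\nabla z_{-n} = \overline{\nabla z_n}$; conjugating the expression for $\partial_x \nabla z_n$ and observing that $\overline{f_\ell}$ and $g_\infty$ play symmetric roles yields the second claimed identity.

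Finally I would insert these expressions into \eqref{formula vector field X z+- n bo}, which reads $W_n = \frac{1}{\ii n} \partial_x \nabla z_{-n}$ and $W_{-n} = -\frac{1}{\ii n} \partial_x \nabla z_n$. The key algebraic step is to expand $D(\overline{g_\infty} f_\ell) = (D \overline{g_\infty}) f_\ell + \overline{g_\infty} D f_\ell$ and to use the identity $D g_\infty = q g_\infty$, equivalent to $\partial_x g_\infty = \ii q g_\infty$, coming from $g_\infty = e^{\ii \partial_x^{-1} q}$ and the reality of $q$; hence $D \overline{g_\infty} = - q \overline{g_\infty}$. Substituting this into the formula for $\partial_x \nabla z_n$ and dividing by $-\ii n$ reorganizes the three terms $\overline{g_\infty} f_\ell$, $q \overline{g_\infty} f_\ell / n$ and $\overline{g_\infty} D f_\ell / n$ as stated, and the conjugate manipulation yields the expression for $W_n$.

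The only delicate point will be tracking the sign and phase factors ($\ii$ vs $-\ii$, $\partial_x$ vs $D$, and the normalization $1/(\ii n)$ in \eqref{formula vector field X z+- n bo}) through the calculation; the identity $D g_\infty = q g_\infty$ does the real work, while everything else is bookkeeping. No further spectral input beyond Lemma \ref{results finite gap potentials}(ii) and \eqref{formula vector field X z+- n bo} is needed, so the proof amounts to a direct substitution.
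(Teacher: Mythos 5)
Your proposal is correct and follows essentially the same route as the paper: read off $\partial_x\nabla z_n$ from the first-order perturbation formula for $\langle 1|\delta f_n\rangle$ already derived in the preceding text, obtain $\partial_x\nabla z_{-n}$ by conjugation, and then use $D\overline{g_\infty}=-q\,\overline{g_\infty}$ (equivalently $Dg_\infty=qg_\infty$) together with the Leibniz rule to reorganize $D(\overline{g_\infty}f_\ell)$ into the three terms appearing in the $W_{\pm n}$ formulas. You also correctly flag a point the paper glosses over, namely that the variation of the normalizing factor $\kappa_n$ drops out because $\langle 1|f_n\rangle=0$ at a proper $S$-gap potential for $n\in S_+^\bot$.
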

   \noindent
 Next we want to show that $\Psi_L(z)$ admits  an expansion of the type stated in Theorem \ref{modified Birkhoff map}. 
 It is convenient to introduce
 $$
 V_S := \Psi^{bo} (\mathcal V_S \times \{ 0 \}) .
 $$
 First note that being a finite gap potential, $q \in V_S$ is $C^\infty$-smooth and so is $W_n(\cdot, q)$ for any $n \in S^\bot$.

 \bigskip
 \noindent  
 \bigskip
 
\begin{theorem}\label{lemma asintotica bo}
For any $q \in V_S$, $N \in \N$, 
$W_{\pm n}(x, q)$, $n \ge N_S+1$, have the expansion as $n \to \infty$ 
\begin{equation}\label{final asymptotic Wn bo}
W_{\pm n}(x, q) = - e^{ \pm \ii n x} \Big( \sum_{k = 0}^N \frac{W_{k}^{ae, \pm }(x, q)}{n^k} \, + \,  \frac{{\cal R}_{N}^{W_{\pm n}}(x, q)}{n^{N + 1}} \Big)
\end{equation}
where for any $1 \le k \le N$, $W_k^{ae, -}(x, q) = \overline{W_k^{ae, +}(x, q)}$ and $W_{k}^{ae, +}(x) \equiv W_{k}^{ae, +}(x, q)$  is given by
$$
W_{k}^{ae, +}(x)  := g_\infty(x) \sum_{\ell \in S_0}  c_{\ell, k} \overline{ \langle 1 | f_\ell \rangle f_\ell(x) } 
+ q(x) g_\infty(x) \sum_{\ell \in S_0}  c_{\ell, k - 1} \overline{ \langle 1 | f_\ell \rangle f_\ell (x)} 
- g_\infty(x)   \sum_{\ell \in S_0} c_{\ell, k -1}  \overline{  \langle 1 | f_\ell \rangle D f_\ell (x)} .
$$
The coefficients $c_{\ell, k}$ are real valued constants and for any $\ell \in S_0$ (cf. \eqref{def S_0}) have the following properties: \\
$(i)$ for $k = -1,$ $0$, one has $c_{\ell, -1}= 0$ and  $c_{\ell, 0}=1$, implying that
\begin{equation}\label{leading term}
W_{0}^{ae, +} =  g_\infty , \qquad
W_{0}^{ ae, -} =  \overline{g_\infty} \,;
\end{equation}
$(ii)$ for any $k \ge 1$, $c_{\ell, k}$ is a polynomial of degree $k$ in the gap lengths $\gamma_j(q)$, $j \in S_+$. 

\smallskip
\noindent
Finally, the remainders ${\cal R}_{N}^{W_{\pm n}}(x) \equiv {\cal R}_{N}^{W_{\pm n}}(x, q)$ satisfy
${\cal R}_{N}^{W_{- n}}(x) = \overline{{\cal R}_{N}^{W_{n}}(x)}$  and ${\cal R}_{N}^{W_{n}}(x)$ is given by 
$$
{\cal R}_{N}^{W_{ n}}(x) := g_\infty(x) \sum_{\ell \in S_0}  r^N_{\ell, n} \overline{ \langle 1 | f_\ell \rangle f_\ell(x) } 
+ q(x) g_\infty(x) \sum_{\ell \in S_0}  r^{N-1}_{\ell, n} \overline{ \langle 1 | f_\ell \rangle f_\ell (x)} 
- g_\infty(x)  \sum_{\ell \in S_0} r^{N-1}_{\ell, n}  \overline{  \langle 1 | f_\ell \rangle D f_\ell (x)} ,
$$
where for any $\ell \in S_0$, the coefficients $r^N_{\ell, n}$ are real, bounded functions of 
$\gamma_j(q)$, $j \in S_+$, independent of $x$.
\end{theorem}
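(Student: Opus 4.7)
\medskip
\noindent
\textbf{Proof plan.}
The starting point is the closed-form expression for $W_{\pm n}(x,q)$ given in Lemma \ref{lemma zn nabla q bo}. For $n \ge N_S+1$ one has
$$
W_n(x,q) = - \frac{e^{\ii n x}}{\sqrt{n \kappa_n}} \sum_{\ell \in S_0} \frac{\overline{\langle 1\, |\, f_\ell\rangle}}{1-\lambda_\ell/n}\,\Big( g_\infty\overline{f_\ell} + \tfrac{1}{n} q\, g_\infty\overline{f_\ell} - \tfrac{1}{n} g_\infty\overline{D f_\ell}\Big),
$$
and analogously for $W_{-n}$, which is just the complex conjugate after extracting the phase $e^{-\ii nx}$. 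My plan is to obtain the expansion \eqref{final asymptotic Wn bo} by expanding each of the two $n$-dependent scalar factors that appear, namely $(1-\lambda_\ell/n)^{-1}$ and $1/\sqrt{n\kappa_n}$, as a finite Taylor polynomial in $1/n$ with an explicit remainder, then multiplying out and collecting powers of $1/n$. Because the bracket in the formula has an $n^0$ term and two $n^{-1}$ terms, the index shifts $c_{\ell,k}$ and $c_{\ell,k-1}$ in the statement arise automatically.

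The first factor is immediate: since $\ell$ ranges over the finite set $S_0$ and, for $q \in V_S$, each $\lambda_\ell = \lambda_\ell(q)$ is bounded independently of $n$, we get
$$
\frac{1}{1-\lambda_\ell/n} \;=\; \sum_{j=0}^{N}\Big(\frac{\lambda_\ell}{n}\Big)^j \;+\; \frac{1}{n^{N+1}}\,\frac{\lambda_\ell^{N+1}}{1-\lambda_\ell/n}
$$
for $n$ large. The second factor is the main analytic input I need: I will invoke the asymptotic behaviour of the normalizing constants $\kappa_n$ of the Lax eigenfunctions for finite gap potentials, recorded in Appendix \ref{spectral theory}, which yields an expansion $n\kappa_n = 1 + \sum_{k\ge 1} \mu_k(q)/n^k$ with $\mu_k(q)$ a polynomial of degree $k$ in the gap lengths $\gamma_j(q)$, $j\in S_+$ (this is where the ``polynomial of degree $k$'' structure in item $(ii)$ originates, and where the restriction $\ell \in S_0$ matters since only $S_+$-gaps are open). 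Then $(n\kappa_n)^{-1/2} = 1 + \sum_{k\ge 1} \nu_k(q)/n^k + \mathrm{rem}/n^{N+1}$ by composing with the real-analytic function $x\mapsto (1+x)^{-1/2}$; the new coefficients $\nu_k$ inherit polynomial dependence on $\gamma_j(q)$.

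Multiplying the two expansions and distributing against the three pieces in the bracket, one collects the coefficient of $n^{-k}$ into three groups: one contributed by $g_\infty\overline{f_\ell}$ with a scalar $c_{\ell,k}$ depending on $\nu_\bullet$ and $\lambda_\ell^\bullet$; one contributed by $qg_\infty\overline{f_\ell}$ with scalar $c_{\ell,k-1}$ (shifted by the extra $1/n$); and one contributed by $-g_\infty\overline{Df_\ell}$ with the same shifted $c_{\ell,k-1}$. This yields exactly the formula for $W_k^{ae,+}$ stated above. The values $c_{\ell,-1}=0$ and $c_{\ell,0}=1$ then follow from $\lambda_\ell^0=1$, $\mu_0=0$, $\nu_0=1$; combined with \eqref{def S_0} this gives $W_0^{ae,+}=g_\infty$. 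The polynomial-degree claim in $(ii)$ follows by induction in $k$ from the analogous property of $\nu_k$ and of the powers $\lambda_\ell^k$ (for $\ell \in S_+$, $\lambda_\ell$ is itself polynomial in the gap lengths, while for $\ell=0$ there is no $\lambda_\ell$-dependence).

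The remainder ${\cal R}_N^{W_n}$ is simply the sum of all terms $\ge n^{-(N+1)}$ generated by the two finite Taylor expansions; each contributing tail factor is bounded uniformly in $n$ and polynomial in the quantities $\lambda_\ell, \gamma_j(q)$, so the stated $r^N_{\ell,n}$ and $r^{N-1}_{\ell,n}$ have the claimed boundedness. Reality of $c_{\ell,k}$ and $r^N_{\ell,n}$ is inherited from the reality of $\lambda_\ell, \kappa_n, \gamma_j(q)$. Finally, the relation $W_{-n}(x,q)=\overline{W_n(x,q)}$, hence $W_k^{ae,-}=\overline{W_k^{ae,+}}$ and ${\cal R}_N^{W_{-n}}=\overline{{\cal R}_N^{W_n}}$, is immediate from comparing the two formulas in Lemma \ref{lemma zn nabla q bo} and the definition $z_{-n}=\overline{z_n}$. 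The main obstacle is establishing the asymptotic expansion of $n\kappa_n$ in powers of $1/n$ with polynomial coefficients in the gap lengths; this is the essential ``spectral'' input and I would base it on the product/trace formulas for $\kappa_n$ collected in Appendix \ref{spectral theory}, where the fact that only gaps indexed by $S_+$ are open forces rapid decay of the correction terms in $n$.
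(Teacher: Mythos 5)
Your proposal is correct and follows essentially the same route as the paper's proof: expand $(n\kappa_n)^{-1/2}(1-\lambda_\ell/n)^{-1}$ in powers of $1/n$ using the product formula for $\kappa_n$ at finite gap potentials, then distribute against the three pieces in the bracket from Lemma \ref{lemma zn nabla q bo}. (One small slip: $\lambda_0(q) = -\sum_{j\in S_+}\gamma_j(q)$ is not zero, so the $\ell=0$ term does carry $\lambda_\ell$-dependence; however it is still polynomial in the gap lengths, so the degree count in item $(ii)$ is unaffected.)
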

\begin{proof}  The claimed results follow from Lemma \ref{lemma zn nabla q bo}. 
Indeed, for any $n \ge N_S+1,$ one has
$$
n \kappa_n = \frac{n}{\lambda_n - \lambda_0} \prod_{j \ne n}(1 - \frac{\gamma_j}{\lambda_j - \lambda_n})
=  \frac{n}{n - \lambda_0} \prod_{j \in S_+}(1 + \frac{\gamma_j}{ n - \lambda_j })
=  \frac{1}{1 - \frac{\lambda_0}{n}} \prod_{j \in S_+}(1 + \frac{\gamma_j}{ n} \frac{1}{ 1 - \frac{\lambda_j}{ n }}).
$$
For any $\ell \in S_0$, one then obtains an expansion of the form
$$
(n \kappa_n)^{-1/2} \, \frac{1}{1 - \frac{\lambda_\ell}{n}} = 1 + \sum_{k=1}^N \frac{ c_{\ell, k} }{n^k} + \frac{r^{N}_{\ell, n}}{n^{N+1}}
$$
where the coefficients $c_{\ell, k}$, $1 \le k \le N$, and the remainder $r^{N}_{\ell, n}\frac{1}{n^{N+1}}$ can be explicitly computed, using
that for any $a \in \R$ with $|a| < 1$ and any $N \ge 1$
$$
\frac{1}{1-a} = 1 + \sum_{k = 1}^N a^k + a^{N+1}\frac{1}{1 - a}, 
$$
$$
(1-a)^{-1/2} = 1 + \sum_{k=1}^N (-1)^k \begin{pmatrix} -1/2 \\ k  \end{pmatrix} a^k + a^{N+1} C_N \int_0^1 (1-t)^N (1 - t a)^{-1/2 - N - 1} \, d t ,
$$
with $C_N$ being a combinatorial constant.
Furthermore, we set $c_{\ell, -1}:=0$ and $c_{\ell, 0}:=1$ for any $\ell \in S_0$.
Since for any $ j \in S_0,$ one has $\lambda_j = j - \sum_{m= j+1}^{N_S} \gamma_m$, the claimed expansions
for $W_{\pm n}$ with the stated formulas for $W^{ae, \pm}_{k}$ and the ones for ${\cal R}_{N}^{W_{ \pm n}}$
then follow from the formulas for $W_{\pm n}$ of Lemma \ref{lemma zn nabla q bo}. The stated properties of 
$c_{\ell, k}$, $-1 \le k \le N$, and  $r^{N}_{\ell, n}$ can be verified in a straightforward way.
The identities \eqref{leading term} follow from the fact that 
$1 = \sum_{\ell \in S_0} \langle 1 | f_\ell \rangle f_\ell$ (cf. \eqref{def S_0}).
\end{proof}
For notational convenience, in the sequel, we will view $W_k^{ae, \pm}(\cdot, q)$ and ${\cal R}_{N}^{W_{\pm n}}(\cdot, q)$  as functions of $z_S$, 
$$
W_k^{ae, \pm }(\cdot, z_S) \equiv W_k^{ae, \pm }(\cdot, \, \Psi^{bo}(z_S, 0))\,, \qquad  
{\cal R}_{N}^{W_{\pm n}}(\cdot, z_S) \equiv {\cal R}_{N}^{W_{\pm n}}(\cdot, \, \Psi^{bo}(z_S, 0))\,.
$$
Theorem \ref{lemma asintotica bo}, combined with Lemma \ref{even symmetry}(ii), has then the following immediate two applications.
\begin{corollary}
For any $s \in \Z_{\geq 0}$ the following holds:\\
$(i)$ for any $k \ge 0$, the maps
$W_k^{ae, \pm } : \mathcal V_{S} \to H^s_{ \C}$, $z_S \mapsto W_{k}^{ae, \pm }(\cdot, z_S)$ are real analytic;\\
$(ii)$ for any $N \ge 1$ and $n \ge N_S + 1$, the maps
$ {\cal R}_{N}^{W_{\pm n}} :  \mathcal V_{S}  \to H^s_\C, \,z_S  \mapsto {\cal R}_{N}^{W_{\pm n}}(\cdot, z_S)$
are real analytic and  satisfy for any $m \geq 0$, 
$$
{\rm sup}_{\begin{subarray}{c}
0 \leq x \leq 1 \\
n \in S^\bot
\end{subarray}} |\partial_x^m {\cal R}_N^{W_{\pm n}}(x, z_S)| \leq C_{N, m}\,.
$$
The constants $C_{N, m}$ can be chosen locally uniformly for $z_S \in \mathcal V_S$. 
\end{corollary}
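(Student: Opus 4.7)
The plan is to derive both assertions directly from the explicit formulas for $W_k^{ae,\pm}$ and $\mathcal{R}_N^{W_{\pm n}}$ established in Theorem \ref{lemma asintotica bo}, by checking that every ingredient in these formulas is a real analytic function of $z_S \in \mathcal{V}_S$ with values in a suitable $H^{s'}_\C$, and then assembling the pieces through (multiplication) maps which are continuous bilinear operators on Sobolev spaces of sufficiently high regularity.

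First I would collect the building blocks. By Theorem \ref{Theorem Birkhoff coordinates BO} (or Corollary \ref{restriction to finite gap}) and the fact that $M_S \subseteq \bigcap_{s \ge 0} H^s_0$, the map $z_S \mapsto q := \Psi^{bo}(z_S,0)$ is real analytic from $\mathcal{V}_S$ into $H^{s'}_0$ for every $s' \ge 0$. Hence $\partial_x^{-1} q$ is real analytic into $H^{s'+1}_0$, and since for $s' > 1/2$ the space $H^{s'}_\C$ is a Banach algebra on which the exponential is an entire function, $g_\infty = \exp(\ii\partial_x^{-1} q)$ is real analytic from $\mathcal V_S$ into $H^{s'}_\C$ for every $s' \ge 0$; the same holds for $\overline{g_\infty}$. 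By Corollary \ref{def kappa}(iii) the simple eigenvalues $\lambda_\ell$ and corresponding eigenfunctions $f_\ell$ ($\ell \in S_0$) depend real analytically on $q$ with values in $H^{s'}_+$, so do the inner products $\langle 1 | f_\ell\rangle$ and the derivatives $D f_\ell$, and the gap lengths $\gamma_j(q)$, $j \in S_+$, are real analytic in $z_S$. In particular every coefficient $c_{\ell,k}$, being a polynomial in the $\gamma_j(q)$, is real analytic in $z_S$.

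Next I would use that $W_k^{ae,+}$ is a finite sum of products of these building blocks, drawn from $\{g_\infty,\, q g_\infty,\, \overline{f_\ell},\, \overline{D f_\ell}\}$ with scalar real analytic prefactors $c_{\ell,k}\,\overline{\langle 1 | f_\ell\rangle}$. Fix $s \in \Z_{\ge 0}$ and choose $s' > s + 1/2$. Pointwise multiplication $H^{s'}_\C \times H^{s'}_\C \to H^s_\C$ is a continuous bilinear map, so the composition of finitely many real analytic maps into $H^{s'}_\C$ with this bilinear product is real analytic into $H^s_\C$. This yields (i). Since $W_k^{ae,-} = \overline{W_k^{ae,+}}$, conjugation does not affect analyticity.

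For (ii), $\mathcal{R}_N^{W_n}$ has the same algebraic shape as the $W_k^{ae,+}$, except that the scalar $c_{\ell,k}$ is replaced by $r^N_{\ell,n}$. The statement of Theorem \ref{lemma asintotica bo} asserts that, for each $\ell \in S_0$ and $N$, the quantities $r^N_{\ell,n}$ are real analytic functions of the gap lengths (via the remainder formulas for the geometric and binomial series) and bounded uniformly in $n$ on the set where $z_S$ ranges in a small enough neighbourhood of any compact set in $\mathcal V_S$. Hence the same argument as in the previous paragraph gives real analyticity of $\mathcal{R}_N^{W_{\pm n}} : \mathcal V_S \to H^s_\C$ for every $s \ge 0$ and every $n \ge N_S+1$. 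The uniform pointwise bound on $\partial_x^m \mathcal{R}_N^{W_{\pm n}}$ follows from Leibniz's rule applied to the explicit product formula: each derivative lands on one of the $C^\infty$-functions $g_\infty,\, q g_\infty,\, f_\ell,\, D f_\ell$, whose $C^m$-norms are bounded locally uniformly in $z_S \in \mathcal V_S$ (being real analytic with values in every $H^{s'}_\C$, hence in every $C^m$), while the scalar coefficients $r^N_{\ell,n}\,\overline{\langle 1|f_\ell\rangle}$ remain uniformly bounded in $n$. Summing over the finite index set $S_0$ produces the constant $C_{N,m}$ independent of $n \in S^\bot$.

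The main subtlety, and the only step that requires care, is keeping track of the regularity gain when composing the exponential with $\partial_x^{-1}q$ and when multiplying factors: one must exploit that the finite gap potentials in $V_S$ lie in every Sobolev space and that the Birkhoff map is real analytic into each $H^{s'}_0$ so that one can work, for a given target regularity $s$, at an auxiliary level $s'$ with $H^{s'}_\C$ an algebra. Once this is arranged, both parts (i) and (ii), together with the uniform-in-$n$ bound, reduce to an elementary application of the chain rule, Leibniz's rule, and the real analyticity of the composition of real analytic maps.
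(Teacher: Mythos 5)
Your proof is essentially correct and follows the same basic strategy as the paper: read off the analyticity and uniform bounds from the explicit product formulas for $W_k^{ae,\pm}$ and $\mathcal{R}_N^{W_{\pm n}}$ established in Theorem~\ref{lemma asintotica bo}. The one notable divergence is how you handle the complex conjugates $\overline{\langle 1 | f_\ell\rangle f_\ell}$ and $\overline{\langle 1 | f_\ell\rangle D f_\ell}$ appearing in those formulas: you invoke the fact that conjugation is a continuous $\R$-linear involution of $H^s_\C$ viewed as a real Hilbert space, so it preserves real analyticity. The paper instead quotes Lemma~\ref{even symmetry}(ii), which rewrites $\overline{f_\ell(\cdot,u)} = f_\ell(-\,\cdot\,,u_*)$ and $\overline{\langle 1|f_\ell\rangle} = \frac{1}{2\pi}\int_0^{2\pi} f_\ell(x,u)\,dx$, giving the conjugate quantities as compositions of maps already known to be real analytic. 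Both arguments are valid; yours is slightly more elementary and self-contained.

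One point you gloss over: you cite Corollary~\ref{def kappa}(iii) for real analyticity of $f_\ell$ into $H^{s'}_+$ for every $s'$, but that corollary only gives analyticity $L^2 \to H^1_+$. You do flag this as the "main subtlety" at the end, but the bootstrap that upgrades it should be stated: since $Df_\ell = \Pi(q f_\ell) + \lambda_\ell f_\ell$ and $q \in V_S \subset \bigcap_{s'} H^{s'}_0$ depends real analytically on $z_S$ with values in every $H^{s'}_0$, one iterates the eigenvalue equation to show $z_S \mapsto f_\ell$ is real analytic into $H^{s'}_+$ for every $s' \ge 1$. This is straightforward but deserves a sentence, since without it the target regularity $H^s_\C$ for $s > 1$ is not reached. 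With this addition your argument is complete.
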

The second application concerns the linear operator $ \Psi_1(z_S) : h^0_\bot \to L^2_0$, given for $z_S \in \mathcal V_S$ by
$$
 \widehat z_\bot \mapsto \Psi_1(z_S)[\widehat z_\bot]= \sum_{n \in S^\bot} \widehat z_n W_n( \cdot, z_S) 
= \sum_{|n| \in [1, N_S] \setminus S_+} \widehat z_n W_n( \cdot, z_S)  + \sum_{|n| \ge N_S +1} \widehat z_n W_n( \cdot, z_S).
$$
Note that for any $s \in \Z_{\ge 0}$, the restriction of $\Psi_1(z_S)$ to $h^s_\bot$ is a bounded linear operator $h^s_\bot \to H^s_0$.
Recall that in \eqref{def partial Fourier}, we introduced the partial Fourier transforms
$\mathcal F^+_{N_S} : L^2_\C \mapsto h^0_{\bot, \C}$ and  $\mathcal F^-_{ N_S}: L^2_\C \mapsto h^0_{\bot, \C}$, 
and in \eqref{def inverse partial Fourier} the partial inverses  
$( \mathcal F^\pm_{N_S})^{-1}: h^0_{\bot, \C} \to L^2_\C$ of the Fourier transform.
Using that
 $$
 \sum_{n > N_S} \widehat z_n \frac{1}{ n^k} e^{\ii nx} = D^{-k} (\mathcal F^+_{ N_S})^{-1}[\widehat z_\bot] \, , \qquad
 \sum_{n > N_S} \widehat z_{-n} \frac{1}{ n^k} e^{-\ii nx}  = (-D)^{-k} (\mathcal F^{-}_{N_S})^{-1}[\widehat z_\bot] \, ,
 $$
Theorem \ref{lemma asintotica bo} implies the following two corollaries.
\begin{corollary}\label{pseudodiff expansion Psi_L} 
 For any $z_S \in \mathcal V_S,$
 up to a remainder,  the operator $\Psi_1(z_S): h^0_\bot \to L^2_0$ is a pseudo-differential operator of order $0$. 
More precisely, $\Psi_1(z_S)$ has an expansion to any order $N \ge 1$ of the form $\mathcal {OP}_N(z_S; \Psi_{1}) +  {\cal R}_{N}(z_S; \Psi_1) $ where
$$
\mathcal {OP}_N(z_S; \Psi_{1}) :=  \big( -  g_\infty   +  \sum_{k = 1}^N  a^+_{k}(z_S; \Psi_1) D^{- k} \, \big) \circ ( \mathcal F^+_{N_S})^{-1} 
+  \big( -  \overline{g_\infty}  +   \sum_{k = 1}^N  a^-_{k}(z_S; \Psi_1) (-D)^{- k} \,  \big)  \circ ( \mathcal F^-_{N_S})^{-1}
$$
and
\begin{equation}\label{coefficients for Psi_1}
a^+_{k}(z_S; \Psi_1) := - W_{k}^{ae, +}(\cdot, z_S), \qquad 
a^-_{k}(z_S; \Psi_1) := - W_{k}^{ae, -}(\cdot, z_S) = \overline{ a^+_{k}(z_S; \Psi_1)} , 
\qquad \forall \, k \ge 1, 
\end{equation}
$$
{\cal R}_{N}(z_S; \Psi_1)[\widehat z_\bot] (x) :=  - \sum_{|n| \in [1, N_S] \setminus S_+} \widehat z_n W_n( x, z_S)  
-  \sum_{|n| > N_S} \widehat z_n \frac{{\cal R}_{N}^{W_n}(x, z_S)}{| n |^{N + 1}} e^{ \ii nx}\,.
$$
For any $s \ge 0$, the restriction of ${\cal R}_{N}(z_S; \Psi_1)$ to $h^s_\bot$ defines a bounded linear operator $h^s_\bot \to H^{s+ N+1}$ and the map
$$
\mathcal V_S \to {\cal B}(h^s_\bot, \, H^{s + N+1}), \ z_S \mapsto  {\cal R}_{N}(z_S; \Psi_1)\,,
$$
is real analytic. Correspondingly,  the map $\Psi_L: {\cal V} \to L^2_0,$ defined
in Proposition~\ref{prop Psi L Psi bo}, admits an expansion of the form 
$\Psi^{bo}(z_S, 0) +  \mathcal {OP}_N( z; \Psi_{L})+ {\cal R}_{N}(z; \Psi_L)$ 
where $\mathcal {OP}_N( z; \Psi_{L})$ is given by
\begin{equation}\label{definizione Psi L Phi 1}
\begin{aligned}
 \big( - g_\infty   +  \sum_{k = 1}^N  a^+_{k}(z_S; \Psi_L) D^{- k} \, \big) [( \mathcal F^+_{N_S})^{-1} z_\bot] \,
 + \,  \big( - {\overline{g_\infty}}  +   \sum_{k = 1}^N  a^-_{k}(z_S; \Psi_L) (-D)^{- k} \, \big) [ ( \mathcal F^-_{N_S})^{-1} z_\bot]
\end{aligned}
\end{equation}
with
\begin{equation}\label{coefficients for Psi_L}
a^\pm_{k}(z_S; \Psi_L):= a^\pm_{k}(z_S; \Psi_1), \quad \forall \, k \ge 1, \qquad  {\cal R}_{N}(z; \Psi_L):=  {\cal R}_{N}(z_S; \Psi_1)[z_\bot] \, .
\end{equation}
In particular, one has $a^-_{k}(z_S; \Psi_L) = \overline{a^+_{k}(z_S; \Psi_L)}$.
\end{corollary}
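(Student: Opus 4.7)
The proof proceeds by directly substituting the asymptotic expansion of Theorem \ref{lemma asintotica bo} into the representation
$$
\Psi_1(z_S)[\widehat z_\bot](x) = \sum_{|n| \in [1, N_S]\setminus S_+} \widehat z_n W_n(x,z_S) + \sum_{|n| \geq N_S+1} \widehat z_n W_n(x, z_S)\, ,
$$
and then rewriting the high frequency part as a pseudo-differential operator. The first, finite, sum involves $C^\infty$-smooth coefficients $W_n(\cdot,z_S)$ with $n$ bounded, so it will simply be absorbed into the remainder. For the second sum, I would split it further according to the sign of $n$, using $\sum_{n \geq N_S+1}\widehat z_n W_n$ and $\sum_{n \geq N_S+1}\widehat z_{-n}W_{-n}$, and substitute the expansion \eqref{final asymptotic Wn bo} for each $W_{\pm n}$.

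After substitution, the sum over $n \geq N_S+1$ splits into an ``operator part'' of the form
$$
-\sum_{k=0}^N W_k^{ae,+}(x,z_S)\sum_{n \geq N_S+1}\widehat z_n \, \frac{e^{\ii n x}}{n^k}
$$
together with an analogous minus-sign version, plus a remainder of order $1/n^{N+1}$. Using the identities recalled just before the statement of the corollary, the inner sum is precisely $D^{-k}(\mathcal F^+_{N_S})^{-1}[\widehat z_\bot]$; pulling the $x$-dependent coefficient $W_k^{ae,+}(\cdot,z_S)$ out as an operator of multiplication then identifies the operator part with $\mathcal{OP}_N(z_S;\Psi_1)$ with coefficients $a_k^+(z_S;\Psi_1)=-W_k^{ae,+}(\cdot,z_S)$. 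The principal part $-g_\infty$ (resp.\ $-\overline{g_\infty}$) comes from the $k=0$ term using \eqref{leading term}, and the relation $a_k^-(z_S;\Psi_1)=\overline{a_k^+(z_S;\Psi_1)}$ follows from $W_k^{ae,-}=\overline{W_k^{ae,+}}$.

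The main thing to verify is then the mapping properties of the remainder $\mathcal R_N(z_S;\Psi_1)$. For $\widehat z_\bot \in h^s_\bot$, writing
$$
\mathcal R_N(z_S;\Psi_1)[\widehat z_\bot](x) = -\sum_{|n|\in[1,N_S]\setminus S_+}\widehat z_n W_n(x,z_S) - \sum_{|n|>N_S}\widehat z_n \frac{\mathcal R_N^{W_n}(x,z_S)}{|n|^{N+1}}e^{\ii n x}\, ,
$$
the first, finite, piece is trivially in $H^{s+N+1}$ and depends real analytically on $z_S$ since $W_n(\cdot,z_S)$ does. For the infinite piece, the key input is the uniform bound on $\partial_x^m \mathcal R_N^{W_{\pm n}}(x,z_S)$ from the corollary following Theorem \ref{lemma asintotica bo}: expanding the product $\mathcal R_N^{W_n}(x,z_S)e^{\ii n x}$ in Sobolev norm, the $1/|n|^{N+1}$ weight exactly compensates the loss of $N+1$ derivatives, and Cauchy--Schwarz applied to $\sum |n|^{2(s+N+1)}|\widehat z_n|^2/|n|^{2(N+1)}$ yields the bound $\|\mathcal R_N(z_S;\Psi_1)[\widehat z_\bot]\|_{s+N+1}\lesssim \|\widehat z_\bot\|_s$. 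Real analyticity in $z_S$ then follows from that of $\mathcal R_N^{W_{\pm n}}(\cdot, z_S)$ together with the locally uniform bounds.

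Finally, the statement for $\Psi_L$ is immediate from Proposition \ref{prop Psi L Psi bo}: by definition $\Psi_L(z_S,z_\bot)=\Psi^{bo}(z_S,0)+\Psi_1(z_S)[z_\bot]$, so the expansion for $\Psi_1$ combined with the leading term $\Psi^{bo}(z_S,0)$ gives exactly the expansion \eqref{definizione Psi L Phi 1} with $a_k^\pm(z_S;\Psi_L)=a_k^\pm(z_S;\Psi_1)$ and $\mathcal R_N(z;\Psi_L)=\mathcal R_N(z_S;\Psi_1)[z_\bot]$. The most delicate point is really only the remainder estimate for $\Psi_1$ in the operator norm $\mathcal B(h^s_\bot, H^{s+N+1})$; all other assertions are essentially a bookkeeping exercise using Theorem \ref{lemma asintotica bo} and the partial Fourier identities.
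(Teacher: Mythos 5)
Your proof is correct and follows essentially the argument the paper leaves implicit: the paper states only that Corollary~2.3 follows from Theorem~\ref{lemma asintotica bo} together with the two partial Fourier identities displayed immediately before, and your sketch fills in exactly those steps. The decomposition into the finite frequency block and the two high-frequency blocks, the substitution of the expansion \eqref{final asymptotic Wn bo}, the identification of the inner sums with $D^{-k}(\mathcal F^+_{N_S})^{-1}$ and $(-D)^{-k}(\mathcal F^-_{N_S})^{-1}$, the sign bookkeeping giving $a_k^\pm(z_S;\Psi_1)=-W_k^{ae,\pm}(\cdot,z_S)$, the use of \eqref{leading term} for the principal symbol, and the trivial transfer to $\Psi_L$ via $\Psi_L(z)=\Psi^{bo}(z_S,0)+\Psi_1(z_S)[z_\bot]$ all match. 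One minor point you could make tighter: the remainder estimate $\|\mathcal R_N(z_S;\Psi_1)[\widehat z_\bot]\|_{s+N+1}\lesssim\|\widehat z_\bot\|_s$ is not a bare Cauchy--Schwarz, since $\mathcal R_N^{W_n}(\cdot,z_S)$ depends on both $n$ and $x$ so the Fourier modes of $\mathcal R_N^{W_n}(x)e^{\ii nx}$ are spread out. It goes through either because Theorem~\ref{lemma asintotica bo} exhibits $\mathcal R_N^{W_n}$ as a finite linear combination $\sum_j r^{N}_{\ell,n}F_\ell(x,z_S)$ of $n$-independent smooth functions with uniformly bounded $n$-dependent scalar coefficients, or more generally by combining the uniform $C^\infty$ bounds $\sup_n\|\mathcal R_N^{W_n}(\cdot,z_S)\|_{C^m}\le C_{N,m}$ with Peetre's inequality and a Young/Schur estimate on the resulting convolution; either way the $|n|^{-(N+1)}$ weight buys the $N+1$ extra derivatives, as you say.
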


\medskip

\begin{corollary}\label{pseudodiff expansion dPsi_L}
For any $z = (z_S, z_\bot) \in \mathcal V$, $d\Psi_1(z): h^0_0 \to L^2$, given for
any $\widehat z = (\widehat z_S, \widehat z_\bot) \in h^0_0$ by
$$
d\Psi_1(z)[\widehat z] = d_S\big( \Psi_1(z_S)[z_\bot]\big)[\widehat z_S] + \Psi_1(z_S)[\widehat z_\bot]
$$
admits an expansion of the form 
$
\mathcal {OP}_N( z; d\Psi_{1})+ {\cal R}_{N}(z; d\Psi_1) \, ,
$
where the pseudo-differential operator $\mathcal {OP}_N( z; d\Psi_{1})$, when written as a $1 \times 2$ matrix, 
$$
\big( \mathcal {OP}_N( z; d\Psi_{1})^S \ \ \mathcal {OP}_N( z; d\Psi_{1})^\bot \big)\, , \qquad
\mathcal {OP}_N( z; d\Psi_{1})^S : h_S \to L^2 \, , \quad  \mathcal {OP}_N( z; d\Psi_{1})^\bot: h^0_\bot \to L^2 \, ,
$$ 
is given by
$$
\begin{aligned}
&  \mathcal {OP}_N( z; d\Psi_{1})^\bot =    \big( - g_\infty   \, + \,  \sum_{k = 1}^N  a^+_{k}(z_S; d\Psi_1) D^{- k} \, \big) \circ (\mathcal F_{N_S}^+ )^{-1} \
 + \  \big( -  {\overline{g_\infty}}  \, +  \,  \sum_{k = 1}^N  a^-_{k}(z_S; d\Psi_1) (-D)^{- k}  \, \big) \circ (\mathcal F_{N_S}^- )^{-1}  \, , \\
 & \mathcal {OP}_N( z; d\Psi_{1})^S  =  - \,  d_Sg_\infty [\, \cdot \,] \cdot   [( \mathcal F^+_{N_S})^{-1} z_\bot]  \ \ + \ \
 \sum_{k = 1}^N  \mathcal A^+_{k}(z_S; d\Psi_1)[\, \cdot \,] \cdot    D^{- k}  [(\mathcal F^+_{N_S})^{-1} z_\bot]  \\
 & \qquad \qquad  \qquad \quad   - \,    d_S \overline{g_\infty}[\, \cdot \, ] \cdot  [( \mathcal F^-_{N_S})^{-1} z_\bot] \  \ +  \ \
 \sum_{k = 1}^N    \mathcal A^-_{k}(z_S; d\Psi_1) [\, \cdot \, ] \cdot (-D)^{- k} [( \mathcal F^-_{N_S})^{-1} z_\bot] \, ,
 \end{aligned}
$$
where for any $k \ge 1$, $a^\pm_{k}(z_S; d\Psi_1) =  a^\pm_{k}(z_S; \Psi_1)$, and
 $\mathcal A^\pm_{k}(z_S; d\Psi_1)$ are the following linear operators,
$$
\mathcal A^\pm_{k}(z_S; d\Psi_1) : h_S \to L^2_\C , \, \widehat z_S \mapsto d_S a^\pm_k(z_S; \Psi_1)[\widehat z_S] \, .
$$
Furthermore, for any $s \geq 0$, $k \ge 1,$ the maps 
$$ \mathcal V_S \mapsto {\cal B}(h_S, H^s_\C), \, z_S \mapsto \mathcal A^\pm_k(z_S; d \Psi_1) \, , \qquad 
\mathcal V \cap h^s_0 \to {\cal B}\big(h^s_0, H^{s + N +1} \big), \, z \mapsto {\cal R}_{N}(z; d \Psi_1) \, ,
$$ 
are real analytic.
As a consequence, $a^-_{k}(z_S; d\Psi_1) = \overline{a^+_{k}(z_S; d\Psi_1)}$
and $\mathcal A^-_k(z_S; d \Psi_1)[z_S] = \overline{ \mathcal A^+_k(z_S; d \Psi_1)[z_S]}$.
\end{corollary}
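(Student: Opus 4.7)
The plan is to obtain the expansion of $d\Psi_1(z)$ by directly differentiating the expansion of $\Psi_1(z_S)[z_\bot]$ supplied by Corollary~\ref{pseudodiff expansion Psi_L}, and to split the result according to the directions $\widehat z_S$ and $\widehat z_\bot$.

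First, since the map $(z_S, z_\bot) \mapsto \Psi_1(z_S)[z_\bot]$ is linear in $z_\bot$ and real analytic in $z_S$ (as both $\Psi^{bo}(\cdot, 0)$ and its normal differential are real analytic on $\mathcal V_S$ by Theorem~\ref{Theorem Birkhoff coordinates BO}), its Fr\'echet derivative at $z = (z_S, z_\bot)$ decomposes as
\[
d\Psi_1(z)[\widehat z] \, = \, d_S\bigl(\Psi_1(z_S)[z_\bot]\bigr)[\widehat z_S] \, + \, \Psi_1(z_S)[\widehat z_\bot]\,,
\]
as stated. The second summand is already handled: substituting $\widehat z_\bot$ for $z_\bot$ in the expansion of Corollary~\ref{pseudodiff expansion Psi_L} yields the column $\mathcal{OP}_N(z; d\Psi_1)^\bot$ acting on $\widehat z_\bot$, with the same coefficients $a_k^\pm(z_S; \Psi_1)$ (hence $a_k^\pm(z_S; d\Psi_1) := a_k^\pm(z_S; \Psi_1)$) and remainder $\mathcal R_N(z_S; \Psi_1)[\widehat z_\bot]$ contributing to $\mathcal R_N(z; d\Psi_1)$.

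For the first summand, I would differentiate each term in the expansion
\[
\Psi_1(z_S)[z_\bot] \, = \, \bigl(-g_\infty + \sum_{k=1}^N a_k^+(z_S;\Psi_1) D^{-k}\bigr)[(\mathcal F^+_{N_S})^{-1} z_\bot] + \text{c.c.} \, + \, \mathcal R_N(z_S;\Psi_1)[z_\bot]
\]
with respect to $z_S$ in the direction $\widehat z_S$. Since $g_\infty(x) = g_\infty(x; \Psi^{bo}(z_S,0))$ is a real analytic function of $z_S$ with values in $H^s_\C$ for every $s \ge 0$ (finite gap potentials are $C^\infty$, in fact real analytic), and since the coefficients $a_k^\pm(z_S; \Psi_1) = -W_k^{ae,\pm}(\cdot, z_S)$ are also real analytic in $z_S$ with values in $H^s_\C$ by Theorem~\ref{lemma asintotica bo}, the differentiation yields precisely the operators $d_S g_\infty[\,\cdot\,]$ and $\mathcal A_k^\pm(z_S; d\Psi_1)[\,\cdot\,] := d_S a_k^\pm(z_S; \Psi_1)[\,\cdot\,]$ multiplying $D^{\mp k}[(\mathcal F^\pm_{N_S})^{-1} z_\bot]$, recovering $\mathcal{OP}_N(z; d\Psi_1)^S$. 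The remaining contribution $d_S \mathcal R_N(z_S; \Psi_1)[z_\bot][\widehat z_S]$ is absorbed into $\mathcal R_N(z; d\Psi_1)$.

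The mapping and analyticity properties follow without additional work: the finite-dimensional $z_S$-dependence means that boundedness of the differential is equivalent to bounded dependence of the coefficients, which in turn is a consequence of Theorem~\ref{lemma asintotica bo}. Concretely, real analyticity of $z_S \mapsto a_k^\pm(z_S;\Psi_1)$ with values in $H^s_\C$ implies real analyticity of $z_S \mapsto \mathcal A_k^\pm(z_S; d\Psi_1) \in \mathcal B(h_S, H^s_\C)$, and real analyticity of the remainder $\mathcal V_S \to \mathcal B(h^s_\bot, H^{s+N+1})$ (of Corollary~\ref{pseudodiff expansion Psi_L}) combined with the linearity in $z_\bot$ gives real analyticity of $z \mapsto \mathcal R_N(z; d\Psi_1) \in \mathcal B(h^s_0, H^{s+N+1})$. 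Finally, the reality conditions $a_k^-(z_S; d\Psi_1) = \overline{a_k^+(z_S; d\Psi_1)}$ and $\mathcal A_k^-(z_S; d\Psi_1)[\widehat z_S] = \overline{\mathcal A_k^+(z_S; d\Psi_1)[\widehat z_S]}$ are inherited from the corresponding identities in Corollary~\ref{pseudodiff expansion Psi_L} by linearity of $d_S$. There is no genuine obstacle here; the argument is essentially a differentiation of an already-established expansion, and the only point requiring care is bookkeeping between the $S$- and $\bot$-components and verifying that the two contributions to the remainder combine into a single real analytic map into the claimed operator space.
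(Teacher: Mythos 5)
Your proposal is correct and follows exactly the route the paper intends: the corollary has no separate proof in the paper precisely because it is obtained, as you do, by differentiating the expansion of Corollary~\ref{pseudodiff expansion Psi_L}, using linearity of $\Psi_1(z_S)$ in $z_\bot$ for the $\bot$-block and term-by-term differentiation in $z_S$ (of $g_\infty$, the coefficients $a_k^\pm(z_S;\Psi_1)$, and the remainder) for the $S$-block.
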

\begin{remark}\label{extension of remainder 1} 
$(i)$ Note that for any $N \ge 1$ and $s \in \R$, $\mathcal {OP}_N(z_S; \Psi_{1}) : h^s_\bot \to H^s$ and ${\cal R}_{N}(z_S; \Psi_1): h^s_\bot \to H^{s +N + 1}$ are bounded linear operators.\\
$(ii)$ The fact that up to a remainder term, $ \Psi_L(z_S, \cdot)$ is given by a pseudo-differential operator of order 0,  
 acting on the scale of Hilbert spaces $h^s_\bot$, $s \in \Z_{\ge 0}$, 
 shows that the differential of the Birkhoff map $z \mapsto \Psi^{bo}(z)$ at a finite gap potential, has distinctive features.
 These features are the starting point for the construction of the coordinates of Theorem \ref{modified Birkhoff map}.
\end{remark}
\begin{remark}\label{notation coefficients expansion}
Whenever possible, we will use a similar notation for the coefficients of the expansion of various quantities 
to the one introduced for the coefficients of the expansion of $\Psi_1(z_S)$ and $\Psi_L(z_S, \cdot)$.
If the coefficients are operators, we use the upper case letter $\mathcal A$
and write $\mathcal A_k$ for the kth coefficient, whereas when they are functions (or operators, defined as multiplication by a function),
we use the lower case letter $a$ and write $a_k$ for the kth coefficient. The quantity, which is expanded, is indicated
as an argument of $\mathcal A_k$ and $a_k$.
 \end{remark}
 
\medskip

A straightforward application of Corollary \ref{pseudodiff expansion Psi_L}
 yields an expansion of the transpose $\Psi_1(z_S)^\top$ of the operator $\Psi_1(z_S)$.  
First note that the transpose  $({\cal F}^+_{N_S})^{- \top}$ of $({\cal F}^+_{N_S})^{- 1}$ 
with respect to the standard inner products in $L^2_0$ and $h^0_\bot$ 
is given by ${\cal F}^+_{N_S}$, i.e., for any $\widehat q \in L^2,$
$$
\begin{aligned}
\langle ({\cal F}^{+}_{N_S})^{-1}[z_\bot] \, |  \, \widehat q \rangle  & = 
\frac{1}{2\pi}\int_0^{2\pi} \sum_{n > N_S} z_n e^{ \ii n x} \widehat q (x) d x 
= \sum_{n > N_S} z_n \frac{1}{2\pi}\int_0^{2\pi} \widehat q(x) e^{ \ii n x} d x \\
&= \sum_{n > N_S} z_n \widehat q_{- n}
=  \sum_{n > N_S} z_n \overline{\widehat q_{n}}
= \langle z_\bot | {\cal F}^+_{N_S} \widehat q \rangle \,.
\end{aligned}
$$
Similarly, one computes the transpose  $({\cal F}^{-}_{N_S})^{- \top}$ of $( {\cal F}^{-}_{N_S})^{-1}$. For later reference we record 
\begin{equation}\label{transpose of Fourier restrictions}
({\cal F}^+_{N_S})^{- \top} =  {\cal F}^+_{N_S}\, , \qquad \quad
({\cal F}^-_{N_S})^{- \top} =  {\cal F}^-_{N_S}\, .
\end{equation}
\begin{corollary}\label{lemma asintotiche Phi 1 q}
For any $z_S \in \mathcal V_S$
and $N \in \N$,  
$\Psi_1(z_S)^\top: L^2_0 \to h^0_\bot, \,  \widehat q \mapsto (\langle W_{-n}(\cdot, z_S) |  \, \widehat q \rangle )_{n \in S^\bot}$ 
 has an expansion of the form $\mathcal {OP}_N(z_S; \Psi^\top_{1}) +  {\cal R}_{N}(z_S; \Psi_1^\top) $ with
\begin{align}
\mathcal {OP}_N(z_S; \Psi^\top_{1})  :=
{\cal F}^+_{N_S} \circ \big( -  \overline{g_\infty}   +  \sum_{k = 1}^N  a^+_{k}(z_S; \Psi_1^\top) D^{- k} \, \big)
\, + \,  {\cal F}^-_{N_S} \circ  \big(  -  g_\infty  +   \sum_{k = 1}^N  a^-_{k}(z_S; \Psi_1^\top) (-D)^{- k} \, \big)\, .  \label{espansione finale Phi 1 t bo}
\end{align}
For any $k \ge 1$, $a_k^-( z_S; \Psi_1^\top) = \overline{a_k^+( z_S; \Psi_1^\top)}$
and for any $s \geq 0$, the coefficients $\mathcal V_S \to H^s_\C$, $z_S \mapsto  a_k^\pm( z_S; \Psi_1^\top)$, $k \ge 1$, and the remainder
$\mathcal V_S \to {\cal B}(H^s, \, h^{s + N + 1}_\bot)$, $z_S \mapsto {\cal R}_N( z_S;  \Psi_1^\top),$ are real analytic. 

Corresponding properties hold for the map 
$$
\mathcal V \to \mathcal B(L^2_0, \, h^0_0), \, z \mapsto d\Psi_L(z)^\top \, . 
$$
For any $z \in \mathcal V$ and $N \in \N$, $d\Psi_L(z)^\top$ has an expansion of the form 
$\mathcal {OP}_N(z_S; d\Psi^\top_{L}) +  {\cal R}_{N}(z; d\Psi_L^\top) $
where $\mathcal {OP}_N(z_S; d\Psi^\top_{L})$ is given by
\begin{align}
 \Big( \,  0,  \, {\cal F}^+_{N_S} \circ \big( - \overline{g_\infty}   +  \sum_{k = 1}^N  a^+_{k}(z_S; d\Psi_L^\top) D^{- k} \, \big)
\, +  \, {\cal F}^-_{ N_S} \circ  \big( -  g_\infty  +   \sum_{k = 1}^N  a^-_{k}(z_S; d\Psi_L^\top) (-D)^{- k} \, \big)  \Big) 
 \label{expansion Phi L t}
\end{align}
with $a_k^{\pm}( z_S; d\Psi_L^\top) = a_k^{\pm}( z_S; \Psi_1^\top)$ 
and where for any integer $s \ge 0,$  
$$
\mathcal V \cap h^s_0 \to B(L^2, \, h^{s+N+1}_0), \, z \mapsto {\cal R}_N( z;  d\Psi_L^\top)
$$
is real analytic.
In particular, one has $a_k^-( z_S; d\Psi_L^\top) = \overline{a_k^+( z_S; d\Psi_L^\top)}$.
\end{corollary}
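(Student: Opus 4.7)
The plan is to obtain both expansions by transposing the expansions of $\Psi_1(z_S)$ and $d\Psi_L(z)$ from Corollaries \ref{pseudodiff expansion Psi_L} and \ref{pseudodiff expansion dPsi_L}, and then reordering the resulting products of Fourier multipliers and multiplication operators via the pseudo-differential calculus of Appendix \ref{appendice B}. I treat $\Psi_1(z_S)^\top$ first; the result for $d\Psi_L(z)^\top$ then follows with little extra work.

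Transposing the identity $\Psi_1(z_S) = \mathcal{OP}_N(z_S; \Psi_1) + \mathcal R_N(z_S; \Psi_1)$ term by term, using $(AB)^\top = B^\top A^\top$, the identities $(\mathcal F^\pm_{N_S})^{-\top} = \mathcal F^\pm_{N_S}$ of \eqref{transpose of Fourier restrictions}, the self-transposition $(D^{-k})^\top = D^{-k}$ and $((-D)^{-k})^\top = (-D)^{-k}$ (all with respect to the sesquilinear standard inner product), the rule that multiplication by $a$ has transpose multiplication by $\overline a$, and the reality $\overline{a_k^\pm(\Psi_1)} = a_k^\mp(\Psi_1)$, one arrives at the pre-expansion
$$
\Psi_1(z_S)^\top = \mathcal F^+_{N_S} \circ \Bigl( -\overline{g_\infty} + \sum_{k=1}^N D^{-k} \, a_k^-(z_S; \Psi_1) \Bigr) + \mathcal F^-_{N_S} \circ \Bigl( -g_\infty + \sum_{k=1}^N (-D)^{-k} \, a_k^+(z_S; \Psi_1) \Bigr) + \mathcal R_N(z_S; \Psi_1)^\top,
$$
where each juxtaposition $D^{-k} a$ denotes composition of $D^{-k}$ with multiplication by $a$. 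The same formula can be read off directly from Theorem \ref{lemma asintotica bo} via $(\Psi_1^\top \widehat q)_n = \langle W_{-n} | \widehat q \rangle$, using that the leading term of $W_{-n}$ for $n > N_S$ is $-e^{-\ii n x}\overline{g_\infty}$.

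The statement requires each summand $D^{-k}\, a_k^-$ to be rewritten with the multiplication factor to the \emph{left} of the Fourier multiplier. I invoke the standard commutator expansion of the pseudo-differential calculus: for any $C^\infty$ function $a$ and any $k \ge 1$,
$$
D^{-k} \, a = \sum_{j = 0}^{N - k} c^+_{k, j} \, (\partial_x^j a) \, D^{-k - j} + R^+_{k, N}(a), \qquad c^+_{k, 0} = 1,
$$
with explicit constants $c^+_{k, j}$ arising from the Leibniz expansion of the symbol $\xi^{-k}$ and a remainder $R^+_{k, N}(a) : H^s \to H^{s + N + 1}$ depending real-analytically on $a$; the analogous identity for $(-D)^{-k}$ involves constants $c^-_{k, j} = \overline{c^+_{k, j}}$. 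Collecting contributions of equal total order $\ell = k + j$ delivers
$$
a_\ell^+(z_S; \Psi_1^\top) = \sum_{k + j = \ell} c^+_{k, j} \, \partial_x^j a_k^-(z_S; \Psi_1), \qquad a_\ell^-(z_S; \Psi_1^\top) = \sum_{k + j = \ell} c^-_{k, j} \, \partial_x^j a_k^+(z_S; \Psi_1),
$$
so that $a_\ell^-(\Psi_1^\top) = \overline{a_\ell^+(\Psi_1^\top)}$ follows by combining $\overline{c^+_{k, j}} = c^-_{k, j}$ with $\overline{a_k^-(\Psi_1)} = a_k^+(\Psi_1)$. All $R^\pm_{k, N}$ together with $\mathcal R_N(z_S; \Psi_1)^\top$ combine into $\mathcal R_N(z_S; \Psi_1^\top)$, which is $(N+1)$-smoothing; real analyticity of the coefficients into $H^s_\C$ and of the remainder into $\mathcal B(H^s, h^{s+N+1}_\bot)$ is inherited from Corollary \ref{pseudodiff expansion Psi_L} together with the analytic dependence of the $R^\pm_{k, N}$ on their argument.

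For $d\Psi_L(z)^\top$, I use Proposition \ref{prop Psi L Psi bo}(iii),
$$
d\Psi_L(z)[\widehat z_S, \widehat z_\bot] = d_S\Psi^{bo}(z_S, 0)[\widehat z_S] + \Psi_1(z_S)[\widehat z_\bot] + d_S\bigl(\Psi_1(z_S)[z_\bot]\bigr)[\widehat z_S].
$$
Under transposition, the $h^0_\bot$-slot of $d\Psi_L(z)^\top \widehat q$ is precisely $\Psi_1(z_S)^\top \widehat q$, yielding the claimed expansion with $a_k^\pm(z_S; d\Psi_L^\top) = a_k^\pm(z_S; \Psi_1^\top)$. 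The $h_S$-slot involves only the transposes of $d_S\Psi^{bo}(z_S, 0)$ and $d_S(\Psi_1(z_S)[z_\bot])$, both of which land in the finite-dimensional space $h_S$; their transposes are therefore finite-rank, hence trivially smoothing of any order and real-analytic in $z$. I absorb them entirely into $\mathcal R_N(z; d\Psi_L^\top)$, so the $h_S$-slot of $\mathcal{OP}_N(z_S; d\Psi_L^\top)$ vanishes. The main obstacle is the reordering step: controlling precisely how many commutator iterations are needed to attain the $(N+1)$-smoothing target while preserving tame estimates and real analyticity, which is where the pseudo-differential lemmas of Appendix \ref{appendice B} do the essential work.
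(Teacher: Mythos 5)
Your approach is the same as the paper's: transpose the expansion of $\Psi_1(z_S)$ from Corollary \ref{pseudodiff expansion Psi_L} term by term, commute the resulting factors $D^{-k}\circ a_k^\mp(z_S;\Psi_1)$ back into the standard symbol order using Lemma \ref{lemma composizione pseudo}, collect by total order, and treat $d\Psi_L(z)^\top$ via Proposition \ref{prop Psi L Psi bo}(iii) together with the finite-dimensionality of the $h_S$-slot. The algebra of the coefficients (including the reality relation $a_\ell^-(\Psi_1^\top)=\overline{a_\ell^+(\Psi_1^\top)}$) is correctly tracked.

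One step needs more than you give it: the claim that $\mathcal R_N(z_S;\Psi_1)^\top:H^s\to h^{s+N+1}_\bot$ is bounded for all $s\ge 0$ is \emph{not} "inherited from Corollary \ref{pseudodiff expansion Psi_L}" by abstract duality. Transposing a bounded map $h^s_\bot\to H^{s+N+1}$ yields boundedness of the transpose only from $H^{-(s+N+1)}$ to $h^{-s}_\bot$, i.e.\ from $H^t$ to $h^{t+N+1}_\bot$ for $t\le -(N+1)$; it says nothing about $t\ge 0$. To obtain the smoothing estimate in the regime $s\ge 0$ one must use the explicit structure of the remainder: from Corollary \ref{pseudodiff expansion Psi_L}, $\mathcal R_N(z_S;\Psi_1)^\top[\widehat q]$ on the high modes $|n|>N_S$ is $\big(\tfrac{1}{|n|^{N+1}}\tfrac{1}{2\pi}\int_0^{2\pi}\widehat q(x)\,\mathcal R_N^{W_{-n}}(x,z_S)\,e^{\ii n x}\,dx\big)_{|n|>N_S}$, and the uniform $C^\infty$ bounds on $\mathcal R_N^{W_{\pm n}}(\cdot,z_S)$ from Theorem \ref{lemma asintotica bo} are what actually produce the extra $N+1$ orders of decay. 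The finitely many low-mode components $|n|\in[1,N_S]\setminus S_+$ are handled by the smoothness of $W_n(\cdot,z_S)$. This is exactly the computation the paper carries out; without it, the "absorbed into the final remainder" step is unjustified. The remark that the same pre-expansion can be read off from Theorem \ref{lemma asintotica bo} via $(\Psi_1^\top\widehat q)_n=\langle W_{-n}|\widehat q\rangle$ is in fact the most direct way to close this gap, so you had the right ingredient available but did not invoke it where it was needed.
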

\begin{remark}\label{extension of remainder 2} 
 We record that for any $N \ge 1$ and $s \in \R$, the operators $\mathcal {OP}_N(z_S; \Psi^\top_{1}) : H^s \to h^s_\bot$
and ${\cal R}_N( z_S;  \Psi_1^\top) : H^s \to h_\bot^{s +N + 1}$ are bounded.
\end{remark}
\begin{proof}
Let $N \ge 1$ be given. By Corollary \ref{pseudodiff expansion Psi_L}, 
$\Psi_1(z_S) = \mathcal {OP}_N(z_S; \Psi_{1}) + {\cal R}_{N}(z_S; \Psi_1)$ where for any $z_S \in \mathcal V_S,$
$$
 \mathcal {OP}_N(z_S; \Psi_{1}) =   \big( -  g_\infty   +  \sum_{k = 1}^N  a^+_{k}(z_S; \Psi_1) D^{- k} \, \big) \circ ({\cal F}^+_{N_S})^{-1}  
\, + \,  \big( -  \overline{ g_\infty}  +   \sum_{k = 1}^N  a^-_{k}(z_S; \Psi_1) (-D)^{- k} \, \big)  \circ ({\cal F}^-_{N_S})^{-1}  \, . 
$$
Taking into account \eqref{coefficients for Psi_1} and  \eqref{transpose of Fourier restrictions}, $ \mathcal {OP}_N(z_S; \Psi_{1})^\top $ is given by
$$
\begin{aligned}
&  ({\cal F}^+_{N_S})^{-\top}  \circ \big( -  {\overline{ g_\infty}}  +   \sum_{k = 1}^N  D^{- k} \circ a^-_{k}(z_S; \Psi_1) \, \big)  
+  ({\cal F}^-_{N_S})^{-\top}  \circ \big( - g_\infty  +   \sum_{k = 1}^N   (-D)^{- k} \circ a^+_{k}(z_S; \Psi_1) \, \big)  \\
& =  {\cal F}^+_{N_S}  \circ \big( -   {\overline{g_\infty}}  +   \sum_{k = 1}^N   D^{- k} \circ a^-_{k}(z_S; \Psi_1) \, \big)  
\, + \, {\cal F}^-_{N_S}  \circ \big( -  g_\infty  +   \sum_{k = 1}^N   (-D)^{- k} \circ a^+_{k}(z_S; \Psi_1) \, \big) \, .
\end{aligned}
$$
By Lemma \ref{lemma composizione pseudo},  ${\cal F}^+_{N_S} \circ  D^{- k} \circ a_k^-(z_S; \Psi)$, $k \ge 1$, has an expansion of the form
$$
 {\cal F}^+_{N_S}  \circ \Big(   a^-_{k}(z_S; \Psi_1) \,  D^{ -k} + \sum_{j = 1}^{N - k} C^+_j(k) \, \partial_x^j a^{-}_{k}(z_S; \Psi_1)  \,  D^{ -k - j} \Big)
+  {\cal R}_{N, k, 0}^{\psi do}( a^-_{k} ) 
$$
where  $C^+_j(k)$, $j \ge 1$, are combinatorial constants and ${\cal R}_{N, k, 0}^{\psi do}( a^-_{k} )$ is a remainder term,
which can be obtained from Lemma \ref{lemma composizione pseudo}.
Similarly, one gets for $ {\cal F}^-_{N_S}  \circ  (-D)^{- k} \circ a^+_{k}(z_S; \Psi_1) $ the expansion
$$
 {\cal F}^-_{N_S}  \circ 
  \Big( a^+_{k}(z_S; \Psi_1) \,  (-D)^{ -k} + \sum_{j = 1}^{N - k} C^-_j(k) \, \partial_x^j a^{+}_{k}(z_S; \Psi_1)  \,  (-D)^{ -k - j} \Big)
+  {\cal R}_{N, k, 0}^{\psi do}( a^+_{k} ) .
$$
For any $k \ge 1$, $j \ge 1$, one has  $C^-_j(k) = \overline{ C^+_j(k)}$ and $ {\cal R}_{N, k, 0}^{\psi do}( a^-_{k} ) = \overline{ {\cal R}_{N, k, 0}^{\psi do}( a^+_{k} )}$.
In conclusion, $\mathcal {OP}_N(z_S; \Psi_{1})^\top$ has an expansion of the form
$$
 {\cal F}^+_{N_S} \circ \big( - \overline{g_\infty}   +  \sum_{k = 1}^N  a^+_{k}(z_S; \Psi_1^\top) D^{- k} \, \big)
\, + \,  {\cal F}^-_{N_S} \circ  \big(  -  g_\infty  +   \sum_{k = 1}^N  a^-_{k}(z_S; \Psi_1^\top) (-D)^{- k} \, \big) 
\, + \, \mathcal R^{(1)}_N(z_S) \, ,
$$
where the coefficients $a^\pm_{k}(z_S; \Psi_1^\top)$ have the claimed properties and where
 for any $s \ge 0$, the remainder $\mathcal V_S \to {\cal B}(H^s, \, h^{s + N + 1}_\bot)$, $z_S \mapsto {\cal R}^{(1)}_N( z_S),$ is real analytic
 By Corollary \ref{pseudodiff expansion Psi_L},  ${\cal R}_{N}(z_S; \Psi_1)^\top [\widehat q]$, $ \widehat q \in H^s$, equals
$$
\Big(   \big(  \frac{1}{2\pi}\int_0^{2 \pi} \widehat q(x) W_{-n}( x, z_S)  dx \big)_{|n| \in [1, N_S] \setminus S_+} , \
 \big( \frac{1}{| n|^{N+1}} \frac{1}{2\pi}\int_0^{2 \pi} \widehat q(x) {\cal R}_{N}^{W_{-n}}(x, z_S) e^{\ii nx} \, dx \big)_{|n| > N_S} \Big)
  \in h^{s + N +1}_\bot\,,
$$
implying that for any $s \ge 0$,
${\cal R}_{N}(z_S; \Psi_1)^\top: H^s \to h^{s + N + 1}_\bot$ is bounded, and 
the map $\mathcal V_S \to {\cal B}(H^s, h^{s + N + 1}_\bot)$, $z_S \mapsto {\cal R}_N( z_S;  \Psi_1)^\top$, is real analytic.
Setting $ {\cal R}_{N}(z_S; \Psi_1^\top) := {\cal R}^{(1)}_{N}(z_S) + {\cal R}_{N}(z_S; \Psi_1)^\top$, 
one infers that $\Psi_1(z_S)^\top$ admits an expansion of the form 
$\mathcal {OP}_N(z_S; \Psi^\top_{1}) +  {\cal R}_{N}(z_S; \Psi_1^\top) $ with the claimed properties. 

Since $d \Psi_L(z) [\widehat z] $ is given by the formula
$$
d \Psi_L(z) [\widehat z] = \big( d_S\Psi^{bo}(z_S, 0) +  d_S ( \Psi_1(z_S)[z_\bot]) \big) [\widehat z_S] + \Psi_1(z_S)[\widehat z_\bot],
$$
one verifies in a straightforward way the claimed expansion of $d \Psi_L(z)^\top$ from the one of $\Psi_1(z_S)^\top$.
\end{proof}

\medskip

Next we discuss the properties of the functions $W_n(\cdot, q)$, $n \ne 0$, and the map $\Psi_L(z)$
with regard to the reversible structures, defined by the maps $S_{rev}$ and $\mathcal S_{rev}$, introduced in Section \ref{introduzione paper}. 
For  notational convenience, we also denote 
 by $\mathcal S_{rev}$ the involution $h_0^0 \to h_0^0, (z_n)_{n \ne 0} \mapsto (z_{-n})_{n \ne 0}$, (cf. \eqref{definition reversible structure for actions angles}),
and sometimes write $q_*$ for $S_{rev}q(x)= q(-x) $, $q \in L^2$. 
By Lemma \ref{even symmetry}, for any $q \in L^2$,
\begin{equation}\label{symmetries 1}
\lambda_n(q_*) = \lambda_n(q), \quad f_n(x, q_*) = \overline{f_n(-x, q)}, \qquad \forall \, n \ge 0 \, ,
\qquad  g_\infty(x, q_*) = \overline{ g_\infty(- x, q)} \, ,
\end{equation}
and 
\begin{equation}\label{symmetries 2}
\gamma_n(q_*) =  \gamma_n(q)\, , \quad \kappa_n(q_*) = \kappa_n(q)\, , \qquad \forall n \ge 1\, .
\end{equation}
As a consequence, the manifolds $M_S$ and $M_S^o$ are left invariant by $S_{rev}$.
Without loss of generality, we will assume in the sequel that the neighborhood $\mathcal V$ is invariant under $\mathcal S_{rev}$.
\smallskip

\noindent
{\bf Addendum to Theorem \ref{lemma asintotica bo}}\label{Addendum Theorem 2.1} {\em (i) For any $z_S \in \mathcal V_S$, one has
$$
S_{rev}  \Psi^{bo} (z_S, 0) = \Psi^{bo} (\mathcal S_{rev}(z_S, 0)), \qquad
W_n(x, S_{rev}q) = W_{-n}( - x, q), \quad \forall \, n \ne 0,  \  x \in \R \, , 
$$
where $q = \Psi^{bo} (z_S, 0)$. Furthermore, for any $k \ge 1$
\begin{equation}\label{symmetries 1 of expansion bo}
W^{ae, +}_k(x, \mathcal S_{rev}z_S) =  W^{ae, -}_{k}( - x, z_S)\,,  \qquad
W^{ae, -}_k(x, \mathcal S_{rev}z_S) =  W^{ae, +}_{k}( - x, z_S)\,, 
\end{equation}
and any $N \ge 1$ and $|n| > N_S$
\begin{equation}\label{symmetries 2 of expansion bo}
 {\cal R}_{N}^{W_n}(x; \mathcal S_{rev}z_S) 
 =  {\cal R}_{N}^{W_{-n}}(- x; z_S)\,.
\end{equation}
(ii)
For any $z = (z_S, z_\bot) \in \mathcal V$, $x \in \R$, and $k \ge 1$,
$$
\big(\Psi_1(\mathcal S_{rev}z_S)[\, \mathcal S_{rev}z_\bot] \big)(x) =  \big( \Psi_1(z_S)[z_\bot] \big) (-x) \, ,
$$
$$
a^+_{k}(\mathcal S_{rev}z_S; \Psi_1)(x) = a^-_{k}(z_S; \Psi_1)(-x)\, , \qquad a^-_{k}(\mathcal S_{rev}z_S; \Psi_1)(x) = a^+_{k}(z_S; \Psi_1)(-x)
$$
As a consequence,  
for any $z \in \mathcal V$ and $N \ge 1$, one has  ${\cal R}_{N}( \mathcal S_{rev}z; \Psi_1) (x) =  {\cal R}_{N}(z; \Psi_1)(-x)$ and in turn
\begin{equation}\label{symmetries of PsiL bo}
(\Psi_L( \mathcal S_{rev}z)) (x) = ( \Psi_L(z) ) (-x)\,, \qquad a^\pm_{k}(\mathcal S_{rev}z_S; \Psi_L)(x) = a^\mp_{k}(z_S; \Psi_L)(-x)\, , \quad \forall \, k \ge 1\, ,
\end{equation}
as well as ${\cal R}_{N}( \mathcal S_{rev}z; \Psi_L) (x) =  {\cal R}_{N}(z; \Psi_L)(-x)$.\\
(iii) For any $z_S \in h_S$ and $\widehat q \in L^2_0$, one has
$ \Psi_1(\mathcal S_{rev} z_S)^\top [S_{rev} \widehat q]  = \mathcal S_{rev}( \Psi_1(z_S)^\top [ \widehat q])$. Furthermore,
 for any $k \ge 1$ and $N  \ge  1$,
$$
 a_k^+( \mathcal S_{rev}z_S; \Psi_1^\top) (x)  =   a^-_{k} (z_S; \Psi_1^\top) (-x) \,,  \qquad 
  a_k^-( \mathcal S_{rev}z_S; \Psi_1^\top) (x)  =   a^+_{k} (z_S; \Psi_1^\top) (-x) \,, 
 $$
 and
 $$
 {\cal R}_N(\mathcal S_{rev}z_S;  \Psi_1^\top)[S_{rev} \widehat q]  = \mathcal S_{rev} \big( {\cal R}_N(z_S; \Psi_1^\top) [\widehat q] \big)\,.
$$
}

\noindent
{\bf Proof of Addendum to Theorem \ref{lemma asintotica bo}}
(i)
By Proposition \ref{proposizione 1 reversibilita}, it follows that for any $z \in \mathcal V$,
$$
S_{rev}  \Psi^{bo} (z) = \Psi^{bo} (\mathcal S_{rev}(z)) \, .
$$ 
Hence, for any $z_S \in \mathcal V_S$ and $q =  \Psi^{bo} (z_S, 0)$, one has for any $n \ne 0$ (cf. definition \eqref{def W_n bo}),
\begin{equation}\label{symmetries W_n}
W_{ n}(x, S_{rev}q) = W_{ -n}( - x, q) = \overline{ W_{ n}( - x, q)} \, .
\end{equation}
The identities \eqref{symmetries 1 of expansion bo} follow from  \eqref{symmetries 1} - \eqref{symmetries 2} and the definition of $W^{ae, \pm}_k$ in Theorem \ref{lemma asintotica bo}.
Combining \eqref{symmetries 1 of expansion bo} and \eqref{symmetries W_n}, one infers  \eqref{symmetries 2 of expansion bo}.\\
(ii) By item (i) one has for any $z_S \in \mathcal V_S$ and $q = \Psi^{kdv} (z_S, 0)$,
\begin{equation}\label{symmetry Psi_1}
\big(\Psi_1(\mathcal S_{rev}z_S)[\, \mathcal S_{rev}z_\bot] \big)(x) = \sum_{n \in S^\bot} z_{-n} W_n( x, S_{rev} q) = \sum_{n \in S^\bot} z_{-n} W_{-n}( - x, q) = \big( \Psi_1(z_S)[z_\bot] \big) (-x)
\end{equation}
as well as $W^{ae, +}_k(x, \mathcal S_{rev}z_S) =  W^{ae, -}_{k}( - x, z_S)$ for any $k \ge 1$. Hence by the definition \eqref{coefficients for Psi_1},
\begin{equation}\label{symmetry coefficient Psi_1}
a^+_{k}(\mathcal S_{rev}z_S; \Psi_1)(x) = a^-_{k}(z_S; \Psi_1)(-x)\, , \qquad a^-_{k}(\mathcal S_{rev}z_S; \Psi_1)(x) = a^+_{k}(z_S; \Psi_1)(-x).
\end{equation}
Combining \eqref{symmetry Psi_1} and \eqref{symmetry coefficient Psi_1} then yields
$({\cal R}_{N}(\mathcal S_{rev}z_S; \, \Psi_1)[\mathcal S_{rev}z_\bot] )(x) = \big( {\cal R}_{N}(z_S; \Psi_1)[z_\bot] \big)(-x)$.
By \eqref{definition Psi_L bo}, the claimed identities \eqref{symmetries of PsiL bo} then follow. \\
(iii) Recall that for any $z_S \in h_S$, $\widehat q \in L^2_0$, one has 
$\Psi_1(z_S)^\top [\widehat q] = (\langle W_{-n}(\cdot, q), \, \widehat q \rangle )_{n \in S^\bot}$.
It then follows from item (i) that
\begin{equation}\label{symmetry transpose Psi_1}
\Psi_1(\mathcal S_{rev}z_S)^\top  [ S_{rev} \widehat q]  = \mathcal S_{rev} \big( \Psi_1(z_S)^\top [  \widehat q] \big).
\end{equation}
Taking into account the definition \eqref{coefficients for Psi_1} of the coefficients  $a^\pm_{k}(z_S; \Psi_1)$ in the expansion for $\Psi_1(z_S)$,
the construction of the coefficients $a^\pm_{k}(z_S; \Psi_1^\top)$  in the expansion of $\Psi_1(z_S)^\top$ 
in the proof of Corollary \ref{lemma asintotiche Phi 1 q} (cf. also Lemma \ref{lemma composizione pseudo}),
and the identities $\mathcal S_{rev} \circ \mathcal F^+_{N_S} = \mathcal F^-_{N_S} \circ S_{rev}$,
$\mathcal S_{rev} \circ \mathcal F^-_{N_S} = \mathcal F^+_{N_S} \circ S_{rev}$ 
 one infers that
$$
 a^+_k (\mathcal S_{rev} z_S; \Psi_1^\top) (x) =  a^-_k (z_S; \Psi_1^\top) (-x)\,, \quad   
  a^-_k (\mathcal S_{rev} z_S; \Psi_1^\top) (x) =  a^+_k (z_S; \Psi_1^\top) (-x)\,, \qquad \forall \, k \ge 1 .
 $$
 When combined with \eqref{symmetry transpose Psi_1}, one then arrives at
 $
 {\cal R}_N(\mathcal S_{rev}z_S;  \Psi_1^\top)[ S_{rev}\widehat q] = \mathcal S_{rev} ( {\cal R}_N(z_S; \Psi_1^\top) [ \widehat q] )\,.  
$
\hfill $\square$


\medskip

In the remaining part of this section we describe the pull back $\Psi_L^* \Lambda_G$ of the symplectic form $\Lambda_G$ by the map $\Psi_L$, defined in Proposition~\ref{prop Psi L Psi bo}.
The symplectic form $\Lambda_G$ is the one defined by the Gardner Poisson structure and is given by
$$
\Lambda_G [\widehat u , \widehat v] = \langle   \widehat u , \partial_x^{-1} \widehat v \rangle = \frac{1}{2\pi} \int_0^{2\pi} \widehat u (x)  \partial_x^{-1}  \widehat v(x) d x   \,, \qquad \forall \,  \widehat u, \widehat v  \in L^2_0\,.
$$
Note that $\Lambda_G = d \lambda_G$ where the one form $\lambda_G$, defined on $L^2_0$, is given by 
\begin{equation}\label{definition lambda G}
\lambda_G(u)[ \widehat v] = \langle  u , \partial_x^{-1} \widehat v \rangle = \frac{1}{2\pi} \int_0^{2\pi}  u(x) \partial_x^{-1}\widehat v(x) d x\,, \quad \forall \, u, \widehat v \in L^2_0\,.
\end{equation}
To compute the pull back of $\Lambda_G$ by $\Psi_L$, note that for any $z = (z_S, z_\bot) \in \mathcal V = \mathcal V_S \times \mathcal V_\bot$,
the derivative $d \Psi_L(z): h_S \times h^0_\bot \to L^2_0$, when written in $1 \times 2$ matrix form, is given by (cf \eqref{forma finale Psi L def}) 
\begin{align}\label{formula differential of Psi L}
d \Psi_L(z) & = d \Psi_L(z_S, 0) \ + \ \begin{pmatrix}
d_S ( \Psi_1(z_S)[z_\bot] ) & 0
\end{pmatrix}  
 = \begin{pmatrix}
d_S\Psi^{bo}(z_S, 0) &  \Psi_1(z_S)
\end{pmatrix} \ +  \ \begin{pmatrix}
d_S ( \Psi_1(z_S)[z_\bot])  & 0
\end{pmatrix} \,. 
\end{align}
%
For any $\widehat z = (\widehat z_{S}, \widehat z_{\bot}) ,$  $\widehat w = (\widehat w_{S}, \widehat w_{\bot})  \in h_S \times h^0_\bot$ one has
\begin{align}
& (\Psi_L^* \Lambda_G)(z) [\widehat z, \widehat w]  = \Lambda_G \big[ d \Psi_L(z) [\widehat z], \, d \Psi_L(z)[\widehat w ]\big] 
 =  \big\langle  d \Psi_L(z)[\widehat z], \, \partial_x^{- 1}d \Psi_L(z)[\widehat w] \big\rangle \nonumber\\
&  =
\big\langle  d \Psi_L(z_S, 0)[\widehat z] +  d_S( \Psi_1(z_S)[z_\bot] )[\widehat z_S] , \,
\partial_x^{- 1} \big( d \Psi_L(z_S, 0)[\widehat w]\big)  +  \partial_x^{- 1} \big( d_S  \big(\Psi_1(z_S)[z_\bot] \big) [\widehat w_S] ) \big)  \big\rangle \,. \nonumber
\end{align}
Since by construction, $d \Psi_L(z_S , 0) : h^0_0  \to  L^2_0$  is symplectic, one has 
$$
(\Psi_L^* \Lambda_G)(z_S, 0) = \Lambda
$$
where $\Lambda$ is the symplectic form on $h^0_0$,
\begin{equation}\label{1 forma Lambda M}
\Lambda [\widehat z, \widehat w] :=  \langle \widehat z ,  \, J^{-1} \widehat w \rangle = \sum_{n \ne 0} \frac{1}{  \ii (-n)} \widehat z_{ n} \widehat w_{-n } \,,
\quad \forall \widehat z, \widehat w \in h^0_0 \, ,
\end{equation}
and $J^{-1}$ denotes the inverse of the diagonal operator $J$, acting on the scale of Hilbert spaces $h^s_0,$ $s \in \R,$
\begin{equation}\label{definition J}
J: h^{s+1}_0 \to h^{s}_0 :\, (z_n)_{n \ne 0} \mapsto  ( \ii n z_n)_{n \ne 0} \, .  
\end{equation}
Note that the Poisson bracket, associated to $J$, is defined for functionals $F, G: h^0_0 \to \R$ with sufficiently decaying gradients,
$$
\{ F , G \}(z) = \langle J \nabla F, \nabla G \rangle (z) = \sum_{n \ne 0} \ii n (\nabla F)_n (\nabla G)_{-n} = \sum_{n \ne 0} \ii n \,  \partial_{z_{-n}} F \cdot \partial_{z_{n}} G \, .
$$
(Here we used that for any $n \ne 0$, $(\nabla F)_n = \partial_{z_{-n}} F$.)
We remark that $\langle J \widehat z ,  \, \widehat w \rangle = \langle J  \widehat z \,  |  \,  \widehat w \rangle$.

The two form $\Lambda$ is exact, $\Lambda = d \lambda$, where $\lambda$ is the following one form on $h^0_0$, 
\begin{equation}\label{definition lambda}
\lambda(z)[\widehat w] : =  \langle  z, J^{-1} \widehat w \rangle =  \sum_{n \ne 0} \frac{1}{ -\ii n} z_n \widehat w_{-n}\,, \quad \forall z, \widehat w \in h^0_0\, . 
\end{equation}
Altogether one concludes that  
\begin{equation}\label{Pull back of LambdaG}
(\Psi_L^* \Lambda_G)(z) [\widehat z, \widehat w]  = \Lambda[\widehat z, \widehat w] +   \Lambda_L(z)[ \widehat z, \widehat w]
\end{equation}
where
$$
\begin{aligned}
 \Lambda_L(z) & [ \widehat z, \widehat w]  
 =  \big\langle  d_S( \Psi_1(z_S)[z_\bot])[\widehat z_S] , \  \partial_x^{- 1} \big( d_S  (\Psi_1(z_S)[z_\bot] ) [\widehat w_S]  \big)  \big\rangle \\
& + \big\langle  d \Psi_L(z_S, 0)[\widehat z]  , \  \partial_x^{- 1} \big( d_S  (\Psi_1(z_S)[z_\bot] ) [\widehat w_S]  \big) \big\rangle
+ \big\langle  d_S( \Psi_1(z_S)[z_\bot] )[\widehat z_S] , \ \partial_x^{- 1} \big( d \Psi_L(z_S, 0)[\widehat w] \big)  \big\rangle \, .
\end{aligned}
$$
Using the standard inner products (cf. \eqref{standard inner products}) and the definition $\Psi_L(z_S, z_\bot) = \Psi^{bo}(z_S, 0) +  \Psi_1(z_S)[z_\bot]$,  
$\Lambda_L(z)[ \widehat z, \widehat w]$ can be written in the form
\begin{equation}\label{def mathcal L}
 \Lambda_L(z)[\widehat z, \widehat w]  = \big\langle \widehat z \, | \,  \mathcal L(z)[\widehat w] \big\rangle \, , 
\end{equation}
where
$ \mathcal  L(z) : h_S \times h^0_\bot \to  h_S \times h^0_\bot$ is given by
\begin{equation}\label{definition L z}
 \mathcal L(z) = \begin{pmatrix}
 \mathcal L_S^S(z) &  \mathcal L_S^{\bot}( z) \\
 \mathcal L_{\bot}^S( z) & 0
\end{pmatrix} 
\end{equation}
with $ \mathcal L_S^S(z) : h_S \to h_S$, $ \mathcal L_S^{\bot}(z) : h^0_\bot  \to h_S$, and $ \mathcal L_\bot^S : h_S \to h^0_\bot$ given by 
\begin{equation}\label{L SS botS Sbot}
\begin{aligned}
 \mathcal L_S^S(z) & := \big(d_S \Psi_1(z_S)[z_\bot]\big)^\top \circ \partial_x^{- 1} \big( d_S \Psi_1(z_S)[ z_\bot] \big)
 + (d_S \Psi^{bo}(z_S, 0))^\top \circ \partial_x^{- 1} \big( d_S \Psi_1(z_S)[ z_\bot] \big)\\
 & + \big( d_S \Psi_1(z_S)[ z_\bot] \big)^\top \circ \partial_x^{-1} d_S \Psi^{bo}(z_S, 0) \,, \\
 \mathcal L_S^{\bot}(z) & :=  \big(d_S \Psi_1(z_S)[z_\bot] \big)^\top \circ  \partial_x^{- 1} \Psi_1(z_S) \,, \qquad
 \mathcal L_\bot^S (z)  :=\Psi_1(z_S)^\top \circ \partial_x^{- 1} \big( d_S \Psi_1(z_S)[z_\bot] \big) \, ,
\end{aligned}
\end{equation}
where we recall that $(\, \cdot \, )^\top$ denotes the transpose of an operator (such as the ones considered above) with respect to the standard inner products defined  in \eqref{standard inner products}.
For any $z = (z_S, z_\bot) \in \mathcal V$, the operators $ \mathcal L(z),$ $ \mathcal L_S^S(z),$  $ \mathcal L_S^{\bot}(z),$ and $ \mathcal L_\bot^S(z)$ are bounded.
In the sequel, we will often write the operators defined in \eqref{L SS botS Sbot} in the following way
\begin{equation}\label{L SS botS Sbot (1)}
\begin{aligned}
 \mathcal L_S^S(z) [\widehat z_S] & = 
 \big( \big\langle \partial_x^{- 1} d_S \big(\Psi_1(z_S)[z_\bot] \big)[\widehat z_S]\, | \ \partial_{z_n}\Psi_1(z_S)[z_\bot] \big\rangle \big)_{n \in S}
  + \big( \big\langle \partial_x^{- 1} d_S \big(\Psi_1(z_S)[z_\bot] \big)[\widehat z_S]\, | \ \partial_{z_n}\Psi^{bo}(z_S, 0) \big\rangle \big)_{n \in S}\\
  & +  \big( \big\langle \partial_x^{- 1} d_S (\Psi^{bo}(z_S, 0))[\widehat z_S] \, | \ \partial_{z_n}\Psi_1(z_S)[z_\bot] \big\rangle \big)_{n \in S} \,, \\
 \mathcal L_S^{\bot}(z)[\widehat z_\bot] & =  \big( \big\langle \partial_x^{- 1} \Psi_1(z_S)[\widehat z_\bot]\, | \ \partial_{z_n} \Psi_1(z_S)[z_\bot] \big\rangle \big)_{n \in S} \,, \\
 \mathcal L_\bot^S(z) [\widehat z_S] & = \Psi_1(z_S)^\top \partial_x^{- 1} d_S (\Psi_1(z_S)[z_\bot] ) [\widehat z_S] =
 \big( \big\langle   \partial_x^{- 1} \, d_S (\Psi_1(z_S)[z_\bot] ) [\widehat z_S] \, | \ W_{n}(\cdot, q) \big\rangle \big)_{n \in S^\bot} \, ,
\end{aligned}
\end{equation}
where $q = \Psi^{bo}(z_S, 0)$.
It follows from \eqref{Pull back of LambdaG} that $\mathcal L(z)$ is skew-adjoint, $\mathcal L(z)^\top = - \mathcal L(z)$. 
As a consequence, one has
\begin{equation}\label{mathcal L transpose}
  \begin{pmatrix}
 \mathcal L_S^S(z)^\top &  \mathcal L^S_{\bot}( z)^\top \\
 \mathcal L^{\bot}_S(z)^\top & 0
\end{pmatrix} 
=\begin{pmatrix}
 - \mathcal L_S^S(z) &  - \mathcal L_S^{\bot}( z) \\
-  \mathcal L_{\bot}^S( z) & 0
\end{pmatrix} \, .
\end{equation}
The operators $\mathcal L_S^S(z)$, $\mathcal L_S^{\bot}(z)$, and $\mathcal L_\bot^S(z)$ satisfy the following properties.
\begin{lemma}\label{espansione L S bot q z}
$(i)$ The maps 
$$
\mathcal V \to {\cal B}(h_S, h_S), \, z \mapsto  \mathcal L_S^S(z) \,, \qquad  
\mathcal V \to {\cal B}\big(h^0_\bot, h_S \big), \,  z \mapsto  \mathcal L_S^{\bot}(z) \, ,
$$
 are real analytic. Furthermore, the following estimates hold: for any $z= (z_S, z_\bot) \in \mathcal V$, $\widehat z_S \in h_S$, 
   and  $\widehat z_1, \ldots, \widehat z_l \in  h^0_0$, $l \geq 1$ ,
$$
\begin{aligned}
& \|  \mathcal L_S^S(z)[\widehat z_S]\| \lesssim \| \widehat z_S\| \| z_\bot \|_0 \,, \quad 
& \| d^l \big(  \mathcal L_S^S(z)[\widehat z_S] \big)[\widehat z_1, \ldots, \widehat z_l] \| 
\lesssim_{l} \| \widehat z_S\| \prod_{j = 1}^l \| \widehat z_j\|_0 \, .
\end{aligned}
$$
If in addition, $\widehat z_\bot \in h^0_\bot$,
$$
\begin{aligned}
& \|  \mathcal L_S^{\bot}(z) [\widehat z_\bot]\| \lesssim \| z_\bot \|_0 \| \widehat z_\bot \|_0\,, \qquad
 \| d^l \big(  \mathcal L_S^{\bot}(z) [\widehat z_\bot] \big)[\widehat z_1, \ldots, \widehat z_l] \| \lesssim_{l} \| \widehat z_\bot \|_0 \prod_{j = 1}^l \| \widehat z_j\|_0\,. 
\end{aligned}
$$
\noindent
$(ii)$  
For any $z = (z_S, z_\bot) \in \mathcal V$, $\widehat z_S \in h_S$, and arbitrary order $N \ge 1$,
$$ 
\mathcal L_\bot^S (z)[\widehat z_S] = \mathcal {OP}_N(z_S; \mathcal L_\bot^S)[\widehat z_S] +  {\cal R}_{N}(z; \mathcal L_\bot^S)[\widehat z_S]
$$ 
with $\mathcal {OP}_N(z_S; \mathcal L_\bot^S)[\widehat z_S] $ given by
 \begin{equation}\label{expansion L bot S}
 {\cal F}^+_{N_S} \circ \sum_{k = 1}^N  \mathcal A^+_k(z_S; \mathcal L_\bot^S)[\widehat z_S]  \cdot D^{- k} [({\cal F}^+_{N_S})^{- 1} z_\bot] 
 + {\cal F}^-_{N_S} \circ \sum_{k = 1}^N  \mathcal A^-_k(z_S; \mathcal L_\bot^S)[\widehat z_S]  \cdot (-D)^{- k} [({\cal F}^-_{N_S})^{- 1} z_\bot]  \, ,
\end{equation}
where for any $s \geq 0$, $k \ge 1,$ the maps 
$$ \mathcal V_S \mapsto {\cal B}(h_S, H^s_\C), \, z_S \mapsto \mathcal A^\pm_k(z_S; \mathcal L_\bot^S) \,, \qquad 
\mathcal V \cap h^s_0 \to {\cal B}\big(h_S, h^{s + N +1}_\bot \big), \, z \mapsto {\cal R}_{N}(z; \mathcal L_{\bot}^S) \, ,
$$ 
are real analytic and $\mathcal A^-_k(z_S; \mathcal L_\bot^S) [\widehat z_S]= \overline{\mathcal A^+_k(z_S; \mathcal L_\bot^S)[\widehat z_S]}$.
For any $\widehat z_S \in h_S$, $ \mathcal A^\pm_1(z_S; \mathcal L_\bot^S)[\widehat z_S]$ are given by
$$
\mathcal A^+_1(z_S; \mathcal L_\bot^S)[\widehat z_S] = -\ii  \, \overline{g_\infty} \cdot d_S g_\infty[\widehat z_S]  \, ,
\qquad
\mathcal A^-_1(z_S; \mathcal L_\bot^S)[\widehat z_S] =  \ii  g_\infty \cdot d_S \overline{g_\infty}[\widehat z_S] \, .
$$
In particular, the operator $\mathcal L_\bot^S(z)$ is one smoothing. More precisely, for any $s \ge 0$,
$$
 \mathcal V \cap h_0^s \to {\cal B}(h_S, h^{s+1}_\bot), \, z \mapsto  \mathcal L_\bot^S(z)
$$
is real analytic. 
The coefficients  $\mathcal A^\pm_k(z_S; \mathcal L_\bot^S)$ are independent of $z_\bot$ and satisfy for any $s \ge 0,$ $z_S \in \mathcal V_S,$
$\widehat z_S \in h_S,$ $\widehat z_1, \ldots, \widehat z_l \in h^0_0$, $l \geq 1$, the following estimates
$$
\| \mathcal A^\pm_k ( z_S; \mathcal L_{\bot}^S) [\widehat z_S]  \|_s \lesssim_{s, k}  \| \widehat z_S \|\,, \qquad
 \| d^l \big( \mathcal A^\pm_k ( z_S; \mathcal L_{\bot}^S) [\widehat z_S]  \big) [\widehat z_1, \ldots, \widehat z_l] \|_{s}  
 \lesssim_{s, k, l}  \| \widehat z_S \| \, \prod_{j = 1}^l \| \widehat z_j\|_0 \,.
$$
Furthermore, for any $s \in \Z_{\ge 0}$, $z = (z_S, z_\bot) \in \mathcal V \cap h^s_0$, $\widehat z_S \in h_S$, and
$\widehat z_1, \ldots, \widehat z_l \in h^s_0$, $l \geq 1$, ${\cal R}_{N}(z; \mathcal L_{\bot}^S) [\widehat z_S]$ satisfies
$ \| {\cal R}_{N}(z; \mathcal L_{\bot}^S) [\widehat z_S]\|_{s + N+1} \lesssim_{s, N} \| \widehat z_S\|  \| z_\bot \|_s$ and 
$$
 \| d^l \big( {\cal R}_{N}(z; \mathcal L_{\bot}^S) [\widehat z_S]  \big) [\widehat z_1, \ldots, \widehat z_l] \|_{s + N+1} 
\lesssim_{s, N, l}  \| \widehat z_S \|  \cdot \big( \sum_{j = 1}^l \| \widehat z_j\|_s \prod_{i \neq j} \| \widehat z_i\|_0 \, + \, \| z_\bot \|_s \prod_{j = 1}^l \| \widehat z_j\|_0 \big) \,. 
$$
(iii) As a consequence, for any integer $s \ge 0$, the map $\mathcal V \cap h^s_0 \to \mathcal B(h^0_0, h^{s+1}_0), z \mapsto  \mathcal L(z)$ is real analytic. Furthermore,
for any $z = (z_S, z_\bot) \in \mathcal V \cap h^s_0$ and $\widehat z \in h^0_0$, it satisfies the estimates
\begin{equation}\label{estimate mathcal L}
\|  \mathcal L(z) [\widehat z]\|_{s+1} \le C(s; \mathcal L)  \| \widehat z \|_0 \| z_\bot \|_s
\end{equation}
and if in addition $\widehat z_1, \ldots, \widehat z_l \in h^s_0$, $l \geq 1$, one has
\begin{equation}\label{estimate derivative mathcal L}
\| d^l ( \mathcal L(z) [\widehat z] )[\widehat z_1, \ldots , \widehat z_l] \|_{s+1} 
\le C(s, l; \mathcal L) \cdot  \| \widehat z\|_0 \cdot 
\big( \sum_{j = 1}^l \| \widehat z_j\|_s \prod_{i \neq j} \| \widehat z_i\|_0 \, + \, \| z_\bot \|_s \prod_{j = 1}^l \| \widehat z_j\|_0 \big) 
\end{equation}
for some constants $C(s; \mathcal L) \ge 1$, $C(s, l; \mathcal L) \ge 1$. 
\end{lemma}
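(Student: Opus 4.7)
The operators $\mathcal L_S^S$, $\mathcal L_S^\bot$, $\mathcal L_\bot^S$ are built from $\Psi^{bo}(z_S, 0)$, $\Psi_1(z_S)$, $d_S \Psi^{bo}(z_S, 0)$ and $d_S(\Psi_1(z_S)[z_\bot])$, each real analytic on $\mathcal V$ by Proposition \ref{prop Psi L Psi bo} and Theorem \ref{Theorem Birkhoff coordinates BO}. The plan is to handle item (i) by direct estimation, to prove item (ii) by substituting the pseudo-differential expansions of $\Psi_1(z_S)^\top$ from Corollary \ref{lemma asintotiche Phi 1 q} and of $d_S(\Psi_1(z_S)[z_\bot])$ (the $S$-part from Corollary \ref{pseudodiff expansion dPsi_L}) into $\Psi_1(z_S)^\top \circ \partial_x^{-1} \circ d_S(\Psi_1(z_S)[z_\bot])$ and invoking the symbolic calculus of Lemma \ref{lemma composizione pseudo}, and to deduce item (iii) from the block structure \eqref{definition L z}.

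For item (i), each component in the finite set $S$ of $\mathcal L_S^S(z)[\widehat z_S]$ or $\mathcal L_S^\bot(z)[\widehat z_\bot]$ is a scalar of the form $\langle \partial_x^{-1}u\,|\,v\rangle$ with $u, v \in L^2_0$ built from $d_S\Psi_1[z_\bot]$, $d_S\Psi^{bo}(z_S, 0)$ and $\Psi_1(z_S)[\widehat z_\bot]$. Boundedness of $\partial_x^{-1}: L^2_0 \to H^1_0$ together with real analyticity and local boundedness of $z_S \mapsto \Psi_1(z_S) \in \mathcal B(h^0_\bot, L^2_0)$ and of $z_S \mapsto \Psi^{bo}(z_S, 0) \in L^2_0$ gives, by Cauchy-Schwarz, the claimed linear bounds; the multilinear estimates on $d^l\mathcal L_S^S$ and $d^l\mathcal L_S^\bot$ follow by differentiating the building blocks, applying the product rule and Cauchy's inequality for analytic maps.

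For item (ii), we substitute the expansions of both outer factors into $\Psi_1(z_S)^\top \circ \partial_x^{-1} \circ d_S(\Psi_1(z_S)[z_\bot])[\widehat z_S]$ and apply Lemma \ref{lemma composizione pseudo} to collect terms of decreasing order on each Fourier branch. On the $+$ branch, $\Psi_1(z_S)^\top$ has principal part $\mathcal F^+_{N_S}\circ (-\overline{g_\infty})$, the middle factor $\partial_x^{-1}$ acts as $-iD^{-1}$ on high positive Fourier modes, and the leading part of $d_S(\Psi_1(z_S)[z_\bot])[\widehat z_S]$ is $(-d_S g_\infty[\widehat z_S])\cdot (\mathcal F^+_{N_S})^{-1}[z_\bot]$; after commuting $D^{-1}$ past the multiplication by $d_S g_\infty[\widehat z_S]$ (which produces only lower-order contributions absorbed into $\mathcal A^+_k$, $k\ge 2$, or the remainder), the product of principal symbols is $\mathcal F^+_{N_S}\circ (-i\overline{g_\infty}\,d_S g_\infty[\widehat z_S])\,D^{-1}$, giving $\mathcal A^+_1 = -i\overline{g_\infty}\,d_S g_\infty[\widehat z_S]$; the analogous computation on the $-$ branch (where $\partial_x^{-1} = i(-D)^{-1}$ on high negative modes) produces $\mathcal A^-_1 = ig_\infty\,d_S\overline{g_\infty}[\widehat z_S]$. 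The relations $a^-_k = \overline{a^+_k}$ and $\mathcal A^-_k[\widehat z_S] = \overline{\mathcal A^+_k[\widehat z_S]}$ are inherited from the complex-conjugation symmetries already recorded in Corollaries \ref{pseudodiff expansion Psi_L}, \ref{pseudodiff expansion dPsi_L}, \ref{lemma asintotiche Phi 1 q}, and the independence of $\mathcal A_k^\pm$ from $z_\bot$ holds because the symbolic coefficients produced by the triple composition depend only on $z_S$; the $z_\bot$-dependence of $\mathcal L_\bot^S(z)[\widehat z_S]$ enters exclusively through the arguments $(\mathcal F^\pm_{N_S})^{-1}z_\bot$ on which the pseudo-differential operators act. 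The tame estimates for $\mathcal A_k^\pm$ and $\mathcal R_N(z;\mathcal L_\bot^S)$ then follow by tracking those of Corollaries \ref{pseudodiff expansion Psi_L}, \ref{pseudodiff expansion dPsi_L}, \ref{lemma asintotiche Phi 1 q} through the symbolic calculus and applying standard tame products of Sobolev functions; the remainder gains $N+1$ derivatives because each of the three factors can individually be truncated at order $N$ with that property.

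Item (iii) follows by assembling the block bounds: $\mathcal L(z)$ vanishes on the $(\bot,\bot)$ block by \eqref{definition L z}, and on the remaining blocks the factor $\|z_\bot\|_s$ arises either from (i) (at low $s$) or from the smoothing remainder of $\mathcal L_\bot^S$ in (ii) (at high $s$), while the factor $\|\widehat z\|_0$ comes from combining the row inputs of the matrix \eqref{definition L z}. The main obstacle is the symbolic bookkeeping in (ii): one must simultaneously treat both Fourier branches, track the signs $\pm i$ introduced by $\partial_x^{-1}$, commute negative powers of $D$ and $-D$ past multiplication operators without losing principal symbols, and verify that truncating three separate expansions at order $N$ produces a remainder with exactly the stated regularization and tame behaviour; this is a direct but delicate application of the pseudo-differential calculus of Appendix \ref{appendice B}.
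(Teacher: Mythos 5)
Your overall strategy — handle (i) by direct Cauchy--Schwarz estimation, prove (ii) by substituting the expansions of $\Psi_1(z_S)^\top$ (Corollary \ref{lemma asintotiche Phi 1 q}) and of $d_S(\Psi_1(z_S)[z_\bot])[\widehat z_S]$ (Corollary \ref{pseudodiff expansion dPsi_L}) into $\Psi_1(z_S)^\top\circ\partial_x^{-1}\circ d_S(\Psi_1(z_S)[z_\bot])[\widehat z_S]$ and then apply Lemma \ref{lemma composizione pseudo}, and obtain (iii) from the block structure \eqref{definition L z} — is the same as the paper's, and your leading-order sign bookkeeping that produces $\mathcal A^\pm_1$ is correct.

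However, there is a gap in item (ii) that the paper handles explicitly and you do not. Both $\Psi_1(z_S)^\top$ and $d_S(\Psi_1(z_S)[z_\bot])[\widehat z_S]$ have \emph{two-branch} expansions: a $+$-branch supported on high positive Fourier modes and a $-$-branch supported on high negative Fourier modes. Your proof composes $+$ with $+$ and $-$ with $-$ (producing the coefficients $\mathcal A_k^\pm$), and you appeal to Lemma \ref{lemma composizione pseudo} for the symbolic calculus; but Lemma \ref{lemma composizione pseudo} only treats compositions of the form $\partial_x^{-k}\circ a\,\partial_x^{-\ell}$ within one branch. You never address the \emph{cross-branch} contributions, such as
\[
\mathcal F^+_{N_S}\circ(-\overline{g_\infty})\circ\partial_x^{-1}\circ\bigl(-d_S\overline{g_\infty}[\widehat z_S]\bigr)\cdot(\mathcal F^-_{N_S})^{-1}z_\bot\,,
\]
in which a high-negative-frequency input is pushed through smooth multipliers and projected onto high positive frequencies (and its mirror image). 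These are genuine terms in the composition, and they do not have a symbolic expansion of the pseudo-differential form appearing in \eqref{expansion L bot S}; they must be shown to be negligible. The mechanism the paper invokes for this is that such cross-branch operators are of Hankel type with $C^\infty$ symbols and hence infinitely smoothing — that is Corollary \ref{Hankel infinitely smoothing} — so they are absorbed into $\mathcal R_N(z;\mathcal L_\bot^S)$ at any order. Without naming this ingredient, your argument does not actually establish the claimed expansion: your phrase ``one must simultaneously treat both Fourier branches'' acknowledges the bookkeeping but does not explain why the off-diagonal pieces can be discarded, and they cannot be disposed of by Lemma \ref{lemma composizione pseudo} alone.
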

\begin{remark}\label{extension of mathcal L(z)}
$(i)$ The coefficients $\mathcal A_1^\pm(z_S; \mathcal L_\bot^S)$ can be computed more explicitly. First note that 
for any $n \ge 1$, $\partial_x^{-1} e^{\ii n x} = -\ii D^{-1} e^{\ii n x} $ and $\partial_x^{-1} e^{- \ii n x} = \ii (-D)^{-1} e^{-\ii n x} $
and that by definition, $g_\infty = e^{\ii \partial_x^{-1} q}$ with $q = \Psi^{bo}(z_S, 0)$ real valued. Hence for any $n \ge 1$,
$$
\overline{g_\infty} \cdot d_S g_\infty[\widehat z_S] \cdot \partial_x^{-1} e^{\ii nx} =
\overline{g_\infty} g_\infty \ii  \partial_x^{-1} (d_S \Psi^{bo}(z_S, 0)[\widehat z_S]) \cdot (-\ii) D^{-1} e^{\ii nx}
=  \partial_x^{-1} (d_S \Psi^{bo}(z_S, 0)[\widehat z_S]) \cdot D^{-1} e^{\ii nx} \, ,
$$
yielding
$$
\mathcal A^+_1(z_S; \mathcal L_\bot^S)[\widehat z_S] = \partial_x^{-1} (d_S \Psi^{bo}(z_S, 0)[\widehat z_S]) \, , \qquad
\mathcal A^-_1(z_S; \mathcal L_\bot^S)[\widehat z_S] =  \partial_x^{-1} (d_S \Psi^{bo}(z_S, 0)[\widehat z_S]) \, .
$$
Hence $\mathcal A^+_1(z_S; \mathcal L_\bot^S)[\widehat z_S] = \mathcal A^-_1(z_S; \mathcal L_\bot^S)[\widehat z_S] $
and $\mathcal A^\pm_1(z_S; \mathcal L_\bot^S)[\widehat z_S] $ are real valued.

\smallskip
\noindent
$(ii)$ Recall that by Remark \ref{extension of remainder 1}(i),  $ \partial_x^{- 1} \Psi_1(z_S): h^{-1}_\bot \to H^0_0$ 
is a bounded linear operator for any $z \in \mathcal V$. Since $\partial_{z_n} \Psi_1(z_S)[z_\bot] \in H^0_0$ for any $n \in S$, it then follows that
$ \mathcal L_S^{\bot}(z): h^{-1}_\bot \to h_S$
and in turn $\mathcal L(z): h^{-1}_0 \to h^0_0$ are bounded linear operators. Estimates, corresponding to the ones for $\mathcal L_S^{\bot}(z)$ and $\mathcal L(z)$ 
of Lemma \ref{espansione L S bot q z}, continue to hold, when these operators are extended to $h^{-1}_\bot$ and, respectively, $h^{-1}_0$.
\end{remark}
\begin{proof}
Item (i) is verified in a straightforward way and
item (iii) is a direct consequence of item (i) and (ii). It remains to verify the statements of item (ii).
Recall that by definition, 
$$
 \mathcal L_\bot^S(z) [\widehat z_S]  = \Psi_1(z_S)^\top \partial_x^{- 1} d_S (\Psi_1(z_S)[z_\bot] ) [\widehat z_S] \,  .
 $$
By Corollary \ref{lemma asintotiche Phi 1 q}, $\Psi_1(z_S)^\top$ has the expansion
$\mathcal {OP}_N(z_S; \Psi^\top_{1}) +  {\cal R}_{N}(z_S; \Psi_1^\top) $ with
\begin{align}
\mathcal {OP}_N(z_S; \Psi^\top_{1})  :=
 {\cal F}^+_{N_S} \circ \big( - \overline{g_\infty}   +  \sum_{k = 1}^N  a^+_{k}(z_S; \Psi_1^\top) D^{- k} \, \big)
\, + \,  {\cal F}^-_{N_S} \circ  \big(  \- g_\infty  +   \sum_{k = 1}^N  a^-_{k}(z_S; \Psi_1^\top) (-D)^{- k} \, \big)\, ,  \label{espansione finale Phi 1 t bo}
\end{align}
where for any $k \ge 1$, $a_k^-( z_S; \Psi_1^\top) = \overline{a_k^+( z_S; \Psi_1^\top)}$
and for any $s \geq 0$, $k \ge 1$,
$$\mathcal V_S \to H^s_\C, \ z_S \mapsto  a_k^\pm( z_S; \Psi_1^\top) \, , \qquad
\mathcal V_S \to {\cal B}(H^s, \, h^{s + N + 1}_\bot), \ z_S \mapsto {\cal R}_N( z_S;  \Psi_1^\top),
$$ 
are real analytic. 
By Corollary \ref{pseudodiff expansion dPsi_L} , $d_S(\Psi_1(z_S)[z_\bot])[\widehat z_S]$ admits the expansion 
$$
\begin{aligned}
 & - \    d_Sg_\infty[\widehat z_S]  \cdot  ( \mathcal F^+_{N_S})^{-1} z_\bot \ \ + \ \
 \sum_{k = 1}^N  \mathcal A^+_{k}(z_S; d\Psi_1)[\widehat z_S] \cdot   D^{- k}  [( \mathcal F^+_{N_S})^{-1} z_\bot] \\
 & \,  - \   d_S \overline{g_\infty}[\widehat z_S] \cdot ( \mathcal F^-_{N_S})^{-1} z_\bot \  \ +  \ \
 \sum_{k = 1}^N  \mathcal A^-_{k}(z_S; d\Psi_1)[\widehat z_S] \cdot  (-D)^{- k} [( \mathcal F^+_{N_S})^{-1} z_\bot] \, , 
 \end{aligned}
$$
where for any $k \ge 1$, $\mathcal A^\pm_{k}(z_S; d\Psi_1)$ are the following linear operators,
$$
\mathcal A^\pm_{k}(z_S; d\Psi_1) : h_S \to L^2_\C , \, \widehat z_S \mapsto d_S a^\pm_k(z_S; \Psi_1)[\widehat z_S] \, .
$$
One has 
It then follows from the expansion of the composition $\partial_x^{-n} \circ a \, \partial_x^{-\ell}$ (cf. Lemma \ref{lemma composizione pseudo})
and the smoothing properties of Hankel operators (cf. Corollary \ref{Hankel infinitely smoothing})  
that $\mathcal L_\bot^S (z)[\widehat z_S] $ admits an expansion with the stated properties.
\end{proof}

\smallskip

Finally, we discuss the properties of the symplectic forms $\Lambda_G$, $\Lambda$, and $\Psi_L^*\Lambda_G$ with respect to
the reversible structures introduced in Section \ref{introduzione paper}. 
First note that for any $\widehat u, \widehat v \in L^2_0,$
$$
(S_{rev}^*\Lambda_G)[ \widehat u, \widehat v] = \Lambda_G[ S_{rev}\widehat u, S_{rev}\widehat v] =
\frac{1}{2\pi} \int_0^{2\pi}  \widehat u(-x) \partial_x^{-1}( \widehat v(-x) ) d x =  - \Lambda_G[ \widehat u, \widehat v]
$$
and similarly, for any $\widehat z, \widehat w \in h^0_0,$
$$
(\mathcal S_{rev}^*\Lambda)[ \widehat z, \widehat w] = \Lambda[ \mathcal S_{rev}\widehat z, \mathcal S_{rev}\widehat w] =
\sum_{n \ne 0} \frac{1}{- \ii n} \widehat z_{-n} \widehat w_{n}  = - \Lambda[ \widehat z, \widehat w]\,.
$$
Since by \eqref{symmetries of PsiL bo} 
$\mathcal S_{rev} \Psi_L = \Psi_L S_{rev}$, 
the pullback $\mathcal S_{rev}^* \Psi_L^*\Lambda_G$ can then be computed as
$$
(\mathcal S_{rev}^*\Psi_L^*) \Lambda_G  = \Psi_L^* (S_{rev}^* \Lambda_G) = - \Psi_L^*\Lambda_G \, .
$$
By \eqref{Pull back of LambdaG} it then follows that the two form $\Lambda_L$, introduced in \eqref{Pull back of LambdaG}, satisfies
$$
\mathcal S_{rev}^*\Lambda_L = - \Lambda_L \,.
$$
In view of the definitions \eqref{def mathcal L} - \eqref{definition L z}
one then concludes that the operators $ \mathcal L_S^S(z)$, $ \mathcal L_S^{\bot}(z)$, and $ \mathcal L_\bot^S(z)$ have the following symmetry properties.

\medskip

\noindent
{\bf Addendum to Lemma \ref{espansione L S bot q z}} {\em For any $z = (z_S, z_\bot) \in h_S \times h^0_\bot$ and any $\widehat z_S \in h_S$, $\widehat z_\bot \in h^0_\bot$,
$$
 \mathcal L_S^S(\mathcal S_{rev} z) [\mathcal S_{rev} \widehat z_S] = -  \mathcal S_{rev} \big( \mathcal L_S^S(z) [\widehat z_S] \big)\,, \quad 
$$
$$
 \mathcal L_S^{\bot}(\mathcal S_{rev}z)[\mathcal S_{rev} \widehat z_\bot]   = - \mathcal S_{rev} \big( \mathcal L_S^{\bot}(z)[\widehat z_\bot] \big)\,, \quad
 \mathcal L_\bot^S(\mathcal S_{rev}z) [\mathcal S_{rev} \widehat z_S]  \big)= - \mathcal S_{rev} \big( \mathcal L_\bot^S(z) [\widehat z_S] \big)\,.
$$
By \eqref{expansion L bot S} it then follows that
$$
\mathcal A^+_k(\mathcal S_{rev} z_S; \mathcal L_\bot^S) [\mathcal S_{rev} \widehat z_S] (x)  = -  \mathcal A^-_k(z_S; \mathcal L_\bot^S) [\widehat z_S] (-x) \, ,
$$
$$
\mathcal A^-_k(\mathcal S_{rev} z_S; \mathcal L_\bot^S) [\mathcal S_{rev} \widehat z_S] (x)  = -  \mathcal A^+_k(z_S; \mathcal L_\bot^S) [\widehat z_S] (-x) \, ,
$$
$$
\qquad
{\cal R}_{N}(\mathcal S_{rev}z;  \mathcal L_{\bot}^S)[\mathcal S_{rev} \widehat z_S]  = -  \mathcal S_{rev} \big({\cal R}_{N}(z; \mathcal L_{\bot}^S)[\widehat z_S] \big) \, .
$$
}

\section{The map $\Psi_C$}\label{sezione Psi C}

\noindent
In this section we construct the symplectic corrector $\Psi_C$. Our approach is based on a well known method of Moser and Weinstein,
implemented for an infinite dimensional setup in \cite{K} (cf. also \cite{Kappeler-Montalto}).
We begin by briefly outlining the construction. At the end of Section \ref{sezione mappa Psi L BO}, we introduce the symplectic forms $\Lambda$
and $\Psi_L^*\Lambda_G$ (cf. \eqref{def mathcal L}). They are defined on $\mathcal V = \mathcal V_S \times \mathcal V_\bot$ 
and are related as follows ($z \in \mathcal V,$  $\widehat z, \widehat w \in h^0_0$),
\begin{equation}\label{setup pullback Lambda_G}
\Psi^*_L \Lambda_G (z) [\widehat z, \widehat w] = \Lambda[\widehat z, \widehat w] + \Lambda_L(z) [\widehat z, \widehat w]\,, 
\qquad \Lambda [\widehat z, \widehat w] =  \langle \widehat z \, |  \, J^{-1} \widehat w \rangle \, ,
\qquad \Lambda_L(z) [\widehat z, \widehat w] = \langle \widehat z \, | \,  \mathcal L(z) [\widehat w] \rangle\,, 
\end{equation}
where $J^{-1}$ is the inverse of the diagonal operator $J$, defined by \eqref{definition J}
and $\mathcal L(z)$ the operator defined by \eqref{definition L z}. 
Our candidate for $\Psi_C$ is $\Psi_X^{0,1}$ where $X \equiv X(\tau, z)$
is a non-autonomous vector field, defined for $z \in \mathcal V$ and $0 \le \tau \le 1$,
so that $(\Psi_X^{0, 1})^* (\Psi^*_L \Lambda_G) = \Lambda$.
The  flow $\Psi^{\tau_0, \tau}_X$, corresponding to the vector field $X$, is required to be well defined on a neighborhood $\mathcal V'$ 
(cf. Lemma \ref{estimates for flow} below) for $0 \le \tau_0, \tau \le 1$, 
and to satisfy the standard normalization conditions $\Psi^{\tau_0, \tau_0}_X(z) = z$ for any $z \in \mathcal V'$ and $0 \le \tau_0 \le 1.$
To find $X$ with the desired properties, introduce the one parameter family of two forms,
 $$
 \Lambda_\tau (z) : = \Lambda + \tau \Lambda_L(z)\,, \quad 0 \le \tau \le 1\,.
 $$
 Note that $\Lambda_\tau$ is a closed two form. Indeed, 
 since $\Lambda_G$ is such a two form, so is $\Psi_L^*\Lambda_G$ and in turn $\Lambda_L = \Psi_L^*\Lambda_G - \Lambda$.
Furthermore, $\Lambda_0 = \Lambda$, $\Lambda_1 = \Psi_L^* \Lambda_G$, and $(\Psi_X^{0,0})^* \Lambda_0 = \Lambda_0.$
The desired identity $(\Psi_X^{0,1})^* \Lambda_1 = \Lambda_0$ then follows if one can show that $(\Psi_X^{0,\tau})^* \Lambda_\tau$
is independent of $\tau$, i.e., that $\partial_\tau \big( (\Psi_X^{0,\tau})^* \Lambda_\tau \big) = 0$. 
Since $(\Psi_X^{0,\tau})^* \Lambda_\tau = (\Psi_X^{0,\tau})^* \Lambda + \tau (\Psi_X^{0,\tau})^* \Lambda_L$, it follows that
$$
\partial_\tau \big( (\Psi_X^{0,\tau})^* \Lambda_\tau \big) = (\Psi_X^{0,\tau})^* (\frak L_{X} \Lambda_\tau + \Lambda_L) \, ,
$$
where $\frak L_{X}$ denotes the Lie derivative with respect to the vector  field $X$.
Using that $\Lambda_\tau$ is a closed two form, one infers from Cartan's formula that
$$
\frak L_{X} \Lambda_\tau = d(\iota_X \Lambda_\tau) + \iota_X d \Lambda_\tau = d(\iota_X \Lambda_\tau)\, ,
$$
where $\iota_X$ denotes the interior product of a form with the vector field $X$. 
Thus, the equation $\partial_\tau \big( (\Psi_X^{0,\tau})^* \Lambda_\tau \big) = 0$ can be written as
$$
(\Psi_X^{0,\tau})^* \big( \, \Lambda_L + d (\Lambda_\tau [ X(\tau, \, \cdot), \, \cdot \, ] )\, \big) = 0 \,.
$$
Hence we need to choose the vector field $X( \tau, z)$ in such a way that
\begin{equation}\label{equation for X}
\Lambda_L(z) -  d \big( \, \Lambda_\tau(z) [ \, \cdot \,  , \, X(\tau, z)] \, \big) = 0 \, .
\end{equation}
Note that 
\begin{equation}\label{operator for Lambda_tau}
\Lambda_\tau(z) [\, \cdot \, , X( \tau, z)] =  \langle \, \cdot \, | \, (J^{-1}  + \tau \mathcal L(z)) [X(\tau, z)] \rangle = \langle \,  \cdot \, | \, J^{-1} \mathcal L_\tau(z) [X(\tau, z)] \rangle 
\end{equation}
where
\begin{equation}\label{definition L tau}
\mathcal L_\tau(z) :=  {\rm Id} + \tau J \mathcal L(z) \, .
\end{equation}
We remark that by Lemma \ref{espansione L S bot q z},  the operator $ \mathcal L_\tau(z) : h^0_0  \to  h^0_0$  is bounded  for any $0 \le \tau \le 1$ and $z \in \mathcal V$.

In order to find a vector field $X$, satisfying \eqref{equation for X},
we rewrite the two form $\Lambda_L(z)$ as the differential of a suitably chosen one form. First note that
since $\Lambda_G = d \lambda_G$ (cf. \eqref{definition lambda G}) and $\Lambda = d \lambda$ (cf. \eqref{definition lambda}),
one has 
$$
\Lambda_L = \Psi^*_L \Lambda_G - \Lambda = 
d (\lambda_1 - \lambda_0)\, , \qquad  \lambda_1 := \Psi^*_L \lambda_G \, , \qquad \lambda_0 := \lambda \, .
$$
 Furthermore, 
 $\mathcal L(z_S, 0) = 0$ by \eqref{L SS botS Sbot (1)} and hence $\Lambda_L(z_S, 0) = 0$ for any $z_S \in \mathcal V_S$. 
It then follows by the Poincar\'e Lemma (cf. e.g. \cite[Appendix 1]{Kappeler-Montalto}) that  $d \lambda_L = \Lambda_L$ where
$\lambda_L$ is the one form on $\mathcal V$, obtained by the cone construction,
$$
\lambda_L(z) [\widehat z] := \int_0^1 \Lambda_L(z_S, \, tz_\bot) [(0, \, z_\bot),  (\widehat z_S, \,  t \widehat z_\bot) ] d t 
= - \int_0^1 \Lambda_L(z_S, \, tz_\bot) [(\widehat z_S, \,  t \widehat z_\bot), (0, \, z_\bot) ] d t .
$$
By the definition of the operator $\mathcal L(z)$ (cf. \eqref{setup pullback Lambda_G}), it then follows that
$$
\lambda_L(z) [\widehat z] = -  \int_0^1 \langle  (\widehat z_S, t \widehat z_\bot) \, | \, \mathcal L(z_S, tz_\bot)[(0, z_\bot)] \rangle d t  
\stackrel{\eqref{definition L z}}{=} -  \int_0^1 \langle \widehat z_S \, | \,  \mathcal L^\bot_S(z_S, t z_\bot)[z_\bot]  \rangle dt \,.
$$
Since $\mathcal L^\bot_S(z_S, t z_\bot) = t \mathcal L^\bot_S(z_S, z_\bot)$ (cf. \eqref{L SS botS Sbot (1)}),
the latter integral can be computed explicitly, 
\begin{equation}
   \lambda_{L} ( z) [ \widehat z] = - \langle \widehat z \, | \,  { \mathcal E}(z) \rangle \, , 
   \qquad 
   \mathcal E(z) :=  (\mathcal E_S(z), 0)\in h_S \times h^0_{\bot}\, ,
   \qquad z \in \mathcal V, \,\, \, \widehat z \in  h^0_0 \, ,
   \label{forma finale 1 forma Kuksin}
 \end{equation}
 where
 \begin{equation}\label{forma finale 1 forma Kuksin 2}
 \mathcal E_S: \mathcal V \to h_S, \, z \mapsto \mathcal E_S(z) :=  \frac12 \mathcal L_S^\bot(z)[z_\bot]\,.
  \end{equation}
  Combining \eqref{operator for Lambda_tau} - \eqref{forma finale 1 forma Kuksin},
 equation  \eqref{equation for X} reads
   $$
   d  \, \langle \, \cdot \, | \ { \mathcal E}(z) \,  + \, J^{-1} \mathcal L_\tau(z) [X(\tau, z)] \, \rangle  = 0 \, .
   $$
   In view of the definition \eqref{definition L tau} of $\mathcal L_\tau(z)$, we now choose $X$ to be a solution of 
 \begin{equation}\label{second equation for X}
J  \mathcal E(z) +  (  {\rm Id} + \tau J \mathcal L(z))[X(\tau, z)] = 0\,, \qquad \forall z \in \mathcal V\,, \,\, 0 \le \tau \le 1\,.
 \end{equation}
 To solve the latter equation for $X$, we need to invert $ {\rm Id} + \tau J \mathcal L(z)$.
 To this end define the standard projections
 \begin{equation}\label{Pi S Pi bot}
\Pi_S : h_S \times h^0_{ \bot } \to h_S \times h^0_{ \bot }\,,  (\widehat z_S, \widehat z_\bot) \mapsto (\widehat z_S, 0)\,, \qquad 
\Pi_\bot : h_S \times h^0_{ \bot } \to h_S \times h^0_{ \bot }\,, (\widehat z_S, \widehat z_\bot ) \mapsto (0, \widehat z_\bot) , 
\end{equation}
 the standard inclusions
\begin{equation}\label{iota S iota bot}
\iota_S : h_S  \to h_S \times h^0_{ \bot }\,,  \ \widehat z_S \mapsto (\widehat z_S, 0)\,, \qquad 
\iota_\bot :  h^0_{ \bot } \to h_S \times h^0_{ \bot }\,,  \ \widehat z_\bot  \mapsto (0, \widehat z_\bot) , 
\end{equation}
and the maps 
\begin{equation}\label{pi S}
  \pi_S : h_S \times h^0_{ \bot } \to h_S\,, \  z = (z_S, z_\bot) \mapsto z_S\,, \qquad
  \pi_\bot : h_S \times h^0_{ \bot } \to h^0_{ \bot }\,, \  z= (z_S , z_\bot) \mapsto z_\bot\,.
  \end{equation}   
\noindent
Furthermore, let 
\begin{equation}\label{def J_S, J_bot}
J_S: = \pi_S J \iota_S \, , \quad 
J_\bot: = \pi_\bot J \iota_\bot \, , \qquad
{\rm{Id}}_S: = \pi_S {\rm{Id}} \iota_S \, , \quad 
{\rm{Id}}_\bot: = \pi_\bot {\rm{Id}} \iota_\bot \, .
\end{equation}
%
 By the definition \eqref {definition L z} of $\mathcal L(z)$, one has ${\rm Id} + \tau J  \mathcal L(z) = A + \tau B$ where
 $$ 
  A:= \begin{pmatrix}
{\rm Id}_S  &  0 \\
\tau A_{21} & {\rm Id}_\bot
\end{pmatrix} \, , \qquad
A_{21}: =  J_\bot \mathcal L_{\bot}^S( z) \, ,
$$
\begin{equation}\label{formula for B}
B := \begin{pmatrix}
B_{11} &  B_{12}\\
 0 & 0
\end{pmatrix} , \qquad
B_{11}:= J_S \mathcal L_S^S(z)  \, , \qquad B_{12} := J_S \mathcal L_S^{\bot}( z) \, .
 \end{equation}
 Note that $A$ is invertible with  $A^{-1} = \begin{pmatrix}
{\rm Id}_S  &  0 \\
- \tau A_{21} & {\rm Id}_\bot
\end{pmatrix} 
$ and hence $ {\rm Id} + \tau J  \mathcal L(z) =  C A$, where
 \begin{equation}\label{formula for C}
  C:=  {\rm Id} + \tau B A^{-1} =  \begin{pmatrix}
C_{11} &  \tau B_{12} \\
 0 &  {\rm Id}_\bot
\end{pmatrix}  \, , \qquad C_{11}:=  {\rm Id}_S + \tau B_{11}  - \tau^2 B_{12} A_{21} \, .
 \end{equation}
 By Lemma \ref{espansione L S bot q z} it follows after shrinking the ball $\mathcal V_\bot$, if needed,  that $C_{11}: h_S \to h_S$ 
 is invertible. As a consequence, $C$ and in turn $CA$ are invertible. One then obtains the following formula for $(  {\rm Id} + \tau J  \mathcal L(z) )^{-1}$,
 \begin{equation}\label{formula inverse mathcal L tau}
(  {\rm Id} + \tau J  \mathcal L(z) )^{-1} = A^{-1} C^{-1} \, , \qquad 
C^{-1} = \begin{pmatrix} 
C_{11}^{-1} & - \tau C_{11}^{-1}  B_{12} \\
0 &  {\rm Id}_\bot
\end{pmatrix} 
 \end{equation}
 or, in more explicit terms,
 \begin{equation}\label{formula for inverse of mathcal L tau}
 (  {\rm Id} + \tau J  \mathcal L(z) )^{-1} = \begin{pmatrix} 
C_{11}^{-1} & - \tau C_{11}^{-1}  B_{12} \\
-\tau J_\bot \mathcal L_{\bot}^S( z) C_{11}^{-1} &  {\rm Id}_\bot + \tau^2 J_\bot \mathcal L_{\bot}^S( z) C_{11}^{-1}B_{12} 
\end{pmatrix} \, ,
 \end{equation}
 with $C_{11}: h_S \to h_S$ and $B_{12}: h_\bot^0 \to h_S$ given as above.
 In view of \eqref{second equation for X} and \eqref{formula inverse mathcal L tau},
 we then define for any $0 \le \tau \le 1$ and $z \in {\cal V}$ the vector field $X(\tau, z)$ as follows,
 \begin{equation}\label{definizione campo vettoriale ausiliario}
 X(\tau, z) := - (  {\rm Id} + \tau J  \mathcal L(z) )^{-1}[J \mathcal E(z) ]= - A^{-1} C^{-1} 
 \begin{pmatrix}
 J_S \mathcal E_S(z) \\
 0
 \end{pmatrix} 
 = -  A^{-1} \begin{pmatrix}
 C_{11}^{-1} J_S \mathcal E_S(z) \\
 0
 \end{pmatrix} 
 \end{equation}
 or, in more explicit terms, $ X(\tau, z) = \big(  X_S(\tau, z) , \,  X_\bot(\tau, z)  \big)$, where
 \begin{equation}\label{formula X}
 \begin{aligned}
 X_S (\tau, z) & = - \big(  {\rm Id}_S + \tau J_S \mathcal L_S^S(z)   - \tau^2 J_S \mathcal L_S^{\bot}( z)   J_\bot \mathcal L_{\bot}^S( z) \big)^{-1} \big[J_S\mathcal E_S(z) \big] \,  , \\
  X_\bot (\tau, z)  &  =  -  \tau  J_\bot \mathcal L_{\bot}^S( z) \big[ X_S (\tau, z)   \big] \, .
  \end{aligned}
 \end{equation}
Note that by \eqref{forma finale 1 forma Kuksin 2} and \eqref{L SS botS Sbot (1)}, $\mathcal E(z)$ and hence $\lambda_L(z)$ are quadratic expressions in $z_\bot$. 
By Lemma \ref{espansione L S bot q z}, one obtains the following estimates for $\mathcal E(z)$:
   \begin{lemma}\label{lemma E(z)}
   The map ${\cal V} \to h_S \times h^0_\bot$, $z \mapsto \mathcal E(z) = (\mathcal E_S(z), 0)$ is real analytic. Furthermore, for any 
   $z \in {\cal V}$, $ \widehat z_1, \ldots, \widehat z_l \in h^0_0$, $l \geq 1$,  one has 
   $$
   \begin{aligned}
  &  \|  \mathcal E_S(z)\| \lesssim \| z_\bot \|_0^2 \,,  \qquad \| d \mathcal E_S(z)[\widehat z_1]\| \lesssim \| z_\bot \|_0 \| \widehat z_1 \|_0 \, , \\
  & \| d^l \mathcal E_S(z)[\widehat z_1, \ldots, \widehat z_l]\|  \lesssim_{ l} \prod_{j = 1}^l \| \widehat z_j\|_0\,, \quad l \geq 2\, .  
   \end{aligned}
   $$
   \end{lemma}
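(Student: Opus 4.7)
The plan is to deduce the lemma directly from Lemma \ref{espansione L S bot q z}(i), since by definition $\mathcal E(z) = (\mathcal E_S(z), 0)$ with $\mathcal E_S(z) = \tfrac{1}{2}\mathcal L_S^{\bot}(z)[z_\bot]$. All the content is therefore about the operator $\mathcal L_S^{\bot}(z)$ evaluated at the continuous linear function $z \mapsto \pi_\bot z = z_\bot$ of $z$.

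Real analyticity is then immediate: the map $z \mapsto \mathcal L_S^{\bot}(z) \in \mathcal B(h^0_\bot, h_S)$ is real analytic on $\mathcal V$ by Lemma \ref{espansione L S bot q z}(i), and composing with the bounded bilinear evaluation $\mathcal B(h^0_\bot, h_S) \times h^0_\bot \to h_S$, $(T, w) \mapsto T[w]$, together with the bounded linear projection $\pi_\bot$, produces a real analytic map $\mathcal V \to h_S$. The zero component in $h^0_\bot$ is trivially real analytic.

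For the zeroth-order bound $\| \mathcal E_S(z)\| \lesssim \|z_\bot\|_0^2$, I would set $\widehat z_\bot = z_\bot$ in the estimate $\| \mathcal L_S^{\bot}(z)[\widehat z_\bot]\| \lesssim \|z_\bot\|_0 \|\widehat z_\bot\|_0$ from Lemma \ref{espansione L S bot q z}(i). For higher derivatives, the Leibniz/chain rule applied to the bilinear evaluation gives, for any $l \ge 1$,
$$
d^l \mathcal E_S(z)[\widehat z_1, \ldots, \widehat z_l] = \tfrac12 \, d^l\bigl(\mathcal L_S^{\bot}(\cdot)[z_\bot]\bigr)(z)[\widehat z_1, \ldots, \widehat z_l] + \tfrac12 \sum_{j=1}^l d^{l-1}\mathcal L_S^{\bot}(z)[\widehat z_1, \ldots, \widehat z_{j-1}, \widehat z_{j+1}, \ldots, \widehat z_l]\bigl[\pi_\bot \widehat z_j\bigr],
$$
where in the first term the argument $z_\bot$ is held fixed while all $l$ directions differentiate $\mathcal L_S^{\bot}$. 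Bounding each term by the derivative estimate $\| d^k(\mathcal L_S^{\bot}(z)[w])[\widehat z_1, \ldots, \widehat z_k]\| \lesssim_k \|w\|_0 \prod_i \|\widehat z_i\|_0$ from Lemma \ref{espansione L S bot q z}(i), the first summand is $\lesssim_l \|z_\bot\|_0 \prod_j \|\widehat z_j\|_0$ and the $j$th summand in the sum is $\lesssim_l \|\pi_\bot \widehat z_j\|_0 \prod_{i \ne j} \|\widehat z_i\|_0 \le \prod_j \|\widehat z_j\|_0$. For $l = 1$ both pieces have the form $\lesssim \|z_\bot\|_0 \|\widehat z_1\|_0$; for $l \ge 2$, the boundedness of $\mathcal V_\bot$ (cf.\ Proposition \ref{prop Psi L Psi bo}(ii)) gives $\|z_\bot\|_0 \lesssim 1$, so that the total becomes $\lesssim_l \prod_j \|\widehat z_j\|_0$, as claimed.

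No step presents a genuine obstacle: the lemma is a direct consequence of the properties of $\mathcal L_S^{\bot}$ already established in Lemma \ref{espansione L S bot q z}(i), combined with a one-line Leibniz expansion that accounts for the fact that $\mathcal E_S(z)$ is of the form ``operator-valued function of $z$ evaluated at a linear function of $z$''.
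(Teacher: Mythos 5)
Your proof is correct and follows essentially the same route as the paper: the paper simply asserts that the lemma follows from Lemma \ref{espansione L S bot q z} (noting that $\mathcal E_S(z) = \tfrac12 \mathcal L_S^\bot(z)[z_\bot]$ is quadratic in $z_\bot$), and your write-up fills in the Leibniz expansion and the bookkeeping of the estimates that the paper leaves implicit.
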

In view of the formulas of the components of $X(\tau, z)$ in \eqref{formula X}, one then infers from Lemma \ref{espansione L S bot q z} and 
Lemma \ref{lemma E(z)} the following results.
 \begin{lemma}\label{estimates for X} For any $s \ge 0,$ the non-autonomous vector field
 $$
 X: [0, 1] \times (\mathcal V \cap h^s_0) \to  h^{s}_0
 $$
 is real analytic and the following estimates hold: for any $z \in \mathcal V \cap h^s_0,$ $0 \le \tau \le 1$, $\widehat z \in h^s_0$,
 $$
 \| X(\tau, z) \|_{s} \lesssim_s \|z_\bot\|_s \|z_\bot \|_0\,, \quad \|dX(\tau, z))[\widehat z]\|_{s} \lesssim_s \|z_\bot\|_s \|\widehat z\|_0 + \|z_\bot\|_0 \|\widehat z\|_s\, .
 $$
 If in addition $\widehat z_1, \,  \ldots , \widehat z_l \in h^s_0$, $l \ge 2$, then
 $$
 \| d^l X(\tau, z) [\widehat z_1, \, \ldots , \widehat z_l] \|_{s} \lesssim_{s, l}  \sum_{j=1}^l \|\widehat z_j\|_s \prod_{i \ne j} \|\widehat z_i\|_0  +  \|z_\bot\|_s \prod_{j = 1}^l \|\widehat z_j\|_0\,.
 $$
 \end{lemma}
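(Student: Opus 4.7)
The plan is to work directly from the explicit formulas \eqref{formula X} for $X_S$ and $X_\bot$ and reduce everything to the estimates already established for $\mathcal E_S$, $\mathcal L^S_S$, $\mathcal L^\bot_S$ and $\mathcal L^S_\bot$ in Lemma \ref{espansione L S bot q z} and Lemma \ref{lemma E(z)}. First I would observe that by those lemmas, each of the building blocks $z \mapsto \mathcal L^S_S(z)$, $\mathcal L^\bot_S(z)$, $\mathcal L^S_\bot(z)$ and $\mathcal E_S(z)$ is real analytic on ${\cal V}\cap h^s_0$ with values in the appropriate spaces, and that after shrinking $\mathcal V_\bot$, if necessary, the operator
$$
M(\tau,z) := {\rm Id}_S + \tau J_S \mathcal L_S^S(z) - \tau^2 J_S \mathcal L_S^{\bot}(z) J_\bot \mathcal L_{\bot}^S(z)
$$
acting on the finite dimensional space $h_S$ is a small perturbation of ${\rm Id}_S$ (uniformly in $\tau \in [0,1]$). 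Hence $M(\tau,z)^{-1}$ exists and depends real analytically on $(\tau,z)$ by the Neumann series. Real analyticity of $X(\tau, \cdot)$ on $\mathcal V\cap h^s_0$ then follows from the formulas \eqref{formula X}, taking into account the one-smoothing property of $\mathcal L^S_\bot(z)$, which is the key point allowing $X_\bot$ to be controlled in $h^s_\bot$.

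Next I would prove the $C^0$ bound. Since $h_S$ is finite dimensional, all norms on $h_S$ are equivalent, so from Lemma \ref{lemma E(z)} and the uniform bound $\|M(\tau,z)^{-1}\|_{h_S\to h_S}\lesssim 1$ one obtains
$$
\|X_S(\tau,z)\| \lesssim \|\mathcal E_S(z)\| \lesssim \|z_\bot\|_0^2.
$$
For the component $X_\bot$, I combine this with the smoothing estimate for $\mathcal L^S_\bot(z)$: by Lemma \ref{espansione L S bot q z}(ii) and the definition of $J_\bot$,
$$
\|X_\bot(\tau,z)\|_s \le \|\mathcal L^S_\bot(z)[X_S(\tau,z)]\|_{s+1} \lesssim_s \|X_S(\tau,z)\|\,\|z_\bot\|_s \lesssim_s \|z_\bot\|_0^2\,\|z_\bot\|_s,
$$
which, after absorbing the bounded factor $\|z_\bot\|_0$ from $\mathcal V_\bot$, gives the required estimate $\|X(\tau,z)\|_s \lesssim_s \|z_\bot\|_0 \|z_\bot\|_s$.

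To pass to the derivative estimates, I would differentiate the formulas \eqref{formula X} using the identity $d(M^{-1}) = -M^{-1}(dM)M^{-1}$, together with the product rule. Each differentiation of $M(\tau,z)$, $\mathcal E_S(z)$ or $\mathcal L^S_\bot(z)$ produces a factor that is either controlled by the tame estimates in Lemma \ref{espansione L S bot q z}(i)--(iii) (for the pieces living in $h_S$, which are always measured in the coarsest $h^0_0$-norm) or, for $\mathcal L^S_\bot$, by the tame estimate for its remainder and coefficients in $h^{s+1}_\bot$. Expanding $d^l X$ via Fa\`a di Bruno and distributing derivatives across the factors yields the two claimed inequalities: the linear estimate
$\|dX(\tau,z)[\widehat z]\|_s \lesssim_s \|z_\bot\|_s\|\widehat z\|_0 + \|z_\bot\|_0\|\widehat z\|_s$ corresponds to the two ways of putting the high-regularity index ($s$) either on a derivative coming from $\mathcal E_S$, $M^{-1}$ or on the smoothing operator $\mathcal L^S_\bot$; and the higher order estimate
$$
\|d^l X(\tau,z)[\widehat z_1,\ldots,\widehat z_l]\|_s \lesssim_{s,l}\sum_{j=1}^l\|\widehat z_j\|_s\prod_{i\ne j}\|\widehat z_i\|_0 + \|z_\bot\|_s\prod_{j=1}^l\|\widehat z_j\|_0
$$
corresponds to either exactly one $\widehat z_j$ taking the high-regularity index while the others stay in $h^0_0$, or all $\widehat z_j$ staying in $h^0_0$ and the high-regularity index being carried by $z_\bot$ through the smoothing factor.

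The only point requiring care is the accounting of where the high-regularity index can sit: the $h_S$-valued pieces only contribute $h^0_0$-bounds for the differentials, so the $s$-index must be absorbed either by $\mathcal L^S_\bot$ (which is one-smoothing and carries a factor $\|z_\bot\|_s$) or by one factor $\widehat z_j$ entering through the smoothing operator. This is precisely the tame structure encoded in the right-hand sides of the claimed estimates, and bookkeeping it across the finitely many terms generated by $d^l$ is the main, though entirely routine, part of the argument.
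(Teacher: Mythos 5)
Your proposal is correct and follows exactly the approach the paper takes, which in fact is given only implicitly: the paper merely states that the claims of this lemma are inferred from the explicit formulas~\eqref{formula X} together with Lemma~\ref{espansione L S bot q z} and Lemma~\ref{lemma E(z)}, and you have fleshed out precisely that inference. You correctly identify the two key structural points: (a) that $M(\tau,z) = {\rm Id}_S + \tau J_S\mathcal L_S^S(z) - \tau^2 J_S\mathcal L_S^\bot(z)J_\bot\mathcal L_\bot^S(z)$ acts on the finite-dimensional space $h_S$ and, after shrinking $\mathcal V_\bot$, is a Neumann-invertible perturbation of the identity (this is also the justification the paper gives for the invertibility of $C_{11}$ right before~\eqref{formula inverse mathcal L tau}); and (b) that all the $h_S$-valued ingredients only ever contribute $h^0$-type bounds, so the high-regularity index $s$ in the tame estimates must be carried either by the one-smoothing operator $\mathcal L_\bot^S(z)$ (which produces the factor $\|z_\bot\|_s$) or by one of the directions $\widehat z_j$; this is exactly the bookkeeping needed to match the right-hand sides stated in the lemma. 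One small imprecision you might sharpen for a written version: the phrase ``putting the high-regularity index on a derivative coming from $\mathcal E_S$, $M^{-1}$'' is slightly misleading, since those pieces live in $h_S$ and only ever appear through the $h^0$-norm; the high-regularity index enters solely through $\mathcal L_\bot^S(z)$ as you correctly state in your closing paragraph. Also, strictly speaking the composition $\mathcal L_S^\bot(z)J_\bot\mathcal L_\bot^S(z)$ requires the extension of $\mathcal L_S^\bot(z)$ to $h^{-1}_\bot$, a point the paper records in Remark~\ref{extension of mathcal L(z)}(ii), but your use of it is consistent.
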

 By a standard contraction argument, there exists an open neighborhood $\mathcal V_S' \subset \mathcal V_S$ of $\mathcal K \subset h_S$ and a ball $\mathcal V_\bot' \subset \mathcal V_\bot,$ centered at $0$,
 so that for any $\tau, \tau_0 \in [0,1]$, the flow map $\Psi_X^{\tau_0, \tau}$  of  the non-autonomous differential equation $ \partial_\tau z = X(\tau, z)$ 
 is well defined on $\mathcal V' := \mathcal V_S' \times \mathcal V_\bot'$ and
 \begin{equation}\label{definition Psi_X {tau_0, tau}}
 \Psi_X^{\tau_0, \tau} : \mathcal V' \to \mathcal V
 \end{equation}
 is real analytic. Arguing as in the proof of \cite[Lemma 4.4]{Kappeler-Montalto} one obtains the following estimates for $\Psi_X^{\tau_0, \tau} - id$. 
 \begin{lemma}\label{estimates for flow}
 Shrinking the ball ${\cal V}_\bot' \subset h^0_{\bot}$ in ${\cal V}' = {\cal V}_S' \times {\cal V}_\bot'$, if needed, it follows that 
for any $s \ge 0$, $\tau_0, \tau \in [0, 1],$ the map
$\Psi_X^{\tau_0, \tau} - id: \mathcal V' \cap h^s_0 \to h^{s}_0$ is real analytic and for any $z \in \mathcal V' \cap h^s_0$, $0 \le \tau_0, \tau \le 1$,
$\widehat z \in h^s_0$,
$$
\| \Psi_X^{\tau_0, \tau}(z)  - z\|_{s} \lesssim_s \|z_\bot\|_s \|z_\bot\|_0\,, \qquad 
\| (d \Psi_X^{\tau_0, \tau}(z)  - { \rm{Id}})[\widehat z]\|_{s}  \lesssim_s \|z_\bot\|_s \|\widehat z\|_0 + \|z_\bot\|_0 \|\widehat z\|_s \, .
$$
If in addition $\widehat z_1, \ldots , \widehat z_l \in h^s_0$, $l \ge 2$, then
$$
\| d^l \Psi_X^{\tau_0, \tau}(z)[\widehat z_1, \ldots, \widehat z_l] \|_{s}  \lesssim_s  \sum_{j = 1}^l \|\widehat z_j\|_s \prod_{i \ne j}\|\widehat z_i\|_0 + \|z_\bot\|_s \prod_{j=1}^l \|\widehat z_j\|_0 \, .
$$
 \end{lemma}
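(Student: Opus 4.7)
\begin{pf}[Proof proposal]
The plan is to derive all three estimates from the corresponding estimates on the vector field $X(\tau,z)$ provided by Lemma \ref{estimates for X}, via the integral form of the flow
$$
\Psi_X^{\tau_0,\tau}(z) = z + \int_{\tau_0}^\tau X(\sigma,\Psi_X^{\tau_0,\sigma}(z))\, d\sigma
$$
together with Gronwall's inequality. Real analyticity of $\Psi_X^{\tau_0,\tau}-{\rm id}$ on $\mathcal V'\cap h^s_0$ is a standard consequence of the real analyticity of $X$ in $z$ and its joint continuity in $(\tau,z)$, once local existence in $h^s_0$ has been established; I would therefore concentrate on the quantitative estimates.

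The first step is to control the $h^0_0$ norm. Writing $u(\tau) := \Psi_X^{\tau_0,\tau}(z)$ and using that $\|X(\tau,u)\|_0\lesssim \|\pi_\bot u\|_0^2$ (cf. Lemma \ref{estimates for X} with $s=0$), one gets $\tfrac{d}{d\tau}\|\pi_\bot u\|_0 \lesssim \|\pi_\bot u\|_0^2$, so after shrinking ${\cal V}_\bot'$ to a sufficiently small ball one has $\|\pi_\bot u(\tau)\|_0 \leq 2\|z_\bot\|_0$ for all $\tau_0,\tau\in[0,1]$; integrating the ODE then yields $\|\Psi_X^{\tau_0,\tau}(z)-z\|_0\lesssim \|z_\bot\|_0^2$. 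For the $h^s_0$ estimate with $s\geq 1$, I use the tame bound $\|X(\tau,u)\|_s\lesssim_s \|\pi_\bot u\|_s\|\pi_\bot u\|_0$. Combining with the uniform smallness of $\|\pi_\bot u(\tau)\|_0$ from the previous step and Gronwall on $\|\pi_\bot u(\tau)\|_s$ gives $\|\pi_\bot u(\tau)\|_s\lesssim_s \|z_\bot\|_s$, and substituting back into the integral yields $\|\Psi_X^{\tau_0,\tau}(z)-z\|_s\lesssim_s \|z_\bot\|_s\|z_\bot\|_0$.

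The estimate on $d\Psi_X^{\tau_0,\tau}(z)-{\rm Id}$ is obtained from the variational equation: $W(\tau):=d\Psi_X^{\tau_0,\tau}(z)[\widehat z]-\widehat z$ satisfies
$$
\partial_\tau W(\tau) = dX(\tau,u(\tau))\bigl[W(\tau)+\widehat z\bigr],\qquad W(\tau_0)=0,
$$
so using the tame estimate on $dX$ from Lemma \ref{estimates for X} and the already-controlled norms of $u(\tau)$, a double application of Gronwall (first in $h^0_0$ to obtain $\|W\|_0\lesssim \|z_\bot\|_0\|\widehat z\|_0$, then in $h^s_0$ to close the $h^s_0$ estimate) produces the claimed bound $\|W\|_s\lesssim_s \|z_\bot\|_s\|\widehat z\|_0+\|z_\bot\|_0\|\widehat z\|_s$. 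The higher-derivative bound ($l\geq 2$) is proved by induction on $l$: differentiating the flow ODE $l$ times and applying Faà di Bruno yields a linear ODE for $d^l\Psi_X^{\tau_0,\tau}(z)[\widehat z_1,\ldots,\widehat z_l]$ of the form
$$
\partial_\tau d^l\Psi_X^{\tau_0,\tau}(z)[\widehat z_1,\ldots,\widehat z_l] = dX(\tau,u)\,d^l\Psi_X^{\tau_0,\tau}(z)[\widehat z_1,\ldots,\widehat z_l] + \mathcal R_l(\tau),
$$
where $\mathcal R_l(\tau)$ is a polynomial expression in $d^j X(\tau,u)$ and in $d^m\Psi_X^{\tau_0,\tau}$ with $m<l$; plugging in the inductive estimates and the tame bounds on the derivatives of $X$, then invoking Gronwall in $h^0_0$ and $h^s_0$ as above, gives the claimed tame estimate.

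The main technical point will be the choice of the ball $\mathcal V_\bot'$ small enough so that the Gronwall constants are absolutely bounded on $[0,1]$; once this is done, the argument is structurally identical to the one given in \cite[Lemma 4.4]{Kappeler-Montalto}, and the tame structure of the final bounds mirrors the tame structure of $X$ established in Lemma \ref{estimates for X}.
\end{pf}
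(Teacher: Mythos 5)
Your proposal is correct and follows the same route the paper intends: the paper does not spell out the argument but simply refers to the corresponding statement \cite[Lemma 4.4]{Kappeler-Montalto}, whose proof is exactly the Gronwall/bootstrap scheme you describe — first a Riccati-type bound in $h^0_0$ to justify shrinking $\mathcal V_\bot'$, then tame Gronwall estimates in $h^s_0$ for the flow, the variational equation, and by induction the higher differentials. Your filling-in of the details is accurate, including the key bootstrap $\|\pi_\bot u(\tau)\|_0\lesssim\|z_\bot\|_0$ and the use of the already-established $\|W\|_0\lesssim\|z_\bot\|_0\|\widehat z\|_0$ inside the $h^s_0$ Gronwall step.
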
 

The main goal of this section is to derive expansions for the flow maps  $\Psi_X^{\tau_0, \tau}(z)$. 
To this end we first derive such expansions for the vector field $X(\tau, z)$. In view of the formulas for the components of $X(\tau, z)$,
one infers from Lemma \ref{espansione L S bot q z}  the following 
\begin{lemma}\label{lemma campo vettoriale new}
For any $N \ge 1,$  $0 \le \tau \le 1$, and $z \in \mathcal V$,
$X(\tau, z) = - \mathcal L_\tau(z)^{-1} [J \mathcal E(z)]$  has an expansion of the form
$ \big( \, 0 , \, {\mathcal{OP}}_N(\tau, z; X) \big) + \mathcal R_N(\tau, z; X)$ where
\begin{equation}\label{espansione campo vettoriale correttore}
{\mathcal{OP}}_N(\tau, z; X)  = 
   {\cal F}^+_{N_S} \circ \sum_{k = 0}^N    a^+_k (\tau, z; X )  D^{- k} [({\cal F}^+_{N_S})^{- 1}z_\bot] 
\, +  \, {\cal F}^-_{N_S} \circ \sum_{k = 0}^N    a^-_k (\tau, z; X )  (-D)^{- k} [({\cal F}^-_{N_S})^{- 1} z_\bot] 
\end{equation}
and for any $s \ge 0$ and $k \ge 0,$ the maps
$$
 [0, 1] \times \mathcal V \to H^s_\C\,, \,\, (\tau, z) \mapsto a^\pm_k(\tau, z; X)\,, \quad
  [0, 1] \times (\mathcal V \cap h^s_0 ) \to  h^{s + N +1}_0\,, \,\, (\tau, z) \mapsto {\cal R}_N (\tau, z; X) \, ,
$$
are real analytic and $ a^-_k (\tau, z; X ) = \overline{ a^+_k (\tau, z; X )}$. 
The coefficients $a^\pm_0 (\tau, z; X )$  take values $\ii \R$.
Writing $q = \Psi^{bo}(z_S, 0)$, they are given by
\begin{equation}\label{formula a^+_0 (tau, z; X ) }
a^+_0 (\tau, z; X ) = - \ii  \tau \partial_x^{-1} \big( d_S q[X_S(\tau, z)] \big)  \, , 
\qquad a^-_0 (\tau, z; X ) = - a^+_0 (\tau, z; X ) \, .
\end{equation}
Furthermore, for any $k \ge 0$, $0 \le \tau \le 1$, $z \in \mathcal V$, $ \widehat z \in h^0_0$, 
$$
 \| a^\pm_k(\tau, z; X)\|_s \lesssim_{s, k} \| z_\bot \|_0^2\,,  \qquad \| d a^\pm_k(\tau, z; X)[\widehat z]  \|_s \lesssim_{s, k} \| z_\bot \|_0 \| \widehat z\|_0 \, .
 $$
If in addition $ \widehat z_1, \ldots, \widehat z_l \in h^0_0$, $l \ge 2$, then
$$
 \| d^l a^\pm_k(\tau, z; X)[\widehat z_1, \ldots, \widehat z_l] \|_s \lesssim_{s, k, l} \prod_{j = 1}^l \| \widehat z_j\|_0\,.
$$
For any $z \in  \mathcal V \cap h^s_0$, $0 \le \tau \le 1$,  $\widehat z \in  h^s_0$, the remainder term ${\cal R}_N(\tau, z; X)$ satisfies
$$
 \| {\cal R}_N(\tau, z; X) \|_{s + N +1} \lesssim_{s, N}  \| z_\bot \|^2_0 +  \| z_\bot \|^2_0 \| z_\bot \|_s \lesssim_{s, N}    \| z_\bot \|_0 \| z_\bot \|_s\,, 
 $$
 $$
 \| d {\cal R}_N(\tau, z; X)[\widehat z] \|_{s + N +1} \lesssim_{s, N} \| z_\bot \|^2_0 \| \widehat z\|_s + \| z_\bot \|_0 \| \widehat z\|_0(1 + \| z_\bot \|_s)  
 \lesssim_{s, N} \| z_\bot \|_0 ( \| \widehat z\|_s + \| z_\bot \|_s \| \widehat z\|_0) \, .
 $$
If in addition $\widehat z_1, \ldots, \widehat z_l \in h^s_0$, $l \geq 2$, then
 $$
 \| d^l {\cal R}_N(\tau, z; X)[\widehat z_1, \ldots, \widehat z_l] \|_{s + N + 1} \lesssim_{s, N, l} \sum_{j = 1}^l \| \widehat z_j\|_s \prod_{i \neq j} \| \widehat z_i\|_0  \, +  \, \| z_\bot \|_s \prod_{j = 1}^l \| \widehat z_j\|_0 \,. 
$$
\end{lemma}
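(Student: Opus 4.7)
The proof decomposes naturally according to the block structure of $X$ displayed in \eqref{formula X}. The plan is to treat $X_S$ and $X_\bot$ separately, observing that $X_S$ takes values in the finite-dimensional space $h_S$ (hence is trivially smoothing and will be absorbed into $\mathcal R_N(\tau,z;X)$), while the structured pseudo-differential expansion of $X_\bot$ comes entirely from applying $J_\bot$ to the expansion of $\mathcal L_\bot^S(z)[X_S(\tau,z)]$ established in Lemma \ref{espansione L S bot q z}(ii).

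First I would handle $X_S$. By \eqref{formula X}, $X_S(\tau,z) = -M(\tau,z)^{-1}[J_S\mathcal E_S(z)]$ with $M(\tau,z) := {\rm Id}_S + \tau J_S \mathcal L_S^S(z) - \tau^2 J_S \mathcal L_S^\bot(z) J_\bot \mathcal L_\bot^S(z)$. From Lemma \ref{espansione L S bot q z}(i) the operator $M(\tau,z)$ is a real analytic perturbation of the identity on the finite-dimensional space $h_S$ (after possibly shrinking $\mathcal V_\bot$), so its inverse is real analytic and bounded. Combining with Lemma \ref{lemma E(z)} one reads off $\|X_S(\tau,z)\|\lesssim \|z_\bot\|_0^2$ and the corresponding multilinear estimates for its derivatives. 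Since $h_S$ is finite dimensional, $(X_S(\tau,z),0)$ is automatically an $H^{s+N+1}$-valued real analytic term for every $N$ and can be included in the remainder $\mathcal R_N(\tau,z;X)$.

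Next I would expand $X_\bot(\tau,z) = -\tau J_\bot\mathcal L_\bot^S(z)[X_S(\tau,z)]$. Applying Lemma \ref{espansione L S bot q z}(ii) with $\widehat z_S := X_S(\tau,z)$, we obtain
$$
\mathcal L_\bot^S(z)[X_S(\tau,z)] = \mathcal F^+_{N_S}\!\circ\!\sum_{k=1}^{N+1}\!\mathcal A^+_k(z_S;\mathcal L_\bot^S)[X_S]\, D^{-k}[(\mathcal F^+_{N_S})^{-1}z_\bot] + (+\leftrightarrow -) + \mathcal R_{N+1}(z;\mathcal L_\bot^S)[X_S].
$$
Then I would push $J_\bot$ inside, using the easily-verified identities $J_\bot\circ\mathcal F^\pm_{N_S} = \mathcal F^\pm_{N_S}\circ(\ii D)$ on $L^2_\C$, so that each block becomes $\mathcal F^\pm_{N_S}\circ (\ii D)\circ\mathcal A_k^\pm[X_S]\,(\pm D)^{-k}\circ(\mathcal F^\pm_{N_S})^{-1}z_\bot$. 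Expanding $(\ii D)\circ\mathcal A_k^\pm[X_S]$ via Lemma \ref{lemma composizione pseudo} produces a pseudo-differential operator of order $-(k-1)$ whose principal symbol is $\pm\ii\,\mathcal A_k^\pm[X_S]$, plus lower-order terms and a smoothing remainder. Collecting the contributions to each order $D^{-j}$ (for $j=0,\dots,N$) from the various $k\ge 1$ yields the coefficients $a_j^\pm(\tau,z;X)$. Everything of order $\le -(N+1)$ is absorbed into $\mathcal R_N(\tau,z;X)$, together with $-\tau J_\bot\mathcal R_{N+1}(z;\mathcal L_\bot^S)[X_S]$ and the finite-dimensional $X_S$-piece discussed above.

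For the leading coefficient \eqref{formula a^+_0 (tau, z; X ) }, the only term contributing to order $D^0$ comes from $k=1$: the principal symbol $\pm\ii\,\mathcal A_1^\pm(z_S;\mathcal L_\bot^S)[X_S]$ multiplied by $-\tau$. Remark \ref{extension of mathcal L(z)}(i) gives $\mathcal A_1^\pm(z_S;\mathcal L_\bot^S)[X_S] = \partial_x^{-1}(d_Sq[X_S])$, so
$$a_0^+(\tau,z;X) = -\ii\tau\partial_x^{-1}\bigl(d_Sq[X_S(\tau,z)]\bigr),\qquad a_0^-(\tau,z;X)=-a_0^+(\tau,z;X),$$
and since $q$ and $X_S$ satisfy the reality conditions, $a_0^\pm$ takes values in $\ii\R$. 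The identity $a_k^-=\overline{a_k^+}$ for higher $k$ follows from the corresponding identity for $\mathcal A_k^\pm$ and the symmetry of the $(\pm)$ blocks in the composition formula. Finally, the tame estimates for $a_k^\pm(\tau,z;X)$ and $\mathcal R_N(\tau,z;X)$ are obtained by combining the multilinear estimates for $X_S$ from the first step with those of Lemma \ref{espansione L S bot q z}(ii) and the product/composition estimates in Lemma \ref{lemma composizione pseudo}; the quadratic factor $\|z_\bot\|_0^2$ enters through $X_S$, and the desired form $\|z_\bot\|_0\|z_\bot\|_s$ emerges by trading one factor of $\|z_\bot\|_0$ against the $h^s_\bot$-regularity of the remaining $z_\bot$-input. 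The main technical nuisance is bookkeeping of the index shifts produced by repeatedly pushing $J_\bot = \mathcal F^\pm_{N_S}(\ii D)(\mathcal F^\pm_{N_S})^{-1}$ past $\mathcal A_k^\pm[X_S]$ and ensuring that the smoothing loss does not exceed what is absorbed in $\mathcal R_N$; this is routine but must be executed with care.
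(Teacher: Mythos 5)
Your proposal follows essentially the same route as the paper: isolate the finite-dimensional $X_S$-component into the remainder, expand $X_\bot = -\tau J_\bot\mathcal L_\bot^S(z)[X_S]$ using the order-$(N+1)$ expansion of $\mathcal L_\bot^S$ from Lemma \ref{espansione L S bot q z}(ii), commute $J_\bot$ past $\mathcal F^\pm_{N_S}$ via $J_\bot\mathcal F^\pm_{N_S} = \mathcal F^\pm_{N_S}\partial_x$, and shift the order of the pseudo-differential sum by one; the identification of $a_0^\pm$ via Remark \ref{extension of mathcal L(z)}(i) and the derivation of the tame estimates are identical.

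One small correction to the bookkeeping: you invoke Lemma \ref{lemma composizione pseudo} to expand $(\ii D)\circ\mathcal A_k^\pm[X_S]\,(\pm D)^{-k}$, but that lemma treats compositions $\partial_x^{-k}\circ a\,\partial_x^{-\ell}$ with $k,\ell\geq0$, so a positive power of $D$ in the left slot is outside its stated scope. What is actually needed here is simply the exact Leibniz identity $D\bigl(a\,D^{-k}h\bigr)=a\,D^{-(k-1)}h+(Da)\,D^{-k}h$, with no asymptotic remainder at all — which is precisely what the paper uses. The end result is the same, but there is no genuine pseudo-differential composition to perform at this stage, only a one-step product rule followed by an index shift $k\to k-1$.
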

\begin{proof}
In a first step, we compute the expansion of 
 $X_\bot (\tau, z)   =  -  \tau  J_\bot \mathcal L_{\bot}^S( z) \big[ X_S (\tau, z)   \big]$ (cf.  \eqref{formula X}).
By Lemma  \ref{espansione L S bot q z}(ii), one has
$\mathcal L_\bot^S (z)[\widehat z_S] = \mathcal {OP}_{N+1}(z_S; \mathcal L_\bot^S)[\widehat z_S] +  {\cal R}_{N+1}(z; \mathcal L_\bot^S)[\widehat z_S]$ 
where for any $\widehat z_S \in h_S$, $\mathcal {OP}_{N+1}(z_S; \mathcal L_\bot^S)[\widehat z_S] $ is the pseudo-differential operator (cf.  \eqref{expansion L bot S})
 \begin{equation}\label{expansion L bot S rep}
 {\cal F}^+_{N_S} \circ \sum_{k = 1}^{N+1}  \mathcal A^+_k(z_S; \mathcal L_\bot^S)[\widehat z_S]  \cdot D^{- k} [({\cal F}^+_{N_S})^{- 1} z_\bot] 
 + {\cal F}^-_{N_S} \circ \sum_{k = 1}^{N+1}  \mathcal A^-_k(z_S; \mathcal L_\bot^S)[\widehat z_S]  \cdot (-D)^{- k} [({\cal F}^-_{N_S})^{- 1} z_\bot]  \, .
\end{equation}
Since by the definition \eqref{definition J} of $J$, one has  $J_\bot \mathcal F_{N_S}^+ =   \mathcal F_{N_S}^+ \partial_x$
and  $\partial_x  = \ii D$, it follows that
$$
\begin{aligned}
- \tau J_\bot \mathcal {OP}_{N+1}(z_S; \mathcal L_\bot^S)[X_S(\tau, z)] = 
 & -  \ii \tau {\cal F}^+_{N_S} \circ D \sum_{k = 1}^{N+1}  \mathcal A^+_k(z_S; \mathcal L_\bot^S)[X_S(\tau, z)]  \cdot D^{- k} [({\cal F}^+_{N_S})^{- 1} z_\bot] \\
 & + \ii \tau {\cal F}^-_{N_S} \circ (-D) \sum_{k = 1}^{N+1}  \mathcal A^-_k(z_S; \mathcal L_\bot^S)[X_S(\tau, z)]  \cdot (-D)^{- k} [({\cal F}^-_{N_S})^{- 1} z_\bot]  \, 
 \end{aligned}
$$
and hence by the chain rule,
\begin{equation}\label{auxilary expansion}
\begin{aligned}
 D \sum_{k = 1}^{N+1}  \mathcal A^+_k(z_S; \mathcal L_\bot^S)[X_S(\tau, z)] & \cdot D^{- k} [({\cal F}^+_{N_S})^{- 1} z_\bot] 
 =   \sum_{k = 0}^{N}  \mathcal A^+_{k+1}(z_S; \mathcal L_\bot^S)[X_S(\tau, z)]  \cdot D^{- k} [({\cal F}^+_{N_S})^{- 1} z_\bot] \\
& - \ii   \sum_{k = 1}^{N+1} \partial_x \big( \mathcal A^+_k(z_S; \mathcal L_\bot^S)[X_S(\tau, z)] \big) \cdot D^{- k} [({\cal F}^+_{N_S})^{- 1} z_\bot] .
 \end{aligned}
\end{equation}
It then follows that  $X(\tau, z)$ has an expansion of the form
$\big( \, 0,  \, \mathcal {OP}_{N} (\tau, z; X) \big) + \mathcal R_N( \tau, z; X) $
with $ \mathcal {OP}_{N} (\tau, z; X)$ given by
$$
{\cal F}^+_{N_S} \circ \sum_{k = 0}^N    a^+_k (\tau, z; X )  D^{- k} [({\cal F}^+_{N_S})^{- 1}z_\bot] 
\, +  \, {\cal F}^-_{N_S} \circ \sum_{k = 0}^N    a^-_k (\tau, z; X )  (-D)^{- k} [({\cal F}^-_{N_S})^{- 1} z_\bot] \, ,
$$
where the coefficients $a^\pm_k (\tau, z; X)$ can be read off from \eqref{auxilary expansion}.
In view of Lemma  \ref{espansione L S bot q z}, the formula \eqref{formula X} for $X(\tau, z)$, and Lemma \ref{lemma E(z)},
the coefficients $a^\pm_k (\tau, z; X)$ and the remainder term $ \mathcal R_N( \tau, z; X) $ have the stated properties.
Finally by \eqref{auxilary expansion}, the coefficients $a^\pm_0(\tau, z; X)$ are given by
$ \mp \ii \tau \mathcal A^+_{1}(z_S; \mathcal L_\bot^S)[X_S(\tau, z)]$
and hence by Remark \ref{extension of mathcal L(z)}(i),
$$
a_0^+(\tau, z; X) = - \ii \tau \partial_x^{-1}\big( d_S q [X_S(\tau, z)] \big), \qquad
a_0^-(\tau, z; X) = \overline{a_0^+(\tau, z; X) } =   \ii \tau \partial_x^{-1}\big( d_S q [X_S(\tau, z)] \big)
$$
where for the latter identity we used that $\partial_x^{-1}\big( d_S q [X_S(\tau, z)] \big)$ is real valued.
\end{proof}

After these preliminary considerations we can now state the main result of this section, saying  that for any $\tau_0, \tau \in [0, 1]$, 
the flow map $\Psi_X^{\tau_0, \tau}$, defined on $\mathcal V'$ and taking values in $\mathcal V$, admits an expansion,
referred to as parametrix for the solution of the initial value problem of $\partial_\tau z(\tau) = X(\tau, z(\tau))$. 
\begin{theorem}\label{espansione flusso per correttore}
(i) For any $\tau_0, \tau \in [0, 1]$, $N \in \N$, and $z = (z_S, z_\bot) \in \mathcal V'$,
\begin{equation}\label{espansione asintotica Psi tau 0 tau}
\Psi_X^{\tau_0, \tau}(z) = z +  \big( 0, \, \, \mathcal{OP}_N ( z; \Psi_X^{\tau_0, \tau}) \big)  + {\cal R}_N( z; \Psi_X^{\tau_0, \tau}) ,
\end{equation}
where $ \mathcal{OP}_N ( z; \Psi_X^{\tau_0, \tau} )$ is the pseudo-differential operator
\begin{equation}\label{expansion for Psi^{tau0, tau}}
{\cal F}^+_{N_S} \circ\sum_{k = 0}^N   a^+_k(z; \Psi_X^{\tau_0, \tau}) \,\, D^{- k} [ ({\cal F}^+_{N_S})^{- 1} z_\bot] \, + \,
{\cal F}^-_{N_S} \circ\sum_{k = 0}^N   a^-_k(z; \Psi_X^{\tau_0, \tau}) \,\, (-D)^{- k}[ ({\cal F}^-_{N_S})^{- 1} z_\bot]
\end{equation}
and for any $\tau_0, \tau \in [0, 1]$, $0 \le k \le N$, and $s \geq 0$, the maps 
$$
\mathcal V' \to H^s_\C, \, z \mapsto a^\pm_k(z; \Psi_X^{\tau_0, \tau}), \qquad
\mathcal V' \cap h^s_0 \to  h^{s + N +1}_0, \, z \mapsto {\cal R}_N(z; \Psi_X^{\tau_0, \tau}) \, ,
$$ 
are real analytic and $a^-_k(z; \Psi_X^{\tau_0, \tau}) = \overline{a^+_k(z; \Psi_X^{\tau_0, \tau})}$. 
Furthermore, for any $k \ge 0$, $z \in \mathcal V'$,  $ \widehat z \in h^0_0$,
$$
 \| a^{\pm}_k(z; \Psi_X^{\tau_0, \tau} ) \|_s \lesssim_{s, k} \| z_\bot \|_0^2\,, \qquad  \| d a^{\pm}_k(z; \Psi_X^{\tau_0, \tau} )[\widehat z]\|_s 
 \lesssim_{s, k} \| z_\bot \|_0 \| \widehat z\|_0 \, .
 $$
If in addition $ \widehat z_1, \ldots, \widehat z_l \in h^0_0$, $l \geq 2$, then
$$
 \| d^l a_k(z; \Psi_X^{\tau_0, \tau} ) [\widehat z_1, \ldots, \widehat z_l]\|_s \lesssim_{s, k, l} \prod_{j = 1}^l \| \widehat z_j\|_0\,. 
$$
The remainder term satisfies the following estimates: for any $z \in \mathcal V'  \cap h^s_0$ and  $\widehat z \in h^s_0$,
$$
 \|{\cal R}_N(z; \Psi_X^{\tau_0, \tau}) \|_{s + N +1} \lesssim_{s, N} \| z_\bot \|_s \| z_\bot \|_0\,, \qquad
 \| d {\cal R}_N(z; \Psi_X^{\tau_0, \tau}) [\widehat z]\|_{s + N +1} \lesssim_{s, N} \| z_\bot \|_s \| \widehat z\|_0 + \| z_\bot \|_0 \| \widehat z\|_s \,, 
 $$
and for any  $\widehat z_1, \ldots, \widehat z_l \in h^s_0$, $l \geq 2$,
 $$
 \| d^l {\cal R}_N(z; \Psi_X^{\tau_0, \tau})[\widehat z_1, \ldots, \widehat z_l] \|_{s + N +1} \lesssim_{s, N, l} \, 
 \sum_{j = 1}^l \| \widehat z_j \|_s \prod_{i \neq j} \| \widehat z_i\|_0  \ + \ \| z_\bot \|_s \prod_{j = 1}^l \| \widehat z_j\|_0\,.
$$
(ii) In particular, the statements of item (i)  hold for $\Psi_C := \Psi_X^{0, 1}: \mathcal V' \to \Psi_X^{0, 1}(\mathcal V')$, referred to as symplectic corrector,
and $ \Psi_X^{1, 0} : \mathcal V' \to  \Psi_X^{1, 0}(\mathcal V')$,
which by a slight abuse of terminology with respect to its domain of definition, we refer to as the inverse of  $\Psi_C$ and denote by $\Psi_C^{- 1}$.
The expansion of the map $\Psi_C$ is then written as ($z \in \mathcal V'$)
$$
\Psi_C(z) = z +  \big(  0  , \,  \mathcal {OP}_N ( z; \Psi_C ) \big) + {\cal R}_N(z; \Psi_C) \, ,
$$
where $\mathcal {OP}_N ( z; \Psi_C )$ is the pseudo-differential operator
\begin{equation}\label{OP_N  Psi_C}
{\cal F}^+_{N_S}  \circ \sum_{k = 0}^N  a^+_k(z;  \Psi_C) D^{- k} [({\cal F}^+_{N_S})^{- 1} z_\bot ] \, + \,
{\cal F}^-_{N_S}  \circ \sum_{k = 0}^N  a^-_k(z;  \Psi_C) (-D)^{- k} [({\cal F}^-_{N_S})^{- 1} z_\bot ] 
\end{equation}
with
$$
a^\pm_k(z; \Psi_C) := a^\pm_k(z; \Psi_X^{0, 1})\,, \qquad {\cal R}_N( z; \Psi_C) := {\cal R}_N(z; \Psi_X^{0, 1})\,.
$$
Similarly, the expansion for the inverse $\Psi_C^{- 1}(z)$, $z \in \mathcal V'$, is of the form
$$
\Psi_C(z)^{- 1} = z +  \big( \, 0 , \ \mathcal {OP}_N ( z; \Psi_C^{-1} ) \big) + {\cal R}_N(z; \Psi_C^{- 1})
$$ 
where $\mathcal {OP}_N ( z; \Psi_C^{-1} )$ is given by
$$
{\cal F}^+_{N_S}  \circ \sum_{k = 0}^N  a^+_k(z;  \Psi_C^{-1}) D^{- k} [({\cal F}^+_{N_S})^{- 1} z_\bot ] \, + \,
{\cal F}^-_{N_S}  \circ \sum_{k = 0}^N  a^-_k(z;  \Psi_C^{-1}) (-D)^{- k} [({\cal F}^-_{N_S})^{- 1} z_\bot ] 
$$
with
$$
a_k^\pm(z;  \Psi^{- 1}_C) := a^\pm_k(z;  \Psi_X^{1, 0})\,, \qquad {\cal R}_N(z; \Psi_C^{- 1}) := {\cal R}_N(z; \Psi_X^{1, 0}).
$$ 
As a consequence, $a^-_k(z; \Psi_C) = \overline{a^+_k(z; \Psi_C)} $ 
and $a^-_k(z; \Psi_C^{-1}) = \overline{a^+_k(z; \Psi_C^{-1})} $.
\end{theorem}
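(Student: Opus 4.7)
The plan is to derive the expansion from the integral form of the ODE
\begin{equation*}
\Psi_X^{\tau_0, \tau}(z) = z + \int_{\tau_0}^{\tau} X(s, \Psi_X^{\tau_0, s}(z)) \, ds
\end{equation*}
combined with the expansion of $X$ provided by Lemma \ref{lemma campo vettoriale new}. Writing $z(s) := \Psi_X^{\tau_0, s}(z)$ and substituting $X(s, z(s)) = (0, \mathcal{OP}_N(s, z(s); X)) + \mathcal{R}_N(s, z(s); X)$, the remainder contribution is already $(s{+}N{+}1)$-smoothing by Lemma \ref{lemma campo vettoriale new}, and since by Lemma \ref{estimates for flow} the map $s \mapsto z(s)$ depends real analytically on $z$ with $\|z(s) - z\|_s$ controlled by $\|z_\bot\|_s \|z_\bot\|_0$, the time integral contributes directly to $\mathcal{R}_N(z; \Psi_X^{\tau_0, \tau})$ with the claimed tame estimates.

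For the pseudo-differential part we must replace both the argument $z(s)$ inside the coefficients and the input $z_\bot(s)$ by the initial data $z$ and $z_\bot$. I would Taylor expand
\begin{equation*}
a_k^\pm(s, z(s); X) = a_k^\pm(s, z; X) + \int_0^1 d_z a_k^\pm(s, z + r(z(s) - z); X)[z(s) - z] \, dr,
\end{equation*}
and write $z_\bot(s) = z_\bot + (z_\bot(s) - z_\bot)$, using Lemma \ref{estimates for flow} to bound the corrections. The resulting ``main'' terms define
\begin{equation*}
a_k^\pm(z; \Psi_X^{\tau_0, \tau}) := \int_{\tau_0}^{\tau} a_k^\pm(s, z; X) \, ds + (\text{higher-order corrections from iteration}),
\end{equation*}
while the fine errors produce either pseudo-differential symbols of higher order $D^{-k-j}$ (absorbed into $a^\pm_{k+j}$ via the symbolic calculus of Lemma \ref{lemma composizione pseudo}) or genuinely smoothing remainders absorbed into $\mathcal{R}_N(z; \Psi_X^{\tau_0, \tau})$. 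Real analyticity is inherited from the real analyticity of the coefficients of $X$ and of the flow, and the identity $a_k^- = \overline{a_k^+}$ is preserved at each step because it holds in Lemma \ref{lemma campo vettoriale new}. Item (ii) is then a direct specialization of (i) to $(\tau_0, \tau) = (0,1)$ and $(1,0)$, using that $\Psi_X^{1,0}$ is, by uniqueness for the initial value problem, the inverse of $\Psi_X^{0,1}$ on the common domain of definition.

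The main obstacle is making the Taylor expansion argument precise without losing the pseudo-differential structure: after expanding the coefficients, the input to the pseudo-differential operator involves $z_\bot(s) - z_\bot$ acted on by the variation $dz(s)$, and keeping the expansion in the canonical form with input $z_\bot$ requires repeatedly invoking the composition lemma for pseudo-differential operators (Lemma \ref{lemma composizione pseudo}) to commute $D^{-k}$ past smooth multiplication and to truncate in a way compatible with the target order $N$. A clean implementation is by induction on $N$: the base case follows from Lemma \ref{estimates for flow} together with the leading-order identity \eqref{formula a^+_0 (tau, z; X ) }, and the inductive step applies the expansion of $X$ at order $N+1$ inside the integrand, substituting the induction hypothesis for $\Psi_X^{\tau_0, s}(z)$ and absorbing the residual terms into $\mathcal{R}_{N+1}$ via the tame estimates of Lemmas \ref{espansione L S bot q z}, \ref{lemma campo vettoriale new} and \ref{estimates for flow}.
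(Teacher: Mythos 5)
Your overall plan is in the right spirit (start from the integral/flow equation $\partial_\tau \Psi_X^{\tau_0,\tau}(z)=X(\tau,\Psi_X^{\tau_0,\tau}(z))$, feed in the expansion of $X$ from Lemma \ref{lemma campo vettoriale new}, commute $D^{-k}$ via Lemma \ref{lemma composizione pseudo}, let the smoothing part of $X$ feed the remainder), but the specific decomposition you propose does not close, and the key structural insight of the paper is missing.

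First, the step ``Taylor expand $a_k^\pm(s,z(s);X)$ around $z$ and write $z_\bot(s)=z_\bot + (z_\bot(s)-z_\bot)$'' is not what the paper does, and it introduces a genuine difficulty. The coefficients $a_k^\pm(s,z(s);X)$ and the correction $z_\bot(s)-z_\bot$ are \emph{both} of order $\|z_\bot\|_0^2$, which is exactly the order of the coefficients $a_k^\pm(z;\Psi_X^{\tau_0,\tau})$ you want to produce. So freezing to $z$ and iterating does not a priori converge to a finite expression at each order $k$: you would need to track an infinite Picard-type iteration and show that it organizes into the claimed pseudo-differential form, which is precisely what needs to be proved. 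Your formula $a_k^\pm(z;\Psi_X^{\tau_0,\tau}) := \int_{\tau_0}^\tau a_k^\pm(s,z;X)\,ds + (\text{corrections})$ leaves this unresolved.

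What actually makes the argument close in the paper is a different mechanism, and it is worth naming. The paper does \emph{not} expand the argument of the coefficients: it keeps $a_k^\pm(\tau,z;\tau_0):=a_k^\pm(\tau,\Psi_X^{\tau_0,\tau}(z);X)$ intact. What it substitutes is the (as yet unknown) expansion of $\Psi_X^{\tau_0,\tau}(z)_\bot$ into the \emph{argument} $D^{-k}[(\mathcal F^\pm_{N_S})^{-1}\Psi_X^{\tau_0,\tau}(z)_\bot]$. After commuting $D^{-k}$ past multiplication operators (Lemma \ref{lemma composizione pseudo}) and re-indexing, the coefficient $b_k^\pm(\tau,z;\tau_0)$ of $D^{-k}$ turns out to depend only on the unknowns $a_j^\pm(z;\Psi_X^{\tau_0,\tau})$ with $j<k$; see \eqref{definition b k}. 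This triangular structure is what permits the recursive definition: matching $\partial_\tau\mathcal{OP}_N(z;\Psi_X^{\tau_0,\tau})=\mathcal{OP}_N(\tau,z;Y_{\tau_0})$ term by term yields the Riccati-type equation $\partial_\tau a_0^\pm = a_0^\pm(\tau,z;\tau_0)\,(1+a_0^\pm)$ with $a_0^\pm|_{\tau=\tau_0}=0$, solved explicitly by the exponential formula \eqref{def a_0^pm}, and \emph{linear} ODEs \eqref{equ for a_k} for $a_k^\pm$, $k\ge1$, solved by variation of constants \eqref{def a_k^pm}. Your sketch never surfaces the triangular dependence, nor the Riccati/exponential, nor the fact that higher coefficients satisfy linear ODEs; these are the actual content of the proof, not a routine bookkeeping detail.

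Finally, your claim that the remainder ``contributes directly to $\mathcal R_N$ with the claimed tame estimates'' skips a second essential step: the remainder $\mathcal R_N(z;\Psi_X^{\tau_0,\tau})$ appears on both sides of the integral equation it satisfies (see \eqref{equazione integrale resto}), because $D^{-k}$ applied to the remainder part of $\Psi_X^{\tau_0,t}(z)_\bot$ produces terms involving $\mathcal R_N(z;\Psi_X^{\tau_0,t})$ itself. Controlling this requires a Gronwall argument (using the smallness of $\mathcal V'_\bot$), and the same structure is used again for the derivatives $d^l\mathcal R_N$. Item (ii) being a specialization of (i) to $(\tau_0,\tau)=(0,1)$ and $(1,0)$ is indeed straightforward and matches the paper.
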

\begin{proof}
Clearly, item (ii) is a direct consequence of (i). Since the proof of item (i) is quite lengthy, we divide it up into several steps.
First note that the flow map $\Psi^{\tau_0, \tau} \equiv \Psi_X^{\tau_0, \tau}$ is a bounded nonlinear operator acting on $\mathcal V' \cap h^s_0$, $s \ge 0$, 
which satisfies the differential equation
\begin{equation}\label{Duhamel ODE X}
\partial_\tau \Psi^{\tau_0, \tau}(z) =  Y_{\tau_0}(\tau, z)  \, ,
\qquad Y_{\tau_0}(\tau, z):= X(\tau, \Psi^{\tau_0, \tau}(z)) \, .
\end{equation}
Using the latter equation, the coefficients $a^\pm_k(z; \Psi^{\tau_0, \tau})$, $k \ge 0,$ 
of the parametrix  \eqref{espansione asintotica Psi tau 0 tau}
are determined inductively. 
In a first step, we want to obtain an expansion of 
$$
Y_{\tau_0}(\tau, z) =  - \mathcal L_\tau(z)^{-1} [J \mathcal E(\Psi^{\tau_0, \tau}(z))]
 \, , \qquad  0 \le  \tau_0, \, \tau \le 1, \  z \in \mathcal V' ,
$$ 
of the form $Y_{\tau_0} (\tau, z)  =  \big( \, 0 , \, {\mathcal{OP}}_N (\tau, z; Y_{\tau_0}) \big) + \mathcal R_N(\tau, z; Y_{\tau_0})$ 
where ${\mathcal{OP}}_N (\tau, z; Y_{\tau_0})$ is a pseudo-differential operator and $\mathcal R_N(\tau, z; Y_{\tau_0})$
a remainder term with the ususal properties.
By \eqref{espansione campo vettoriale correttore}, one has for any $0 \le  \tau_0$, $\tau \le 1,$ $z \in \mathcal V'$, 
\begin{equation}\label{expansion X tau Psi tau_0, t}
Y_{\tau_0}(\tau, z) = \big( \, 0 , \, {\mathcal{OP}}_N(\tau, \Psi^{\tau_0, \tau}(z); X) \big) + \mathcal R_N (\tau, \Psi^{\tau_0, \tau}(z); X ) \, ,
 \end{equation}
\begin{equation}\label{hoch schule - 1}
\begin{aligned}
{\mathcal{OP}}_N(\tau, \Psi^{\tau_0, \tau}(z); X)  & = 
  {\cal F}^+_{N_S} \circ \sum_{k = 0}^N    a^+_k (\tau, z; \tau_0 )  D^{- k} [({\cal F}^+_{N_S})^{- 1}\Psi^{\tau_0, \tau}(z)_\bot] \\
& \, +  \, {\cal F}^-_{N_S} \circ \sum_{k = 0}^N    a^-_k (\tau, z; \tau_0 )  (-D)^{- k} [({\cal F}^-_{N_S})^{- 1} \Psi^{\tau_0, \tau}(z)_\bot] \, .
\end{aligned}
\end{equation}
where,  to simplify notation, $a^\pm_k(\tau, z; \tau_0)$ is defined as
\begin{equation}\label{definition a _ k (t,z; tau)}
a^\pm_k(\tau, z; \tau_0) := a^\pm_k(\tau, \Psi^{\tau_0, \tau}(z); X) \, .
\end{equation}
In a first step, we argue formally and want to derive an expansion for $Y_{\tau_0}(\tau, z)$ 
in terms of  the (not yet determined) expansion \eqref{espansione asintotica Psi tau 0 tau}-\eqref{expansion for Psi^{tau0, tau}} of $ \Psi_X^{\tau_0, \tau}(z)$.
We begin with expanding the term $D^{- k}[ ({\cal F}^+_{N_S})^{- 1}\Psi^{\tau_0, \tau}(z)_\bot]$ in \eqref{hoch schule - 1}.

\smallskip
\noindent
{\it Expansion of $D^{- k} [({\cal F}^+_{N_S})^{- 1} \Psi^{\tau_0, \tau}(z)_\bot ]$, $0 \le k \le N$:} 
Substituting the expansion \eqref{espansione asintotica Psi tau 0 tau}-\eqref{expansion for Psi^{tau0, tau}}
into the expression $D^{- k} [({\cal F}^+_{N_S})^{- 1}\Psi^{\tau_0, \tau}(z)_\bot]$ one obtains
\begin{align}
D^{- k} [( {\cal F}^+_{N_S})^{- 1} & \Psi^{\tau_0, \tau}(z)_\bot] 
= D^{- k} \Big( [( {\cal F}^+_{N_S})^{- 1} z_\bot] +  \sum_{j = 0}^N  a^+_j(z; \Psi^{\tau_0, \tau}) \,\, D^{- j} [({\cal F}^+_{N_S})^{- 1}z_\bot] 
 + ({\cal F}^+_{N_S})^{- 1} {\cal R}_N(z; \Psi^{\tau_0, \tau})_\bot  \Big) 
\nonumber\\
& = D^{- k} \Big( [({\cal F}^+_{N_S})^{- 1} z_\bot] + \sum_{j = 0}^{N - k} a^+_j(z; \Psi^{\tau_0, \tau}) \,\, D^{- j} [({\cal F}^+_{N_S})^{- 1}z_\bot] \, \Big) 
+ {\cal R}_{N, k}^{(+, 1)}(\tau, z; \tau_0) \label{hoch schule 0}
\end{align}
where 
\begin{equation}\label{cal R n N (1)}
{\cal R}_{N, k}^{(+, 1)}(\tau, z; \tau_0) := D^{- k} \circ  \sum_{j = N - k + 1}^{N }  a^+_j(z; \Psi^{\tau_0, \tau}) \,\, D^{- j} [({\cal F}^+_{N_S})^{- 1}z_\bot] 
+ D^{- k} [ ({\cal F}^+_{N_S})^{- 1} {\cal R}_N(z; \Psi^{\tau_0, \tau})_\bot ] \,.
\end{equation}
Using Lemma \ref{lemma composizione pseudo}(i) and the notation established there, one has for any $0 \le j \le N-k$,
$$
\begin{aligned}
D^{- k} \circ  \big( a^+_j(z; \Psi^{\tau_0, \tau}) \,  D^{- j} [ ({\cal F}^+_{N_S})^{- 1} z_\bot] \big) & = 
\sum_{i = 0}^{N - k - j} C_i( k, j) \cdot D^i a^+_j(z; \Psi^{\tau_0, \tau}) \, \cdot \, D^{- k - j - i} [ ({\cal F}^+_{N_S})^{- 1}z_\bot] \\
& + {\cal R}_{N, k, j}^{(+, 2)}(\tau, z; \tau_0)
\end{aligned}
$$ 
where 
\begin{equation}\label{definition R (2)}
{\cal R}_{N,  k, j}^{(+, 2)}(\tau, z; \tau_0) :=  {\cal R}_{N,  k, j}^{\psi do}(a^+_j(z; \Psi^{\tau_0, \tau})) \,[ ( {\cal F}^+_{N_S})^{- 1} z_\bot]\,.
\end{equation}
By Lemma \ref{lemma composizione pseudo}, for any $z \in \mathcal V' \cap h^s_0$, $s \ge 0$, $0 \le \tau, \tau_0 \le 1$, $0 \le j \le N$,
\begin{equation}\label{stima cal R n k N (2)}
\| {\cal R}_{N, k, j}^{(+, 2)}(\tau, z; \tau_0)\|_{s + N + 1} \lesssim_{s, N} {\rm max}_{0 \le i  \le N}\| a^+_i (z ;  \Psi^{\tau_0, \tau})\|_{s + 2 N}  \| z_\bot \|_s\,. 
\end{equation}
Combining the above expansions, 
the formula \eqref{hoch schule 0} for $D^{- k} [( {\cal F}^+_{N_S})^{- 1} \Psi^{\tau_0, \tau}(z)_\bot ] $ then reads
$$
D^{- k} [({\cal F}^+_{N_S})^{- 1} z_\bot] + \sum_{j = 0}^{N - k} \sum_{i  =0}^{N - k - j} C_i(k, j) \cdot D^i  a^+_j(z; \Psi^{\tau_0, \tau}) \cdot   D^{- k - j - i} [( {\cal F}^+_{N_S})^{- 1}z_\bot]   +
  {\cal R}_{N, k}^{(+, 3)}( \tau, z; \tau_0)
$$
where 
\begin{equation}\label{definizione cal R n N (3)}
{\cal R}_{N, k}^{(+, 3)}(\tau, z; \tau_0) := {\cal R}_{N, k}^{(+, 1)}(\tau, z;  \tau_0) +  \sum_{j = 0}^{N - k} {\cal R}_{N, k, j}^{(+, 2)}(\tau, z;  \tau_0) \,. 
\end{equation}
Changing in the double sum $\sum_{j = 0}^{N - k} \sum_{i  =0}^{N - k - j}$  the index $i$ of summation to $n:= i+j$ and then interchanging the order of summation, one obtains
$$
\sum_{j = 0}^{N - k} \sum_{i  =0}^{N - k - j} C_i(k, j) \cdot D^i  a^+_j \cdot  D^{- k - j - i} 
=\sum_{n = 0}^{N - k} \sum_{j=0}^{n} C_{n-j} (k, j) \cdot D^{n-j}  a^+_j \cdot  D^{- k - n} \, ,
$$
implying that  $D^{- k} [ ({\cal F}^+_{N_S})^{- 1} \Psi^{\tau_0, \tau}(z)_\bot ] $ equals
\begin{equation}\label{hoch schule 2}
 D^{- k} [ ({\cal F}^+_{N_S})^{- 1}z_\bot] + \sum_{n = 0}^{N - k} \big( \sum_{ j=0}^{n} C_{n -j}(k , j) \cdot D^{n-j} a^+_j(z; \Psi^{\tau_0, \tau})  \big) \cdot D^{- k - n} [({\cal F}^+_{N_S})^{- 1}z_\bot] + 
{\cal R}_{N, k}^{(+, 3)}(\tau, z;  \tau_0) \, .
\end{equation}

%
%
%
\smallskip


\noindent
{\it Expansion of  $\sum_{k = 0}^N  a^+_k(\tau, z; \tau_0) \,  D^{- k} [({\cal F}^+_{N_S})^{- 1} \Psi^{\tau_0, \tau}(z)_\bot]$:} 
Substituting for any $0 \le k \le N$ the expansion \eqref{hoch schule 2} into 
$ \sum_{k = 0}^N  a^+_k(\tau, z; \tau_0) \,  D^{- k} [({\cal F}^+_{N_S})^{- 1} \Psi^{\tau_0, \tau}(z)_\bot]$, one gets
\begin{align}
&   \sum_{k = 0}^N  a^+_k(\tau, z; \tau_0) \,\,  D^{- k}[ ({\cal F}^+_{N_S})^{- 1} \Psi^{\tau_0, \tau}(z)_\bot]    \, 
  = \,  \sum_{k = 0}^N   a^+_k(\tau, z; \tau_0) \,\,   D^{- k}[ ( {\cal F}^+_{N_S})^{- 1} z_\bot] \,\,   \label{sumatra 0}\\
& + \,  \sum_{k = 0}^N \sum_{n = 0}^{N - k} \sum_{ j = 0}^n C_{n-j}(k, j) a^+_k(\tau, z; \tau_0) \cdot D^{n - j} a^+_j(z; \Psi^{\tau_0, \tau}) \cdot D^{-k - n } [ ({\cal F}^+_{N_S})^{- 1} z_\bot] \nonumber \\
&  + \sum_{k = 0}^N   a^+_k(\tau, z; \tau_0) {\cal R}_{N, k}^{(+, 3)}(\tau, z;  \tau_0)  \nonumber \, .
\end{align} 
Changing the index of summation $n$ to $l:= k+n$ and then interchanging the sum with respect to $k$ and $l$ and subsequently with respect to $k$ and $j$, 
the triple sum in \eqref{sumatra 0} becomes
\begin{align}\label{sumatra 1}
 & \sum_{k = 0}^N \sum_{l = k}^{N} \sum_{ j = 0}^{l - k} C_{l - k -j}(k, j) a^+_k(\tau, z; \tau_0) \cdot D^{l - k - j} a^+_j(z; \Psi^{\tau_0, \tau})  \cdot 
D^{-l }[ ( {\cal F}^+_{N_S})^{- 1} z_\bot] \nonumber \\
& = \, \sum_{l = 0}^N \Big( \sum_{k= 0}^{l } \sum_{ j = 0}^{l - k} C_{l - k -j}(k, j) a^+_k(\tau, z; \tau_0) \cdot D^{l - k - j} a^+_j(z; \Psi^{\tau_0, \tau})) \Big) \, 
D^{-l } [ ({\cal F}^+_{N_S})^{- 1}z_\bot] \nonumber \\
& = \, \sum_{l = 0}^N \Big( \sum_{j= 0}^{l } \sum_{ k = 0}^{l - j} C_{l - k -j}(k, j) a^+_k(\tau, z; \tau_0) \cdot D^{l - k - j} a^+_j(z; \Psi^{\tau_0, \tau}) \Big) \, 
D^{-l }[ ({\cal F}^+_{N_S})^{- 1} z_\bot]\,.
\end{align}
Combining \eqref{sumatra 0} - \eqref{sumatra 1} and  writing $n$ for $k$ and $k$ for $l$ in \eqref{sumatra 1}, one infers  that
\begin{align}
 \sum_{k = 0}^N  & a^+_k(\tau, z; \tau_0) \,  D^{- k} [({\cal F}^+_{N_S})^{- 1} \Psi^{\tau_0, \tau}(z)_\bot]
 = \,  \sum_{k = 0}^N   a^+_k(\tau, z; \tau_0) \,\,   D^{- k}[ ( {\cal F}^+_{N_S})^{- 1} z_\bot]   \label{sumatra 0 +}\\
& + \, \sum_{k = 0}^N \big( \sum_{j= 0}^{k} \sum_{ n = 0}^{k - j} C_{k - n -j}(n, j) a^+_n(\tau, z; \tau_0) \cdot D^{k - n - j} a^+_j(z; \Psi^{\tau_0, \tau}) \big) \cdot D^{-k }[ ({\cal F}^+_{N_S})^{- 1} z_\bot] \nonumber \\
& + \sum_{k = 0}^N   a^+_k(\tau, z; \tau_0) {\cal R}_{N, k}^{(+, 3)}(\tau, z;  \tau_0)  \nonumber  \, ,
\end{align}
with ${\cal R}_{N, k}^{(+, 3)}(\tau, z;  \tau_0) $ given by \eqref{definizione cal R n N (3)}.

\smallskip

\noindent
{\it Expansion of $ \sum_{k = 0}^N  a^-_k(\tau, z; \tau_0) \, (- D)^{- k} [({\cal F}^-_{N_S})^{- 1} \Psi^{\tau_0, \tau}(z)_\bot]$:}  
Analogous to the case '$+$', one obtains
\begin{align}
 \sum_{k = 0}^N  & a^-_k(\tau, z; \tau_0) \, (-D)^{- k} [({\cal F}^-_{N_S})^{- 1} \Psi^{\tau_0, \tau}(z)_\bot]
 = \,  \sum_{k = 0}^N   a^-_k(\tau, z; \tau_0) \,\,   (-D)^{- k}[ ( {\cal F}^-_{N_S})^{- 1} z_\bot]   \label{sumatra 0 -} \\
& + \, \sum_{k = 0}^N \big( \sum_{j= 0}^{k} \sum_{ n = 0}^{k - j} C_{k - n -j}(n, j) a^-_n(\tau, z; \tau_0) \cdot (-D)^{k - n - j} a^-_j(z; \Psi^{\tau_0, \tau}) \big) \cdot (-D)^{-k }[ ({\cal F}^-_{N_S})^{- 1} z_\bot] \nonumber \\
& + \sum_{k = 0}^N   a^-_k(\tau, z; \tau_0) {\cal R}_{N, k}^{(-, 3)}(\tau, z;  \tau_0)  \nonumber  \, ,
\end{align}
where for any $0 \le k \le N$, ${\cal R}_{N, k}^{(-, 3)}(\tau, z;  \tau_0)$ is defined analogous to ${\cal R}_{N, k}^{(+, 3)}(\tau, z;  \tau_0)$,
\begin{equation}\label{definizione cal R n N (-,3)}
{\cal R}_{N, k}^{(-, 3)}(\tau, z; \tau_0) := {\cal R}_{N, k}^{(-, 1)}(\tau, z;  \tau_0) +  \sum_{j = 0}^{N - k} {\cal R}_{N, k, j}^{(-, 2)}(\tau, z;  \tau_0) \, , 
\end{equation}
the remainder ${\cal R}_{N, k}^{(-, 1)}(\tau, z;  \tau_0)$ analogous to ${\cal R}_{N, k}^{(+, 1)}(\tau, z;  \tau_0)$, 
\begin{align}\label{cal R n N (-,1)}
{\cal R}_{N, k}^{(-, 1)}(\tau, z; \tau_0)  := & (-D)^{- k} \circ  \sum_{j = N - k + 1}^{N }  a^-_j(z; \Psi^{\tau_0, \tau}) \,\, (-D)^{- j} [({\cal F}^-_{N_S})^{- 1}z_\bot]  \nonumber\\
& + (-D)^{- k} [ ({\cal F}^-_{N_S})^{- 1} {\cal R}_N(z; \Psi^{\tau_0, \tau})_\bot ] \,.
\end{align}
and ${\cal R}_{N,  k, j}^{(-, 2)}(\tau, z; \tau_0)$ analogous to ${\cal R}_{N,  k, j}^{(+, 2)}(\tau, z; \tau_0)$,
\begin{equation}\label{definition R (-,2)}
{\cal R}_{N,  k, j}^{(-, 2)}(\tau, z; \tau_0) :=  {\cal R}_{N,  k, j}^{\psi do}(a^-_j(z; \Psi^{\tau_0, \tau})) \,[ ( {\cal F}^-_{N_S})^{- 1} z_\bot]\,.
\end{equation}
Furthermore,  for any $z \in \mathcal V' \cap h^s_0$, $s \ge 0$, $0 \le \tau, \tau_0 \le 1$, $0 \le j \le N$, one has (cf. \eqref{stima cal R n k N (2)})
\begin{equation}\label{stima cal R n k N (-,2)}
\| {\cal R}_{N, k, j}^{(-, 2)}(\tau, z; \tau_0)\|_{s + N + 1} \lesssim_{s, N} {\rm max}_{0 \le i  \le N}\| a^-_i (z ;  \Psi^{\tau_0, \tau})\|_{s + 2 N}  \| z_\bot \|_s\,. 
\end{equation}

\smallskip

\noindent
{\it Expansion of $Y_{\tau_0}(\tau, z)$:}
From  \eqref{hoch schule - 1} and \eqref{sumatra 0 +} - \eqref{sumatra 0 -} one infers that $Y_{\tau_0}(\tau, z) = X(\tau, \Psi^{\tau_0, \tau}(z))$
has an expansion of the form
\begin{equation}\label{expansion for Y_tau_0}
Y_{\tau_0} (\tau, z)  =  \big( \, 0 , \, {\mathcal{OP}}_N (\tau, z; Y_{\tau_0}) \big) + \mathcal R_N(\tau, z; Y_{\tau_0}) 
\end{equation}
with ${\mathcal{OP}}_N (\tau, z; Y_{\tau_0}) $ and $\mathcal R_N(\tau, z; Y_{\tau_0})$ given as follows:
using that $ C_0( k, j) = 1$ (cf. Lemma \ref{lemma composizione pseudo}(i)) one concludes that
the pseudo-differential operator ${\mathcal{OP}}_N (\tau, z; Y_{\tau_0}) $ reads
\begin{align}\label{psi-do}
&   \mathcal F^+_{N_S} \circ \big( a^+_0(\tau, z; \tau_0) \big(1 + a^+_0(z; \Psi^{\tau_0, \tau})  \big) \big) \cdot[ ({\cal F}^+_{N_S})^{- 1} z_\bot]  \\
 + & \, \mathcal F^+_{N_S} \circ \sum_{k = 1}^N \big( a^+_0(\tau, z; \tau_0) a^+_k(z; \Psi^{\tau_0, \tau})  
+  b^+_k(\tau, z; \tau_0) \big) \cdot D^{-k } [ ({\cal F}^+_{N_S})^{- 1} z_\bot] \nonumber\\
 +  & \,  \mathcal F^-_{N_S} \circ \big( a^-_0(\tau, z; \tau_0) \big(1 + a^-_0(z; \Psi^{\tau_0, \tau})  \big) \big) \cdot [ ({\cal F}^-_{N_S})^{- 1} z_\bot] \nonumber \\
 +  & \,  \mathcal F^-_{N_S} \circ \sum_{k = 1}^N \big( a^-_0(\tau, z; \tau_0) a^-_k(z; \Psi^{\tau_0, \tau}) 
 +  b^-_k(\tau, z; \tau_0) \big) \cdot (-D)^{-k } [ ({\cal F}^-_{N_S})^{- 1} z_\bot] \nonumber
\end{align}
where for any $k \ge 1$, 
\begin{equation}\label{definition b k}
b^\pm_k(\tau, z; \tau_0)  := a^\pm_k(\tau, z; \tau_0) + 
 \sum_{j= 0}^{k-1 } \sum_{ n = 0}^{k - j} C_{k - n -j}(n, j) a^\pm_n(\tau, z; \tau_0) \, (\pm D)^{k - n - j} a^\pm_j(z; \Psi^{\tau_0, \tau}) 
\end{equation}
and the remainder term $ \mathcal R_N(\tau, z; Y_{\tau_0})$ is defined as
\begin{align}\label{remainder of Y tau_0}
  \big( 0, \,\, {\cal F}^+_{N_S} \circ   \sum_{k = 0}^N  a^+_k(\tau, z; \tau_0) & {\cal R}_{N, k}^{(+, 3)}(\tau, z; \tau_0) 
 + \, {\cal F}^-_{N_S} \circ   \sum_{k = 0}^N  a^-_k(\tau, z; \tau_0) {\cal R}_{N, k}^{(-, 3)}(\tau, z; \tau_0) \, \big) \nonumber\\
& +  \,{\cal R}_N(\tau , \Psi^{\tau_0, \tau}(z); X) \,.
\end{align}

\noindent
{\it Definition and estimates of $a^\pm_k(z; \Psi^{\tau_0, \tau})$, $ k \ge 0$:}
The fact that the coefficients $b^+_k(\tau, z; \tau_0)$, $k \ge 1$, depend on the unknown coefficients $a^+_j(z; \Psi^{\tau_0, \tau})$ with $0 \le j \le k-1,$ 
but not on $a^+_j(z; \Psi^{\tau_0, \tau})$ with $j \ge k$,
allows to define $a^+_k(z; \Psi^{\tau_0, \tau})$ recursively by using equation \eqref{Duhamel ODE X}. 
Clearly, the coefficients $a^-_k(z; \Psi^{\tau_0, \tau})$ can be defined in the same way. 
Our candidates for $a^\pm_k(z; \Psi^{\tau_0, \tau})$ are thus obtained by solving
$\partial_\tau  \mathcal{OP}_N ( z; \Psi_X^{\tau_0, \tau}) =  {\mathcal{OP}}_N (\tau, z; Y_{\tau_0})$, or in more detail, 
\begin{equation}\label{equ for a_0}
\partial_\tau a^\pm_0(z; \Psi^{\tau_0, \tau}) = a^\pm_0(\tau, z; \tau_0) \cdot (1 + a^\pm_0(z; \Psi^{\tau_0, \tau})   ) \, ,  \quad a^\pm_0(z; \Psi^{\tau_0, \tau_0}) = 0 \, ,
\end{equation}
and for any $k \ge 1$
\begin{equation}\label{equ for a_k}
\partial_\tau a^\pm_k(z; \Psi^{\tau_0, \tau}) =  a^\pm_0(\tau, z; \tau_0) a^\pm_k(z; \Psi^{\tau_0, \tau}) +  b^\pm_k(\tau, z; \tau_0)  \, ,  \quad a^\pm_k(z; \Psi^{\tau_0, \tau_0}) = 0 \, .
\end{equation}
The solution of \eqref{equ for a_0} then leads to the definition
\begin{equation}\label{def a_0^pm}
a^\pm_0(z; \Psi^{\tau_0, \tau}) := e^{\alpha^\pm(\tau, z; \tau_0)} - 1 \, , \qquad  \alpha^\pm(\tau, z; \tau_0):= \int_{\tau_0}^\tau a^\pm_0(t, z; \tau_0) \, d t
\end{equation}
and, recursively, for any $k \ge 1 $, the one of \eqref{equ for a_k}  to 
\begin{equation}\label{def a_k^pm}
a^\pm_k(z; \Psi^{\tau_0, \tau}) := e^{\alpha^\pm(\tau, z; \tau_0)} \int_{\tau_0}^\tau  e^{-\alpha^\pm(t, z; \tau_0)} b^\pm_k(t, z; \tau_0) \, d t \, .
\end{equation}
Going through the arguments above, one verifies that for any $k \ge 0$, $a^-_k(z; \Psi^{\tau_0, \tau}) = \overline{ a^+_k(z; \Psi^{\tau_0, \tau})}$.
To prove the claimed estimates for $a^\pm_k(z; \Psi^{\tau_0, \tau})$, $k \ge 0$, we first estimate $a^\pm_k(\tau, z; \tau_0)$. 
Recall that by \eqref{definition a _ k (t,z; tau)},
$a^\pm_k(\tau, z; \tau_0) = a^\pm_k(\tau, \Psi^{\tau_0, \tau}(z); X)$. 
By Lemma \ref{lemma campo vettoriale new} and Lemma \ref{estimates for flow} one has
for any $0 \le \tau_0, \tau  \le 1$, $z \in \mathcal V'$, and $s \ge 0,$
\begin{equation}\label{estimate a k (t, z, tau 0)}
\| a^\pm_k(\tau, z; \tau_0) \|_s \lesssim_{s, k} \| \pi_\bot \Psi^{\tau_0, \tau}(z) \|_0^2 \lesssim_{s, k}  \| z_\bot \|^2_0 \,.
\end{equation}
It then follows from the definition \eqref{def a_0^pm} of $ a^\pm_0(z; \Psi^{\tau_0, \tau})$ that for any $s \ge 0$,
\begin{equation}\label{estimate for a_0^pm}
\| a^\pm_0(z; \Psi^{\tau_0, \tau})  \|_s  \lesssim_s  \, \, \| z_\bot \|_0^2 \, ,
\qquad \forall  \, 0 \le \tau_0, \tau \le 1, \, \forall \, z \in \mathcal V'\, .
\end{equation}
To prove corresponding estimates for $a^\pm_k(z; \Psi^{\tau_0, \tau})$ with $1 \le k \le N,$ we argue by induction.
Assume that 
for any $0 \le j \le k-1$ and $ s \geq 0$, 
\begin{equation}\label{stima a Psi k completa nella dim}
\| a^\pm_j(z; \Psi^{\tau_0, \tau}) \|_s \lesssim_{s, j} \| z_\bot \|_0^2, \qquad \forall  \, 0 \le \tau_0, \tau \le 1, \,  \forall  \, z \in \mathcal V' \, .
\end{equation}
By the definition \eqref{def a_k^pm} of $a^\pm_k(z; \Psi^{\tau_0, \tau})$, the definition \eqref{definition b k} of $b^\pm_k(\tau, z; \tau_0)$,
and by the induction hypothesis, Lemma \ref{estimates for flow} - Lemma \ref{lemma campo vettoriale new},  
and the interpolation Lemma \ref{lemma interpolation}, 
one then concludes that the estimate \eqref{stima a Psi k completa nella dim} is also satisfied for $j = k$. 
Using the analyticity properties established for $a^\pm_k(\tau, z; \tau_0)$ and $\Psi^{\tau_0, \tau}(z)$, 
one verifies the ones stated for the coefficients $a^\pm_k(z; \Psi^{\tau_0, \tau})$.

\smallskip

\noindent
{\em Estimates of the derivatives of $a^\pm_k(z; \Psi^{\tau_0, \tau})$:} 
By Lemma \ref{lemma campo vettoriale new}, Lemma \ref{estimates for flow}, and the chain rule one has
for any $0 \le k \le N$, $0 \le \tau_0, \tau  \le 1$, $z \in \mathcal V'$, $\widehat z \in h_0^0$, $s \ge 0,$
\begin{equation}\label{estimate d a k (t, z, tau 0)}
\| d a^\pm_k(\tau, z; \tau_0) [\widehat z] \|_s  \lesssim_{s, k}  
\| \pi_\bot \Psi^{\tau_0, \tau}(z) \|_0 \|d \Psi^{\tau_0, \tau}(z) [\widehat z] \|_0 \lesssim_s  \| z_\bot \|_0 \| \widehat z\|_0 \, .
\end{equation}
If in addition, $\widehat z_1, \ldots , \widehat z_l \in h^0_0$, $l \ge 2,$
\begin{equation}\label{estimate d ^ l a k (t, z, tau 0)}
\| d^l a^\pm_k(\tau, z; \tau_0)[\widehat z_1, \ldots , \widehat z_l]  \|_s \lesssim_s  \prod_{j=1}^l \| \widehat z_j\|_0\,.
\end{equation}
By the definition of $a^\pm_0(z; \Psi^{\tau_0, \tau})$, and  \eqref{estimate for a_0^pm}, \eqref{estimate d a k (t, z, tau 0)} - \eqref{estimate d ^ l a k (t, z, tau 0)}
it then follows that the claimed estimate for $\| d^l a^\pm_0(z; \Psi^{\tau_0, \tau})[\widehat z_1, \ldots , \widehat z_l]  \|_s$ hold for any $l \ge 1$.
To prove corresponding estimates for the derivatives of $a^\pm_k(z; \Psi^{\tau_0, \tau})$ with $1 \le k \le N,$ we again argue by induction.
Assume that for any $0 \le j \le k-1$ and $ s \geq 0$,
\begin{equation}\label{estimates for d a_k z; Psi tau_0, tau}
\| d a^\pm_j(z; \Psi^{\tau_0, \tau}) [\widehat z] \|_s \lesssim_{s, j} \| z_\bot \|_0 \| \widehat z\|_0, \quad \forall  \, 0 \le \tau_0, \tau \le 1, \,  \forall  \, z \in \mathcal V'\,, \, \widehat z \in h^0_0\,.
\end{equation}
By the definition \eqref{def a_k^pm} of $a^\pm_k(z; \Psi^{\tau_0, \tau})$ and the estimates  \eqref{estimate a k (t, z, tau 0)} - \eqref{estimate d a k (t, z, tau 0)} 
 it then follows that
\eqref{estimates for d a_k z; Psi tau_0, tau} also holds for $j = k.$
The estimates for  $\| d^l a^\pm_k(z; \Psi^{\tau_0, \tau}) [\widehat z_1, \ldots, \widehat z_l]\|_s$
with $l \ge 2$ are derived in a similar fashion.

\smallskip

\noindent
{\it Definition and estimate of ${\cal R}_N( z; \Psi^{\tau_0, \tau})$:}   
The remainder term ${\cal R}_N( z; \Psi^{\tau_0, \tau})$
 is defined so that  the identity \eqref{espansione asintotica Psi tau 0 tau} holds,
$$
{\cal R}_N( z; \Psi_X^{\tau_0, \tau}) := 
\Psi_X^{\tau_0, \tau}(z) - z  -  \big( 0, \, \, \mathcal{OP}_N ( z; \Psi_X^{\tau_0, \tau}) \big) \, .
$$
By \eqref{Duhamel ODE X} and the expansion \eqref{expansion for Y_tau_0} of $Y_{\tau_0}(\tau, z)$, 
the remainder term ${\cal R}_N( z; \Psi^{\tau_0, \tau})$ satisfies
$$
 \partial_\tau {\cal R}_N( z; \Psi_X^{\tau_0, \tau})  = \mathcal R_N(\tau, z; Y_{\tau_0})  \, , \quad
 {\cal R}_N( z; \Psi_X^{\tau_0, \tau_0})   = 0 \, ,
$$
and hence
\begin{equation}\label{remainder R N Psi^ tau0, tau}
{\cal R}_N( z; \Psi_X^{\tau_0, \tau}) = \int_{\tau_0}^\tau \mathcal R_N(t, z; Y_{\tau_0}) \, d t \, .
\end{equation}
We recall that the remainder term $\mathcal R_N(t, z; Y_{\tau_0})$ is given by \eqref{remainder of Y tau_0},
$$
\begin{aligned}
  \big( 0, \,\, {\cal F}^+_{N_S} \circ   \sum_{k = 0}^N  a^+_k(t, z; \tau_0) & {\cal R}_{N, k}^{(+, 3)}(t, z; \tau_0) 
 + \, {\cal F}^-_{N_S} \circ   \sum_{k = 0}^N  a^-_k(t, z; \tau_0) {\cal R}_{N, k}^{(-, 3)}(t, z; \tau_0) \, \big) 
 +  \,{\cal R}_N(\tau , \Psi^{\tau_0, t}(z); X) \, ,
\end{aligned}
$$
where ${\cal R}_{N, k}^{(+, 3)}(t, z; \tau_0)$ is given in \eqref{definizione cal R n N (3)}
and ${\cal R}_{N, k}^{(-, 3)}(t, z; \tau_0)$ in \eqref{definizione cal R n N (-,3)}.
We estimate the two components  $\pi_S \int_{\tau_0}^\tau {\cal R}_N(t, z; Y_{\tau_0}) \,d t$ and $\pi_\bot \int_{\tau_0}^\tau {\cal R}_N(t, z;  Y_{\tau_0}) \,d t$ 
of $\int_{\tau_0}^\tau {\cal R}_N(t, z;  Y_{\tau_0}) \,d t $ separately. 
By \eqref{remainder R N Psi^ tau0, tau} and the formula \eqref{remainder of Y tau_0} for $\mathcal R_N(\tau, z; Y_{\tau_0})$, recalled above,
$$
\pi_S \int_{\tau_0}^\tau {\cal R}_N(t, z;  Y_{\tau_0}) \,d t  = \int_{\tau_0}^\tau \pi_S {\cal R}_{N}(t, \Psi^{\tau_0, t}(z) ; X)\, d t
$$
and 
$$
\begin{aligned}
& \pi_\bot \int_{\tau_0}^\tau {\cal R}_N(t, z; Y_{\tau_0}) \,d t = \int_{\tau_0}^\tau \pi_\bot {\cal R}_{N}(t, \Psi^{\tau_0, t}(z); X )\, d t \\
&  + 
{\cal F}^+_{N_S} \circ  \sum_{k = 0}^N\int_{\tau_0}^\tau  a^+_k(t, z; \tau_0) {\cal R}_{N, k}^{(+,3)}(t, z; \tau_0)   \, d t 
+ \, {\cal F}^-_{N_S} \circ  \sum_{k = 0}^N\int_{\tau_0}^\tau  a^-_k(t, z; \tau_0) {\cal R}_{N, k}^{(-,3)}(t, z; \tau_0)   \, d t \,. 
\end{aligned}
$$
By Lemma \ref{lemma campo vettoriale new} and Lemma \ref{estimates for flow}, for any $z \in \mathcal V'$, $0 \le \tau_0, t \le 1,$
\begin{equation}\label{stima X Psi N (q)}
\|\pi_S {\cal R}_{N}(t, \Psi^{\tau_0, t}(z) ; X) \| \lesssim_{N} \| {\cal R}_{N}(t, \Psi^{\tau_0, t}(z) ; X) \|_0  \lesssim_{N} \| z_\bot \|_0^2\,,
\end{equation}
implying that 
$$
\| \int_{\tau_0}^\tau \pi_S {\cal R}_{N}(t, \Psi^{\tau_0, t}(z) ; X)\, d t \| \lesssim_{s, N} \| z_\bot \|_0^2\,.
$$
The component $\pi_\bot \int_{\tau_0}^\tau {\cal R}_N(t, z;  Y_{\tau_0}) \,d t$ is estimated by Gronwall's lemma.
First we have to determine the terms in ${\cal R}_{N, k}^{(\pm,3)}(t, z; \tau_0)$ which contain ${\cal R}_{N}(t, \Psi^{\tau_0, t}(z) ; X)$.
By the definition of ${\cal R}_{N, k}^{(\pm,3)}(t, z; \tau_0)$ (cf. \eqref{definizione cal R n N (3)}, \eqref{definizione cal R n N (-,3)}), 
$$
\int_{\tau_0}^\tau  a^\pm_k(t, z; \tau_0) {\cal R}_{N, k}^{(\pm,3)}(t,  z; \tau_0) \,d t=  \int_{\tau_0}^\tau  a^\pm_k(t, z; \tau_0){\cal R}_{N, k}^{(\pm,1)}(t, z;  \tau_0) \,d t +  
\sum_{j = 0}^{N - k} \int_{\tau_0}^\tau  a^\pm_k(t, z; \tau_0){\cal R}_{N, k, j}^{(\pm,2)}(t, z;  \tau_0) \,d t \, .
$$
Note that the terms ${\cal R}_{N, k, j}^{(\pm, 2)}(t, z;  \tau_0)$ do not involve ${\cal R}_{N}(t, \Psi^{\tau_0, t}(z) ; X)$
(cf.  \eqref{definition R (2)},  \eqref{definition R (-,2)}), whereas in contrast,  ${\cal R}_{N, k}^{(\pm,1)}(t, z;  \tau_0)$ do. Indeed,
by the definitions \eqref{cal R n N (1)} and  \eqref{cal R n N (-,1)},
$ \int_{\tau_0}^\tau  a^\pm_k(t, z; \tau_0){\cal R}_{N, k}^{(\pm,1)}(t, z;  \tau_0) \,d t $ equals 
$$
\begin{aligned}
&  \int_{\tau_0}^\tau  a^\pm_k(t, z; \tau_0) \cdot (\pm D)^{- k}  \big(  \sum_{j = N - k + 1}^{N }  a^\pm_j(z; \Psi^{\tau_0, t}) \cdot (\pm D)^{- j}[( {\cal F}^\pm_{N_S})^{- 1}z_\bot] \big)  \,d t \, \\
&  + \,  \int_{\tau_0}^\tau  a^\pm_k(t, z; \tau_0) \cdot (\pm D)^{-k} [({\cal F}^\pm_{N_S})^{- 1} \pi_\bot {\cal R}_N(z; \Psi^{\tau_0, t}) ] \,d t \,.
\end{aligned}
$$
One then infers from \eqref{remainder R N Psi^ tau0, tau} that $\pi_\bot {\cal R}_N(z; \Psi^{\tau_0, \tau})$ satisfies the integral equation
\begin{align}\label{equazione integrale resto}
\pi_\bot {\cal R}_N(z; \Psi^{\tau_0, \tau}) =  & B_N( \tau, z; \tau_0) 
+ \mathcal F^+_{N_S} \circ \int_{\tau_0}^\tau   \sum_{k = 0}^N a^+_k(t, z; \tau_0) \cdot D^{-k} [ ({\cal F}^+_{N_S})^{- 1} \pi_\bot {\cal R}_N(z; \Psi^{\tau_0, t}) ]\, d t \\
& + \mathcal F^-_{N_S} \circ \int_{\tau_0}^\tau   \sum_{k = 0}^N a^-_k(t, z; \tau_0) \cdot (-D)^{-k} [ ({\cal F}^-_{N_S})^{- 1} \pi_\bot {\cal R}_N(z; \Psi^{\tau_0, t}) ]\, d t \nonumber \, ,
\end{align}
where 
\begin{align}
B_N(\tau, z; \tau_0) :=  & \int_{\tau_0}^\tau \pi_\bot {\cal R}_{N}(t, \Psi^{\tau_0, t}(z); X )\, d t 
+ {\cal F}^+_{N_S} \circ  \sum_{k = 0}^N \sum_{j = 0}^{N - k} \int_{\tau_0}^\tau  a^+_k(t, z; \tau_0){\cal R}_{N, k, j}^{(+,2)}(t, z;  \tau_0) \,d t  \\
&  + {\cal F}^+_{N_S} \circ  \sum_{k = 0}^N \sum_{j = N - k + 1}^{N } \int_{\tau_0}^\tau  a^+_k(t, z; \tau_0) \cdot D^{- k} \big( \, a^+_j(z; \Psi^{\tau_0, t})  \cdot D^{- j} [({\cal F}^+_{N_S})^{- 1}z_\bot] \,  \big) \, d t \nonumber \\
&+ {\cal F}^-_{N_S} \circ  \sum_{k = 0}^N \sum_{j = 0}^{N - k} \int_{\tau_0}^\tau  a^-_k(t, z; \tau_0){\cal R}_{N, k, j}^{(-,2)}(t, z;  \tau_0) \, d t \nonumber \\
&  + {\cal F}^-_{N_S} \circ  \sum_{k = 0}^N \sum_{j = N - k + 1}^{N } \int_{\tau_0}^\tau  a^-_k(t, z; \tau_0) \cdot (-D)^{- k} \big( \, a^-_j(z; \Psi^{\tau_0, t})  \cdot (-D)^{- j} [({\cal F}^-_{N_S})^{- 1}z_\bot] \,  \big) \, d t \,. \nonumber
\end{align}
By the estimates of ${\cal R}_{N, k, j}^{(\pm,2)}(t, z;  \tau_0)$ in \eqref{stima cal R n k N (2)}, \eqref{stima cal R n k N (-,2)}, 
the ones of $a^\pm_k(\tau, z; X)$ and ${\cal R}_N(\tau, z; X)$ in Lemma \ref{lemma campo vettoriale new}, 
the estimates of $\Psi^{\tau_0, t}(z)$ in Lemma \ref{estimates for flow}, and the ones of $a^\pm_k(z; \Psi^{\tau_0, \tau})$ in \eqref{stima a Psi k completa nella dim},
and using the interpolation Lemma \ref{lemma interpolation}, one obtains for any $s \ge 0,$
$$
\| B_N( \tau, z; \tau_0) \|_{s + N +1 } \lesssim_{s, N} \| z_\bot \|_s \| z_\bot \|_0\,, \quad \forall z \in \mathcal V' \cap h^s_0\,, \,\,\, \forall \,\,0 \le \tau_0, \tau \le 1\,. 
$$
Note that $ \sum_{k = 0}^N a^\pm_k(t, z; \tau_0) (\pm D)^{-k}$ is a pseudo-differential operator of order $0$ where
the coefficients $a^\pm_k(t, z; \tau_0)$ satisfy $ \| a^\pm_k(t, z; \tau_0)\|_s  \lesssim_{s, k} \| z_\bot \|^2_0 $ (cf. \eqref{estimate a k (t, z, tau 0)}).
Hence for any $z \in \mathcal V' \cap h^s_0$, $0 \le \tau_0, \tau \le 1$,
$$
 \|  \sum_{k = 0}^N a^\pm_k(t, z; \tau_0) (\pm D)^{-k} [({\cal F}^\pm_{N_S})^{- 1} \pi_\bot {\cal R}_N(z; \Psi^{\tau_0, t})]  \|_{s+N+1} 
 \lesssim_{s, N} \| z_\bot \|_0^2 \,  \| ({\cal F}^\pm_{N_S})^{- 1} \pi_\bot {\cal R}_N(z; \Psi^{\tau_0, t}) \|_{s + N + 1}\,.
 $$
By Gronwall's Lemma and since $\mathcal V'_\bot$ is a ball of sufficiently small radius, the integral equation \eqref{equazione integrale resto} yields that for any $s \ge 0,$
\begin{equation}\label{stima X Psi N (z)}
\| \pi_\bot {\cal R}_{N}(z; \Psi^{\tau_0, \tau}) \|_{s + N + 1} \lesssim_{s, N} \| z_\bot \|_s \| z_\bot \|_0\,, \quad \forall z \in \mathcal V' \cap h^s_0 \,, \,\,\, \forall \,\,0 \le \tau_0, t \le 1\, ,
\end{equation}
which together with \eqref{stima X Psi N (q)}  proves the claimed estimates of ${\cal R}_{N}(z; \Psi^{\tau_0, \tau})$. 
The stated analyticity property of ${\cal R}_N( z; \Psi_X^{\tau_0, \tau})$ then follows from the already established analyticity properties of  
$\Psi_X^{\tau_0, \tau}(z)$, $a^\pm_k(\tau, z; \tau_0)$, and $a^\pm_k(z; \Psi^{\tau_0, \tau})$ (cf. e.g. \cite[Theorem A.5]{GK}).

\smallskip

\noindent
{\em Estimates of the derivatives of ${\cal R}_{N}(z ; \Psi^{\tau_0, \tau})$:} The estimates of the derivatives of ${\cal R}_{N}(z ; \Psi^{\tau_0, \tau})$  can be obtained in a similar way 
as the ones for ${\cal R}_{N}(z ; \Psi^{\tau_0, \tau})$. Indeed, for any $s \ge 0$, $z \in \mathcal V' \cap h_0^s$, $0 \le \tau_0, \tau \le 1$, $\widehat z \in h_0^s,$
$ d{\cal R}_{N}(z ; \Psi^{\tau_0, \tau})[\widehat z] = \int_{\tau_0}^\tau d  \mathcal R_N(t, z; Y_{\tau_0})[\widehat z] \, d t$ can be computed as
$$
\begin{aligned}
 \Big( 0, \ {\cal F}^+_{N_S} \circ  \int_{\tau_0}^\tau   \sum_{k = 0}^N d \big( a^+_k(t, z; \tau_0) & {\cal R}_{N, k}^{(+, 3)}(t, z; \tau_0) \big) [\widehat z]  \, d t \, 
  + {\cal F}^-_{N_S} \circ  \int_{\tau_0}^\tau \sum_{k = 0}^N d \big( a^-_k(t, z; \tau_0)  {\cal R}_{N, k}^{(-, 3)}(t, z; \tau_0) \big) [\widehat z]  \, d t \, \Big) \\
& \, +  \, \int_{\tau_0}^\tau\, d \big( {\cal R}_N(t , \Psi^{\tau_0, t}(z); X) \big) [\widehat z] \,d t\,. 
 \end{aligned}
$$
Again, we estimate 
$\pi_S \big( d{\cal R}_{N}(z ; \Psi^{\tau_0, \tau})[\widehat z] \big) = \int_{\tau_0}^\tau\,  \pi_S \big( d \big( {\cal R}_N(t , z; Y_{\tau_0}) \big) [\widehat z] \big) \,d t $
and $\pi_\bot \big( d{\cal R}_{N}(z ; \Psi^{\tau_0, \tau})[\widehat z] \big)$  
separately. By Lemma \ref{lemma campo vettoriale new}, Lemma \ref{estimates for flow}, and the chain rule, one has
\begin{equation}\label{estimate of S projection of derivative}
\| \int_{\tau_0}^\tau\,  \pi_S \big( d \big( {\cal R}_N(t , \Psi^{\tau_0, t}(z); X) \big) [\widehat z] \big) \,d t \| \lesssim_{N} \| z_\bot \|_0 \|\widehat z\|_0 \, ,
\end{equation}
whereas by \eqref{equazione integrale resto}, 
$\pi_\bot \big( d{\cal R}_{N}(z ; \Psi^{\tau_0, \tau})[\widehat z] \big)$
satisfies
\begin{align}\label{equation for the derivative of the remainder}
 \pi_\bot \big( d {\cal R}_N(z; \Psi^{\tau_0, \tau}) \big[\widehat z] \big) =  & \, B^{(1)}_N( \tau, z; \tau_0) [ \widehat z]  
 + \mathcal F^+_{N_S} \circ \int_{\tau_0}^\tau   \sum_{k = 0}^N a^+_k(t, z; \tau_0) \cdot D^{-k} \big[ ({\cal F}^+_{N_S})^{- 1} \pi_\bot d {\cal R}_N(z; \Psi^{\tau_0, t})[\widehat z]  \big] \, d t  \nonumber \\
& + \mathcal F^-_{N_S} \circ \int_{\tau_0}^\tau   \sum_{k = 0}^N a^-_k(t, z; \tau_0) \cdot (-D)^{-k} \big[ ({\cal F}^-_{N_S})^{- 1} \pi_\bot  d {\cal R}_N(z; \Psi^{\tau_0, t}) [\widehat z]  \big] \, d t  \, ,
\end{align}
with $B^{(1)}_N( \tau, z; \tau_0)[ \widehat z]$ given by
$$
\begin{aligned}
 B^{(1)}_N( \tau, z; \tau_0) [ \widehat z]  = & d B_N( \tau, z; \tau_0)  [\widehat z] 
+ \mathcal F^+_{N_S} \circ \int_{\tau_0}^\tau   \sum_{k = 0}^N d a^+_k(t, z; \tau_0)[\widehat z]  \cdot D^{-k} [ ({\cal F}^+_{N_S})^{- 1} \pi_\bot {\cal R}_N(z; \Psi^{\tau_0, t}) ]\, d t \\
& + \mathcal F^-_{N_S} \circ \int_{\tau_0}^\tau   \sum_{k = 0}^N d a^-_k(t, z; \tau_0) [\widehat z]  \cdot (-D)^{-k} [ ({\cal F}^-_{N_S})^{- 1} \pi_\bot {\cal R}_N(z; \Psi^{\tau_0, t}) ]\, d t \nonumber \, .
\end{aligned}
$$
Since
$$
\| B_N^{(1)}( \tau, z; \tau_0) [\widehat z] \|_{s + N +1 } \lesssim_{s, N} 
\| z_\bot \|_s  \| \widehat z \|_0 +  \| z_\bot \|_0 \| \widehat z \|_s \,, \quad \forall z \in \mathcal V' \cap h^s_0\,, \,\, \widehat z \in h^s_0\,,  \,\,0 \le \tau_0, \tau \le 1 \, ,
$$
we infer from \eqref{equation for the derivative of the remainder} by Gronwall's lemma that
\begin{equation}\label{estimate of bot projection of derivative}
\| \pi_\bot \big( d{\cal R}_{N}(z ; \Psi^{\tau_0, \tau})[\widehat z] \big) \|_{s+N+1} \lesssim_{s, N} 
\| z_\bot \|_s  \| \widehat z \|_0 +  \| z_\bot \|_0 \| \widehat z \|_s \,, \quad \forall z \in \mathcal V' \cap h^s_0\,,  \,\, \widehat z \in h^s_0\,,  \,\, 0 \le \tau_0, \tau \le 1\, ,
\end{equation}
which together with  \eqref{estimate of S projection of derivative} 
proves the claimed estimate for $d {\cal R}_N(z ; \Psi^{\tau_0, \tau}) [\widehat z]$.
In a similar fashion, one derives the estimates for $\| d^l {\cal R}_N(z ; \Psi^{\tau_0, \tau}) [\widehat z_1, \ldots, \widehat z_l] \|_{s + N + 1}$
with $\widehat z_1, \ldots, \widehat z_l \in h^s_0$, $l \ge 2$.
\end{proof}

\bigskip

It turns out that the flow maps $\Psi_X^{\tau_0, \tau}$ and hence the symplectic corrector $\Psi_C$ and its inverse $\Psi_C^{-1}$
preserve the reversible structures, introduced in Section \ref{introduzione paper}. To state the result in more detail, note that without loss of generality, we may assume that the neighborhood 
$\mathcal V' = \mathcal V'_S \times \mathcal V'_\bot$ (cf  Lemma \ref{estimates for flow}) is invariant  under the map $\mathcal S_{rev}$.

\noindent
{\bf Addendum to Theorem \ref{espansione flusso per correttore}}
{\em (i) For any $0 \le \tau_0, \tau \le 1$,
$\Psi_X^{\tau_0, \tau} \circ \mathcal S_{rev} = \mathcal S_{rev} \circ \Psi_X^{\tau_0, \tau}$ on $\mathcal V'$
and for any $z \in \mathcal V'$, $x \in \mathbb R$, $N \in \N,$ and  $0 \le k \le N$, 
the coefficients $ a^\pm_k(z; \Psi_X^{\tau_0, \tau})$ and 
the remainder term $ {\cal R}_N( z; \Psi_X^{\tau_0, \tau})$ 
of the expansion \eqref{expansion for Psi^{tau0, tau}} of $\Psi_X^{\tau_0, \tau}(z)$ satisfy
$$
   a^+_k( \mathcal S_{rev} z; \Psi_X^{\tau_0, \tau})(x) =  a^-_k(z; \Psi_X^{\tau_0, \tau})( - x) \,, \quad  
 a^-_k( \mathcal S_{rev} z; \Psi_X^{\tau_0, \tau})(x) =  a^+_k(z; \Psi_X^{\tau_0, \tau})( - x) \, ,
$$   
$$   
    {\cal R}_N( \mathcal S_{rev} z; \Psi_X^{\tau_0, \tau}) = \mathcal S_{rev} ( {\cal R}_N( z; \Psi_X^{\tau_0, \tau}))\,.
$$
(ii) As a consequence, $\Psi_C$ and $\Psi_C^{-1}$ are invariant under $\mathcal S_{rev}$ on $\mathcal V'$,
\begin{equation}\label{reversability and flow}
\Psi_C \circ \mathcal S_{rev} = \mathcal S_{rev} \circ \Psi_C\,, \quad  \Psi_C^{-1} \circ \mathcal S_{rev} = \mathcal S_{rev} \circ \Psi_C^{-1}
\end{equation}
and for any $z \in \mathcal V'$, $x \in \mathbb R$, $N \in \N,$ and $0 \le k \le N$,  
$$
   a^+_k( \mathcal S_{rev} z; \Psi_C)(x) =  a^-_k(z; \Psi_C)( - x) \,, \quad  
    a^-_k( \mathcal S_{rev} z; \Psi_C)(x) =  a^+_k(z; \Psi_C)( - x) \,,   
$$
$$   
    {\cal R}_N( \mathcal S_{rev} z; \Psi_C) = \mathcal S_{rev} ( {\cal R}_N( z; \Psi_C))\,.
$$
}
\noindent
{\bf Proof of Addendum to Theorem \ref{espansione flusso per correttore}} Clearly, item (ii) is a direct consequence of item (i).
By the  Addendum to Lemma \ref{espansione L S bot q z}, the operator $\mathcal L(z)$, introduced in \eqref{Pull back of LambdaG}, 
satisfies $\mathcal L( \mathcal S_{rev} z) \circ \mathcal S_{rev} = - \mathcal S_{rev} \circ \mathcal L(z)$ on $\mathcal V$.
It implies that for any $z \in \mathcal V$, 
$\mathcal E(\mathcal S_{rev} z) = - \mathcal S_{rev} \mathcal E(z)$ where $\mathcal E(z)$ has been introduced in \eqref{forma finale 1 forma Kuksin 2}.
Altogether we then conclude that the vector field $X(\tau, z),$ introduced in \eqref{definizione campo vettoriale ausiliario}, satisfies 
$$
X(\tau, \mathcal S_{rev}z) = \mathcal S_{rev}X(\tau, z)\,, \qquad \forall \, z \in \mathcal V, \,\, 0 \le \tau \le 1
$$
and hence by the uniqueness of the initial value problem of $\partial_\tau z = X(\tau, z)$,  the solution map satisfies 
$$
\Psi_X^{\tau_0, \tau}( \mathcal S_{rev}z) = \mathcal S_{rev} \Psi_X^{\tau_0, \tau}(z) \,, \quad \forall \, z \in \mathcal V' \,, \, \, 0 \le \tau_0,  \tau \le 1\,.
$$
The claimed identities for  $a^\pm_k(z; \Psi_X^{\tau_0, \tau})$ and ${\cal R}_N( z; \Psi_X^{\tau_0, \tau})$
then follow from the expansion \eqref{espansione asintotica Psi tau 0 tau}.
\hfill   $\square$

\bigskip
We finish this section with a discussion of two applications of Theorem \ref{espansione flusso per correttore}. The first one concerns the expansion 
of the transpose $(d\Psi^{0, \tau}_X(z))^\top$ of the differential $ d\Psi^{0, \tau}_X(z)$ which will be used in Section \ref{Hamiltoniana trasformata} 
in the proof of Lemma \ref{proprieta hamiltoniana cal P 3 (2b)}. Recall that for any $z \in \mathcal V',$ and $\widehat z$, $\widehat w \in h^0_0$,
$$
\Lambda_\tau(z)[\widehat z, \widehat w] = \langle \widehat z \, |  \, J^{-1}\mathcal L_\tau(z) [\widehat w]  \rangle, \qquad
\mathcal L_\tau(z) = {\rm{Id}} + \tau J \mathcal L(z), \quad 0 \le \tau \le 1 \,,
$$
and that the flow $\Psi^{0, \tau}_X$ satisfies $\partial_\tau \big( (\Psi^{0, \tau}_X)^* \Lambda_\tau \big) = 0$ and hence $ (\Psi^{0, \tau}_X)^* \Lambda_\tau = \Lambda_0$.
By the definition of the pullback this means that for any $z \in V',$ $0 \le \tau \le 1$, and $\widehat z$, $\widehat w \in h^0_0$,
$$
\langle d\Psi^{0, \tau}_X(z) [\widehat z] \, |  \, J^{-1} \mathcal L_\tau (\Psi^{0, \tau}_X(z)) \big[ d\Psi^{0, \tau}_X(z) \widehat w \big] \rangle
= \langle  \widehat z | J^{-1} \widehat w \rangle 
$$
implying that
$$
(d\Psi^{0, \tau}_X(z))^\top  J^{-1} \mathcal L_\tau (\Psi^{0, \tau}_X(z))  d\Psi^{0, \tau}_X(z) = J^{-1}.
$$
Using that $(d\Psi^{0, \tau}_X(z))^{-1} = d\Psi^{ \tau, 0}_X(\Psi^{0, \tau}_X(z))$ one obtains the following formula for $(d\Psi^{0, \tau}_X(z))^\top$,
\begin{equation}\label{formula for d Psi ^{0, tau}_X (z) ^ t }
(d\Psi^{0, \tau}_X(z))^\top = J^{-1} \circ  d\Psi^{ \tau, 0}_X(\Psi^{0, \tau}_X(z)) \circ \big(\mathcal L_\tau (\Psi^{0, \tau}_X(z))\big)^{-1} \circ J.
\end{equation}
Note that $ d\Psi^{ \tau, 0}_X(\Psi^{0, \tau}_X(z))$ and $\big( \mathcal L_\tau (\Psi^{0, \tau}_X(z)) \big)^{-1}$ are bounded linear operators on $h^0_0$,
implying that $(d\Psi^{0, \tau}_X(z))^\top$ is one on $h^1_0$, and that these operators and their derivatives depend continuously on $0 \le \tau \le 1.$
\begin{corollary}\label{expansion of  of differential of Psi}
 For any $0 \le \tau \le 1$, $z  \in {\cal V'}$, the transpose $(d\Psi^{0, \tau}_X(z))^\top$
(with respect to the standard inner product) of the differential
$d\Psi^{0, \tau}_X(z)$ is a bounded linear operator $ (d\Psi^{0, \tau}_X(z))^\top : h^1_0 \to  h^1_0$ and for any $N \in \N$, 
$(d\Psi^{0, \tau}_X(z))^\top $ admits an expansion of the form
$$
 { \rm{Id}} + \iota_\bot \circ \mathcal{OP}(\, z \, ; (d\Psi^{0, \tau}_X)^\top )  +  {\cal R}_N( \, z \, ;  (d\Psi^{0, \tau}_X)^\top ) ,
$$
where for any $\widehat z \in h^1_0$, $ \mathcal{OP}(\, z \, ; (d\Psi^{0, \tau}_X)^\top )[\widehat z]$ is given by
$$
\begin{aligned}
&  {\cal F}^+_{N_S} \circ \sum_{k = 0}^N  a^+_k(z; (d\Psi^{0, \tau}_X)^\top) \cdot  D^{- k} [(\mathcal F^+_{N_S})^{-1} \widehat z_\bot] +  
{\cal F}^+_{N_S} \circ \sum_{k = 0}^N  \mathcal A^+_k(z; (d\Psi^{0, \tau}_X)^\top)[\widehat z]  \cdot D^{- k} [( \mathcal F^+_{N_S})^{-1} z_\bot ]\\
+ \, &   {\cal F}^-_{N_S} \circ \sum_{k = 0}^N  a^-_k(z; (d\Psi^{0, \tau}_X)^\top) \cdot (-D)^{- k} [(\mathcal F^-_{N_S})^{-1} \widehat z_\bot] +  
{\cal F}^-_{N_S} \circ \sum_{k = 0}^N  \mathcal A^-_k(z; (d\Psi^{0, \tau}_X)^\top)[\widehat z] \cdot  (-D)^{- k}[ (\mathcal F^-_{N_S})^{-1} z_\bot ]
\end{aligned}
$$
and for any integer $s \ge 0$, $  k \ge 0$, and $N \ge 1$,
$$
a^\pm_k( \, \cdot \, ; (d\Psi^{0, \tau}_X)^\top) : {\cal V'} \to H^s_\C \, , \,\, z \mapsto  a^\pm_k (z; (d\Psi^{0, \tau}_X)^\top )\,, 
$$
$$
\mathcal A^\pm_k( \, \cdot \, ; (d\Psi^{0, \tau}_X)^\top) : {\cal V'} \to \mathcal B( h_0^1, H^s_\C), \,\, z \mapsto  \mathcal A^\pm_k (z ; (d\Psi^{0, \tau}_X)^\top )\,,
$$
$$
 {\cal R}_N( \, \cdot \, ;  (d\Psi^{0, \tau}_X)^\top ) : {\cal V'} \cap h^s_0 \to {\cal B}(h^{s+1}_0, h_0^{s + 1 + N +1}), \,\, z  \mapsto {\cal R}_N( z; (d\Psi^{0, \tau}_X)^\top)
$$
are real analytic maps. Furthermore, for any any $z \in {\cal V'}$, $k \ge 0$ and $\widehat z \in h^1_0$,
$$ 
a^-_k(z; (d\Psi^{0, \tau}_X)^\top) = \overline{  a^+_k(z; (d\Psi^{0, \tau}_X)^\top)} \, , \qquad
 \mathcal A^-_k(z; (d\Psi^{0, \tau}_X)^\top)[\widehat z] = \overline{ \mathcal A^+_k(z; (d\Psi^{0, \tau}_X)^\top)[\widehat z]} \, .
$$
If in addition, $\widehat z_1, \ldots, \widehat z_l \in h^0_0$, $l \ge 1$, then 
    $$
   \| a^\pm_k( z; (d\Psi^{0, \tau}_X)^\top)\|_s \lesssim_{s, k} \| z_\bot\|^2_0 \,, \qquad  \| d a^\pm_k( z; (d\Psi^{0, \tau}_X)^\top)[\widehat z_1]\|_s  \lesssim_{s, k}  \| z_\bot\|_0 \| \widehat z_1\|_0 \,,
   $$
   $$
    \| d^l a^\pm_k( z; (d\Psi^{0, \tau}_X)^\top)[\widehat z_1, \ldots, \widehat z_l]\|_s  \lesssim_{s, k, l}  \prod_{j = 1}^l \| \widehat z_j\|_0\,,
   $$
and 
 $$ 
   \| \mathcal A^\pm_k( z; (d\Psi^{0, \tau}_X)^\top) [\widehat z]\|_s \lesssim_{s, k}  \| z_\bot\|_0 \|\widehat z \|_1\,, \quad
   \| d^l (\mathcal A^\pm_k( z; (d\Psi^{0, \tau}_X)^\top)  [\widehat z] )[\widehat z_1, \ldots, \widehat z_l]\|_s  \lesssim_{s, k, l} \|\widehat z \|_1 \prod_{j = 1}^l \| \widehat z_j\|_0\,.
  $$
The remainder ${\cal R}_N( z; (d\Psi^{0, \tau}_X)^\top)$ satisfies for any $z \in {\cal V'} \cap h^s_0$, $\widehat z \in h^{s+1}_0,$ and $\widehat z_1, \ldots, \widehat z_l \in h^s_0$, $l \in \N$,
  $$
   \| {\cal R}_N( z; (d\Psi^{0, \tau}_X)^\top) [\widehat z]\|_{s + 1 + N + 1}  \lesssim_{s, N} \| z_\bot \|_0 \| \widehat z\|_{s+1} + \| z_\bot \|_s \| \widehat z\|_1\,,  \qquad \qquad \qquad \qquad \qquad
   $$
   $$
   \begin{aligned}
   \| d^l \big(  {\cal R}_N & ( z; (d\Psi^{0, \tau}_X)^\top)  [\widehat z] \big)[\widehat z_1,  \ldots,  \widehat z_l]  \|_{s + 1 + N + 1} \\
  & \lesssim_{s, N, l} \| \widehat z\|_{s+1} \prod_{j = 1}^l \| \widehat z_j\|_0 + \| \widehat z\|_1 \sum_{j = 1}^l \| \widehat z_j\|_s \prod_{i \neq j} \| \widehat z_i\|_0 + 
   \| z_\bot\|_s  \| \widehat z\|_1 \prod_{j = 1}^l \| \widehat z_j \|_0\,.
    \end{aligned}
  $$
\end{corollary}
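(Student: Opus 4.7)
The plan is to start from the explicit formula \eqref{formula for d Psi ^{0, tau}_X (z) ^ t },
$$
(d\Psi^{0, \tau}_X(z))^\top = J^{-1} \circ  d\Psi^{ \tau, 0}_X(\Psi^{0, \tau}_X(z)) \circ \big(\mathcal L_\tau (\Psi^{0, \tau}_X(z))\big)^{-1} \circ J,
$$
and to expand each of the four factors on the right as a pseudo-differential operator (with respect to the $\pm$-splitting realised through $\mathcal F^\pm_{N_S}$) plus a smoothing remainder of the prescribed order, then to compose them using the calculus of Lemma \ref{lemma composizione pseudo} and propagate the tame estimates via the interpolation Lemma \ref{lemma interpolation}.

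First I would treat the four factors separately. The multiplier $J$ acts as $\ii D$ on the Fourier realisation of positive high modes and as $-\ii(-D)$ on that of negative high modes, with $J^{-1}$ acting symmetrically; both fit trivially into the form of \eqref{expansion for Psi^{tau0, tau}} after a shift of indices. For $\big(\mathcal L_\tau(w)\big)^{-1}$ I would use the explicit block representation \eqref{formula for inverse of mathcal L tau}: the entries $\mathcal L^S_S(w)$, $\mathcal L^S_\bot(w)$ and $C_{11}(w)^{-1}$ take values in the finite-dimensional space $h_S$ and depend on $x$ only through $C^\infty$-smooth quantities (finite sums of the eigenfunctions $f_\ell$ and the smooth potential $\Psi^{bo}(z_S,0)$), so after composition their contributions on the $\bot$-side are smoothing to every order; the genuinely pseudo-differential contribution comes from $\mathcal L^\bot_S(w)$ through its expansion \eqref{expansion L bot S}. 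Finally, $d\Psi^{\tau, 0}_X(\Psi^{0, \tau}_X(z))$ is obtained by differentiating the parametrix \eqref{espansione asintotica Psi tau 0 tau}: the real-analyticity assertions of Theorem \ref{espansione flusso per correttore}, combined with the chain rule and the expansion of $\Psi^{0,\tau}_X$ itself, produce an expansion with an identity principal term plus a pseudo-differential correction and a smoothing remainder, all with the expected tame estimates.

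Composing the four expansions yields the claimed form. The leading identity summand corresponds to the product of the identity parts of the four factors (note that at $\tau=0$ one has $\mathcal L_0=\mathrm{Id}$ and $\Psi^{0,0}_X=\mathrm{Id}$, so the whole expression reduces to $J^{-1}\circ J=\mathrm{Id}$ on $h^1_0$). The decisive structural observation is that, because of the block structure of $\mathcal L_\tau^{-1}$ and of $d\Psi^{\tau,0}_X$ — whose $S$-component is contained in the smoothing remainder $\pi_S {\cal R}_N$, and whose $S\to\bot$ and $\bot\to S$ couplings either are finite-rank with $C^\infty$-smooth range or involve the one-smoothing operator $\mathcal L^\bot_S$ — every contribution valued in the $h_S$-direction can be absorbed into ${\cal R}_N( z; (d\Psi^{0,\tau}_X)^\top)$, so that the remaining pseudo-differential principal part takes values in $h^0_\bot$; this is what produces the factor $\iota_\bot$ in the statement. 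The reality identities $a^-_k=\overline{a^+_k}$ and $\mathcal A^-_k[\widehat z]=\overline{\mathcal A^+_k[\widehat z]}$ are then inherited factor by factor from Theorem \ref{espansione flusso per correttore} and the Addendum to Lemma \ref{espansione L S bot q z}.

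The tame estimates follow by propagating those of Theorem \ref{espansione flusso per correttore} and Lemma \ref{espansione L S bot q z} through each composition, separating in the bookkeeping the quadratic dependence in $z_\bot$ (carried by the coefficients and by the flow itself) from the linear dependence in $\widehat z$ (entering through the differential action): this is exactly what leads to the decomposition of the symbol into scalar coefficients $a^\pm_k$ of order zero in $\widehat z$ and operators $\mathcal A^\pm_k$ linear in $\widehat z$. The main obstacle I expect is this very separation — organising the combinatorics of the composition so that each term ends up assigned to the correct piece ($a^\pm_k$, $\mathcal A^\pm_k$, or the remainder) with the advertised regularity budget $s+1+N+1$; in particular, verifying that every $\bot\to S$ coupling produced by the composition is smoothing to the required order hinges crucially on Lemma \ref{espansione L S bot q z}(ii) and on the $C^\infty$-smoothness in $x$ of the finite-rank building blocks.
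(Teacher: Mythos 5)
Your proposal follows essentially the same route as the paper's proof: start from \eqref{formula for d Psi ^{0, tau}_X (z) ^ t }, peel off the $J$ and $J^{-1}$ factors as Sobolev-index shifts, expand $\mathcal L_\tau(w)^{-1}$ via the block formula \eqref{formula for inverse of mathcal L tau} and Lemma \ref{espansione L S bot q z}(ii), expand $d\Psi^{\tau,0}_X$ by differentiating the parametrix of Theorem \ref{espansione flusso per correttore}, substitute $w = \Psi^{0,\tau}_X(z)$, and compose via Lemma \ref{lemma composizione pseudo}. The only small slip is attributing the reality identity $\mathcal A^-_k[\widehat z]=\overline{\mathcal A^+_k[\widehat z]}$ to the Addendum to Lemma \ref{espansione L S bot q z} (which concerns the $\mathcal S_{rev}$ symmetry) rather than to Lemma \ref{espansione L S bot q z}(ii) itself, but this does not affect the argument.
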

\begin{remark}
Corollary \ref{expansion of  of differential of Psi} holds in particular for $ d\Psi_C(z)^\top = (d\Psi^{0, 1}_X(z))^\top$.
\end{remark}
\begin{proof}
The starting point is the formula \eqref{formula for d Psi ^{0, tau}_X (z) ^ t } for $d\Psi^{0, \tau}_X(z)^\top$.
Since
$$
\| J \widehat z \|_{s} \lesssim_s \| \widehat z \|_{s + 1} \, , \qquad 
\| J^{-1} \widehat z \|_{s + 1} \lesssim_s \| \widehat z \|_{s} \, ,
$$
 it suffices to derive corresponding estimates for the operator
$d\Psi^{ \tau, 0}_X(\Psi^{0, \tau}_X(z)) \circ \mathcal L_\tau (\Psi^{0, \tau}_X(z))^{-1}$.
By Theorem \ref{espansione flusso per correttore}, for any $w \in \mathcal V'$ and $\widehat w \in h^0_0$, one has
$$
d\Psi^{\tau, 0}_X(w) =   \widehat w +  \big( 0, \ d \big( \mathcal{OP}_N ( w; \Psi_X^{\tau, 0}) \big) [\widehat w] \big) 
+d {\cal R}_N( w; \Psi_X^{\tau, 0})[\widehat w] 
$$
where $d \big( \mathcal{OP}_N ( w; \Psi_X^{\tau, 0}) \big)[\widehat w]$ is given by 
$$
\begin{aligned}
& {\cal F}^+_{N_S} \circ\sum_{k = 0}^N   a^+_k(w; \Psi_X^{\tau, 0}) \cdot  D^{- k} [( {\cal F}^+_{N_S} )^{- 1}\widehat w_\bot]  
+ {\cal F}^+_{N_S} \circ\sum_{k = 0}^N   d a^+_k(w; \Psi_X^{\tau, 0})[\widehat w] \cdot D^{- k}[ ({\cal F}^+_{N_S})^{- 1}[w_\bot]  \\
& +  {\cal F}^-_{N_S} \circ\sum_{k = 0}^N   a^-_k(w; \Psi_X^{\tau, 0}) \cdot (-D)^{- k} [( {\cal F}^-_{N_S} )^{- 1}\widehat w_\bot]  
+ {\cal F}^-_{N_S} \circ\sum_{k = 0}^N   d a^-_k(w; \Psi_X^{\tau, 0})[\widehat w] \cdot (-D)^{- k}[ ({\cal F}^-_{N_S})^{- 1}[w_\bot] \, .
\end{aligned}
$$
These formulas are applied to 
$w =  \Psi^{0, \tau}_X(z) = z +  \big( 0, \  \mathcal{OP}_N ( z; \Psi_X^{0, \tau}) \big)  + {\cal R}_N( z; \Psi_X^{0, \tau})$ where
$$
\mathcal{OP}_N ( z; \Psi_X^{0, \tau}) = 
{\cal F}^+_{N_S} \circ\sum_{k = 0}^N   a^+_k(z; \Psi_X^{0, \tau}) \cdot  D^{- k} [( {\cal F}^+_{N_S} )^{- 1} z_\bot] 
+  {\cal F}^-_{N_S} \circ\sum_{k = 0}^N   a^-_k(z; \Psi_X^{0, \tau}) \cdot (-D)^{- k} [( {\cal F}^-_{N_S} )^{- 1} z_\bot] \, .
$$ 
By \eqref{formula for inverse of mathcal L tau}, $\mathcal L_\tau (w)^{-1} = \big({ \rm{Id}} +  \tau J \mathcal L (w) \big)^{-1}$ is of the form
$$
 (  {\rm Id} + \tau J  \mathcal L(w) )^{-1} = \begin{pmatrix} 
C_{11}^{-1} & - \tau C_{11}^{-1}  B_{12} \\
-\tau J_\bot \mathcal L_{\bot}^S( w) C_{11}^{-1} &  {\rm Id}_\bot + \tau^2 J_\bot \mathcal L_{\bot}^S( w) C_{11}^{-1}B_{12}
\end{pmatrix} ,
$$
with $C_{11}$ and $B_{12}$ given by \eqref{formula for C} and \eqref{formula for B}, respectively. 
We then obtain an expansion of  $ \mathcal L_\tau(w)^{-1}$ from the expansion of  $\mathcal L_{\bot}^S( w)$,
provided by Lemma \ref{espansione L S bot q z}(ii). The formulas are then again applied to 
$w =  \Psi^{0, \tau}_X(z) = z +  \big( 0, \  \mathcal{OP}_N ( z; \Psi_X^{0, \tau}) \big)  + {\cal R}_N( z; \Psi_X^{0, \tau})$.
Combining these expansions, one obtains an expansion of $d\Psi^{ \tau, 0}_X(\Psi^{0, \tau}_X(z)) \circ \mathcal L_\tau (\Psi^{0, \tau}_X(z))^{-1}$
as stated. The claimed estimates follow from Lemma \ref{espansione L S bot q z} and Theorem \ref{espansione flusso per correttore}.
\end{proof}
\medskip

As a second application of Theorem \ref{espansione flusso per correttore}, 
we compute the Taylor expansion of the symplectic corrector $\Psi_C(z_S, z_\bot)$ in $z_\bot$ around $0$.
This expansion will be needed in the subsequent section to show that the BO Hamiltonian, when expressed in the new coordinates
provided by the map $\Psi_L \circ \Psi_C$, is in Birkhoff normal
form up to order three. Note that by Theorem \ref{espansione flusso per correttore}, 
for any $z_S \in \mathcal V'_S,$ $\widehat z_\bot \in h^0_\bot$, $0 \le k \le N$,
$$
a^\pm_k((z_S, 0); \Psi_C) = 0\,, \qquad d_\bot a^\pm_k((z_S, 0); \Psi_C) [\widehat z_\bot] = 0\,,  \quad
$$
$$
{\cal R}_{N}((z_S, 0); \Psi_C) = 0 \,, \qquad d_\bot \big( {\cal R}_{N}((z_S, 0); \Psi_C)\big)[\widehat z_\bot] = 0 \,.
$$
Hence the Taylor expansion of ${\cal R}_{ N}(z; \Psi_C)$ in $z_\bot$ of order three around $ 0$ reads
\begin{equation}\label{Taylor expansion R_N}
{\cal R}_{ N}(z; \Psi_C) = {\cal R}_{ N, 2}(z; \Psi_C) + {\cal R}_{ N, 3}(z; \Psi_C)\,, \qquad 
\mathcal R_{N, 2} (z; \Psi_C) : =  \frac{1}{2} d^2_\bot {\cal R}_{N}((z_S, 0) ; \Psi_C) [z_\bot, z_\bot]
\end{equation}
with  the Taylor remainder term ${\cal R}_{N, 3}(z; \Psi_C)$ given by 
\begin{equation}\label{Taylor remainder term}
{\cal R}_{N, 3}(z; \Psi_C) = \int_0^1 d^3_\bot {\cal R}_{N}((z_S, t z_\bot); \Psi_C)[z_\bot, z_\bot, z_\bot] \, \frac{1}{2}(1 - t)^2 \, dt
\end{equation}
whereas for any $0 \le k \le N$, 
$
{\cal F}^\pm_{N_S} \big( a^\pm_k ( z; \Psi_C) (\pm D)^{- k} [({\cal F}^\pm_{N_S})^{- 1} z_\bot]  \big)
$ 
vanishes in $z_\bot$ at $0$ up to order two.

Furthermore, we need to compute the Taylor expansion of $ d\Psi_C(z)^\top = (d\Psi^{0, 1}_X(z))^\top$ in $z_\bot$ around $0$.
According to Corollary \ref{expansion of  of differential of Psi}, 
the term $ {\cal R}_{N}(z; d \Psi_C^\top)$ in the expansion of $d\Psi_C(z)^\top$ satisfies
$ {\cal R}_{N}((z_S, 0); d \Psi_C^\top)=0$ for any $(z_S, 0) \in \mathcal V'$
and hence for any $\widehat z \in h^1_0$,  the Taylor expansion of ${\cal R}_{N}(z; d \Psi_C^\top)[\widehat z]$ of order $2$ in $z_\bot$ around $0$ reads
\begin{equation}\label{Taylor expansion of of d Psi C t}
 {\cal R}_{N}(z; d \Psi_C^\top)[\widehat z] = {\cal R}_{N, 1}(z; d \Psi_C^\top)[\widehat z] + {\cal R}_{N, 2}(z; d \Psi_C^\top)[\widehat z] ,
\end{equation}
where ${\cal R}_{N, 2}(z; d \Psi_C^\top)[\widehat z]$ denotes the Taylor remainder term of order $2$.
\begin{corollary}\label{proposizione espansione taylor correttore simplettico}
 $(i)$ For any $z' \in \mathcal V'$ and any integer $N \ge 1$, the Taylor expansion of the symplectic corrector 
 $\Psi_C(z_S, z_\bot)$ in $z_\bot$ around $0$ reads   
$$
 \Psi_C(z) = (z_S, 0)  + ( 0, z_\bot) + {\cal R}_{N, 2}(z; \Psi_C) +  \Psi_{C, 3}(z) \, ,
 $$
 where 
  \begin{equation}\label{espansione Psi C ordini taglia}
  \Psi_{C, 3}(z) \equiv  \Psi_{C, N, 3}(z) :=  \big( 0, \, \, \mathcal{OP}_N ( z; \Psi_{C}) \big)  + {\cal R}_{N, 3}( z; \Psi_{C}) 
   \end{equation}
 and $ \mathcal{OP}_N ( z; \Psi_{C})$ is given by \eqref{OP_N  Psi_C}.
For any $s \geq 0$, the map $\mathcal V' \cap h^s_0 \to  h^{s + N +1}_0$, $z \mapsto {\cal R}_{N, 2}(z; \Psi_C)$,  is real analytic
and  the following estimates hold: for any $z \in \mathcal V' \cap h^s_0 $, $\widehat z \in  h^s_0$,
$$
 \| {\cal R}_{N, 2}(z; \Psi_C)\|_{s + N +1} \lesssim_{s, N} \| z_\bot \|_s \| z_\bot \|_0\,, \qquad  
 \| d {\cal R}_{N, 2}(z; \Psi_C)[\widehat z]\|_{s + N +1} \lesssim_{s, N} \| z_\bot \|_0 \| \widehat z\|_s + \| z_\bot \|_s \| \widehat z\|_0\, .
 $$
 If in addition, $ \widehat z_1, \ldots, \widehat z_l \in  h^s_0$, $l \ge 2,$
 $$
 \| d^l {\cal R}_{N, 2}(z; \Psi_C)[\widehat z_1, \ldots, \widehat z_l] \|_{s + N +1} \lesssim_{s, N, l} \sum_{j = 1}^l \| \widehat z_j \|_s \prod_{i \neq j} \| \widehat z_i \|_0 + 
\| z_\bot \|_s \prod_{j = 1}^l \| \widehat z_j\|_0\,.
$$
Similarly, for any $s \geq 0$, the map $\mathcal V' \cap h^s_0 \to  h^{s + N +1}_0$, $z \mapsto {\cal R}_{N, 3}(z; \Psi_C)$,  is real analytic
and  the following estimates hold: for any $z \in \mathcal V' \cap h^s_0 $, $ \widehat z_1, \widehat z_2 \in  h^s_0$,
$$
 \| {\cal R}_{N, 3}(z; \Psi_C)\|_{s + N + 1} \lesssim_{s, N} \| z_\bot \|_s \| z_\bot \|_0^2 \,,  
 \qquad \| d {\cal R}_{N, 3}(z; \Psi_C )[\widehat z_1]\|_{s + N +1} \lesssim_{s, N} \| z_\bot \|_s \| z_\bot \|_0 \| \widehat z_1\|_0 + \| z_\bot \|_0^2 \| \widehat z_1\|_s \,, 
 $$
$$ 
\| d^2  {\cal R}_{N, 3}(z; \Psi_C )[\widehat z_1, \widehat z_2] \|_{s + N +1} \lesssim_{s, N} 
 \| z_\bot \|_0 \big( \| \widehat z_1\|_s \| \widehat z_2\|_0 + \| \widehat z_1\|_0 \| \widehat z_2\|_s \big) + \| z_\bot \|_s \| \widehat z_1\|_0 \| \widehat z_2\|_0\, .
$$
If in addition, $\widehat z_1, \ldots, \widehat z_l \in  h^s_0$, $l \ge 3$,
$$
 \| d^l {\cal R}_{N, 3}(z; \Psi_C )[\widehat z_1, \ldots, \widehat z_l ]\|_{s + N +1} 
\lesssim_{s, N, l} \sum_{j = 1}^l \| \widehat z_j\|_s \prod_{i \neq j} \| \widehat z_i\|_0 + \| z_\bot \|_s \prod_{j = 1}^l \| \widehat z_j\|_0\,. 
$$
\noindent
$(ii)$ For any integer $N \ge 1$ and $\widehat z \in h^1_0$, the Taylor expansion of $d \Psi_C(z)^\top[\widehat z]$ with respect to the component
$z_\bot$ of $z = (z_S, z_\bot)$ around $0$ can be computed as
$$
d \Psi_C(z)^\top [\widehat z]= \widehat z + \Psi_{C, 1}^\top(z) [\widehat z] + \Psi_{C, 2}^\top(z)[\widehat z] , 
 $$
 where $\Psi_{C, 1}^\top(z) :=  {\cal R}_{N, 1}(z; d \Psi_C^\top)$ (cf. \eqref{Taylor expansion of of d Psi C t}),
 and for any $\widehat z \in h^0_0$,
 $$
 \Psi_{C, 2}^\top(z) [\widehat z] : = \big( 0 \, , \,  \mathcal{OP}(z; d \Psi_C^\top)[\widehat z] \big) + {\cal R}_{N, 2}(z; d \Psi_C^\top)[\widehat z]
 $$ 
 with ${\cal R}_{N, 2}(z; d \Psi_C^\top)$ given by \eqref{Taylor expansion of of d Psi C t} and
 $ \mathcal{OP}(z; d \Psi_C^\top)[\widehat z]$ by Corollary \ref{expansion of  of differential of Psi},
 $$
\begin{aligned}
&  {\cal F}^+_{N_S} \circ \sum_{k = 0}^N  a^+_k(z; d\Psi_C^\top) \cdot D^{- k} [(\mathcal F^+_{N_S})^{-1} \widehat z_\bot] +  
{\cal F}^+_{N_S} \circ \sum_{k = 0}^N  \mathcal A^+_k(z; d\Psi_C^\top)[\widehat z]  \cdot D^{- k} [( \mathcal F^+_{N_S})^{-1} z_\bot ]\\
+ \, &   {\cal F}^-_{N_S} \circ \sum_{k = 0}^N  a^-_k(z; d\Psi_C^\top) \cdot (-D)^{- k} [(\mathcal F^-_{N_S})^{-1} \widehat z_\bot] +  
{\cal F}^-_{N_S} \circ \sum_{k = 0}^N  \mathcal A^-_k(z; d\Psi_C^\top)[\widehat z] \cdot  (-D)^{- k}[ (\mathcal F^-_{N_S})^{-1} z_\bot ] .
\end{aligned}
$$
For any $i =1, 2$, $s \ge 0$, 
$$
 {\cal R}_{N, i}( \, \cdot \, ; \, d\Psi_{C}^\top) : {\cal V'} \cap h^s_0 \to {\cal B}(h^{s+1}_0, h_0^{s + 1 + N +1}) \, , \,\,
z  \mapsto {\cal R}_{N, i}( z; d\Psi_{C}^\top) \, ,
$$
is a real analytic map. Furthermore, for any $z \in {\cal V'} \cap h^s_0$, $\widehat z \in h^{s+1}_0,$
  $$
   \| {\cal R}_{N, 1}( z; d\Psi_{C}^\top) [\widehat z]\|_{s + 1 + N + 1}  \lesssim_{s, N}  \|  z_\bot\|_0 \| \widehat z\|_{s+1} + \| z_\bot \|_s \| \widehat z\|_1\, .
   $$
  If in addition, $\widehat z_1, \ldots, \widehat z_l \in h^s_0$, $l \in \N$,
   $$
   \begin{aligned}
   \| d^l \big(  {\cal R}_{N, 1} & ( z; d\Psi_{C}^\top)  [\widehat z] \big)[\widehat z_1,  \ldots,  \widehat z_l]  \|_{s + 1 + N + 1} \\
  & \lesssim_{s, N, l} \| \widehat z\|_{s+1} \prod_{j = 1}^l \| \widehat z_j\|_0 + \| \widehat z\|_1 \sum_{j = 1}^l \| \widehat z_j\|_s \prod_{i \neq j} \| \widehat z_i\|_0 + 
   \| z_\bot\|_s  \| \widehat z\|_1 \prod_{j = 1}^l \| \widehat z_j \|_0\,.
    \end{aligned}
  $$
  Similarly, for any $z \in {\cal V'} \cap h^s_0$, $\widehat z \in h^{s+1}_0,$ $\widehat z_1 \in h^s_0$,
  $$
   \| {\cal R}_{N, 2}( z; d\Psi_{C}^\top) [\widehat z]\|_{s + 1 + N + 1}  \lesssim_{s, N}  \|  z_\bot\|_0^2 \| \widehat z\|_{s+1} + \| z_\bot \|_s \| z_\bot\|_0 \| \widehat z\|_1\,, 
   $$
   and
   $$
    \|d \big( {\cal R}_{N, 2}  ( z; d\Psi_{C}^\top)  [\widehat z] \big)[\widehat z_1] \|_{s + 1 + N + 1} 
    \lesssim_{s, N} \|  z_\bot\|_0 \| \widehat z_1 \|_0 \| \widehat z\|_{s+1} + \|  z_\bot\|_0 \| \widehat z_1 \|_s \| \widehat z\|_1 + \| z_\bot \|_s \|  \widehat z_1 \|_0 \| \widehat z\|_1 \, .
    $$
   If in addition, $\widehat z_2, \ldots, \widehat z_l \in h^s_0$, $l \ge 2$, then
   $$
    \| d^l \big(  {\cal R}_{N, 2}  ( z; d \Psi_{C}^\top)  [\widehat z] \big)[\widehat z_1,  \ldots,  \widehat z_l]  \|_{s + 1 + N + 1}  \qquad \qquad \qquad \qquad \qquad \qquad \qquad
    $$
    $$
   \lesssim_{s, N, l} \| \widehat z\|_{s+1} \prod_{j = 1}^l \| \widehat z_j\|_0 + \| \widehat z\|_1 \sum_{j = 1}^l \| \widehat z_j\|_s \prod_{i \neq j} \| \widehat z_i\|_0 + 
   \| \widehat z\|_1  \| z_\bot\|_s  \prod_{j = 1}^l \| \widehat z_j \|_0\,.
  $$
\end{corollary}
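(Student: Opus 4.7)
The plan is to combine the expansions of $\Psi_C$ and $d\Psi_C^\top$ obtained in Theorem \ref{espansione flusso per correttore} and Corollary \ref{expansion of  of differential of Psi} with Taylor's formula with integral remainder in the normal variable $z_\bot$. The crucial preliminary observation is that the Moser--Weinstein flow preserves the zero section and coincides with the identity along it to first order. Indeed, by \eqref{forma finale 1 forma Kuksin 2} and \eqref{L SS botS Sbot (1)}, the map $\mathcal E$ vanishes quadratically at $z_\bot = 0$, so from \eqref{definizione campo vettoriale ausiliario} one has $X(\tau, z_S, 0) = 0$ and $dX(\tau, z_S, 0) = 0$ for every $\tau \in [0,1]$ and every $z_S \in \mathcal V'_S$. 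The variational equation then yields $\Psi_X^{0,\tau}(z_S, 0) = (z_S, 0)$ and $d\Psi_X^{0,\tau}(z_S, 0) = \mathrm{Id}$ for every $\tau$, which accounts for the explicit leading terms $(z_S, 0) + (0, z_\bot)$ in part (i) and for the leading $\widehat z$ in part (ii).

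For part (i), I would start from the identity $\Psi_C(z) = z + \big(0, \mathcal{OP}_N(z; \Psi_C)\big) + \mathcal R_N(z; \Psi_C)$ furnished by Theorem \ref{espansione flusso per correttore}. Since $\|a^\pm_k(z; \Psi_C)\|_s \lesssim_{s,k} \|z_\bot\|_0^2$, the pseudo-differential piece $(0, \mathcal{OP}_N(z; \Psi_C))$ is of order at least three in $z_\bot$ and thus belongs to the cubic block $\Psi_{C,3}$. As for the remainder $\mathcal R_N(z; \Psi_C)$, the vanishing properties established above give $\mathcal R_N((z_S,0); \Psi_C) = 0$ and $d_\bot \mathcal R_N((z_S,0); \Psi_C) = 0$, so Taylor's formula with integral remainder at order three yields the splitting $\mathcal R_N(z; \Psi_C) = \mathcal R_{N,2}(z; \Psi_C) + \mathcal R_{N,3}(z; \Psi_C)$ defined in \eqref{Taylor expansion R_N}--\eqref{Taylor remainder term}. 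The tame bounds are obtained by plugging the test vectors $\widehat z_j = (0, z_\bot)$ into the multilinear estimates for $d^l \mathcal R_N(z; \Psi_C)$ of Theorem \ref{espansione flusso per correttore}, evaluated at the base point $(z_S, 0)$ for $\mathcal R_{N,2}$ and at $(z_S, t z_\bot)$, $t\in[0,1]$, for $\mathcal R_{N,3}$, using also that $\|z_\bot\|_0$ is uniformly bounded on $\mathcal V'$.

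Part (ii) proceeds by the same scheme, starting from the expansion $d\Psi_C(z)^\top[\widehat z] = \widehat z + \iota_\bot \mathcal{OP}(z; d\Psi_C^\top)[\widehat z] + \mathcal R_N(z; d\Psi_C^\top)[\widehat z]$ supplied by Corollary \ref{expansion of  of differential of Psi}. The bounds $\|a^\pm_k\|_s = O(\|z_\bot\|_0^2)$ and $\|\mathcal A^\pm_k[\widehat z]\|_s = O(\|z_\bot\|_0 \|\widehat z\|_1)$ imply that every monomial in $\iota_\bot \mathcal{OP}(z; d\Psi_C^\top)[\widehat z]$ is at least quadratic in $z_\bot$, so this whole block is absorbed into $\Psi_{C,2}^\top$. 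The remainder $\mathcal R_N(z; d\Psi_C^\top)[\widehat z]$ vanishes at $z_\bot = 0$ (directly from the tame estimate of Corollary \ref{expansion of  of differential of Psi} evaluated there), so the Taylor formula with integral remainder at order two gives the decomposition $\mathcal R_N(z; d\Psi_C^\top)[\widehat z] = \mathcal R_{N,1}(z; d\Psi_C^\top)[\widehat z] + \mathcal R_{N,2}(z; d\Psi_C^\top)[\widehat z]$ as in \eqref{Taylor expansion of of d Psi C t}. The estimates on $\mathcal R_{N,1}$ and $\mathcal R_{N,2}$ follow by inserting $\widehat z_j = (0, z_\bot)$ into the multilinear bounds for $d^l \mathcal R_N(z; d\Psi_C^\top)$ at the appropriate base points; the analyticity statements are inherited from those of $\Psi_C$ and $d\Psi_C^\top$ via standard differentiation arguments. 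No genuinely hard step is expected here: the content is essentially a careful bookkeeping of Taylor remainders combined with multilinear tame estimates, with the main care needed in tracking the powers of $\|z_\bot\|_0$ so as to preserve the claimed homogeneities.
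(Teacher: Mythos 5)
Your proposal is correct and takes essentially the same approach as the paper: both start from the expansions of $\Psi_C$ and $d\Psi_C^\top$ furnished by Theorem~\ref{espansione flusso per correttore} and Corollary~\ref{expansion of  of differential of Psi}, observe the vanishing of the pseudo-differential coefficients and remainders at $z_\bot = 0$, and split off the lower-order Taylor terms in $z_\bot$, reading off the estimates from the multilinear tame bounds at the appropriate base points. Your preliminary observation that $X(\tau,z_S,0)=0$ and $dX(\tau,z_S,0)=0$ (hence $\Psi_X^{0,\tau}$ fixes the zero section with identity linearization) is the mechanism behind the vanishing properties stated in the paper just before the corollary, so you have simply made explicit what the paper's one-line proof treats as "direct consequence."
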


\begin{proof} 
$(i)$ The claimed properties of ${\cal R}_{N, 2}(z; \Psi_C)$ follow directly from Theorem \ref{espansione flusso per correttore}. In view of the formula \eqref{Taylor remainder term}
the same is true for the ones of ${\cal R}_{N, 3}(z; \Psi_C)$. 
Item $(ii)$ is a direct consequence of Corollary \ref{expansion of  of differential of Psi}. 
\end{proof}


 \section{The BO Hamiltonian in new coordinates}\label{Hamiltoniana trasformata}
In this section we provide an expansion of the transformed BO Hamiltonian $\mathcal H = H^{bo} \circ \Psi$
where the map $\Psi = \Psi_L \circ \Psi_C$ is the composition of $\Psi_L$ (cf. Section \ref{sezione mappa Psi L BO}) with the symplectic corrector $\Psi_C$ (cf. Section \ref{sezione Psi C})
and $H^{bo}$ is the BO Hamiltonian, introduced in \eqref{BO Hamiltonian},
$$  
 H^{bo}(q) = \frac{1}{2 \pi}\int_0^{2\pi} \big(\,  \frac12 (|\partial_x|^{\frac 12} q)^2 -  \frac13q^3 \big) d x \, .
 $$
  First we need to make some preliminary considerations. Recall that
  for any finite subset $S_+ \subset \mathbb N$, the map $\Psi_S$, introduced in \eqref{definition Psi_S}, 
  establishes a one to one correspondance between $\mathcal M^o_S$ and the set $M^o_S$ of proper $S$-gap potentials where $S = S_+ \cup (- S_+)$.
  For any proper $S$-gap potential $q$, the corresponding BO actions 
  $$
  I = (I_S, I_\bot)\, , \qquad I_S= (I_j)_{j \in S_+} \, , \quad I_\bot = (I_j)_{j \in S^\bot_+} \, ,
  $$
  defined in terms of the Birkhoff coordinates $\Phi^{bo}(q)$, 
  satisfy $I_\bot = 0$ and $I_j > 0$ for any $j \in S_+$.
 Denote by  $\Omega_S(I_S)$ and $\Omega_\bot(I_S)$ the  diagonal linear operators defined by 
  \begin{equation}\label{Omega_S}
 \qquad \Omega_S(I_S) :=   {\rm diag}(( \Omega_n(I_S))_{n \in S}) :h_S \to  h_S\,, (z_n)_{n \in S} \mapsto ( \Omega_n(I_S) z_n)_{n \in S} \, , \qquad
  \end{equation}
   \begin{equation}\label{splitting Omega}
   \qquad \Omega_\bot(I_S) := 
 {\rm diag}(( \Omega_n(I_S))_{n \in S^\bot}) :h^1_\bot \to  h^0_\bot\,, (z_n)_{n \in S^\bot} \mapsto ( \Omega_n(I_S) z_n)_{n \in S^\bot} \, ,
  \end{equation}
   where for any $n \ge 1$, $\Omega_n(I_S)$ is defined by
   \begin{equation}\label{definition Omega n}
    \Omega_n(I_S) :=  \frac{1}{ n } \omega^{bo}_n((I_S, 0))\,, \qquad   
    \Omega_{-n}(I_S) :=  \Omega_{n}(I_S)
   \end{equation} 
  and $\omega_n^{bo}(I)$, $n \ge 1$, are the BO frequencies, viewed as a function of the actions (cf. \eqref{BO frequencies}),
  $$
   \omega_n^{bo}(I) = n^2 -2 \sum_{k=1}^\infty \min \{n, k \} I_k \, , \qquad \forall \, n \ge 1 \, .
  $$
  \begin{lemma}\label{espansione asintotica frequenza thomas}
  For any finite gap potential $q \in M_S$, $n \ne 1$, one has 
  \begin{equation}\label{asintotica frequenze kdv}
  \Omega_n(I_S) =  |n| - \frac{2}{|n|} \sum_{k \in S_+} \min \{|n|, k \} I_k \, .
  \end{equation}
  In particular, 
  \begin{equation}\label{asymptotics BO frequencies}
  \Omega_n(I_S) =  |n| - (2 \sum_{k \in S_+} k I_k) \, \frac{1}{|n|} \, , \qquad \forall \, |n| \ge N_S +1 \, .
  \end{equation}
  \end{lemma}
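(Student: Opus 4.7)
The proof will be essentially a direct unpacking of the definitions, with no serious obstacle to overcome. The plan is to start from the defining relation $\Omega_n(I_S) := \frac{1}{n}\omega_n^{bo}((I_S, 0))$ for $n \ge 1$ in \eqref{definition Omega n} and substitute the explicit formula for the BO frequencies from \eqref{BO frequencies}, namely $\omega_n^{bo}(I) = n^2 - 2\sum_{k=1}^\infty \min\{n, k\} I_k$, evaluated at $I = (I_S, 0)$. The key observation is that since $q \in M_S$, the action components $I_k$ vanish for $k \in S_+^\bot$, so the infinite sum collapses to a finite one over $k \in S_+$:
$$
\omega_n^{bo}((I_S, 0)) = n^2 - 2\sum_{k \in S_+} \min\{n, k\} I_k.
$$

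Dividing by $n$ then yields \eqref{asintotica frequenze kdv} in the case $n \ge 1$. To extend to negative indices, I will invoke the symmetry $\Omega_{-n}(I_S) = \Omega_n(I_S)$ from \eqref{definition Omega n}: writing $n' := -n \ge 1$ when $n \le -1$, the identity just established for $n'$ combined with this symmetry produces the formula with $|n|$ in place of $n$, giving \eqref{asintotica frequenze kdv} in full generality.

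The second assertion \eqref{asymptotics BO frequencies} will follow by noting that when $|n| \ge N_S + 1$, every $k \in S_+$ automatically satisfies $k \le N_S < |n|$, so $\min\{|n|, k\} = k$ for every index appearing in the sum. Substituting this simplification into \eqref{asintotica frequenze kdv} immediately produces the stated expansion. Since both identities reduce to elementary algebraic rearrangements of definitions that have already been established earlier in the paper, there is no real technical difficulty; the lemma merely records a normal form for the frequencies that will be convenient in the subsequent analysis of the transformed Hamiltonian.
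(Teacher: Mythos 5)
Your proof is correct and is exactly the natural argument; the paper in fact states this lemma without an explicit proof, treating it as an immediate consequence of the definitions \eqref{definition Omega n} and \eqref{BO frequencies} together with the fact that $I_k = 0$ for $k \in S_+^\bot$ when $q \in M_S$, which is precisely what you write out. (As a minor aside, the ``$n\ne 1$'' in the lemma statement is evidently a typo for ``$n \ne 0$''; your proof correctly treats all $n \ne 0$.)
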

  For what follows, we need to consider the linearized Benjamin-Ono equation. 
   Let $\widehat u(t) = \partial_\e |_{\e = 0} u_\e(t)$ where $u_\e(t)$ is a one parameter family of solutions of \eqref{1.1},
   corresponding to a one parameter family of initial data $u_\e(0)$.
   Then $\widehat u(t)$ satisfies the linearized BO equation,
\begin{equation}\label{lin BO}
  \partial_t \widehat u(t) = \partial_x d  \nabla H^{bo}(u_0(t)) [\widehat u(t)]\,.
  \end{equation}
  Furthermore, $z_\e(t) = \Phi^{bo}(u_\e(t))$ solves 
  $$
  \partial_t z_\e(t) = J \nabla \mathcal H^{bo}(z_\e(t)) = J \Omega(I_\e) z_\e(t)\, , \qquad I_\e = (I_{\e, n})_{n \ge 1} = \big(\frac{1}{n} z_{\e,n}(0) \cdot z_{\e,-n}(0)\big)_{n \ge 1} \, .
  $$
  Hence 
  \begin{equation}\label{def widehat z}
  \widehat z(t) := \partial_\e |_{\e = 0} z_\e(t) = d\Phi^{bo}(u_0(t))[\widehat u(t)]
  \end{equation} 
  satisfies the linear equation
  \begin{equation}\label{eq widehat z}
    \partial_t \widehat z(t) = J  \Omega(I_0) [\widehat z(t)]  + J \partial_\e|_{\e = 0} \Omega(I_\e) [z_0(t)]\, , 
   \end{equation}  
   where for any $n \ge 1$,
  \begin{equation}\label{variation of I_n}
   \partial_\e |_{\e = 0} \Omega_n(I_\e) = \sum_{k \ge 1}\partial_{I_k} \Omega_n(I_0) \frac{1}{k} 
 \big(  \partial_\e |_{\e = 0}z_{\e,k}(0) \cdot z_{0,-k}(0)  + z_{0,k}(0) \cdot \partial_\e |_{\e = 0} z_{\e,-k}(0) \big)  \, .
  \end{equation}  
Now assume that  $t \mapsto q(t)$ is a solution of the BO equation \eqref{1.1} in $M^o_S$ with 
  $z(t) := \Phi^{bo}(q(t)) \in \mathcal V$,  $t \in \R$. Note that $z(t)$ is of the form $(z_S(t), 0)$,
and  the actions $I= (I_S, 0)$  of $q(t)$ satsify $I_S = ( \frac{1}{n} z_n(0) \cdot z_{-n}(0))_{n \in S_+}$
 since $I = (I_n)_{n \ge 1}$ are independent of $t$. 
  We need to investigate $\partial_x d  \nabla H^{bo} (q(t)) [\widehat q(t)]$  where 
  $\widehat q(t)$ solves   $\partial_t \widehat q(t) = \partial_x d  \nabla H^{bo}(q(t)) [\widehat q(t)]$ with
  $\widehat z(0) = d \Phi^{bo}(q(0))[\widehat q(0)]$ in $h^2_0$ and $\widehat z_S(0) = 0$.
  One has
  $$
 \widehat q(0) = \Psi_1(z_S(0))[\widehat z_\bot(0)]  \ (=d \Psi^{bo}(z_S(0), 0)[ 0, \widehat z_\bot(0)] )  \, , \qquad \widehat z_\bot(0) \in h^2_\bot .
 $$ 
 By \eqref{eq widehat z} - \eqref{variation of I_n}
 it then follows that $\widehat z_S(t) = 0$ for any $t \in \R$, hence $\widehat q(t) = \Psi_1(z_S(t)) [\widehat z_\bot(t)]$, and that
  $$\partial_t \widehat z_\bot (t) =   J_\bot \Omega_\bot(I_S) [ \widehat z_\bot (t) ]\, ,
  $$
  or more explicitly, for any  $n \in S^\bot$,
 \begin{equation}\label{linearized BO in Bikrhoff}
 \partial_t \widehat z_n (t)  =  \ii n \Omega_n (I_S) \widehat z_n(t) \,.
 \end{equation}
  By differentiating $\widehat q(t) = \Psi_1(z_S(t))[\widehat z_\bot(t)]$ with respect to $t$, one gets  
  \begin{align}
  \partial_t \widehat q (t)& = \Psi_1(z_S(t))[\partial_t \widehat z_\bot (t)] + d_S \big( \Psi_1(z_S(t))[\widehat z_\bot (t)] \big) [\partial_t z_S(t) ]\nonumber\\
  & = \Psi_1(z_S(t)) \big[ J_\bot \Omega_\bot(I_S) [ \widehat z_\bot (t)]  \big] +
  d_S \big( \Psi_1(z_S(t))[\widehat z_\bot(t)] \big)[ \partial_t z_S(t)]\,. 
  \label{maradona 1} 
  \end{align}
  Comparing \eqref{lin BO} and \eqref{maradona 1} 
  and using that $ \partial_t z_S(t) = J_S \Omega_S(I_S) [z_S(t)]$, one gets 
  \begin{align}
\partial_x d \nabla H^{bo} (q(t))  \big[ \Psi_1(z_S(t)) [\widehat z_\bot(t) ] \big] 
& =  \Psi_1(z_S(t)) \big[ J_\bot \Omega_\bot(I_S) [\widehat z_\bot(t) ] \big] \,  \nonumber\\
  & \quad + \, d_S \big( \Psi_1(z_S(t))[\widehat z_\bot(t) ] \big) [J_S \Omega_S(I_S) [z_S(t)]]\,.  \label{maradona 3}
  \end{align}
  Applying $\Psi_1(z_S(t))^{- 1}$ to both sides of \eqref{maradona 3}, then yields
  \begin{align}
\Psi_1(z_S(t))^{- 1}\partial_x & d \nabla H^{bo} (q(t))  \big[ \Psi_1(z_S(t)) [\widehat z_\bot(t)]  \big] 
=  J_\bot \Omega_\bot(I_S)  [\widehat z_\bot(t)]\,  \nonumber\\
  & \quad + \, \Psi_1(z_S(t))^{- 1} d_S \big( \Psi_1(z_S(t))[\widehat z(t) ] \big)[J_S \Omega_S(I_S) [z_S(t)]]\,.  \label{maradona 3-1}
  \end{align}
  Since $\Psi_1(z_S)$ is symplectic one has 
  $ \Psi_1(z_S)^\top \partial_x^{- 1} \Psi_1(z_S) = J_\bot^{- 1}$ or
   $ \Psi_1(z_S)^{- 1} \partial_x  = J_\bot \Psi_1(z_S)^\top,$
  implying that 
   \begin{align}
J_\bot \Psi_1(z_S(t))^\top d \nabla H^{bo} (q(t)) & \big[ \Psi_1(z_S(t))[\widehat z_\bot(t)]  \big] 
=  J_\bot \Omega_\bot(I_S) [\widehat z_\bot(t)]\,  \nonumber\\
  & \quad + \, \Psi_1(z_S(t))^{- 1} d_S \big( \Psi_1(z_S(t))[\widehat z_\bot(t) ] \big)[J_S \Omega_S(I_S) [z_S(t)]]   \label{maradona 3-2}
  \end{align}
and hence for any $z_S \in \mathcal V_S$, $I_S= (\frac{1}{ n} z_n z_{-n})_{n \in S_+}$, $q = \Psi^{bo}(z_S, 0)$, and $\widehat z_\bot \in h^2_\bot$,
   \begin{align}
 \Psi_1(z_S)^\top d \nabla H^{bo} (q) & \big[ \Psi_1(z_S) [\widehat z_\bot]  \big] 
=   \Omega_\bot(I_S) [\widehat z_\bot] + \,{\cal G}(z_S)[\widehat z_\bot]   \label{maradona 4}
  \end{align}
  where $ {\cal G}(z_S): h^0_\bot \to h^0_\bot$ is given by
  \begin{equation}\label{definizione cal M (wS)}
  \begin{aligned}
  {\cal G}(z_S)[\widehat z_\bot] & := J_\bot^{- 1} \Psi_1(z_S)^{- 1} d_S \big( \Psi_1(z_S)[\widehat z_\bot] \big)[ J_S \Omega_S(I_S) [z_S]]\,. 
  \end{aligned}
  \end{equation}
  In the next lemma we record an expansion for the operator ${\cal G}(z_S)$. 
      \begin{lemma}\label{stime cal M(wS)}
  For any integer $N \ge 1$, the operator ${\cal G}(z_S): h^0_\bot \to h^0_\bot$ admits an expansion of the form 
 $ \mathcal{OP}(z_S; \mathcal G) +  {\cal R}_N(z_S;  {\cal G})$, 
  $$
    \mathcal{OP}(z_S; \mathcal G) =  {\cal F}^+_{N_S} \circ \sum_{k =  0}^{N}  a^+_k( z_S;  {\cal G}) D^{- k} \circ ({\cal F}^+_{N_S})^{- 1}
     +  {\cal F}^-_{N_S} \circ \sum_{k =  0}^{N}  a^-_k( z_S;  {\cal G}) (-D)^{- k} \circ ({\cal F}^-_{N_S})^{- 1} \, ,
  $$  
  where for any $0 \le k \le N,$ $s \ge 0,$ the maps 
  $$
  \mathcal V_S \to H^s_\C, \, z_S  \mapsto a^\pm_k(z_S; {\cal G})\,, \qquad 
  \mathcal V_S \mapsto {\cal B}(h^s_\bot, h^{s + N + 1}_\bot), \, z_S  \mapsto {\cal R}_N(z_S; \mathcal G) \, ,
  $$
   are real analytic and $a^-_k( z_S;  {\cal G}) = \overline{a^+_k( z_S;  {\cal G})}$. 
  \end{lemma}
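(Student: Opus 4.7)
\textbf{Plan for the proof of Lemma \ref{stime cal M(wS)}.}
The starting point is the identity \eqref{maradona 4}, which rewrites
\[
\mathcal G(z_S)[\widehat z_\bot] \; = \; \Psi_1(z_S)^\top \, d\nabla H^{bo}(q)\big[\Psi_1(z_S)[\widehat z_\bot]\big] \; - \; \Omega_\bot(I_S)[\widehat z_\bot].
\]
A direct computation from the definition \eqref{BO Hamiltonian} of $H^{bo}$ gives $d\nabla H^{bo}(q)[\widehat q] = |\partial_x|\widehat q - 2 q \widehat q$, so the problem reduces to expanding the composition $\Psi_1(z_S)^\top \circ (|\partial_x| - 2 M_q) \circ \Psi_1(z_S)$, where $M_q$ is multiplication by $q = \Psi^{bo}(z_S, 0)$, and to checking that its top order symbol cancels $\Omega_\bot(I_S)$.

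The plan is then to substitute the pseudo-differential expansions of $\Psi_1(z_S)$ from Corollary \ref{pseudodiff expansion Psi_L} and of $\Psi_1(z_S)^\top$ from Corollary \ref{lemma asintotiche Phi 1 q}, and to compose them with the operator $|\partial_x|-2M_q$ using the symbolic calculus of Lemma \ref{lemma composizione pseudo}. Cross terms between the $+$ and $-$ branches involve Hankel-type operators and are infinitely smoothing by Corollary \ref{Hankel infinitely smoothing}, so they are absorbed into the remainder. On the $+$ branch, the leading order comes from $(-\overline{g_\infty}) \circ |\partial_x| \circ (-g_\infty)$; using $g_\infty \overline{g_\infty} = 1$ (which follows from $g_\infty = e^{\ii \partial_x^{-1} q}$ with $q$ real) together with the identity $\partial_x g_\infty = \ii q\, g_\infty$, one finds that this principal contribution equals $D$ on positive high frequencies modulo lower order terms proportional to $q$ (coming from the commutation of $|\partial_x|$ with $g_\infty$), which are partially absorbed by the $-2 \Psi_1^\top M_q \Psi_1$ term. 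Analogously for the $-$ branch one obtains $-D$ on negative frequencies.

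The key step is to check that the order-$1$ symbol together with the $1/|n|$ correction extracted from the composition match exactly the asymptotic expansion $\Omega_n(I_S) = |n| - 2(\sum_{k\in S_+} kI_k)/|n|$ given by Lemma \ref{espansione asintotica frequenza thomas} (for $|n| \ge N_S+1$), so that the subtraction of $\Omega_\bot(I_S)$ lowers the composition to an operator of order $0$, which has the stated form $\mathcal{OP}(z_S; \mathcal G) + \mathcal R_N(z_S; \mathcal G)$. The finitely many low frequencies $|n| \in [1, N_S]\setminus S_+$ are handled separately: they contribute only to finite-rank (hence smoothing) operators and are folded into the remainder. The main obstacle is this cancellation of top order symbols, which requires careful bookkeeping of the sub-leading symbols produced by the symbolic composition together with the identity $\partial_x g_\infty = \ii q g_\infty$; without this identity, the $q$-dependent sub-leading terms would not combine with $-2\Psi_1^\top M_q \Psi_1$ in the way that eliminates the $|n|^0$ piece growth. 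Once the cancellation is established, the real analyticity of the maps $z_S \mapsto a_k^\pm(z_S; \mathcal G)$ and $z_S \mapsto \mathcal R_N(z_S; \mathcal G)$, the tame estimates, and the complex-conjugation symmetry $a_k^-(z_S; \mathcal G) = \overline{a_k^+(z_S; \mathcal G)}$ are inherited from the corresponding properties of $\Psi_1(z_S)$ and $\Psi_1(z_S)^\top$ via Lemma \ref{lemma composizione pseudo}, using that $q$, $H^{bo}$ and the Fourier projections $\mathcal F^\pm_{N_S}$ are compatible with complex conjugation.
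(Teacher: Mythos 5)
Your proposal takes a genuinely different route from the paper. The paper's proof is essentially one line: it goes back to the original definition \eqref{definizione cal M (wS)} of $\mathcal G$, which after substituting the left-inverse identity $\Psi_1(z_S)^{-1} = J_\bot \Psi_1(z_S)^\top \partial_x^{-1}$ reads $\mathcal G(z_S)[\widehat z_\bot] = \Psi_1(z_S)^\top \, \partial_x^{-1} \, d_S\big(\Psi_1(z_S)[\widehat z_\bot]\big)[J_S\Omega_S(I_S)z_S]$. This is a composition of three operators, each of which already has an expansion of order $\le 0$ (respectively Corollary \ref{lemma asintotiche Phi 1 q}, the multiplier $\partial_x^{-1}$, and Corollary \ref{pseudodiff expansion dPsi_L}), so the expansion of $\mathcal G$ follows directly from Lemma \ref{lemma composizione pseudo} \emph{without any cancellation}. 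In fact the $\partial_x^{-1}$ in the middle makes $\mathcal G$ automatically of order $-1$, so the claimed form with $k$ starting at $0$ is obtained with room to spare. You instead start from the alternative identity \eqref{maradona 4}, writing $\mathcal G = \Psi_1^\top \circ (|\partial_x| - 2M_q) \circ \Psi_1 - \Omega_\bot(I_S)$. Since the middle factor is of order $+1$, this forces you to \emph{establish} a cancellation of the order-$1$ symbol against $\Omega_\bot(I_S)$. That cancellation is real (it is a consequence of the identity \eqref{maradona 4} together with the definition of $\mathcal G$), and your plan to verify it using $g_\infty\overline{g_\infty}=1$, $\partial_x g_\infty = \ii q g_\infty$, Lemma \ref{espansione asintotica frequenza thomas} and Corollary \ref{Hankel infinitely smoothing} for the cross-branch Hankel terms is sound, but it is substantially more work than the paper's argument and is more prone to bookkeeping errors — for instance, the order-$0$ coefficient does not reduce to $-q$ as one might guess from the two leading terms alone: once the sub-leading symbols $W_1^{ae,\pm}$ of $\Psi_1$ and $\Psi_1^\top$ are included, together with the trace identity $\lambda_0 + \sum_{j\in S_+}\gamma_j = 0$, the $D^0$ coefficient of $\mathcal G$ actually vanishes, consistent with the order-$(-1)$ nature that is manifest in the paper's direct computation. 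What your route buys, at this extra cost, is an explicit link between the leading coefficients of $\mathcal G$ and the frequency expansion of $\Omega_n(I_S)$, which the paper's shorter argument keeps implicit.
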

  \begin{proof}
  In view of the definition \eqref{definizione cal M (wS)} of ${\cal G}$,
  the lemma follows from  Corollary \ref{pseudodiff expansion Psi_L} 
  and  Lemma \ref{lemma composizione pseudo}.
    \end{proof}

  \smallskip
  
    After this preliminary discussion, we can now study the transformed Hamiltonian ${H}^{bo} \circ \Psi$ where $\Psi = \Psi_L \circ \Psi_C$. 
    We split the analysis into two parts. First we expand ${\cal H}^{(1)} := { H}^{bo} \circ \Psi_L$ and then we analyze ${\cal H}^{(2)} = {\cal H}^{(1)} \circ \Psi_C$. 
   
   \medskip
   
  \noindent 
 {\bf Expansion of ${\cal H}^{(1)} := H^{bo} \circ \Psi_L$}
 
 \noindent
To expand $H^{bo} \circ \Psi_L$, it is useful to write $H^{bo}(u) $ as $H^{bo}(u) =H_2^{bo} (u)+ H_3^{bo}(u)$ where
  \begin{equation}\label{forma compatta hamiltoniana d-NLS}
  H_2^{bo}(u) := \frac12 \langle  |\partial_x| u \, , \, u \rangle\,, 
  \qquad H_3^{bo}(u) := \frac{1}{2\pi}\int_0^{2\pi} - \frac 13 u^3 \, d x\,.
  \end{equation}
   The $L^2$-gradient $\nabla H^{bo}$ of $H^{bo}$ and its derivative are then given by 
 \begin{equation}\label{espressione gradiente Hamiltoniana originaria nls}
  \nabla H^{bo}(u) =  |\partial_x|  u - u^2 \,, \qquad
  d \nabla H^{bo}(u) = |\partial_x| - 2u \, .
  \end{equation} 
Let $z_S \in \mathcal V_S$ and $q = \Psi^{bo}(z_S, 0)$ be given. 
The Taylor expansion of $H^{bo}(q+v)$ around $q$ in direction 
$v = \Psi_1(z_S)[z_\bot]$ 
with $z_\bot \in \mathcal V_\bot \cap h_\bot^1$ reads
\begin{align}
H^{bo}(q+v) =  H^{bo}(q )  + \langle \nabla H^{bo}(q ) ,  v \rangle  +
\frac12 \langle d \nabla H^{bo}(q )[v]\, , \, v \rangle +  \frac{1}{2\pi} \int_0^{2\pi} - \frac 13 v^3 \, dx \, . \label{espansione taylor H4 nls}
\end{align}
Since $v = d\Psi^{bo}(z_S,0)[0, z_\bot]$ one has
$\langle \nabla H^{bo}(q ) ,  v \rangle  = \partial_\e |_{\e=0} H^{bo} (\Psi^{bo}(z_S , \e z_\bot))$.
Recall that $\mathcal H^{bo} = H^{bo} \circ \Psi^{bo}$ is a function of the actions $I=(I_n)_{n \ge 1}$ alone and that $I_n = \frac{1}{n}z_n \cdot z_{-n}$, $n \ge 1$.
It implies that  
$$
\partial_\e |_{\e=0} H^{bo} (\Psi^{bo}(z_S, \e z_\bot)) = \sum_{n \in S^\bot_+} \omega_n(I_S, 0) \partial_\e |_{\e=0} \, \e^2 I_n =0 
$$
and hence $\langle \nabla H^{bo}(q ) , v \rangle  =0$.
Since $\Psi_L(z) = q  + \Psi_1(z_S)[z_\bot]$, the Hamiltonian $ {\cal H}^{(1)}(z)  = H^{bo}(\Psi_L(z))$ can be computed as
  \begin{align}
 {\cal H}^{(1)}(z)  & = H^{bo}(q)  + \frac12 \big\langle d \nabla H^{bo} (q) \big[ \Psi_1(z_S) z_\bot  \big]\, , \, \Psi_1(z_S) [z_\bot] \big\rangle + 
\frac{1}{2\pi} \int_0^{2\pi} - \frac 13 (\Psi_1(z_S) [z_\bot] )^3 \, d x \,.  \nonumber
  \end{align}
  By formula \eqref{maradona 4}, 
  $$
   \big\langle d \nabla {H}^{bo}(q )  \big[ \Psi_1(z_S)[z_\bot] \big]\, , \, \Psi_1(z_S)[z_\bot ] \big\rangle
   = \big\langle \Omega_\bot(I_S) [z_\bot]\, , \, z_\bot \big\rangle  +
  \big\langle{\cal G}(z_S) [z_\bot]\, , \, z_\bot \big\rangle\,.
  $$
   Since $\Psi_1(z_S)^\top \circ d \nabla H^{bo} (q) \circ \Psi_1(z_S)$ and  $\Omega_\bot(I_S)$ are symmetric,
  so is the operator ${\cal G}(z_S)$. 
In summary,  
     \begin{align}
{\cal H}^{(1)}(z) = \mathcal H^{bo}_S(z) + 
\frac12 \big\langle { \Omega}_\bot(I_S)[ z_\bot] \, ,  \,  z_\bot \big\rangle \,
+ {\cal P}_2^{(1)}(z) + {\cal P}_3^{(1)}(z)    \label{forma semifinale H nls circ Psi}
  \end{align}
where for any $z = (z_S, z_\bot) \in \mathcal V \cap h^1_0,$ 
  \begin{align}\label{perturbazione composizione con Psi L}
  \mathcal H^{bo}_S(z) & :=  H^{bo}(\Psi^{bo}(z_S, 0)) \, , \qquad
   {\cal P}_2^{(1)}(z) :=  \frac12 \langle {\cal G}(z_S)[z_\bot]\, , \, z_\bot \rangle\,,  \\
 & {\cal P}_3^{(1)}(z) := \frac{1}{2\pi}\int_0^{2\pi} - \frac 13 (\Psi_1(z_S)[z_\bot] )^3\, d x\,.  \nonumber
  \label{perturbazione composizione con Psi L}
  \end{align}
  Here, the superscript $(1)$ in ${\cal P}_2^{(1)}(z)$ and ${\cal P}_3^{(1)}(z)$ refers to the Hamiltonian $\mathcal H^{(1)}$
  whereas the subscripts in these functionals refer to the fact that  ${\cal P}_2^{(1)}(z)$ is quadratic in $z_\bot$ and ${\cal P}_3^{(1)}(z)$ is at least of order three  in $z_\bot$.
  Furthermore, note that $ \mathcal H^{bo}_S(z) = \mathcal H^{bo}_S(\Pi_S z)$ where we recall that
$\Pi_S : h_S \times h^0_{ \bot } \to h_S \times h^0_{ \bot }$ denotes the projection, given by  
$(\widehat z_S, \widehat z_\bot) \mapsto (\widehat z_S, 0)$ (cf. \eqref{Pi S Pi bot}).

  Recall from \eqref{definition prarproduct} that for any $a \in H^1$, the paraproduct $T_a u$ of the function $a$  with $v \in L^2$ 
  with respect to the cut-off function $\chi$ is defined as $(T_a v) (x) = \sum_{k, n \in \Z} \chi(k, n) a_k v_n e^{\ii 2 \pi (k + n) x}$
  with $v_n$, $n \in \Z$, denoting the Fourier coefficients of $v$ and $a_k$, $k \in \Z$, the ones of $a$.
  \begin{lemma}\label{lemma stima cal P2 P3}
 For any integer $N \ge 1$, there exists an integer $\sigma_N \ge N$ (loss of regularity) so that on $\mathcal V \cap h^{\sigma_N}_0$,
  the $L^2$-gradient $\nabla{\cal P}_3^{(1)}$ of  ${\cal P}_3^{(1)}$ admits the asymptotic expansion of the form $\big( 0, \mathcal{OP}( z;  \nabla {\cal P}_3^{(1)} ) \big)  + {\cal R}_N(z; \nabla {\cal P}_3^{(1)})$,
  where $ \mathcal{OP}( z;  \nabla {\cal P}_3^{(1)} )$ is the para-differential operator
  \begin{equation}\label{epansione d nabla P3}
  \mathcal{OP}( z;  \nabla {\cal P}_3^{(1)} )=  {\cal F}^+_{N_S} \circ  \sum_{k = 0}^N   T_{a^+_k(z; \nabla {\cal P}_3^{(1)})} D^{- k}[({\cal F}^+_{N_S})^{- 1} z_\bot] 
  + {\cal F}^-_{N_S} \circ  \sum_{k = 0}^N   T_{a^-_k(z; \nabla {\cal P}_3^{(1)})} (-D)^{- k}[({\cal F}^-_{N_S})^{- 1} z_\bot] 
  \end{equation}
  and where for any $s \geq 0$, $0 \le k \le N$, the maps
   $$
   \mathcal V \cap h^{s + \sigma_N}_0 \to H^s_\C, \, z  \mapsto a^\pm_k(z; \nabla {\cal P}_3^{(1)})\,, \qquad
   \mathcal V \cap h^{s \lor \sigma_N}_0 \to h^{s + N + 1}_0, \, z \mapsto {\cal R}_N(z; \nabla {\cal P}_3^{(1)})
   $$
 are real analytic and $a^+_k(z; \nabla {\cal P}_3^{(1)}) = \overline{a^+_k(z; \nabla {\cal P}_3^{(1)})}$. 
 Furthermore, for any $z \in \mathcal V \cap h^{s + \sigma_N}_0$ with  $\| z \|_{\sigma_N} \leq 1$, 
 $ \| a^\pm_k(z; \nabla {\cal P}_3^{(1)}) \|_s \lesssim_{s, N} \| z_\bot\|_{s + \sigma_N}$.
 If in addition $\widehat z_1, \ldots , \widehat z_l \in h^{s+ \sigma_N}_0$, $l \ge 1$, then
  \begin{equation}\label{stima P3 dopo Psi L}
   \| d^l a^\pm_k(z; \nabla {\cal P}_3^{(1)}) [\widehat z_1 , \ldots, \widehat z_l] \|_s \, \lesssim_{s, k, l} \,
   \sum_{j = 1}^l \| \widehat z_j\|_{s + \sigma_N} \prod_{i \neq j} \| \widehat z_i \|_{\sigma_N} + \| z_\bot \|_{s + \sigma_N} \prod_{j = 1}^l \| \widehat z_j\|_{\sigma_N}\,.
   \end{equation}
  Similarly, for any $ z \in \mathcal V \cap h^{s \lor \sigma_N}_0$ with  $\| z \|_{\sigma_N} \leq 1$, $\widehat z \in h^{s \lor \sigma_N}_0$,
  $ \| {\cal R}_N(z; \nabla {\cal P}_3^{(1)}) \|_{s + N + 1} \lesssim_{s, N} \| z_\bot \|_{s \lor \sigma_N} \| z_\bot \|_{\sigma_N}$ and
   \begin{equation}\label{stima P3 dopo Psi L (part 2)}
  \| d {\cal R}_N(z; \nabla {\cal P}_3^{(1)}) [\widehat z] \|_{s + N + 1} \lesssim_{s, N} 
  \| z_\bot\|_{s \lor \sigma_N } \| \widehat z \|_{\sigma_N} + \| z_\bot\|_{\sigma_N} \| \widehat z \|_{s \lor \sigma_N } \, .
  \end{equation}
  If in addition $\widehat z_1, \ldots , \widehat z_l \in h^{s \lor \sigma_N}_0$, $l \ge 2$, then
  \begin{equation}\label{stima P3 dopo Psi L (part 3)}
   \| d^l {\cal R}_N(z; \nabla {\cal P}_3^{(1)})[\widehat z_1, \ldots, \widehat z_l] \|_{s + N + 1} \lesssim_{s, N, l} 
  \sum_{j = 1}^l \| \widehat z_j\|_{s \lor \sigma_N} \prod_{i \neq j} \| \widehat z_i\|_{\sigma_N} + \| z_\bot \|_{s \lor\sigma_N} \prod_{j = 1}^l \| \widehat z_j\|_{\sigma_N}\,.
  \end{equation}
   \end{lemma}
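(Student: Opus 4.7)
The plan is to compute the $L^2$-gradient of $\mathcal{P}_3^{(1)}$ explicitly, split it into its $z_S$ and $z_\bot$ components, then reduce the cubic structure by Bony's paralinearization and expand the resulting expression using the pseudo-differential expansions of $\Psi_1(z_S)$ and $\Psi_1(z_S)^\top$ from Corollaries \ref{pseudodiff expansion Psi_L} and \ref{lemma asintotiche Phi 1 q}. Set $u = u(z) := \Psi_1(z_S)[z_\bot]$. Since $d(\Psi_1(z_S)[z_\bot])[\widehat z] = \Psi_1(z_S)[\widehat z_\bot] + d_S(\Psi_1(z_S)[z_\bot])[\widehat z_S]$, one gets
\[
\nabla_\bot \mathcal{P}_3^{(1)}(z) = - \Psi_1(z_S)^\top [ u^2],
\qquad
\nabla_S \mathcal{P}_3^{(1)}(z) = - \bigl( \langle u^2, \partial_{z_n} \Psi_1(z_S)[z_\bot]\rangle\bigr)_{n \in S} \in h_S.
\]
Because $h_S$ has fixed finite dimension, the estimates $\|\nabla_S \mathcal{P}_3^{(1)}(z)\| \lesssim \|u\|_0^2 \|u\|_0 \lesssim \|z_\bot\|_0^2 \|z_\bot\|_{\sigma_N}$ (via Corollary \ref{pseudodiff expansion Psi_L}) let us absorb $\nabla_S \mathcal{P}_3^{(1)}$ entirely into $\mathcal{R}_N(z; \nabla \mathcal{P}_3^{(1)})$, as all norms on $h_S$ are equivalent.

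Next, I would apply Bony's decomposition to the square: $u^2 = 2 T_u u + R_{\mathrm{Bony}}(u,u)$, where the standard paraproduct result in Appendix \ref{appendice B} gives, after choosing $\sigma_N$ sufficiently large in terms of $N$, the tame bound $\|R_{\mathrm{Bony}}(u,u)\|_{s+N+1} \lesssim_{s,N} \|u\|_{\sigma_N}\|u\|_{s \lor \sigma_N}$. Composing with $\Psi_1(z_S)^\top$, which by Corollary \ref{lemma asintotiche Phi 1 q} is of order zero with coefficients of tame type, the contribution $-\Psi_1(z_S)^\top[R_{\mathrm{Bony}}(u,u)]$ is placed into the remainder $\mathcal{R}_N(z; \nabla \mathcal{P}_3^{(1)})$ using the estimate $\|u\|_{s+\sigma_N} \lesssim \|z_\bot\|_{s+\sigma_N}$ and the analyticity of $z_S \mapsto W_n(\cdot,z_S)$ established in Section \ref{sezione mappa Psi L BO}.

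The remaining paraproduct main term $-2\Psi_1(z_S)^\top[T_u u]$ is expanded by first substituting the pseudo-differential expansion
$u = -g_\infty (\mathcal{F}^+_{N_S})^{-1}[z_\bot] - \overline{g_\infty}(\mathcal{F}^-_{N_S})^{-1}[z_\bot] + \sum_{k\geq 1} a_k^\pm(z_S;\Psi_L)(\pm D)^{-k}[(\mathcal{F}^\pm_{N_S})^{-1}z_\bot] + \mathcal{R}_N(z;\Psi_L)$
into the inner argument of $T_u$, then using the composition rules for paraproducts with pseudo-differential operators (symbolic calculus in Appendix \ref{appendice B}: $T_a \circ (\pm D)^{-k}$ and $\mathcal{F}^\pm_{N_S}\circ T_a$ behave well modulo $(N+1)$-smoothing operators), and finally composing from the outside with the expansion of $\Psi_1(z_S)^\top$ of Corollary \ref{lemma asintotiche Phi 1 q}. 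After rearranging and separating the frequency $\pm$-halves via $\mathcal{F}^\pm_{N_S}$, the resulting expression takes precisely the form \eqref{epansione d nabla P3} for suitable coefficients $a_k^\pm(z;\nabla \mathcal{P}_3^{(1)})$, with all cross terms (such as $T_+ $ composed with $(\mathcal{F}^-_{N_S})^{-1}$-factors) becoming smoothing by the smoothing properties of Hankel-type operators (cf. Corollary \ref{Hankel infinitely smoothing}) and thus pushed into $\mathcal{R}_N$. The symmetry $a_k^-(z;\nabla \mathcal{P}_3^{(1)}) = \overline{a_k^+(z;\nabla \mathcal{P}_3^{(1)})}$ follows from the analogous symmetries of the expansion coefficients of $\Psi_1$ and $\Psi_1^\top$ combined with the reality of $u$.

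The main obstacle will be the bookkeeping of the three-fold composition $\mathcal{F}^\pm_{N_S} \circ (\text{pseudo}) \circ T_u \circ (\text{pseudo}) \circ (\mathcal{F}^\pm_{N_S})^{-1}$ and its systematic reduction to the target normal form while tracking each remainder and its tame estimate. To close the tame bounds \eqref{stima P3 dopo Psi L}–\eqref{stima P3 dopo Psi L (part 3)} with the uniform loss $\sigma_N$, one must iterate the interpolation Lemma \ref{lemma interpolation} at every stage of the composition to keep the "$\sigma_N$" factor from propagating into the higher Sobolev index on the right-hand side, and one must choose $\sigma_N$ large enough to absorb both the finite number of derivatives entering the paraproduct symbolic calculus and the order of the Bony remainder needed to achieve $(N+1)$-smoothing.
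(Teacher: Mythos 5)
Your proposal is correct and follows essentially the same path the paper takes: compute $\nabla_\bot\mathcal P_3^{(1)} = -\Psi_1(z_S)^\top[(\Psi_1(z_S)[z_\bot])^2]$, split the square by Bony's decomposition into $2T_u u + \mathcal R^{(B)}(u,u)$, feed in the pseudo-differential expansions of $\Psi_1(z_S)$ and $\Psi_1(z_S)^\top$ from Corollaries 2.5 and 2.7, and close the tame estimates by the composition and interpolation lemmas of Appendix B. Your explicit observations — that the $\nabla_S$-component lives in the fixed finite-dimensional space $h_S$ and can therefore be absorbed entirely into $\mathcal R_N$, and that cross terms coming from mismatched frequency halves are benign — are details the paper leaves implicit; note only that for the paraproduct pieces the mismatched-half terms vanish (or are trivially bounded) by the support restriction built into the cut-off $\chi$, rather than by Hankel smoothing per se (the Hankel mechanism is the right picture for the non-paraproduct remainder pieces), but the conclusion is the same.
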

  \begin{proof}
  By a straightforward calculation, one has
  $\nabla_\bot {\cal P}_3^{(1)}(z) = - \Psi_1(z_S)^\top ( \Psi_1(z_S)[z_\bot] )^2$.
 By the Bony decomposition given in Lemma \ref{primo lemma paraprodotti}$(ii)$,
  $$
( \Psi_1(z_S)[z_\bot])^2 = 2  \, T_{\Psi_1(z_S)[z_\bot]} \Psi_1(z_S)[z_\bot] + \, {\cal R}^{(B)} (\Psi_1(z_S)[z_\bot]\,,\, \Psi_1(z_S)[z_\bot] ) \,. 
  $$
 The expansion \eqref{epansione d nabla P3} and the stated estimates follow from Corollary \ref{pseudodiff expansion Psi_L},
 Corollary \ref{lemma asintotiche Phi 1 q}, and Lemmata  \ref{primo lemma paraprodotti}, \ref{lemma composizione pseudo}, and \ref{lemma interpolation}.    
  \end{proof}
  
 \smallskip

\noindent  
{\bf Expansion of ${\cal H}^{(2)} := {\cal H}^{(1)} \circ \Psi_C$}

\noindent
To compute the expansion of ${\cal H}^{(2)}(z)= {\cal H}^{(1)} ( \Psi_C(z))$ on $\mathcal V' \cap h^1_0$, 
we study the composition of each of the terms in \eqref{forma semifinale H nls circ Psi} with the symplectic corrector $\Psi_C$ separately. 
Recall that $\Psi_C$ is defined on $\mathcal V' $ and takes values in $\mathcal V$.

\smallskip

\noindent
{\em Term $\mathcal H_S^{bo} $.} \ 
By Corollary \ref{proposizione espansione taylor correttore simplettico}, $\Psi_C(z)$ has a Taylor expansion in $z_\bot$ around $0$ of the form
$$
\Psi_C(z) = (z_S, 0) + (0, z_\bot) + \tilde \Psi_C(z), \qquad
\tilde \Psi_C(z):= {\cal R}_{N, 2}(z; \Psi_C) +  \Psi_{C, 3}(z)\,, \qquad  \Psi_{C, 3}(z) \equiv  \Psi_{C, N, 3}(z) \, ,
$$ 
where $R_{N, 2} (z; \Psi_C)$ is the term of order two, given by $R_{N, 2} (z; \Psi_C) =  \frac{1}{2} d^2_\bot {\cal R}_{N}((z_S, 0) ; \Psi_C) [z_\bot, z_\bot]$ (cf. \eqref{Taylor expansion R_N}),
and $\Psi_{C, 3}(z)$ is given by \eqref{espansione Psi C ordini taglia}
$$
  \Psi_{C, 3}(z)  =  \big( 0, \, \, \mathcal{OP}_N ( z; \Psi_{C}) \big)  + {\cal R}_{N, 3}( z; \Psi_{C}) 
$$
with ${\cal R}_{ N, 3} (z; \Psi_{C})$ denoting the Taylor remainder term \eqref{Taylor remainder term}. 
Since $ \mathcal H^{bo}_S(z) =  \mathcal H^{bo}_S(\Pi_S z)$ (cf. \eqref{perturbazione composizione con Psi L}),
the Taylor expansion of $\mathcal H^{bo}_S( \Psi_C(z)) = \mathcal H^{bo}_S(z  + \tilde \Psi_C(z))$ reads
\begin{align}\label{h nls circ PsiC}
\mathcal H_S^{bo}(\Psi_C(z)) & = \mathcal H_S^{bo}(z) + \langle \nabla_S \mathcal H_S^{bo}(z) \,  , \,  \pi_S {\cal R}_{N, 2}(z; \Psi_C) \rangle + {\cal P}^{(2a)}_3(z)\,, 
\end{align}
where ${\cal P}^{(2a)}_3(z)$ is the Taylor remainder term of order three, given by
$$
 \langle \nabla_S \mathcal H_S^{bo}(z) \, , \pi_S  \Psi_{C, 3}(z) \rangle + 
\int_0^1 (1 - \tau) \big\langle \, d_S  \nabla_S \mathcal H_S^{bo} (z + \tau \tilde \Psi_C(z) )
[ \pi_S  \tilde \Psi_C(z)] \, , \, \pi_S  \tilde \Psi_C(z) \, \big\rangle \, d \tau
$$
and $\pi_S : h_S \times h^0_{ \bot } \to h_S$ denotes the map given by $z = (z_S, z_\bot) \mapsto z_S$ (cf. \eqref{pi S}).
Since $\pi_S  \Psi_{C, 3}(z) = \pi_S {\cal R}_{ N, 3} (z; \Psi_{C})$ and $\pi_S \tilde \Psi_C(z) = \pi_S {\cal R}_{N}(z; \Psi_C) = \pi_S \big(  {\cal R}_{N, 2}(z; \Psi_C) + {\cal R}_{N, 3}(z; \Psi_C) \big)$ (cf. \eqref{Taylor expansion R_N}),
one has
\begin{align}\label{definizione h3 nls}
{\cal P}^{(2a)}_3(z)  & =   \langle \nabla_S \mathcal H_S^{bo}(z) \, , \, \pi_S  R_{N, 3}(z; \Psi_C) \rangle \nonumber\\
 & +  \int_0^1 (1 - \tau) \big\langle \, d_S  \nabla_S \mathcal H_S^{bo} (z + \tau \tilde \Psi_C(z) ) 
[\pi_S {\cal R}_{N}(z; \Psi_C)] \, ,  \, \pi_S {\cal R}_{N}(z; \Psi_C)  \, \big\rangle \, d \tau .
\end{align}

In the next lemma we show that $ \nabla {\cal P}^{(2a)}_3(z)$ is in $h^{s+N + 1}_0$ for any $z \in \mathcal V' \cap h^s_0.$ 
\begin{lemma}\label{stime tame h3 nls}
The Hamiltonian ${\cal P}^{(2a)}_3 : \mathcal V'  \to \R$ is real analytic and for any integers $s \geq 0$, $N \ge 1$,
the map $\mathcal V' \cap h^s_0 \to h^{s + N + 1}_0$, $z \mapsto \nabla {\cal P}_3^{(2a)}(z)$ is real analytic. 
Furthermore, for any $z \in \mathcal V' \cap h^s_0 $, and
$ \widehat z \in h^s_0$,
  $$
  \| \nabla {\cal P}^{(2a)}_3(z)\|_{s + N + 1} \lesssim_{s, N}  \, \| z_\bot\|_s \| z_\bot \|_0\,, \quad 
  \| d  \nabla {\cal P}^{(2a)}_3 (z) [\widehat z]\|_{s + N + 1} \lesssim_{s, N} \, \| z_\bot \|_s \| \widehat z\|_0 +  \| z_\bot \|_0 \| \widehat z\|_s  \, .
  $$
  If in addition $ \widehat z_1, \ldots, \widehat z_l \in h^s_0$, $l \geq 2$, 
  $$
  \| d^l \nabla {\cal P}^{(2a)}_3(z) [\widehat z_1, \ldots, \widehat z_l]\|_{s + N + 1} \lesssim_{s, N, l} \,
  \sum_{j = 1}^l \| \widehat z_j\|_s\prod_{i \neq j} \|\widehat z_i \|_0 + 
  \| z_\bot \|_s \prod_{j = 1}^l \|\widehat z_j \|_0\,.
  $$
\end{lemma}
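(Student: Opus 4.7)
The plan is to decompose $\mathcal P_3^{(2a)}$ as in \eqref{definizione h3 nls} into its two summands $F_1$ (the $\mathcal R_{N, 3}$-term) and $F_2$ (the $\tau$-integral), treat each separately, and assemble the estimates. Real analyticity of the scalar map $\mathcal V' \to \R$ is immediate from the real analyticity of $\nabla_S \mathcal H_S^{bo}$, $\Psi_C$, $\mathcal R_{N,2}$, $\mathcal R_{N,3}$ supplied by Corollary \ref{proposizione espansione taylor correttore simplettico} and Theorem \ref{espansione flusso per correttore}; the real analyticity of the gradient map into $h^{s+N+1}_0$ then follows from the explicit formula for $\nabla \mathcal P_3^{(2a)}$ derived below.

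The key structural observation is that each summand of $\mathcal P_3^{(2a)}$ is a pairing between a finite-dimensional vector (in $h_S$) and a smoothing remainder; since $h_S$ embeds boundedly into $h^t_0$ for every $t$, the $h_S$-component of $\nabla \mathcal P_3^{(2a)}$ sits automatically in $h^{s+N+1}_0$ with norm controlled by its $h_S$-size, while the $z_\bot$-component inherits the $(N+1)$-smoothing directly from differentiating the remainder under the integral sign. For $F_1(z) = \langle \nabla_S \mathcal H_S^{bo}(z), \pi_S \mathcal R_{N, 3}(z; \Psi_C)\rangle_{h_S}$ I would apply the product rule: one contribution comes from differentiating $\nabla_S \mathcal H_S^{bo}$ in $z_S$ and is bounded by $\|\pi_S \mathcal R_{N, 3}\|_{h_S} \lesssim \|\mathcal R_{N, 3}\|_{s+N+1} \lesssim \|z_\bot\|_s \|z_\bot\|_0^2$ via Corollary \ref{proposizione espansione taylor correttore simplettico}; the other is obtained by differentiating the integral representation \eqref{Taylor remainder term} of $\mathcal R_{N, 3}$ and invoking the tame bounds for $d^l \mathcal R_N$ from Theorem \ref{espansione flusso per correttore}. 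Using $\|z_\bot\|_0 \lesssim 1$ on $\mathcal V'$, both contributions satisfy the claimed bound $\lesssim \|z_\bot\|_s \|z_\bot\|_0$.

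For $F_2$, which is the $\tau$-integral of the bilinear pairing of two copies of $\pi_S \mathcal R_N(z; \Psi_C)$ against the inner operator $d_S \nabla_S \mathcal H_S^{bo}(z + \tau \tilde \Psi_C(z))$, the Leibniz rule on the two $\mathcal R_N$-factors together with the chain rule on the $z$-dependence of the inner operator (which passes through $\tilde\Psi_C = \mathcal R_{N, 2} + \Psi_{C, 3}$) produces a finite linear combination of terms, each containing at least one differentiated smoothing remainder. Invoking the tame bounds for $d^l \mathcal R_N$, $d^l \mathcal R_{N, 2}$, $d^l \Psi_{C, 3}$ supplied by Corollary \ref{proposizione espansione taylor correttore simplettico}, the uniform boundedness of all $z_S$-derivatives of $\nabla_S \mathcal H_S^{bo}$ on the compact closure of $\mathcal V_S'$, and the interpolation Lemma \ref{lemma interpolation} to redistribute Sobolev regularity, yields the claimed estimate. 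The bounds for $d^l \nabla \mathcal P_3^{(2a)}$ follow by iterating the same scheme.

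The main obstacle is the algebraic bookkeeping ensuring that in every term produced by the repeated Leibniz and chain rules, exactly one factor carries the top Sobolev index $s$ while the remaining factors carry only low indices -- precisely what the interpolation Lemma is designed to manage. The structural reason the proof succeeds is that $\mathcal P_3^{(2a)}$ was constructed as a pairing whose infinite-dimensional side is always a smoothing remainder, so that every derivative of $\mathcal P_3^{(2a)}$ automatically inherits the full $(N+1)$-Sobolev gain.
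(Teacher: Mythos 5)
Your proposal is correct and takes essentially the same route as the paper's own proof: decompose $\mathcal{P}_3^{(2a)}$ into the $\mathcal{R}_{N,3}$-summand and the $\tau$-integral, write each as a finite sum of scalar functions paired with the $S$-components of the smoothing remainders, differentiate term by term, and invoke the tame bounds of Corollary~\ref{proposizione espansione taylor correttore simplettico}. The structural observation you highlight — that the $h_S$-component of the gradient is finite-dimensional and hence automatically in every $h^t_0$, while the $z_\bot$-gradient of each scalar component of $\pi_S\mathcal{R}_{N,3}$ inherits the $(N{+}1)$-gain from the smoothing of the remainder — is exactly the mechanism the paper relies on, though the paper also spells out the explicit formulas for $\nabla h_n$ and $\nabla h_{n,k}$. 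One small discrepancy: you list the interpolation Lemma~\ref{lemma interpolation} as a tool, but the paper's argument for this particular lemma does not invoke it; the required tame bounds come directly from Corollary~\ref{proposizione espansione taylor correttore simplettico} without any need to redistribute Sobolev indices.
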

\begin{proof}
We begin by analyzing the first term
$\langle \nabla_S \mathcal H_S^{bo}(z) \, | \,  \pi_S  R_{N, 3} (z; \Psi_C) \rangle$ on the right hand side of \eqref{definizione h3 nls}.
It is given by the finite sum $\sum_{n \in S} h_n(z)$  where 
$$
h_{n}(z) :=  (\nabla \mathcal H_S^{bo}(z))_{n} \cdot (R_{N, 3}(z))_{-n}  
=\partial_{z_{-n}} \mathcal H_S^{bo}(z) \cdot \langle {\cal R}_{ N, 3} (z; \Psi_{C}) \, , \,  e_n \rangle\,, 
\quad \forall n \in S \,,
$$
and $(e_n)_{n \in S}$ denotes the standard basis of $h_S$.
The derivative of $h_n$ in direction $\widehat z \in h^0_0$ then reads
$$
\begin{aligned}
\langle  \nabla h_{n}(z) \, , \, \widehat z \rangle 
& = \langle  \nabla \partial_{z_{-n}} \mathcal H_S^{bo}(z) \, , \,  \widehat z  \rangle \cdot \langle {\cal R}_{ N, 3} (z; \Psi_{C}) \, , \,  e_n \rangle
+ \partial_{z_{-n}} \mathcal H_S^{bo}(z)  \cdot \langle d  {\cal R}_{ N, 3} (z; \Psi_{C}) [\widehat z] \, , \,  e_{n} \rangle\\
 & =  \langle  \nabla \partial_{z_{-n}} \mathcal H_S^{bo}(z) \, , \,  \widehat z \rangle \cdot \langle {\cal R}_{ N, 3} (z; \Psi_{C}) \, , \,  e_n \rangle
+  \partial_{z_{-n}} \mathcal H_S^{bo}(z) \cdot \langle   (d {\cal R}_{ N, 3} (z; \Psi_{C}))^\top [e_{n}] \, , \,  \widehat z \rangle \, ,
  \end{aligned}
$$
 implying that 
 $$
 \nabla h_{n}(z) = \langle {\cal R}_{ N, 3} (z; \Psi_{C}) \, , \,  e_n \rangle  \cdot \nabla \partial_{z_{-n}} \mathcal H_S^{bo}(z) 
 + \partial_{z_{-n}} \mathcal H_S^{bo}(z) \cdot  (d {\cal R}_{ N, 3} (z; \Psi_{C}))^\top [e_{n}] \,.  
 $$
By 
Corollary \ref{proposizione espansione taylor correttore simplettico},  for any $s \ge 0$,
$\mathcal V' \cap h^s_0 \to h^{s + N + 1}_0$, $z \mapsto \nabla  h_{n}(z)$ is real analytic and satisfies the estimates 
$\| \nabla h_{n}(z)\|_{s + N + 1} \lesssim_{s, N}  \| z_\bot \|_s \| z_\bot \|_0\,.$
The estimates for the higher order derivatives of $h_n$, $n \in S,$ are obtained by differentiating the expression for $ \nabla h_{n}(z)$ 
and using the estimates of 
Corollary \ref{proposizione espansione taylor correttore simplettico}.  

In order to analyze the second term on the right hand side of \eqref{definizione h3 nls}
 it suffices to study the functions $h_{n, k}(z; \tau)$, $n, k \in S$, given by
$$
h_{n, k}(z; \tau) := \partial_{z_{-n}} \partial_{z_{-k}} \mathcal H_S^{bo}(z + \tau \tilde \Psi_C(z) ) \cdot
\langle {\cal R}_{ N} (z; \Psi_{C}), e_{n} \rangle \cdot \langle {\cal R}_{ N} (z; \Psi_{C}), e_{k} \rangle 
$$
where $0 \le \tau \le 1$. Clearly, $h_{n, k}(z; \tau)$ depends continuously on $\tau$ as do all the derivatives with respect to the variable $z$.
Since $\mathcal H_S^{bo}( z + \tau \tilde \Psi_C(z) )$ only depends on $\pi_S(z + \tau \tilde \Psi_C(z))$ 
one sees that
$$
\begin{aligned}
\langle \nabla h_{n, k}(z; \tau), \,\widehat z \rangle & = 
\langle \nabla_S \big( \, \partial_{z_{-n}} \partial_ {z_{-k}}  \mathcal H_S^{bo}(z + \tau  \tilde \Psi_C(z)) \big), 
\, \pi_S \big({\rm{ Id}} +  \tau \, d \tilde \Psi_C(z) \big) [\widehat z]  \rangle
	\cdot \langle {\mathcal R}_N (z; \Psi_C), e_n \rangle \cdot 
	\langle {\cal R}_{ N} (z; \Psi_{C}) , e_{k} \rangle \nonumber\\
& \quad + \partial_{z_{-n}} \partial_ {z_{-k}} \mathcal H_S^{bo}(z + \tau  \tilde \Psi_C(z)) 
	\cdot \langle  (d {\cal R}_{ N} (z; \Psi_{C}))^\top [e_{n}], \widehat z \rangle \cdot \langle \mathcal R_N (z; \Psi_C), e_k \rangle \nonumber\\
& \quad + \partial_{z_{-n}} \partial_ {z_{-k}}  \mathcal H_S^{bo}(z + \tau  \tilde \Psi_C(z)) 
	\cdot \langle {\cal R}_{ N} (z; \Psi_{C}), e_{n } \rangle \cdot \langle  (d {\cal R}_{ N} (z; \Psi_{C}))^\top [e_{k}], \widehat z \rangle  \, , \nonumber\\
\end{aligned}
$$
implying that 
$$
\begin{aligned}
\nabla h_{n, k}(z; \tau) & = \big({\rm{ Id}} +  \tau \ d \tilde \Psi_C(z) \big)^\top
\big[\Pi_S \nabla_S \big( \, \partial_{z_{-n}} \partial_ {z_{-k}}  \mathcal H_S^{bo}(z + \tau  \tilde \Psi_C(z)) \, \big) \big]   
	\cdot \langle {\cal R}_{ N} (z; \Psi_{C}), e_n \rangle \cdot \langle {\cal R}_{ N} (z; \Psi_{C}), e_k \rangle \nonumber\\
& \quad + \partial_{z_{-n}} \partial_ {z_{-k}} \mathcal H_S^{bo}(z + \tau  \tilde \Psi_C(z)) 
	\cdot \langle {\cal R}_{ N} (z; \Psi_{C}), e_{k} \rangle \cdot  (d {\cal R}_{ N} (z; \Psi_{C}))^\top [e_{n}]   \nonumber\\
& \quad + \partial_{z_{-n}} \partial_ {z_{-k}}  \mathcal H_S^{bo}(z + \tau  \tilde \Psi_C(z)) \cdot
	\langle {\cal R}_{ N} (z; \Psi_{C}), e_{n} \rangle \cdot (d {\cal R}_{ N} (z; \Psi_{C}))^\top [e_{k}] \, . \nonumber\\
\end{aligned}
$$
By Corollary \ref{proposizione espansione taylor correttore simplettico}, for any $s \geq 0$, 
the map $\mathcal V'  \cap h^s_0 \to h^{s + N + 1}_0$, $z \mapsto \nabla h_{n, k}(z; y)$ is real analytic and satisfies the estimate 
$\| \nabla h_{n, k}(z; y)\|_{s + N +1} \lesssim_{s, N} \| z_\bot \|_s \| z _\bot\|_0^2$. 
The estimates for the higher order derivatives are obtained by differentiating $\nabla h_{n, k}$ 
and  applying again Corollary \ref{proposizione espansione taylor correttore simplettico}.
\end{proof}

\smallskip

In a next step we analyze ${\cal H}_\Omega(\Psi_C(z))$ where for any $z \in h^1_0$,
\begin{equation}\label{def H_Omega}
{\cal H}_\Omega(z) :=  \frac{1}{2} \langle \Omega_\bot(I_S) [z_\bot] \, , \, z_\bot \rangle \, ,
\end{equation}
and according to \eqref{splitting Omega}, 
  \begin{equation}\label{splitting Omega bot (IS)}
  \Omega_\bot(I_S) =  |D_\bot|  + \Omega_\bot^{(0)}(I_S)\,,  \qquad |D_\bot| :=   {\rm diag}_{n \in S^\bot} (|n|) \, ,
  \end{equation}
  where by \eqref{splitting Omega} - \eqref{asintotica frequenze kdv},
 \begin{equation}\label{definizione Omega bot (0) (IS)}
  \Omega_\bot^{(0)}(I_S) := {\rm diag}_{n \in S^\bot} (\Omega_n(I_S) - |n|)
  = {\rm diag}_{n \in S^\bot}(- \frac{2}{|n|} \sum_{k \in S_+} \min \{|n|, k \} I_k) \,.
    \end{equation}
  
  \noindent
  {\em Term ${\cal H}_\Omega(z)$.}  \
First, for further reference, we rewrite $\Omega_\bot^{(0)}(I_S)$ in the form of an expansion as follows.
  \begin{lemma}\label{lemma Omega bot q}
  The operator $\Omega_\bot^{(0)}(I_S)$ can be written in the form
  \begin{equation}\label{identity Omega bot}
  {\cal F}^+_{N_S}  \circ \big( a^+_1(I_S; \Omega_\bot^{(0)})  \,  D^{- 1} \big) ({\cal F}^+_{N_S})^{- 1}
 + {\cal F}^-_{N_S} \circ \big(  a^-_1(I_S; \Omega_\bot^{(0)}) \,  (-D)^{- 1} \big) ({\cal F}^-_{N_S})^{- 1} 
 +  {\cal R}_N( I_S; \Omega_\bot^{(0)}) \, ,
 \end{equation}
 where 
 $$
 a^+_1(I_S; \Omega_\bot^{(0)}) := - 2 \sum_{k \in S_+} k I_k \, , \qquad
 a^-_1(I_S; \Omega_\bot^{(0)}) := a^+_1(I_S; \Omega_\bot^{(0)}) \, ,
 $$
and  ${\cal R}_N( I_S; \Omega_\bot^{(0)})$ is defined by the identity \eqref{identity Omega bot}. For any $s \ge 0$, 
 $$
 \mathbb R_{> 0}^{S_+} \to \R, \, I_S \mapsto a^+_1(I_S; \Omega_\bot^{(0)})\,, \qquad  
 \mathbb R^{S_+}_{> 0} \to {\cal B}(h^s_\bot, h^{s + N +1}_\bot), \, I_S \mapsto  {\cal R}_N( I_S; \Omega_\bot^{(0)}) \, ,
 $$
 are real analytic. 
  \end{lemma}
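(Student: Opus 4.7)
The starting point is the explicit formula \eqref{definizione Omega bot (0) (IS)},
$$
\Omega_n^{(0)}(I_S) := \Omega_n(I_S) - |n| = -\frac{2}{|n|} \sum_{k \in S_+} \min\{|n|, k\}\, I_k, \qquad n \in S^\bot,
$$
which splits naturally according to whether $|n| > N_S$ or $|n| \in [1, N_S]\setminus S_+$. The plan is to show that for the ``high frequency'' part $|n| \ge N_S + 1$ the formula reduces exactly to a first-order pseudo-differential symbol, while the ``low frequency'' part contributes only a finite-rank, hence smoothing, remainder.

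First, for $|n| \ge N_S + 1$ and any $k \in S_+$ one has $k \le N_S < |n|$, hence $\min\{|n|, k\} = k$. Therefore
$$
\Omega_n^{(0)}(I_S) = \frac{1}{|n|}\Big( - 2\sum_{k \in S_+} k I_k \Big), \qquad \forall\, |n| \ge N_S + 1.
$$
Setting $a^\pm_1(I_S; \Omega_\bot^{(0)}) := -2\sum_{k \in S_+} k I_k$ (a constant in $x$, hence trivially in $H^s_\C$ for every $s$), a direct computation using $D\,e^{\ii n x} = n\, e^{\ii nx}$ and the definition of $\mathcal F^\pm_{N_S}$ in \eqref{def partial Fourier}--\eqref{def inverse partial Fourier} shows that $\mathcal F^+_{N_S} \circ (a^+_1\, D^{-1})\circ (\mathcal F^+_{N_S})^{-1}$ is the diagonal operator on $h^0_\bot$ acting by $\widehat z_n \mapsto (a^+_1/n) \widehat z_n$ for $n \ge N_S + 1$ and by $0$ on all other indices in $S^\bot$, and analogously for the $-$ branch. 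Summing the two contributions recovers $\Omega_n^{(0)}(I_S) \widehat z_n$ for every $|n| \ge N_S + 1$.

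Define then
$$
\mathcal R_N(I_S; \Omega_\bot^{(0)}) := \Omega_\bot^{(0)}(I_S) - \mathcal F^+_{N_S} \circ (a^+_1 D^{-1}) \circ (\mathcal F^+_{N_S})^{-1} - \mathcal F^-_{N_S} \circ (a^-_1 (-D)^{-1}) \circ (\mathcal F^-_{N_S})^{-1},
$$
so that \eqref{identity Omega bot} holds by construction. By the preceding paragraph, this remainder acts trivially on the indices $|n| \ge N_S + 1$ and coincides with the diagonal operator $\mathrm{diag}_{|n| \in [1, N_S]\setminus S_+}\bigl(\Omega_n^{(0)}(I_S)\bigr)$ on the finite set of indices $\{ n \in S^\bot : |n| \le N_S\}$. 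Since this index set is finite, $\mathcal R_N(I_S; \Omega_\bot^{(0)})$ is a finite-rank operator whose matrix entries are polynomial (hence real analytic) functions of $I_S$. Any finite-rank operator on $h^0_\bot$ with range supported on indices $|n| \le N_S$ maps $h^s_\bot$ into $h^{s + N + 1}_\bot$ (indeed into $h^{s+M}_\bot$ for any $M$), with operator norm controlled by $\langle N_S\rangle^{N+1}$ times the maximum of $|\Omega_n^{(0)}(I_S)|$ over $|n| \le N_S$, which is polynomial in $|I_S|$.

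Finally, real analyticity of the maps $I_S \mapsto a^\pm_1(I_S; \Omega_\bot^{(0)})$ and $I_S \mapsto \mathcal R_N(I_S; \Omega_\bot^{(0)})$ on $\mathbb R^{S_+}_{>0}$ is immediate from the fact that both are polynomial in $I_S$. No non-trivial obstacle is expected: the only thing to verify carefully is the bookkeeping of the indices on which each of the three operators in \eqref{identity Omega bot} is non-zero, in order to check that their sum matches $\Omega_\bot^{(0)}(I_S)$ on every index $n \in S^\bot$.
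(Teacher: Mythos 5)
Your proof is correct and is exactly the argument that the paper leaves implicit: the lemma is stated without a proof because it is an immediate consequence of \eqref{asymptotics BO frequencies} from Lemma~\ref{espansione asintotica frequenza thomas}, which already records that $\Omega_n^{(0)}(I_S) = -\bigl(2\sum_{k\in S_+} kI_k\bigr)/|n|$ for $|n|\ge N_S+1$. Splitting the diagonal operator into the range $|n|\ge N_S+1$ (captured exactly by the two pseudo-differential terms) and the finite set $|n|\le N_S$, $n\in S^\bot$ (the finite-rank remainder, smoothing to any order, polynomial hence real analytic in $I_S$) is the expected and essentially unique way to verify the claim, and you have carried out the index bookkeeping correctly.
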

  To analyze ${\cal H}_\Omega(\Psi_C(z))$, we write the quadratic form 
  $2 {\cal H}_\Omega(\Psi_C(z)) =  \langle \Omega_\bot(I_S) [z_\bot], \, z_\bot \rangle$, $z_\bot \in h^1_\bot$, as a sum
  $$
   \langle \Omega_\bot(I_S) [z_\bot], \, z_\bot \rangle  =  
    \ \langle |D_\bot| [z_\bot], \, z_\bot  \rangle 
   +  \ \langle  \Omega_\bot^{(0)}(I_S) [z_\bot] \,, \, z_\bot  \rangle
  $$
  and consider $\langle |D_\bot | [z_\bot] \, , \, z_\bot  \rangle$ and $ \langle  \Omega_\bot^{(0)}(I_S) [z_\bot] \, , \, z_\bot  \rangle$ separately.
Substituting $\pi_\bot \Psi_C(z) = z_\bot + \pi_\bot \tilde \Psi_C(z)$ for $z_\bot$ 
in $ \big\langle |D_\bot| [z_\bot] \, , \, z_\bot \big\rangle$, one gets
\begin{align}
  \big\langle |D_\bot| [z_\bot +& \pi_\bot \tilde \Psi_C(z)] \, ,  \, z_\bot  + \pi_\bot \tilde \Psi_C(z) \big\rangle
    =   \big\langle |D_\bot|  [z_\bot]  \, , \, z_\bot  \big\rangle +  
 \big\langle |D_\bot|  [z_\bot] \, , \,  \pi_\bot \tilde \Psi_C(z) \big\rangle \\
& +   \big\langle |D_\bot| [ \pi_\bot \tilde \Psi_C(z)] \, , \, z_\bot  \big\rangle +  
 \big\langle |D_\bot| [ \pi_\bot \tilde \Psi_C(z)] \, ,  \, \pi_\bot \tilde \Psi_C(z) \big\rangle \, ,   \nonumber
\end{align}
where the map $\pi_\bot$ is defined in \eqref{pi S}. With a view towards the expansion of $H^{bo} \circ \Psi$, stated in Theorem \ref{modified Birkhoff map},
we treat the difference
$$
  \frac12  \langle |D_\bot| [z_\bot + \pi_\bot \tilde \Psi_C(z)] \, , \, z_\bot + \pi_\bot \tilde \Psi_C(z) \rangle  
\, - \, \frac12   \langle | D_\bot | [z_\bot] , \, z_\bot   \rangle
$$
as part of the error term ${\cal P}_3(z)$.  It needs special attention since the Hamiltonian vector fields, associated to the functionals
$$
\langle |D_\bot| [z_\bot] \, ,  \, \pi_\bot \tilde \Psi_C(z) \rangle \, ,  \qquad 
 \langle |D_\bot| [ \pi_\bot \tilde \Psi_C(z)] \, , \, z_\bot  \rangle \, ,
$$ 
could be unbounded. We write
  \begin{equation}\label{parte pericolosa Psi C}
  {\cal H}_\Omega(\Psi_C(z)) = {\cal H}_\Omega(z) + {\cal P}_3^{(2b)}(z)\,, 
  \qquad {\cal P}_3^{(2b)}(z) := {\cal H}_\Omega(\Psi_C(z)) - {\cal H}_\Omega(z)\, ,
  \end{equation}
  with ${\cal H}_\Omega(z)$ given by \eqref{def H_Omega}.
 Note that by the mean value theorem, 
  \begin{align}
  & {\cal P}_3^{(2b)}(z) = \int_0^1 {\cal P}_{\Omega}(\tau, \Psi_{X}^{0, \tau}(z)) \, d \tau \, ,   \label{caniggia 0}
  \end{align}
where for $\tau \in [0, 1]$, $z \in \mathcal V$,
  \begin{equation}\label{definition cal H Omega tau}
{\cal P}_{\Omega} (\tau, z) := \langle \nabla {\cal H}_\Omega(z) \, , \, X(\tau, z) \rangle \, .
  \end{equation}
In a first step we analyze ${\cal P}_{\Omega} (\tau, z)$.   One has 
  \begin{align}
   \langle \nabla {\cal H}_\Omega(z) \, , \, X(\tau, z) \rangle &  = 
     \langle \nabla_S {\cal H}_\Omega(z) \, ,\, \pi_S X(\tau, z) \rangle + 
   \frac{1}{2} \langle \Omega_\bot(I_S) z_\bot \, , \,  \pi_\bot {X}(\tau, z)  \rangle\,. \label{termine delicato sergey}
  \end{align}
Since ${\cal H}_\Omega = \frac{1}{2} \langle \Omega_\bot(I_S) [z_\bot] \, , \, z_\bot \rangle$
and $\Omega_\bot(I_S) =  |D_\bot| +  \Omega_\bot^{(0)}(I_S)$ one has
\begin{align}\label{formula for nabla_S H_ Omega}
 \langle \nabla_S {\cal H}_\Omega(z)\,,\, \pi_S X(\tau, z) \rangle  
 & =  \sum_{j \in S} \, \partial_{z_{-j}} \mathcal H_\Omega(z) \cdot \langle X(\tau, z), e_j \rangle \nonumber \\ 
 & =  \frac{1}{2} \sum_{j \in S} \, \langle \partial_{z_{-j}} \Omega^{(0)}_\bot(I_S) [z_\bot] \, , \, z_\bot \rangle \, \langle X(\tau, z) \, , \, e_j \rangle \,.
 \end{align}
 Concerning the term $\frac{1}{2} \langle \Omega_\bot(I_S) z_\bot, \,  \pi_\bot {X}(\tau, z)  \rangle$ in \eqref{termine delicato sergey}, recall that 
 (cf.  \eqref{definizione campo vettoriale ausiliario}, \eqref{definition L tau})
  $$
  X(\tau, z) = - {\cal L}_\tau(z)^{- 1} [ J \mathcal E(z)] \, ,  \qquad
  {\cal L}_\tau(z) = {\rm{Id}} + \tau J \mathcal L(z) \, ,
  $$
  and hence 
  \begin{equation}\label{proprieta Neumann F tau w}
  X(\tau, z)  = - J \mathcal E(z) +  \tau J \mathcal L(z) [X(\tau, z)]\,.
  \end{equation}
 Since $\mathcal E(z) = (\mathcal E_S(z), 0)$ and $J^\top = - J$,
 one gets
  \begin{align}
  \langle \Omega_\bot(I_S) z_\bot \, , \,  \pi_\bot {X}(\tau, z)  \rangle  & = 
  \big\langle \Omega_\bot(I_S) z_\bot , \, \pi_\bot \tau J \mathcal L(z) X(\tau, z) \big\rangle \nonumber \\
  &  =  -   \tau  \big\langle J_\bot \Omega_\bot(I_S) z_\bot , \, \pi_\bot  \mathcal L(z) X(\tau, z) \big\rangle \, . \label{torres 0}
  \end{align}
  By \eqref{definition L z} the component $\mathcal L_\bot^\bot(z)$ of $\mathcal L(z)$ vanishes, implying that
  $ \pi_\bot \mathcal L(z) X(\tau, z) = \mathcal L_\bot^S(z) \pi_S X(\tau, z)$.  Substituting the latter expression into \eqref{torres 0}  
  and using that by \eqref{mathcal L transpose}, $ \mathcal L_\bot^S(z)^\top  = - \mathcal L_S^\bot(z)$, one concludes that
  \begin{align}\label{cappello 10}
  \langle \Omega_\bot(I_S) z_\bot \, , \,  \pi_\bot {X}(\tau, z)  \rangle
   & =  -   \tau \langle J_\bot \Omega_\bot(I_S) z_\bot \, , \, \mathcal L_\bot^S(z) \pi_S X(\tau, z) \rangle  \nonumber \\
  & =   \tau \langle \mathcal L_S^\bot(z) J_\bot \Omega_\bot(I_S) z_\bot  \, , \, \pi_S X(\tau, z) \rangle.
   \end{align}
 Furthermore, by \eqref{L SS botS Sbot (1)},
  \begin{align}
  &  \mathcal L_S^\bot(z) [J_\bot \Omega_\bot(I_S) z_\bot ]= 
  \big( \big\langle \partial_x^{- 1} \Psi_1(z_S)[ J_\bot \Omega_\bot(I_S ) z_\bot ] \, | \, \partial_{z_n}  \Psi_1(z_S)[ z_\bot]   \big\rangle \big)_{n \in S} 
\,. \label{cappello 0}
  \end{align}
  Since $ \Omega_\bot(I_S ) =  |D_\bot|  +  \Omega_\bot^{(0)}(I_S)$ and 
  $$
   J_\bot |D_\bot| = \ii D_\bot |D_\bot | \, , \qquad  D_\bot :=  {\rm diag}_{n \in S^\bot} (n)\, ,
  $$
  we need to analyze $\Psi_1(z_S) \ii D_\bot |D_\bot|$. 
%
%
%
%
  By Remark \ref{extension of remainder 1}(i),
 $\Psi_1(z_S) \ii D_\bot |D_\bot|$ is a bounded linear operator $h^s_\bot \to H^{s-2}_0$ for any $s \ge 0$.
  \begin{lemma}\label{lemma cal T(q)}
  For any integer $N \ge 0$ and any $z_S \in \mathcal V_S$, the operator
  $$
  {\cal T}(z_S) := \Psi_1(z_S) \ii D_\bot |D_\bot|  - \partial_x  |\partial_x|  \Psi_1(z_S) 
  $$ 
  admits an expansion of the form $\mathcal{OP}(z_S;  {\cal T}) + {\cal R}_N(z_S;  {\cal T})$ where
  
  $$
  \begin{aligned}
   \mathcal{OP}(z_S;  {\cal T})  = \sum_{k = - 1}^N a^+_k( z_S; {\cal T}) D^{- k} ({\cal F}^+_{N_S})^{- 1} + 
   \sum_{k = - 1}^N a^-_k( z_S; {\cal T}) (-D)^{- k} ({\cal F}^-_{N_S})^{- 1}
  \end{aligned}
  $$
  and for any $s \geq 0$, $-1 \le k \le N$, the maps 
  $$
  \mathcal V_S \to H^s_\C, \, z_S \mapsto a^{\pm}_k( z_S; {\cal T})\,, \qquad 
  \mathcal V_S \to {\cal B}(h^s_\bot, H^{s + N + 1}), \, z_S \mapsto {\cal R}_N( z_S;  {\cal T}) \, ,
  $$ 
  are real analytic and $a^-_k( z_S; {\cal T}) = \overline{a^+_k( z_S; {\cal T})}$.
  A similar statement holds for the transpose ${\cal T}(z_S)^\top$ of ${\cal T}(z_S)$. 
  \end{lemma}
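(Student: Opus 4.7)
The plan is to derive an algebraic identity expressing $\mathcal T(z_S)$ as a sum of terms whose pseudo-differential expansions have already been established, and then to apply the composition rules of Lemma \ref{lemma composizione pseudo}. The starting point is the identity \eqref{maradona 3}, which was derived by comparing the linearized BO flow in physical and Birkhoff coordinates: for any $z_S \in \mathcal V_S$, $\widehat z_\bot \in h^2_\bot$, setting $q = \Psi^{bo}(z_S,0)$,
\[
\partial_x d\nabla H^{bo}(q)\bigl[\Psi_1(z_S)[\widehat z_\bot]\bigr] = \Psi_1(z_S)\bigl[J_\bot \Omega_\bot(I_S)\widehat z_\bot\bigr] + d_S\bigl(\Psi_1(z_S)[\widehat z_\bot]\bigr)\bigl[J_S \Omega_S(I_S)z_S\bigr].
\]
Using $d\nabla H^{bo}(q) = |\partial_x| - 2M_q$ (with $M_q$ multiplication by $q$), the splitting $\Omega_\bot(I_S) = |D_\bot| + \Omega_\bot^{(0)}(I_S)$ of \eqref{splitting Omega bot (IS)}, the identity $\Psi_1(z_S) J_\bot |D_\bot| = \Psi_1(z_S) \ii D_\bot|D_\bot|$, and the identification via \eqref{definizione cal M (wS)} of the last term as $\Psi_1(z_S) J_\bot \mathcal G(z_S)[\widehat z_\bot]$, rearrangement yields the clean identity
\[
\mathcal T(z_S) \;=\; -2\, \partial_x \circ M_q \circ \Psi_1(z_S) \;-\; \Psi_1(z_S)\, J_\bot\, \Omega_\bot^{(0)}(I_S) \;-\; \Psi_1(z_S)\, J_\bot\, \mathcal G(z_S),
\]
valid a priori on $h^2_\bot$ but extending by density to every $h^s_\bot$, $s \ge 0$.

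The three summands on the right-hand side are then expanded separately. For the first, Corollary \ref{pseudodiff expansion Psi_L} gives the order-zero pseudo-differential expansion of $\Psi_1(z_S)$, multiplication by the $C^\infty$-smooth finite gap potential $q$ preserves that expansion (producing $C^\infty$-smooth coefficients), and composition with $\partial_x = \ii D$ raises the order to one; on the plus-side the principal symbol thus equals $2\ii q g_\infty$, which is the leading contribution to $a^+_{-1}(z_S;\mathcal T)$. For the second, Lemma \ref{lemma Omega bot q} represents $\Omega_\bot^{(0)}(I_S)$ as an order $-1$ pseudo-differential operator with the constant (in $x$) leading symbol $-2\sum_{k\in S_+} kI_k$; since $(\mathcal F^\pm_{N_S})^{-1}J_\bot = \partial_x(\mathcal F^\pm_{N_S})^{-1} = \pm\ii D(\mathcal F^\pm_{N_S})^{-1}$ on the respective components, composition with $J_\bot$ raises the order back to $0$, and a further composition with $\Psi_1(z_S)$ (order $0$) remains of order $0$. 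For the third, Lemma \ref{stime cal M(wS)} provides the order-zero expansion of $\mathcal G(z_S)$; composition with $J_\bot$ makes it order $1$, and a final composition with $\Psi_1(z_S)$ yields another order-one contribution that is absorbed into $a^\pm_{-1}(z_S;\mathcal T)$ and the lower order coefficients.

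Assembling the three contributions via Lemma \ref{lemma composizione pseudo} gives the claimed expansion $\mathcal T(z_S) = \mathcal{OP}(z_S;\mathcal T) + \mathcal R_N(z_S;\mathcal T)$ with summation indexed from $k = -1$ to $N$. The reality conditions $a^-_k(z_S;\mathcal T) = \overline{a^+_k(z_S;\mathcal T)}$ are inherited from the corresponding reality conditions for $\Psi_1(z_S)$, $\mathcal G(z_S)$, and $\Omega_\bot^{(0)}(I_S)$, together with the fact that $q = \Psi^{bo}(z_S,0)$ is real-valued. Real analyticity of the coefficients as maps $\mathcal V_S \to H^s_\C$ and of the remainder as maps $\mathcal V_S \to \mathcal B(h^s_\bot, H^{s+N+1})$, together with the locally uniform tame bounds, follow from the analogous properties established in Corollary \ref{pseudodiff expansion Psi_L}, Lemma \ref{lemma Omega bot q}, and Lemma \ref{stime cal M(wS)}, propagated through Lemma \ref{lemma composizione pseudo}.

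For the transpose $\mathcal T(z_S)^\top$, one uses that $\partial_x^\top = -\partial_x$, $|\partial_x|^\top = |\partial_x|$, and $(D_\bot|D_\bot|)^\top = -D_\bot|D_\bot|$ with respect to the bilinear pairings of \eqref{complex bilinear forms}, whence
\[
\mathcal T(z_S)^\top = -\ii D_\bot|D_\bot|\,\Psi_1(z_S)^\top \;+\; \Psi_1(z_S)^\top\, \partial_x|\partial_x|.
\]
Applying the same strategy — now with the pseudo-differential expansion of $\Psi_1(z_S)^\top$ from Corollary \ref{lemma asintotiche Phi 1 q} — or, equivalently, directly transposing the expansion of $\mathcal T(z_S)$ through the composition rules, yields the statement. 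The principal obstacle is purely combinatorial: carefully tracking the order arithmetic through the compositions so that the naively order-two piece $\Psi_1(z_S)\ii D_\bot|D_\bot|$ is seen to cancel against $\partial_x|\partial_x|\Psi_1(z_S)$ at the top symbol level, leaving only the order-one residue produced by the commutator terms $-2\partial_x M_q\Psi_1$ and $-\Psi_1 J_\bot\mathcal G$; this is exactly what the identity above encodes.
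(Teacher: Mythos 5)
Your proof is correct, but it takes a genuinely different route from the paper's. The paper's own proof is a short, direct argument: apply the pseudo-differential expansion of $\Psi_1(z_S)$ from Corollary \ref{pseudodiff expansion Psi_L} to both $\Psi_1(z_S)\ii D_\bot|D_\bot|$ and $\partial_x|\partial_x|\Psi_1(z_S)$, observe that $\mathcal T(z_S)$ is of commutator type so that the a priori order-two contributions cancel at top symbol level, and then conclude via Lemma \ref{lemma composizione pseudo} and Corollary \ref{lemma asintotiche Phi 1 q} (for the transpose). You instead exploit the flow identity \eqref{maradona 3}, rearranged using $d\nabla H^{bo}(q) = |\partial_x| - 2q$, $\Omega_\bot = |D_\bot| + \Omega_\bot^{(0)}$, $J_\bot|D_\bot| = \ii D_\bot|D_\bot|$, and the definition \eqref{definizione cal M (wS)} of $\mathcal G$, to obtain the closed algebraic identity
$$
\mathcal T(z_S) \;=\; -2\,\partial_x M_q\,\Psi_1(z_S) \;-\; \Psi_1(z_S)\,J_\bot\,\Omega_\bot^{(0)}(I_S) \;-\; \Psi_1(z_S)\,J_\bot\,\mathcal G(z_S)\,,
$$
in which each summand is manifestly of order at most one; the expansions are then assembled from Corollary \ref{pseudodiff expansion Psi_L}, Lemma \ref{lemma Omega bot q}, and Lemma \ref{stime cal M(wS)} via Lemma \ref{lemma composizione pseudo}, and the transpose is handled analogously. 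This structural route makes the top-order cancellation an algebraic consequence of the integrability identity rather than a symbol-level computation; the trade-off is that it imports the machinery around $\mathcal G(z_S)$ and the linearized flow, whereas the paper's commutator observation needs only the expansion of $\Psi_1$ itself. Both approaches are sound and rely on material established before the lemma, so there is no circularity. One small caveat about exposition: the statement that the principal symbol on the plus side equals $2\ii q g_\infty$ isolates only the contribution from the first summand; the third summand $-\Psi_1 J_\bot\mathcal G$ is also of order one and contributes to $a^+_{-1}(z_S;\mathcal T)$, so the full coefficient is a sum of the two contributions.
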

  \begin{proof} 
  Since by Corollary \ref{pseudodiff expansion Psi_L}), $\Psi_1(z_S)$ has an expansion of order zero and $D_\bot |D_\bot| $ is a diagonal
  operator of order two, the operator $ {\cal T}(z_S)$, being of commutator type,
  has an expansion of order one.
  The claimed statements then follow from Corollary \ref{pseudodiff expansion Psi_L} (expansion of $\Psi_1(z_S)$)
  and Corollary \ref{lemma asintotiche Phi 1 q} (expansion of $\Psi_1(z_S)^\top$).
  \end{proof}
  Using that  $J_\bot \Omega_\bot(I_S) =  \ii D_\bot |D_\bot| + J_\bot \Omega_\bot^{(0)}(I_S)$,
  the operator $\partial_x^{- 1}  \Psi_1(z_S)  J_\bot   \Omega_\bot(I_S)$, 
  appearing in formula \eqref{cappello 0}, reads
  \begin{equation}\label{auxilary identity}
  \partial_x^{- 1}  \Psi_1(z_S) J_\bot \Omega_\bot(I_S) =
    \partial_x^{-1}\Psi_1(z_S)  \ii D_\bot |D_\bot| +  \partial_x^{- 1}  \Psi_1(z_S) J_\bot \Omega^{(0)}_\bot(I_S).
  \end{equation}
  By the definition of ${\cal T}(z_S)$ 
  one then gets
  $$
 \partial_x^{- 1}  \Psi_1(z_S) \ii D_\bot |D_\bot| 
 =   \ii  |\partial_x| \Psi_1(z_S) +  \partial_x^{- 1} \mathcal T(z_S) \, .
  $$
  By  \eqref{cappello 0} it then follows that for any $n \in S$,  
  \begin{align}
  &  \big( \mathcal L_S^\bot(z) [J_\bot \Omega_\bot(I_S) z_\bot ] \big)_n
  =  \ii \langle  | \partial_x |  \Psi_1(z_S)[ z_\bot ] \, | \, \partial_{z_n}  \Psi_1(z_S)[ z_\bot]   \rangle + \langle {\cal T}_{1, n}(z_S)[ z_\bot ] \, | \,  z_\bot  \rangle \, , \label{cappello 20}
  \end{align}
  where for any $z_S \in \mathcal V_S$ and $n \in S$, the operator ${\cal T}_{1, n}(z_S)$ is given by (cf. \eqref{auxilary identity})
  \begin{equation}\label{definizione cal T1 (q)}
  {\cal T}_{1, n}(z_S) := (\partial_{z_n} \Psi_1(z_S))^\top \partial_x^{- 1} {\cal T}(z_S) +  (\partial_{z_n} \Psi_1(z_S))^\top \partial_x^{- 1} \Psi_1(z_S) J_\bot \Omega^{(0)}_\bot(I_S )\,. 
  \end{equation}
  Since $(\partial_{z_n} \Psi_1(z_S))^\top$ (Corollary  \ref{lemma asintotiche Phi 1 q}) and 
  $\partial_x^{- 1} {\cal T}(z_S)$ (Lemma \ref{lemma cal T(q)}) have both an expansion of order zero, one infers that
  $  {\cal T}_{1, n}(z_S)$ has also such an expansion. More precisely, the following result holds. 
  \begin{lemma}\label{proprieta cal T 1 j (q)}
  For any $n \in S$ and $N \in \N$, the operator ${\cal T}_{1, n}(z_S)$, defined by \eqref{definizione cal T1 (q)} for $z_S \in \mathcal V_S$, admits an expansion 
  of the form $\mathcal{OP}(z_S; {\cal T}_{1, n})  + {\cal R}_N( z_S;   {\cal T}_{1, n})$ where
  $$
  \mathcal{OP}(z_S; {\cal T}_{1, n})   =  
  {\cal F}^+_{N_S} \circ \sum_{k = 0}^N  a^+_k(z_S;  {\cal T}_{1, n}) D^{- k} ({\cal F}^+_{N_S})^{- 1} 
  +  {\cal F}^-_{N_S} \circ \sum_{k = 0}^N  a^-_k(z_S;  {\cal T}_{1, n}) (-D)^{- k} ({\cal F}^-_{N_S})^{- 1} 
  $$
  and for any $s \geq 0$, $0 \le k \le N$, the maps 
  $$
  \mathcal V_S \to H^s_\C, \, z_S \mapsto a^\pm_k( z_S;  {\cal T}_{1, n})\,, \qquad 
  \mathcal V_S \to {\cal B}(h^s_\bot , h^{s + N +1}_\bot), \, z_S \mapsto {\cal R}_N( z_S;  {\cal T}_{1, n}) \, ,
  $$ 
  are real analytic and $a^-_k( z_S; {\cal T}_{1, n}) = \overline{a^+_k( z_S;  {\cal T}_{1, n})}$.
  \end{lemma}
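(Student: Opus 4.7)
The strategy is to expand each of the two summands defining $\mathcal T_{1,n}(z_S)$ as a composition of pseudo\-/differential operators whose expansions have already been established, and then to combine them via the composition formula of Lemma \ref{lemma composizione pseudo}. Throughout, we view each factor as an operator on the ``$+$'' and ``$-$'' sector separately via the partial Fourier transforms $\mathcal F^{\pm}_{N_S}$ and $(\mathcal F^{\pm}_{N_S})^{-1}$, so that the composition rules of Lemma \ref{lemma composizione pseudo} apply to pairs of like sign. Terms mixing opposite signs produce Hankel-type operators, and by Corollary \ref{Hankel infinitely smoothing} they are absorbed into the remainder $\mathcal R_N(z_S;\mathcal T_{1,n})$ for any prescribed $N$.

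For the first summand $(\partial_{z_n}\Psi_1(z_S))^\top \, \partial_x^{-1} \, \mathcal T(z_S)$, the plan is as follows. First, differentiate the expansion of $\Psi_1(z_S)^\top$ given in Corollary \ref{lemma asintotiche Phi 1 q} with respect to the variable $z_n$, $n\in S$, to obtain an expansion of $(\partial_{z_n}\Psi_1(z_S))^\top$ of the same form and of order zero, with coefficients which are themselves $C^\infty$-smooth in $x$ and real analytic in $z_S$; this is legitimate because the coefficients of the expansion in Corollary \ref{lemma asintotiche Phi 1 q} and the associated remainder maps depend real analytically on $z_S$. Next, Lemma \ref{lemma cal T(q)} provides an order-one expansion of $\mathcal T(z_S)$, which we precompose with $\partial_x^{-1}$; the result is an expansion of order zero, since $\partial_x^{-1}$ lowers the order by one and the structure of the symbols is preserved by this multiplication. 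Finally, we compose with $(\partial_{z_n}\Psi_1(z_S))^\top$. By Lemma \ref{lemma composizione pseudo}, the composition of two expansions of order zero is itself an expansion of order zero, with coefficients $a_k^\pm(z_S;\mathcal T_{1,n})$ obtained as finite sums of derivatives and products of the coefficients of the factors, and with a remainder gaining $N+1$ derivatives. The reality relations $a_k^- = \overline{a_k^+}$ are inherited from the corresponding relations in Corollary \ref{lemma asintotiche Phi 1 q} and Lemma \ref{lemma cal T(q)}, and the real analyticity of the maps $z_S \mapsto a_k^\pm(z_S;\mathcal T_{1,n})$, $z_S\mapsto \mathcal R_N(z_S;\mathcal T_{1,n})$ follows by composing real analytic maps in $z_S$.

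For the second summand $(\partial_{z_n}\Psi_1(z_S))^\top \, \partial_x^{-1}\, \Psi_1(z_S) \, J_\bot \, \Omega^{(0)}_\bot(I_S)$, the argument is analogous but slightly easier because the result is one order smoother. By Lemma \ref{lemma Omega bot q}, $\Omega^{(0)}_\bot(I_S)$ is of order $-1$; since $J_\bot$ is of order one and diagonal, the product $J_\bot\Omega^{(0)}_\bot(I_S)$ is a bounded operator of order zero on $h^s_\bot$ whose expansion is inherited from Lemma \ref{lemma Omega bot q}. Composing with $\Psi_1(z_S)$ (Corollary \ref{pseudodiff expansion Psi_L}, order zero), then with $\partial_x^{-1}$ (order $-1$), and finally with $(\partial_{z_n}\Psi_1(z_S))^\top$ (order zero), one obtains an expansion of order $-1$. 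Since any expansion of order $-1$ may in particular be written as one starting from $k=0$ with a vanishing leading coefficient, this is incorporated into the overall expansion of $\mathcal T_{1,n}(z_S)$ without difficulty; the tame estimates and analyticity transfer as before.

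The principal obstacle is purely bookkeeping: the factors live between different function/sequence spaces, and the compositions involve the partial Fourier restrictions $\mathcal F^\pm_{N_S}$ at several stages. Keeping careful track of which ``$\pm$''--sector each factor acts on, and showing that the inevitable mixed-sign contributions (which morally arise from truncating $(\mathcal F^\pm_{N_S})^{-1}\mathcal F^\mp_{N_S}$) are infinitely smoothing by Corollary \ref{Hankel infinitely smoothing}, is the only point requiring care. Once this is in place, the symbolic composition formula of Lemma \ref{lemma composizione pseudo} and the smoothing of $\partial_x^{-1}$ combine to deliver the claimed expansion, the identities $a^-_k(z_S;\mathcal T_{1,n})=\overline{a^+_k(z_S;\mathcal T_{1,n})}$, and the analyticity and tame estimates for the remainder.
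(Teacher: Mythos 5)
Your proof is correct and follows essentially the same route as the paper, which simply cites Corollary \ref{lemma asintotiche Phi 1 q}, Lemma \ref{lemma cal T(q)}, Corollary \ref{pseudodiff expansion Psi_L}, Lemma \ref{lemma Omega bot q}, and Lemma \ref{lemma composizione pseudo} as the ingredients; you have expanded the same chain of compositions, and your observation that the mixed $\pm$--sector cross terms are smoothed by Corollary \ref{Hankel infinitely smoothing} is exactly the implicit bookkeeping step.
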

  \begin{proof}
  The claimed statements follow from Corollary  \ref{lemma asintotiche Phi 1 q}  (expansion of $\Psi_1(z_S)^\top$), 
  Lemma \ref{lemma cal T(q)} (expansion of ${\cal T}(z_S)$),  
  Corollary \ref{pseudodiff expansion Psi_L}  (expansion of $\Psi_1(z_S)$)
  Lemma \ref{lemma Omega bot q} (expansion of  $\Omega_\bot^{(0)}(I_S)$),  
  and Lemma \ref{lemma composizione pseudo}. 
  \end{proof}
We now turn our attention to the term $ \ii \langle  | \partial_x | \Psi_1(z_S)[ z_\bot ] \, | \, \partial_{z_n}  \Psi_1(z_S)[ z_\bot]   \rangle$
 in \eqref{cappello 20}. 
By \eqref{forma compatta hamiltoniana d-NLS}
$$
d \nabla H^{bo}(q) =  |\partial_x| + d \nabla H_3^{bo}(q) =  |\partial_x| -  2 q \, , 
\qquad H_3^{bo}(q) := \frac{1}{2\pi} \int_0^{2\pi} - \frac 13 q^3 dx \, ,
$$ 
and hence $|\partial_x| = d \nabla H^{bo}(q) + 2q$.
Since $\Psi_1(z_S)[ z_\bot ]$ is real valued, one then infers that for any $n \in S$
\begin{align}
  \langle & |\partial_x|  \Psi_1(z_S)[ z_\bot ] \, | \, \partial_{z_n}  \Psi_1(z_S)[ z_\bot]   \rangle 
 =   \partial_{z_{-n}} \frac 12 \langle  |\partial_x|  \Psi_1(z_S)[ z_\bot ] \, | \,  \Psi_1(z_S)[ z_\bot]   \rangle \nonumber\\
& =  \frac{1}{2} \partial_{z_{-n}} \langle d \nabla H^{bo}(q) \big[ \Psi_1(z_S)[ z_\bot ]\big] \, | \,  \Psi_1(z_S)[ z_\bot]   \rangle 
+ \frac 12 \partial_{z_{-n}} \langle 2 q  \Psi_1(z_S)[ z_\bot ] \, | \,  \Psi_1(z_S)[ z_\bot]   \rangle \, . \nonumber\\
\end{align}
Since by \eqref{maradona 4}, 
$$  
\Psi_1(z_S)^\top d \nabla H^{bo}(q)  \Psi_1(z_S) =   \Omega_\bot(I_S) + \,{\cal G}(z_S)
$$ 
we conclude that $\ii \langle  |\partial_x|  \Psi_1(z_S)[ z_\bot ] \, | \, \partial_{z_n}  \Psi_1(z_S)[ z_\bot]   \rangle$ equals
\begin{align}
  \frac{\ii}{2}  \langle \partial_{z_{-n}} \big( \Omega_\bot(I_S) + \partial_{z_{-n}}{\cal G}(z_S) \big) [z_\bot]  \, ,\,  z_\bot  \rangle  
+    \frac{\ii}{2}  \langle  \partial_{z_{-n}} \big( 2 \Psi_1(z_S)^\top \, q \, \Psi_1(z_S) \big) [ z_\bot ] \, , \,   z_\bot \rangle\,.
\end{align}
Using again that for any $n \in S$,
$\partial_{z_{-n}} \Omega_\bot(I_S) =  \partial_{z_{-n}} \Omega_\bot^{(0)}(I_S)$ (cf. \eqref{splitting Omega}),
one thus obtains 
$$
 \ii \langle  |\partial_x|  \Psi_1(z_S)[ z_\bot ] \, | \, \partial_{z_n}  \Psi_1(z_S)[ z_\bot]   \rangle =   \langle {\cal T}_{2, n}(z_S)[z_\bot]\,, z_\bot \rangle
$$
where
\begin{equation}\label{definizione cal T2}
 {\cal T}_{2, n}(z_S) :=   \frac{\ii}{2} \partial_{z_{-n}} \big( \,  \Omega_\bot^{(0)}(I_S) + {\cal G}(z_S) + 2 \Psi_1(z_S)^\top q  \Psi_1(z_S) \,  \big)\,. 
\end{equation}
\begin{lemma}\label{lemma cal T2 j}
For any $n \in S$ and any integer $N \ge 0$, the operator ${\cal T}_{2, n}(z_S) : h^0_\bot \to h^0_\bot$, defined by \eqref{definizione cal T2} for $z_S \in \mathcal V_S$, 
admits an expansion of the form $\mathcal{OP}( z_S; {\cal T}_{2, n}) + {\cal R}_N( z_S; {\cal T}_{2, n})$ where
$$
\mathcal{OP}( z_S; {\cal T}_{2, n})
 = {\cal F}^+_{N_S} \circ  \sum_{k = 0}^{N} a^+_k( z_S; {\cal T}_{2, n}) D^{- k}  ({\cal F}^+_{N_S})^{- 1} + 
  {\cal F}^-_{N_S} \circ  \sum_{k = 0}^{N} a^-_k( z_S; {\cal T}_{2, n}) (-D)^{- k}  ({\cal F}^-_{N_S})^{- 1} 
$$
and where for any $s \geq 0$, $0 \le k \le N,$  the maps 
$$
\mathcal V_S \to H^s_\C, \, z_S \mapsto a^\pm_k( z_S;  {\cal T}_{2, n})\,,  \qquad 
\mathcal V_S \to {\cal B}(h^s_\bot , h^{s + N + 1}_\bot), \, z_S \mapsto  {\cal R}_N ( z_S; {\cal T}_{2, n}) \,,
$$ 
are real analytic and $a^-_k( z_S; {\cal T}_{2, n}) = \overline{a^+_k( z_S;  {\cal T}_{2, n})}$.
A similar statement holds for the transpose ${\cal T}_{2, n}(z_S)^\top$ of the operator ${\cal T}_{2, n}(z_S)$. 
\end{lemma}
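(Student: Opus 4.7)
\begin{pf} (Proof proposal for Lemma \ref{lemma cal T2 j}.)
The plan is to establish the stated expansion for each of the three summands on the right hand side of \eqref{definizione cal T2} separately and then to observe that the partial derivative $\partial_{z_{-n}}$ preserves the structure of the expansion, since all coefficients and remainders below will be real analytic in $z_S \in \mathcal V_S$.

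First, the term $\Omega_\bot^{(0)}(I_S)$ has already been written as an expansion of the required form in Lemma \ref{lemma Omega bot q}, with coefficients $a_1^\pm(I_S; \Omega_\bot^{(0)})$ polynomial in $I_S = (\frac{1}{k} z_k z_{-k})_{k \in S_+}$ and a smoothing remainder ${\cal R}_N(I_S; \Omega_\bot^{(0)})$. Second, the operator ${\cal G}(z_S)$ has an expansion with the required properties by Lemma \ref{stime cal M(wS)}. Third, to deal with $\Psi_1(z_S)^\top \, q \, \Psi_1(z_S)$ (where $q = \Psi^{bo}(z_S, 0)$), I would use Corollary \ref{pseudodiff expansion Psi_L} for the expansion of $\Psi_1(z_S)$, Corollary \ref{lemma asintotiche Phi 1 q} for the one of $\Psi_1(z_S)^\top$, and then apply the composition Lemma \ref{lemma composizione pseudo} twice, treating multiplication by $q$ as a pseudo-differential operator of order zero (its symbol is the $C^\infty$-smooth finite gap potential $q(x)$). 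Collecting the three expansions one obtains an expansion of $\Omega_\bot^{(0)}(I_S) + {\cal G}(z_S) + 2\Psi_1(z_S)^\top q\Psi_1(z_S)$ of the form
$$
{\cal F}^+_{N_S}\circ \sum_{k=0}^N b_k^+(z_S) D^{-k}\circ ({\cal F}^+_{N_S})^{-1}
\ + \ {\cal F}^-_{N_S}\circ \sum_{k=0}^N b_k^-(z_S) (-D)^{-k}\circ ({\cal F}^-_{N_S})^{-1} \ + \ {\cal R}_N^{(0)}(z_S),
$$
with $b_k^-(z_S) = \overline{b_k^+(z_S)}$ and the analyticity and mapping properties of $b_k^\pm$ and ${\cal R}_N^{(0)}$ analogous to those required for the final statement.

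Next, I would apply $\frac{\ii}{2}\partial_{z_{-n}}$ to this expansion. Since for each $k$ the map $\mathcal V_S \to H^s_\C$, $z_S \mapsto b_k^\pm(z_S)$ is real analytic, its partial derivative in the direction $z_{-n}$ is again real analytic and maps into $H^s_\C$; and similarly the derivative of ${\cal R}_N^{(0)}$ defines a map of the required mapping properties into ${\cal B}(h^s_\bot, h^{s+N+1}_\bot)$. Setting
$$
a_k^\pm(z_S; {\cal T}_{2,n}) := \tfrac{\ii}{2}\,\partial_{z_{-n}} b_k^\pm(z_S), \qquad {\cal R}_N(z_S; {\cal T}_{2,n}) := \tfrac{\ii}{2}\,\partial_{z_{-n}} {\cal R}_N^{(0)}(z_S),
$$
yields the claimed expansion. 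The identity $a_k^-(z_S; {\cal T}_{2,n}) = \overline{a_k^+(z_S; {\cal T}_{2,n})}$ follows from $b_k^-(z_S) = \overline{b_k^+(z_S)}$ together with the fact that $\partial_{z_{-n}}$ and complex conjugation commute on analytic functions of the underlying real coordinates.

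The corresponding statement for the transpose ${\cal T}_{2,n}(z_S)^\top$ follows by the same strategy: the transposes of $\Omega_\bot^{(0)}(I_S)$ and ${\cal G}(z_S)$ admit analogous expansions (for the former trivially, since it is diagonal; for the latter by the same argument that produced the expansion of $\Psi_1(z_S)^\top$ from the one of $\Psi_1(z_S)$ in Corollary \ref{lemma asintotiche Phi 1 q}), and the self-adjointness of the multiplication operator $M_q$ makes the transpose of $\Psi_1(z_S)^\top q \Psi_1(z_S)$ equal to itself; then one applies $\partial_{z_{-n}}$ as before. The main technical step I anticipate as the most delicate is the bookkeeping in the triple composition $\Psi_1(z_S)^\top \circ M_q \circ \Psi_1(z_S)$: one has to apply Lemma \ref{lemma composizione pseudo} carefully so that the resulting remainder is $(N+1)$-smoothing and so that the coefficients at each order $D^{-k}$, $(-D)^{-k}$ are real analytic functions of $z_S$ with values in $H^s_\C$ for arbitrary $s$, using the fact that $q = \Psi^{bo}(z_S,0)$ is $C^\infty$-smooth (indeed, real analytic) in $x$ for every $z_S \in \mathcal V_S$.
\end{pf}
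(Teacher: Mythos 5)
Your proposal follows the same route as the paper: expand each of the three summands in \eqref{definizione cal T2} using Lemma \ref{lemma Omega bot q} (for $\Omega_\bot^{(0)}(I_S)$), Lemma \ref{stime cal M(wS)} (for $\mathcal G(z_S)$), and Corollaries \ref{pseudodiff expansion Psi_L}, \ref{lemma asintotiche Phi 1 q} together with Lemma \ref{lemma composizione pseudo} (for $\Psi_1(z_S)^\top q \Psi_1(z_S)$), then apply $\frac{\ii}{2}\partial_{z_{-n}}$; the paper's proof is precisely a citation of these lemmas. One small caution: the step establishing $a_k^-(z_S; {\cal T}_{2,n}) = \overline{a_k^+(z_S; {\cal T}_{2,n})}$ is stated a bit loosely — the Wirtinger derivatives on the complexified coordinates satisfy $\partial_{z_{-n}}\overline{f} = \overline{\partial_{z_n} f}$ rather than a naive commutation with conjugation, and the prefactor $\frac{\ii}{2}$ changes sign under conjugation, so the verification of the reality condition for the $a_k^\pm$ requires tracking how the $n \leftrightarrow -n$ symmetry of the operator in brackets in \eqref{definizione cal T2} interacts with the $\partial_{z_{-n}}$-derivative; this is bookkeeping of the same kind already carried out for $\Psi_1(z_S)^\top$, $\mathcal G(z_S)$, and $\Omega_\bot^{(0)}(I_S)$ in the cited lemmas, so nothing substantive is missing, but "$\partial_{z_{-n}}$ and complex conjugation commute" is not quite the right phrasing.
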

\begin{proof}
The claimed results follow from Lemmata \ref{lemma asintotiche Phi 1 q}, \ref{stime cal M(wS)}, \ref{lemma Omega bot q}, and Lemma \ref{lemma composizione pseudo}. 
\end{proof}

\smallskip

By \eqref{definition cal H Omega tau} - \eqref{termine delicato sergey}, 
 \eqref{cappello 10} -- 
 \eqref{cappello 20}, 
 and \eqref{definizione cal T2} 
 the Hamiltonian ${\cal P}_\Omega(\tau, z)$, defined in \eqref{definition cal H Omega tau}, is given by
  $$
{\cal P}_\Omega(\tau, z)  =   \langle \nabla_S {\cal H}_\Omega(z)\, , \, \pi_S X(\tau, z) \rangle 
+ \frac{\tau}{2} \sum_{j \in S}  \big\langle \big(  {\cal T}_{1 , j}(z_S) +   {\cal T}_{2, j}(z_S) \big) [z_\bot]\, , \, z_\bot \big\rangle \cdot
\langle X(\tau, z)\,,\, e_j \rangle  \, .
$$
Hence by \eqref{formula for nabla_S H_ Omega},
  \begin{equation}\label{forma finale cal P Omega tau}
{\cal P}_\Omega(\tau, z)
 = \frac12 \sum_{j \in S}  \langle {\cal T}_{3, j}(\tau, z_S)[z_\bot]\, , \, z_\bot \rangle  \cdot  \langle X(\tau, z)\, , \, e_j \rangle
  \end{equation}
where for any $j \in S$, $z_S \in \mathcal V_S$, and $0 \le \tau \le 1,$ the operator ${\cal T}_{3, j}(\tau, z_S): h^0_\bot \to h^0_\bot$ is defined by
\begin{equation}\label{definition cal T 3 j}
{\cal T}_{3, j}(\tau, z_S) :=  \partial_{z_{-j}} \Omega_\bot^{(0)}(I_S)  +  \tau {\cal T}_{1 , j}(z_S) + \tau {\cal T}_{2, j}(z_S)\,. 
\end{equation}
The Hamiltonian ${\cal P}_\Omega(\tau, z)$ has the following properties. 
\begin{lemma}\label{proprieta hamiltoniana cal P Omega tau psi}
For any $0 \le \tau \le 1$ and any integer $N \ge 0$, the Hamiltonian ${\cal P}_\Omega(\tau, \cdot ) : \mathcal V \to \R$ is real analytic and $\nabla {\cal P}_\Omega (\tau, z)$ 
admits the expansion of the form $ \big( 0, \,  \mathcal{OP}(\tau, z;  \nabla {\cal P}_\Omega) \big) + {\cal R}_N(\tau, z;  \nabla {\cal P}_\Omega)$ where
$$
 \mathcal{OP}(\tau, z;  \nabla {\cal P}_\Omega)  =  {\cal F}^+_{N_S} \circ \sum_{k = 0}^N a^+_k( \tau, z; \nabla {\cal P}_\Omega) D^{- k} [ ({\cal F}^+_{N_S})^{- 1} z_\bot]
 + {\cal F}^-_{N_S} \circ \sum_{k = 0}^N a^-_k( \tau, z; \nabla {\cal P}_\Omega) (-D)^{- k} [ ({\cal F}^-_{N_S})^{- 1} z_\bot]
$$
and where for any $s \geq 0$, $0 \le k \le N,$ the maps 
$$
\mathcal V \to H^s_\C, \, z \mapsto a^\pm_k(\tau, z; \nabla {\cal P}_\Omega)\,, \qquad 
\mathcal V \cap h^s_0  \to  h^{s + N + 1}_0, \, z \mapsto {\cal R}_N(\tau, z; \nabla {\cal P}_\Omega) \, ,
$$ 
are real analytic and $a^-_k( \tau, z; \nabla {\cal P}_\Omega) = \overline{a^+_k( \tau, z;  \nabla {\cal P}_\Omega)}$.
Furthermore, for any $0 \le \tau \le 1$, $z \in \mathcal V$, $\widehat z \in h^0_0$,
$$
 \|  a^\pm_k(\tau, z; \nabla {\cal P}_\Omega) \|_s \lesssim_{s} \| z_\bot \|_0^2 \, ,
 \qquad  \| d a^\pm_k (\tau, z;  \nabla {\cal P}_\Omega)[\widehat z] \|_s \lesssim_s  \| z_\bot \|_0 \| \widehat z\|_0 \, .
 $$
If in addition, $\widehat z_1, \ldots, \widehat z_l \in h^0_0$, $l \ge 2$, then
$$
 \| d^l  a^\pm_k(\tau, z;  \nabla {\cal P}_\Omega)[\widehat z_1, \ldots, \widehat z_l] \|_s  \lesssim_{s, l} \prod_{j = 1}^l \| \widehat z_j\|_0 \,.
 $$
 Similarly, for any $0 \le \tau \le 1$, $z \in \mathcal V \cap h^s_0$, $\widehat z_1, \widehat z_2 \in h^s_0,$
$$
 \| {\cal R}_N (\tau, z; \nabla {\cal P}_\Omega) \|_{s + N + 1} \lesssim_{s, N} \| z_\bot \|_s \| z_\bot \|_0^2 \,,  \qquad \qquad \qquad \qquad \qquad \qquad  \quad \ \
 $$
 $$
 \| d {\cal R}_N(\tau, z;  \nabla {\cal P}_\Omega)[\widehat z_1] \|_{s + N + 1} \lesssim_{s, N}  \| z_\bot \|_0^2 \| \widehat z_1\|_s
 +  \| z_\bot \|_s \| z_\bot \|_0 \| \widehat z_1\|_0\,,   \qquad \qquad \qquad \ \ \
 $$
 $$
  \| d^2 {\cal R}_N(\tau, z;  \nabla {\cal P}_\Omega)[\widehat z_1, \widehat z_2] \|_{s + N + 1} \lesssim_{s, N}  
 \| z_\bot \|_0 \big( \| \widehat z_1\|_s \| \widehat z_2\|_0 +  \| \widehat z_1\|_0 \| \widehat z_2\|_s\big) + \|z_\bot  \|_s \| \widehat z_1\|_0 \| \widehat z_2 \|_0\,,  
 $$
 and if in addition $\widehat z_1, \ldots, \widehat z_l \in h^s_0$, $l \ge 3$, then
 $$
  \| d^l {\cal R}_N(\tau, z;  \nabla {\cal P}_\Omega) [\widehat z_1, \ldots, \widehat z_l]\|_{s+N+1}  \lesssim_{s, N, l} 
 \sum_{j = 1}^l \| \widehat z_j\|_s \prod_{i \neq j} \| \widehat z_i\|_0 + \| z_\bot \|_s \prod_{j = 1}^l \| \widehat z_j\|_0\,. 
$$
\end{lemma}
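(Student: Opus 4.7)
The starting point is formula \eqref{forma finale cal P Omega tau}, which expresses
$$
{\cal P}_\Omega(\tau, z) = \tfrac12 \sum_{j\in S} \langle {\cal T}_{3,j}(\tau, z_S)[z_\bot], z_\bot\rangle \cdot \langle X(\tau, z), e_j\rangle.
$$
Each summand is a product of two factors that are each quadratic in $z_\bot$: the first manifestly so; the second because ${\cal E}_S(z) = \tfrac12 {\cal L}_S^\bot(z)[z_\bot]$ is quadratic in $z_\bot$ by \eqref{L SS botS Sbot (1)}, and hence so is $X_S(\tau, z)$ by \eqref{formula X} and Lemma \ref{espansione L S bot q z}. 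Thus ${\cal P}_\Omega$ is quartic in $z_\bot$, which accounts for the overall $\|z_\bot\|_0^2$ prefactor in the coefficient estimates and the $\|z_\bot\|_s\|z_\bot\|_0^2$ size of the remainder. The real analyticity of ${\cal P}_\Omega(\tau, \cdot)$ on $\mathcal V$ follows immediately from that of the building blocks, supplied by Lemmas \ref{lemma Omega bot q}, \ref{proprieta cal T 1 j (q)}, \ref{lemma cal T2 j}, and \ref{estimates for X}.

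Next I would compute the gradient via the product rule,
$$
\nabla {\cal P}_\Omega(\tau, z) = \tfrac12 \sum_{j\in S} \Big[ \langle X(\tau, z), e_j\rangle \, \nabla_z \langle {\cal T}_{3,j}(\tau, z_S)[z_\bot], z_\bot\rangle + \langle {\cal T}_{3,j}(\tau, z_S)[z_\bot], z_\bot\rangle \, (dX(\tau, z))^\top[e_j] \Big],
$$
and handle the $\pi_S$- and $\pi_\bot$-components separately. Since $h_S$ is finite-dimensional and $\pi_S \nabla {\cal P}_\Omega$ is of size $\|z_\bot\|_0^3$, it lies in $h^{s+N+1}_0$ for every $s \ge 0$ and can be absorbed wholesale into ${\cal R}_N(\tau, z; \nabla {\cal P}_\Omega)$. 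The problem thus reduces to producing the pseudo-differential expansion of $\pi_\bot \nabla {\cal P}_\Omega$.

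The $\pi_\bot$-part of the first contribution is $\tfrac12 \langle X(\tau, z), e_j \rangle \cdot ({\cal T}_{3,j} + {\cal T}_{3,j}^\top)[z_\bot]$. Using \eqref{definition cal T 3 j} together with Lemmas \ref{lemma Omega bot q}, \ref{proprieta cal T 1 j (q)}, and \ref{lemma cal T2 j}, the operator ${\cal T}_{3,j}$ admits a pseudo-differential expansion of order $0$; the same holds for its transpose, by an application of Lemma \ref{lemma composizione pseudo} in the spirit of Corollary \ref{lemma asintotiche Phi 1 q}. Multiplying by the scalar $\tfrac12 \langle X(\tau, z), e_j \rangle$, which is of size $\|z_\bot\|_0^2$ by Lemma \ref{estimates for X}, produces a pseudo-differential operator, applied to $z_\bot$, with coefficients of order $\|z_\bot\|_0^2$. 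For the second contribution, $\pi_\bot (dX(\tau, z))^\top[e_j]$ is obtained by differentiating and transposing the expansion of $X$ supplied by Lemma \ref{lemma campo vettoriale new}; since every coefficient $a_k^\pm(\tau, z; X)$ vanishes to order two at $z_\bot = 0$, one can write $\pi_\bot (dX(\tau, z))^\top[e_j] = B_j(z)[z_\bot]$ plus higher-order corrections, so that after multiplication by $\tfrac12 \langle {\cal T}_{3,j}[z_\bot], z_\bot\rangle \lesssim \|z_\bot\|_0^2$ the second contribution is again a pseudo-differential operator applied to $z_\bot$, with coefficients of order $\|z_\bot\|_0^2$.

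Combining the two contributions via Lemma \ref{lemma composizione pseudo} produces the announced expansion, with the reality relations $a_k^-(\tau, z; \nabla {\cal P}_\Omega) = \overline{a_k^+(\tau, z; \nabla {\cal P}_\Omega)}$ inherited from each building block. The tame estimates on $a_k^\pm$, ${\cal R}_N$, and their derivatives then follow by applying Leibniz to each factor of $z_\bot$ in $\nabla {\cal P}_\Omega$ and invoking the interpolation Lemma \ref{lemma interpolation}, exactly in the spirit of Theorem \ref{espansione flusso per correttore}. The main obstacle is the careful bookkeeping: tracking how each derivative $d^l$ distributes across the three factors of $z_\bot$ present in $\nabla {\cal P}_\Omega$ (the two inside the scalar quadratic forms and the one on which the outer pseudo-differential operator acts), so as to produce the precise polynomial structure in $\|\widehat z_j\|_s$, $\|\widehat z_j\|_0$, $\|z_\bot\|_s$, and $\|z_\bot\|_0$ stated in the lemma; the contributions of the $\mathcal R_N$-parts of the expansions of ${\cal T}_{3,j}$ and $X$ are absorbed into ${\cal R}_N(\tau, z; \nabla {\cal P}_\Omega)$ using the estimates of Lemmas \ref{espansione L S bot q z} and \ref{lemma campo vettoriale new}.
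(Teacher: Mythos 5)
Your proposal is correct and follows essentially the same route as the paper: compute $\nabla_S{\cal P}_\Omega$ and $\nabla_\bot{\cal P}_\Omega$ from \eqref{forma finale cal P Omega tau} by the product rule, absorb the finite-dimensional $S$-component (which you correctly observe is cubic in $z_\bot$) into the remainder, and build the pseudodifferential expansion of the $\bot$-component from the expansions of ${\cal T}_{3,j}$ (via Lemmas \ref{lemma Omega bot q}, \ref{proprieta cal T 1 j (q)}, \ref{lemma cal T2 j}) and of $X$ (Lemma \ref{lemma campo vettoriale new}), combining with Lemma \ref{lemma composizione pseudo}. Your writing of $\frac12({\cal T}_{3,j}+{\cal T}_{3,j}^\top)$ for the gradient of the quadratic form is slightly more careful than the paper, which tacitly uses that ${\cal T}_{3,j}$ can be symmetrized at no cost; otherwise the plan and estimates coincide.
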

\begin{proof}
One has $\nabla_S {\cal P}_\Omega(\tau, z) = (\partial_{z_{-n}} {\cal P}_\Omega(\tau, z))_{n \in S}$ with
$$
\begin{aligned}
 \partial_{z_{-n}} {\cal P}_\Omega (\tau, z) & = \frac12 \sum_{j \in S} \,  \langle \partial_{z_{-n}} {\cal T}_{3, j}(\tau, z_S)[z_\bot]\,,\,  z_\bot \rangle  \cdot  \langle X(\tau, z)\,,\, e_j \rangle  \\
& + \frac12 \sum_{j \in S} \,  \langle {\cal T}_{3, j}(\tau, z_S)[z_\bot]\,,\, z_\bot \rangle  \cdot \langle \partial_{z_{-n}}X(\tau, z)\,,\, e_j \rangle,
\end{aligned}
$$
whereas $\nabla_\bot {\cal P}_\Omega(\tau, z)$ can be computed to be
$$
\nabla_\bot {\cal P}_\Omega(\tau, z) =  \sum_{j \in S} \,  \langle X(\tau, z)\,,\, e_j \rangle \,   {\cal T}_{3, j}(\tau, z_S)[z_\bot]    
+ \frac12 \sum_{j \in S}  \, \langle {\cal T}_{3, j}(\tau, z_S)[z_\bot]\,,\, z_\bot \rangle  \, (d_\bot X(\tau, z))^\top [e_j]\,. 
$$
The claimed statements then follow by Lemmata \ref{lemma campo vettoriale new}, \ref{lemma Omega bot q}, \ref{proprieta cal T 1 j (q)}, \ref{lemma cal T2 j}.
\end{proof}
We are now ready to analyze the gradient of the Hamiltonian ${\cal P}_3^{(2b)}(z) := \int_0^1 {\cal P}_{\Omega}(\tau, \Psi_{X}^{0, \tau}(z)) \, d \tau$ (cf.  \eqref{caniggia 0}). 
\begin{lemma}\label{proprieta hamiltoniana cal P 3 (2b)}
The Hamiltonian ${\cal P}_3^{(2b)} : \mathcal V' \to \R$ is real analytic and for any integer $N \ge 0$, its gradient $\nabla {\cal P}_3^{(2b)} (z)$ admits the expansion
of the form $ \big( 0, \,  \mathcal{OP}( z; \nabla {\cal P}_3^{(2b)}) \big)+  {\cal R}_N( z; \nabla {\cal P}_3^{(2b)})$ where
$$
 \mathcal{OP}( z; \nabla {\cal P}_3^{(2b)}) = {\cal F}^+_{N_S} \circ \sum_{k = 0}^N  a^+_k(z; \nabla {\cal P}_3^{(2b)}) \, D^{- k} [ ({\cal F}^+_{N_S})^{- 1} z_\bot]  
 + {\cal F}^-_{N_S} \circ \sum_{k = 0}^N  a^-_k(z; \nabla {\cal P}_3^{(2b)}) \, (-D)^{- k} [ ({\cal F}^-_{N_S})^{- 1} z_\bot] 
$$
and where for any $s \geq 0$, $0 \le k \le N$, the maps 
$$
\mathcal V' \to H^s_\C, \, z \mapsto a^\pm_k( z; \nabla {\cal P}_3^{(2b)})\,, \qquad  
\mathcal V' \cap h^s_0 \to  h^{s + N + 1}_0, \, z \mapsto {\cal R}_N( z;  \nabla {\cal P}_3^{(2b)}) \, ,
$$ 
are real analytic and $a^-_k( z; \nabla {\cal P}_3^{(2b)}) = \overline{a^+_k( z;  \nabla {\cal P}_3^{(2b)})}$.
Furthermore, the following estimates hold: for any $z \in \mathcal V'$, $\widehat z \in h^0_0$,
$$
 \|  a^\pm_k( z;  \nabla {\cal P}_3^{(2b)}) \|_s \lesssim_{s} \| z_\bot \|_0^2 \,, \qquad \| d a^\pm_k( z; \nabla {\cal P}_3^{(2b)})[\widehat z] \|_s 
 \lesssim_s  \| z_\bot \|_0 \| \widehat z\|_0\,, 
 $$
 and if in addition $\widehat z_1, \ldots, \widehat z_l \in h^0_0,$ $l \ge 2$, then
 $\| d^l  a^\pm_k(z; \nabla {\cal P}_3^{(2b)})[\widehat z_1, \ldots, \widehat z_l] \|_s  \lesssim_{s, l} \prod_{j = 1}^l \| \widehat z_j\|_0$.
 
 \noindent
 Similarly, for any $z \in \mathcal V' \cap h^s_0$, $\widehat z_1, \widehat z_2 \in h^s_0$, one has
 $$ 
 \| {\cal R}_N( z;  \nabla {\cal P}_3^{(2b)}) \|_{s + N + 1} \lesssim_{s, N} \| z_\bot \|_s \| z_\bot \|_0^2 \, ,  \qquad \qquad \qquad  \qquad \qquad \qquad \qquad
 $$
 $$
 \| d {\cal R}_N( z;  \nabla {\cal P}_3^{(2b)})[\widehat z_1] \|_{s + N + 1} \lesssim_{s, N}  \| z_\bot \|_0^2 \| \widehat z_1\|_s  +  \| z_\bot \|_s \| z_\bot \|_0 \| \widehat z_1\|_0\,, 
 \qquad \qquad \qquad  \quad
 $$
 $$
  \| d^2 {\cal R}_N( z; \nabla {\cal P}_3^{(2b)})[\widehat z_1, \widehat z_2] \|_{s + N + 1} \lesssim_{s, N}  
 \| z_\bot \|_0 \big( \| \widehat z_1\|_s \| \widehat z_2\|_0 +  \| \widehat z_1\|_0 \| \widehat z_2\|_s\big) + \|z_\bot  \|_s \| \widehat z_1\|_0 \| \widehat z_2 \|_0\,, 
 $$
 and if in addition $\widehat z_1, \ldots, \widehat z_l \in h^s_0,$ $l \ge 2$, then
 $$
  \| d^l   {\cal R}_N( z; \nabla {\cal P}_3^{(2b)}) [\widehat z_1, \ldots, \widehat z_l]\|_{s + N + 1}  \lesssim_{s, N, l} 
 \sum_{j = 1}^l \| \widehat z_j\|_s \prod_{i \neq j} \| \widehat z_i\|_0 + \| z_\bot \|_s \prod_{j = 1}^l \| \widehat z_j\|_0 \,. 
$$
\end{lemma}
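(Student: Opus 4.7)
The starting point is the integral representation \eqref{caniggia 0} together with the chain rule, which yields
\begin{equation*}
\nabla {\cal P}_3^{(2b)}(z) \; = \; \int_0^1 \big( d \Psi_X^{0, \tau}(z) \big)^\top \big[ \nabla {\cal P}_\Omega \big( \tau, \Psi_X^{0, \tau}(z) \big) \big] \, d\tau \, .
\end{equation*}
Thus the task reduces to expanding, for each $\tau \in [0,1]$, the composition of three objects for which the relevant expansions and tame estimates are already available: the transpose of the differential of the flow (Corollary \ref{expansion of  of differential of Psi}), the gradient $\nabla {\cal P}_\Omega(\tau, \cdot)$ (Lemma \ref{proprieta hamiltoniana cal P Omega tau psi}), and the flow map $\Psi_X^{0,\tau}$ itself (Theorem \ref{espansione flusso per correttore}).

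The plan is to proceed in three steps. First, I would substitute the expansion
$\Psi_X^{0,\tau}(z) = z + ( 0, \, \mathcal{OP}_N(z; \Psi_X^{0,\tau}) ) + {\cal R}_N(z; \Psi_X^{0,\tau})$
into $\nabla {\cal P}_\Omega(\tau, \cdot)$, using the estimates of Lemma \ref{proprieta hamiltoniana cal P Omega tau psi} and the Taylor type expansion used already in the proof of Theorem \ref{espansione flusso per correttore} to obtain an expansion of
$ \nabla {\cal P}_\Omega\big( \tau, \Psi_X^{0, \tau}(z) \big)$
of the form $\big( 0, \, \mathcal{OP}_N(\tau, z; \nabla {\cal P}_\Omega \circ \Psi_X^{0,\tau}) \big) + {\cal R}_N(\tau, z; \nabla {\cal P}_\Omega \circ \Psi_X^{0,\tau})$ with coefficients that are $O(\|z_\bot\|_0^2)$ and remainder that is $O(\|z_\bot\|_s \|z_\bot\|_0^2)$ at the $h^{s+N+1}_0$ level. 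Here the key tool is Lemma \ref{lemma composizione pseudo} (composition of pseudo-differential operators) applied term by term, together with Lemma \ref{lemma interpolation} to recover the tame multilinear estimates. Second, I would combine this with the expansion of $(d\Psi_X^{0,\tau}(z))^\top$ from Corollary \ref{expansion of  of differential of Psi}. The latter has the form $\mathrm{Id}$ plus an $\iota_\bot$-valued pseudo-differential operator plus a smoothing remainder, so the composition produces again an expansion of the claimed shape: applying $\mathrm{Id}$ leaves the previous expansion intact, applying the $\iota_\bot$-valued pseudo-differential part amounts to another composition handled by Lemma \ref{lemma composizione pseudo}, and applying the smoothing remainder contributes only to ${\cal R}_N( z; \nabla {\cal P}_3^{(2b)})$.

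Third, I would integrate in $\tau \in [0,1]$: since all coefficients and the remainder depend real analytically on $\tau$ and continuously on $z$ by Theorem \ref{espansione flusso per correttore} and Corollary \ref{expansion of  of differential of Psi}, integration preserves the expansion and the estimates. The coefficients are defined by
$
a_k^\pm(z; \nabla {\cal P}_3^{(2b)}) := \int_0^1 a_k^\pm( \tau, z; (d\Psi_X^{0,\tau})^\top \cdot \nabla {\cal P}_\Omega \circ \Psi_X^{0,\tau}) \, d\tau,
$
and similarly for the remainder; the identity
$a_k^-( z; \nabla {\cal P}_3^{(2b)}) = \overline{a_k^+( z; \nabla {\cal P}_3^{(2b)})}$
follows at each step from the reality of ${\cal P}_3^{(2b)}$ and the corresponding identities in Lemma \ref{proprieta hamiltoniana cal P Omega tau psi} and Corollary \ref{expansion of  of differential of Psi}. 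Real analyticity of each of the maps $z \mapsto a_k^\pm(z; \nabla {\cal P}_3^{(2b)})$ and $z \mapsto {\cal R}_N(z; \nabla {\cal P}_3^{(2b)})$ then follows from the real analyticity of all constituents together with the uniform analytic estimates under $\tau \in [0,1]$ (cf. e.g. \cite[Theorem A.5]{GK}).

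The tame estimates for $\nabla {\cal P}_3^{(2b)}$ and its derivatives are obtained by combining, in each step of the composition above, the corresponding estimates in Theorem \ref{espansione flusso per correttore}, Lemma \ref{proprieta hamiltoniana cal P Omega tau psi}, and Corollary \ref{expansion of  of differential of Psi}, using Lemma \ref{lemma interpolation} to convert mixed products of $\|\cdot\|_s$ and $\|\cdot\|_0$ norms into the multilinear bounds claimed in the statement. The main obstacle is the bookkeeping in the composition step: one must track how each factor of $z_\bot$ accumulates, since the target estimates require the precise power $\|z_\bot\|_0^2$ in the coefficients and $\|z_\bot\|_s \|z_\bot\|_0^2$ in the remainder, a cubic dependence inherited from the fact that both $\mathcal E(z)$ (hence $X(\tau, z)$) and $\nabla {\cal P}_\Omega(\tau, z)$ vanish at order two in $z_\bot$ on $(z_S, 0)$; the differential $(d \Psi_X^{0,\tau})^\top$ contributes only further factors of $z_\bot$ in the non-identity parts, which is exactly what preserves the cubic count.
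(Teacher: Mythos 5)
Your proposal is correct and follows essentially the same route as the paper: both start from the identity $\nabla \mathcal{P}_3^{(2b)}(z) = \int_0^1 (d\Psi_X^{0,\tau}(z))^\top \nabla \mathcal{P}_\Omega(\tau, \Psi_X^{0,\tau}(z))\, d\tau$ and then invoke Corollary \ref{expansion of  of differential of Psi}, Lemma \ref{proprieta hamiltoniana cal P Omega tau psi}, Theorem \ref{espansione flusso per correttore}, and Lemma \ref{lemma composizione pseudo}. The paper's proof is much terser, but the ingredients and the order of application are exactly those you describe; your account of the cubic-order bookkeeping is a useful expansion of what the paper leaves implicit.
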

\begin{proof}
By a straightforward computation, one has for any $z \in \mathcal V'$,
$$
\nabla {\cal P}_3^{(2b)}(z) = \int_0^1 (d \Psi_X^{0, \tau}(z))^\top \nabla {\cal P}_{\Omega}(\tau, \Psi_{X}^{0, \tau}(z)) \, d \tau\,.
$$
The claimed statements then follow by applying Corollary \ref{expansion of  of differential of Psi} (expansion of $d \Psi_X^{0, \tau}(z)^\top$), 
Lemma \ref{proprieta hamiltoniana cal P Omega tau psi} (expansion of $\nabla {\cal P}_\Omega(\tau, z)$), 
Theorem \ref{espansione flusso per correttore} (expansion of $\Psi_{X}^{0, \tau}(z)$), and Lemma \ref{lemma composizione pseudo}.  
\end{proof}
 
 \smallskip
  
  \noindent
  {\em Terms ${\cal P}_2^{(1)}$ and ${\cal P}_3^{(1)}$.} \ Recall that the Hamiltonians ${\cal P}_2^{(1)}$ and ${\cal P}_3^{(1)}$ 
  were introduced in \eqref{perturbazione composizione con Psi L}. We write
  \begin{align}\label{cal P 12 Psi C}
&  {\cal P}_2^{(1)}(\Psi_C(z)) + {\cal P}_3^{(1)}(\Psi_C(z)) = {\cal P}_2^{(1)}(z) + {\cal P}_3^{(2c)}(z) \,,  \nonumber \\
&  {\cal P}_3^{(2c)}(z):=   {\cal P}_2^{(1)}(\Psi_C(z)) - {\cal P}_2^{(1)}(z) + {\cal P}_3^{(1)}(\Psi_C(z)) \, ,
  \end{align}
  where by the mean value theorem
  $$
 {\cal P}_2^{(1)}(\Psi_C(z)) - {\cal P}_2^{(1)}(z) = \int_0^1\big\langle \nabla {\cal P}_2^{(1)}\big( z + y (\Psi_C(z) - z)   \big)\,,\, \Psi_C(z) - z \big\rangle d y \,. 
  $$
  The Hamiltonian ${\cal P}_3^{(2c)}(z)$ has the following properties.
  \begin{lemma}\label{composizione cal P 12 Psi C}
  The Hamiltonian ${\cal P}_3^{(2c)} : \mathcal V' \cap h^1_0 \to \R$ is real analytic and for any integer $N \ge 0$ its gradient $\nabla {\cal P}_3^{(2c)} (z)$ admits the expansion
  of the form $ \big( 0, \, \mathcal{OP}(z;  \nabla {\cal P}_3^{(2c)}) \big) + {\cal R}_N (z;  \nabla {\cal P}_3^{(2c)})$ where
   $$
  \mathcal{OP}(z;  \nabla {\cal P}_3^{(2c)})  =  {\cal F}^+_{N_S} \circ  \sum_{k = 0}^N T_{a^+_k(z; \nabla {\cal P}_3^{(2c)})}  D^{- k} [ ( {\cal F}^+_{N_S})^{- 1} z_\bot] 
  +  {\cal F}^-_{N_S} \circ  \sum_{k = 0}^N T_{a^-_k(z; \nabla {\cal P}_3^{(2c)})} (- D)^{- k} [ ( {\cal F}^-_{N_S})^{- 1} z_\bot] 
  $$ 
  with the property that there exists an integer $\sigma_N \ge N$ (loss of regularity) such that for any $s \geq 0$, $0 \le k \le N$, the maps 
  $$
  \mathcal V' \cap h^{s + \sigma_N}  \to H^s_\C, \, z \mapsto a^\pm_k(z; \nabla {\cal P}_3^{(2c)})\,,
  \qquad 
  \mathcal V'  \cap h_0^{s \lor \sigma_N} \to  h_0^{s + N + 1}, \, z \mapsto {\cal R}_N(z; \nabla {\cal P}_3^{(2c)})
  $$
   are real analytic and $a^-_k( z; \nabla {\cal P}_3^{(2c)}) = \overline{a^+_k( z;  \nabla {\cal P}_3^{(2c)})}$.
   Furthermore, for any $s \ge 0,$ $z \in \mathcal V' \cap h^{s + \sigma_N}_0$ with  $\| z \|_{\sigma_N} \leq 1$, $\widehat z_1, \ldots, \widehat z_l \in h^{s + \sigma_N}_0$, $l \ge 1$,
  $$
  \begin{aligned}
   & \|   a^\pm_k( z; \nabla {\cal P}_3^{(2c)})  \|_s \lesssim_{s, N} \| z_\bot \|_{s + \sigma_N}\,, \\
   & \| d^l a^\pm_k( z; \nabla {\cal P}_3^{(2c)}) [\widehat z_1, \ldots, \widehat z_l] \|_s \lesssim_{s, N, l} 
  \sum_{j = 1}^l \| \widehat z_j\|_{s + \sigma_N} \prod_{i \neq j} \| \widehat z_j\|_{\sigma_N} + \| z_\bot \|_{s + \sigma_N} \prod_{j = 1}^l \| \widehat z_j\|_{\sigma_N}\,.
    \end{aligned}
  $$
  Similarly, for any $s \ge 0,$ $z \in \mathcal V' \cap h^{s \lor \sigma_N}_0$ with  $\| z \|_{\sigma_N} \leq 1$, $\widehat z\in h^{s \lor \sigma_N}_0$, 
  $$
  \begin{aligned}
   &\| {\cal R}_N(z ; \nabla {\cal P}_3^{(2c)})\|_{s + N + 1} \lesssim_{s, N} \| z_\bot \|_{s \lor \sigma_N} \| z_\bot \|_{\sigma_N} \,, \\
   &\| d {\cal R}_N( z; \nabla {\cal P}_3^{(2c)}) [\widehat z]\|_{s + N + 1} \lesssim_{s, N} \| z_\bot \|_{\sigma_N} \| \widehat z\|_{s \lor \sigma_N} + 
   \| z_\bot \|_{s \lor \sigma_N} \| \widehat z\|_{\sigma_N}\,, 
   \end{aligned}
   $$
   and if in addition $\widehat z_1, \ldots, \widehat z_l \in h^{s \lor \sigma_N}_0$, $ l \geq 2$, then
   $$
   \| d^l {\cal R}_N( z; \nabla {\cal P}_3^{(2c)}) [\widehat z_1, \ldots, \widehat z_l] \|_{s + N + 1} \lesssim_{s, N, l} 
  \sum_{j = 1}^l \|\widehat z_j \|_{s \lor \sigma_N} \prod_{i \neq j} \| \widehat z_i\|_{\sigma_N} 
  + \| z_\bot \|_{s \lor \sigma_N} \prod_{j = 1}^l \| \widehat z_j\|_{\sigma_N}\,. 
  $$
   \end{lemma}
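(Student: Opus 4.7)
We begin by writing $\mathcal{P}_3^{(2c)} = \mathcal{A} + \mathcal{B}$ with
$\mathcal{A}(z) := {\cal P}_2^{(1)}(\Psi_C(z)) - {\cal P}_2^{(1)}(z)$ and
$\mathcal{B}(z) := {\cal P}_3^{(1)}(\Psi_C(z))$. Real analyticity of ${\cal P}_3^{(2c)}$ on $\mathcal{V}' \cap h^1_0$ is inherited from that of ${\cal P}_2^{(1)}$ and ${\cal P}_3^{(1)}$ (Lemma \ref{stime cal M(wS)}, Corollary \ref{pseudodiff expansion Psi_L}) and of $\Psi_C$ (Theorem \ref{espansione flusso per correttore}). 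The chain rule gives
$$
\nabla{\cal P}_3^{(2c)}(z) = (d\Psi_C(z))^\top \bigl(\nabla{\cal P}_2^{(1)}(\Psi_C(z)) + \nabla{\cal P}_3^{(1)}(\Psi_C(z))\bigr) - \nabla{\cal P}_2^{(1)}(z),
$$
which I would analyze term by term.

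For $\nabla\mathcal{B}$ I would use Lemma \ref{lemma stima cal P2 P3}, according to which $\nabla{\cal P}_3^{(1)}(w)$ already admits a para-differential expansion of the desired form at any point $w$, with some loss of regularity $\sigma_N^{(1)} \ge N$. Substituting $w = \Psi_C(z) = z + \tilde\Psi_C(z)$, with $\tilde\Psi_C(z)$ at least quadratic in $z_\bot$ and admitting a pseudo-differential expansion plus smoothing remainder (Corollary \ref{proposizione espansione taylor correttore simplettico}(i)), produces an expansion of $\nabla{\cal P}_3^{(1)}(\Psi_C(z))$ of the same type: the argument $({\cal F}^\pm_{N_S})^{-1}\pi_\bot\Psi_C(z)$ decomposes as $({\cal F}^\pm_{N_S})^{-1}z_\bot$ plus smoothing and higher order pieces, which are absorbed either into the paraproduct symbols or into the remainder by Lemma \ref{lemma composizione pseudo}. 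Composing with the expansion of $(d\Psi_C(z))^\top = \mathrm{Id} + \Psi_{C,1}^\top(z) + \Psi_{C,2}^\top(z)$ from Corollary \ref{proposizione espansione taylor correttore simplettico}(ii) (equivalently Corollary \ref{expansion of  of differential of Psi}) preserves the para-differential form modulo an $(N+1)$-smoothing remainder, by the symbolic calculus of Appendix \ref{appendice B}.

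The term $\nabla\mathcal{A}$ is slightly more delicate because $\nabla{\cal P}_2^{(1)}$ is naturally a pseudo-differential object (coming from $\mathcal{G}(z_S)$, Lemma \ref{stime cal M(wS)}) rather than a paraproduct. I would write
$$
\nabla\mathcal{A}(z) = \bigl((d\Psi_C(z))^\top - \mathrm{Id}\bigr)\nabla{\cal P}_2^{(1)}(\Psi_C(z)) + \int_0^1 d(\nabla{\cal P}_2^{(1)})(z + y\tilde\Psi_C(z))[\tilde\Psi_C(z)]\,dy.
$$
Since $\tilde\Psi_C(z)$ is quadratic in $z_\bot$, $\nabla{\cal P}_2^{(1)}(w)$ is linear in $w_\bot$, and $(d\Psi_C)^\top - \mathrm{Id}$ is of order at least one in $z_\bot$, both summands are of order $\ge 2$ in $z_\bot$, matching the right hand sides of the claimed estimates. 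Applying Bony's decomposition (Lemma \ref{primo lemma paraprodotti}) to convert the pseudo-differential symbols of $\mathcal{G}(z_S)$ and $d\mathcal{G}(z_S)$ into paraproducts plus smoothing remainders, then composing with the expansions of the other factors as above, yields a para-differential expansion of $\nabla\mathcal{A}$ of the claimed type.

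Summing the expansions of $\nabla\mathcal{A}$ and $\nabla\mathcal{B}$, grouping the para-multiplication terms of each order $k \in \{0,\dots,N\}$ into $T_{a_k^\pm(z;\nabla{\cal P}_3^{(2c)})}(\pm D)^{-k}[({\cal F}^\pm_{N_S})^{-1}z_\bot]$ and absorbing the $(N+1)$-smoothing contributions into ${\cal R}_N(z;\nabla{\cal P}_3^{(2c)})$ gives the claimed expansion. The tame estimates for the coefficients, the remainder, and their derivatives follow by interpolation (Lemma \ref{lemma interpolation}) from the corresponding estimates already available in Lemmata \ref{stime cal M(wS)} and \ref{lemma stima cal P2 P3}, Corollaries \ref{expansion of  of differential of Psi} and \ref{proposizione espansione taylor correttore simplettico}, together with the composition and paraproduct estimates of Appendix \ref{appendice B}; the reality relations $a_k^- = \overline{a_k^+}$ are preserved at each step, as they hold in every input expansion and because ${\cal P}_3^{(2c)}$ is real valued. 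The main obstacle is the bookkeeping of the loss of regularity $\sigma_N$: each application of the para-differential composition rule and each use of Bony's decomposition contributes to $\sigma_N$, and one has to choose $\sigma_N \ge N$ large enough to accommodate all such losses simultaneously. This is a finite combinatorial choice depending only on $N$, independent of $s$, and leads to the $\sigma_N$ appearing in the statement.
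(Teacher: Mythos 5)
Your proposal is correct and follows essentially the same route as the paper's own (very terse, one-line) proof: differentiate $\mathcal{P}_3^{(2c)}$ by the chain rule, invoke Corollary \ref{proposizione espansione taylor correttore simplettico} for the Taylor expansion of $\Psi_C$ and $d\Psi_C^\top$, Lemma \ref{stime cal M(wS)} for $\nabla\mathcal{P}_2^{(1)}$, Lemma \ref{lemma stima cal P2 P3} for $\nabla\mathcal{P}_3^{(1)}$, and reorganize via Lemmata \ref{primo lemma paraprodotti}, \ref{lemma composizione pseudo}. Your additional observations — the splitting into $\mathcal{A}+\mathcal{B}$, the mean-value form of $\nabla\mathcal{A}$, the Bony conversion of the $C^\infty$ symbols of $\mathcal{G}(z_S)$ into paraproducts plus smoothing remainders, and the order-of-vanishing bookkeeping in $z_\bot$ — are accurate elaborations of the argument the paper leaves implicit.
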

  \begin{proof}
   The lemma follows by differentiating  the Hamiltonian ${\cal P}_3^{(2c)}$, defined  in \eqref{cal P 12 Psi C} and then  applying Corollary \ref{proposizione espansione taylor correttore simplettico}, 
   Lemmata \ref{stime cal M(wS)}, \ref{lemma stima cal P2 P3} and using Lemmata \ref{primo lemma paraprodotti}, \ref{lemma composizione pseudo}.   
  \end{proof}
  
  \bigskip
  


\smallskip
Altogether we have found the following expansion of ${\cal H}^{(2)} (z) = {\cal H}^{(1)} (\Psi_C(z)) $: 
By \eqref{forma semifinale H nls circ Psi} (expansion of ${\cal H}^{(1)} $) and by  the definition
${\cal H}_\Omega(z) =  \frac{1}{2} \langle \Omega_\bot(I_S) [z_\bot], [z_\bot] \rangle $ (cf.  \eqref{def H_Omega}), one has for any $z \in \mathcal V'$
$$
\begin{aligned}
{\cal H}^{(2)} (z)  & =  {\cal H}_S^{bo}( \Psi_C(z)) + \mathcal H_\Omega(\Psi_C(z)) + {\cal P}_2^{(1)}(\Psi_C(z)) +  {\cal P}_3^{(1)}(\Psi_C(z)) \\
& = {\cal H}_S^{bo}(z) +  \mathcal H_\Omega(z) + {\cal P}_2^{(1)}(z) + 
\big({\cal H}_S^{bo}( \Psi_C(z)) - {\cal H}_S^{bo}(z) \big) + \big( \mathcal H_\Omega(\Psi_C(z)) -  \mathcal H_\Omega(z) \big)+  {\cal P}_3^{(2c)}(z) \, ,
\end{aligned}
$$
where ${\cal P}_3^{(2c)}(z) = {\cal P}_2^{(1)}(\Psi_C(z)) - {\cal P}_2^{(1)}(z) +  {\cal P}_3^{(1)}(\Psi_C(z))$ (cf. \eqref{cal P 12 Psi C}).
Using $H^{bo}(\Psi^{bo}(z_S, 0)) =  {\cal H}^{bo}(I_S )$ (cf. \eqref{perturbazione composizione con Psi L}) and the identity (cf. \eqref{h nls circ PsiC}),
$$ 
\mathcal H_S^{bo}(\Psi_C(z)) - \mathcal H_S^{bo}(z) =  \langle \nabla_S \mathcal H_S^{bo}(z) \,  , \,  \pi_S {\cal R}_{N, 2}(z; \Psi_C) \rangle + {\cal P}^{(2a)}_3(z) \, ,
$$
as well as the definition ${\cal P}_3^{(2b)}(z) = {\cal H}_\Omega(\Psi_C(z)) - {\cal H}_\Omega(z)$ (cf. \eqref{parte pericolosa Psi C}) 
it follows that for $z= (z_S, z_\bot) \in \mathcal V'$, the Hamiltonian 
${\cal H}^{(2)}(z)$ is given by 
\begin{equation}\label{cal H2 nls}
{\cal H}^{(2)} (z) = {\cal H}^{bo}(I_S ) + \frac12 \langle \Omega_\bot(I_S)[z_\bot] \, , \, z_\bot \rangle
+ {\cal P}_2^{(2)}(z) + {\cal P}_3^{(2)}(z) \, ,
\end{equation}
where 
\begin{equation} \label{definizione cap P2 (3)}
{\cal P}_2^{(2)} (z)  :=  \langle \nabla_S \mathcal H_S^{bo}(z) \, , \,  \pi_S {\cal R}_{N, 2}(z; \Psi_C) \rangle + {\cal P}_2^{(1)}(z) \,, \quad 
{\cal P}_3^{(2)} (z)  := {\cal P}^{(2a)}_3(z) +{\cal P}_3^{(2b)}(z) + {\cal P}_3^{(2c)}(z).
\end{equation}
We recall that by \eqref{Taylor expansion R_N} and \eqref{perturbazione composizione con Psi L},
\begin{align}\label{definizione cap P2 (2)} 
\mathcal R_{N, 2} (z; \Psi_C) =  \frac{1}{2} d^2_\bot {\cal R}_{N}((z_S, 0) ; \Psi_C) [z_\bot, z_\bot]\,, \qquad
\quad {\cal P}_2^{(1)}(z) =  \frac12 \langle {\cal G}(z_S)[z_\bot] \, , \, z_\bot \rangle\,. \quad
\end{align}
Note that ${\cal P}_2^{(2)}(z)$ is quadratic with respect to $z_\bot$, whereas ${\cal P}^{(2)}_3$
is a remainder term of order three in  $z_\bot$. 
Being quadratic with respect to $z_\bot$, ${\cal P}^{(2)}_2$ can be written as 
\begin{equation}\label{bla bla cal P2}
{\cal P}_2^{(2)} (z) = \frac12 \langle d_\bot \nabla_\bot {\cal P}_2^{(2)}(z_S, 0)   [z_\bot] \, , \, z_\bot \rangle \, .
\end{equation}
 The following lemma is the analogue of a corresponding result for the KdV equation, due to Kuksin \cite{K}.  
      \begin{lemma}\label{cancellazione finale termini quadratici}
  The Hamiltonian ${\cal P}^{(2)}_2$ vanishes on $ \mathcal V'$. 
    \end{lemma}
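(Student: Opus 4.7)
The plan is to prove $\mathcal{P}_2^{(2)} \equiv 0$ by computing the linearization of the Hamiltonian vector field $X_{\mathcal{H}^{(2)}}$ at points $(z_S,0) \in \mathcal{V}' \cap (\mathcal{V}_S' \times \{0\})$ in the normal direction in two different ways and matching the results. Since \eqref{bla bla cal P2} reduces everything to showing $d_\bot \nabla_\bot \mathcal{P}_2^{(2)}(z_S,0) = 0$, this comparison is enough.

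First I would compute the $\bot$-component of $dX_{\mathcal{H}^{(2)}}(z_S,0)[(0,\hat z_\bot)]$ directly from the expansion \eqref{cal H2 nls}. Using that $\mathcal{P}_3^{(2)} = O(\|z_\bot\|^3)$ contributes nothing to this linearization and that $\nabla_\bot \mathcal{H}^{(2)}(z_S,0) = 0$, one finds
\[
\pi_\bot dX_{\mathcal{H}^{(2)}}(z_S,0)[(0,\hat z_\bot)] = J_\bot \Omega_\bot(I_S)\hat z_\bot + J_\bot d_\bot \nabla_\bot \mathcal{P}_2^{(2)}(z_S,0)[\hat z_\bot].
\]

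Next I would compute the same quantity using the symplecticity of $\Psi = \Psi_L \circ \Psi_C$, which gives the conjugation identity $d\Psi(z) X_{\mathcal{H}^{(2)}}(z) = X_{H^{bo}}(\Psi(z))$. Differentiating this at $(z_S,0)$ in the direction $(0,\hat z_\bot)$ yields
\[
d\Psi(z_S,0)\, dX_{\mathcal{H}^{(2)}}(z_S,0)[(0,\hat z_\bot)] = dX_{H^{bo}}(q)\, d\Psi(z_S,0)[(0,\hat z_\bot)] - d^2\Psi(z_S,0)\bigl[(0,\hat z_\bot),X_{\mathcal{H}^{(2)}}(z_S,0)\bigr].
\]
Three inputs from the construction enter here. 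By Corollary~\ref{proposizione espansione taylor correttore simplettico}, $\Psi_C(z_S,0) = (z_S,0)$ and $d\Psi_C(z_S,0) = \mathrm{Id}$, so by Proposition~\ref{prop Psi L Psi bo}(i), $d\Psi(z_S,0) = d\Psi^{bo}(z_S,0)$ and in particular $d\Psi(z_S,0)[(0,\hat z_\bot)] = \Psi_1(z_S)\hat z_\bot$. The key BO linearization identity \eqref{maradona 3} gives
\[
dX_{H^{bo}}(q)\,\Psi_1(z_S)\hat z_\bot = \Psi_1(z_S) J_\bot \Omega_\bot(I_S)\hat z_\bot + d_S\bigl(\Psi_1(z_S)\hat z_\bot\bigr)[J_S \Omega_S(I_S) z_S].
\]
Finally $X_{\mathcal{H}^{(2)}}(z_S,0) = (J_S \nabla_S \mathcal{H}_S^{bo}(z_S), 0) = (J_S \Omega_S(I_S) z_S,\, 0)$, and a direct calculation using $\Psi = \Psi_L \circ \Psi_C$, the fact that $\Psi_L$ is affine in $z_\bot$, and the fact that $d_S d_\bot \Psi_C(z_S,0) = 0$ (since the linear-in-$z_\bot$ part of $\Psi_C$ is the identity map $z_\bot \mapsto (0,z_\bot)$ independently of $z_S$) shows that
\[
d^2\Psi(z_S,0)\bigl[(0,\hat z_\bot),(J_S \Omega_S z_S,0)\bigr] = d_S\bigl(\Psi_1(z_S)\hat z_\bot\bigr)[J_S \Omega_S(I_S) z_S].
\]
The two contributions of the form $d_S(\Psi_1(z_S)\hat z_\bot)[J_S \Omega_S z_S]$ cancel exactly, leaving $d\Psi(z_S,0)\, dX_{\mathcal{H}^{(2)}}(z_S,0)[(0,\hat z_\bot)] = \Psi_1(z_S) J_\bot \Omega_\bot(I_S)\hat z_\bot$. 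Applying $d\Psi(z_S,0)^{-1}$ and comparing with the direct computation forces $J_\bot d_\bot \nabla_\bot \mathcal{P}_2^{(2)}(z_S,0)[\hat z_\bot] = 0$; since $J_\bot$ is injective, this gives $d_\bot \nabla_\bot \mathcal{P}_2^{(2)}(z_S,0) = 0$ for every $z_S \in \mathcal{V}_S'$, and \eqref{bla bla cal P2} yields $\mathcal{P}_2^{(2)} \equiv 0$.

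The only delicate step is the cancellation of the mixed second derivative $d^2\Psi(z_S,0)[(0,\hat z_\bot),(J_S\Omega_S z_S,0)]$ against the $z_S$-dependence correction from \eqref{maradona 3}; this relies crucially on the fact that the linear-in-$z_\bot$ part of $\Psi_C$ is the tautological inclusion, which is precisely the normalization built into the symplectic corrector in Corollary~\ref{proposizione espansione taylor correttore simplettico}. Everything else is bookkeeping, and the whole argument is essentially the infinite-dimensional analog of the observation that if $\Psi$ is a symplectic transformation sending the invariant torus $\{z_\bot=0\}$ to the finite-gap torus, then the quadratic normal form of the transformed Hamiltonian at the torus is determined by the linearized BO dynamics in Birkhoff coordinates.
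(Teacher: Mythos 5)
Your argument is correct, and it reaches the same conclusion as the paper's proof, but by a genuinely different mechanism. The paper works dynamically: it linearizes both $\partial_t w = J\nabla\mathcal H^{(2)}(w)$ and the BO equation along the $S$-gap orbit $z(t)=(z_S(t),0)$, pushes both linearized solutions forward (by $d\Psi(z(t))$ and $d\Psi^{bo}(z(t))$, respectively, which agree on the torus), and invokes uniqueness of the initial value problem to conclude $\widehat w(t)=\widehat z(t)$; this matching of ODEs forces $J_\bot d_\bot\nabla_\bot\mathcal P_2^{(2)}(z_S,0)=0$. You instead work statically, at a fixed $z_S$, by differentiating the conjugacy identity $d\Psi(z)[X_{\mathcal H^{(2)}}(z)] = X_{H^{bo}}(\Psi(z))$ once in the normal direction $(0,\widehat z_\bot)$, using \eqref{maradona 3} to evaluate $dX_{H^{bo}}$, and then explicitly verifying that the curvature term $d^2\Psi(z_S,0)[(0,\widehat z_\bot),(J_S\Omega_S z_S,0)]$ reproduces exactly the $d_S(\Psi_1(z_S)\widehat z_\bot)[J_S\Omega_S z_S]$ correction from \eqref{maradona 3}. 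The price you pay for avoiding the flow argument is that this cancellation must be checked by hand: it hinges on (a) $\Psi_L$ being affine in $z_\bot$, so its only surviving mixed second derivative is $d_S\Psi_1(z_S)[\cdot][\cdot]$, and (b) the normalization $d^2\Psi_C(z_S,0)[(0,\widehat z_\bot),(\widehat w_S,0)]=0$, which follows from Corollary~\ref{proposizione espansione taylor correttore simplettico} since $\Psi_C(z)-z$ vanishes to second order in $z_\bot$. You correctly identify this as the delicate point. What your version buys is a purely pointwise, algebraic identity; what the paper's version buys is that it never has to compute $d^2\Psi$ at all, trading the explicit cancellation for IVP uniqueness. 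Both proofs rest on the same pillars — $d\Psi(z_S,0)=d\Psi^{bo}(z_S,0)$ on the torus, the linearized BO formula \eqref{maradona 3}, and the expansion \eqref{cal H2 nls} of $\mathcal H^{(2)}$ — so the content is equivalent.
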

  \begin{proof}
  In view of \eqref{bla bla cal P2}, it suffices to prove that for any $z_S \in \mathcal V_S'$, the operator $d_\bot \nabla_\bot {\cal P}_2^{(2)}(z_S, 0)$ vanishes.
  We establish that $d_\bot \nabla_\bot {\cal P}_2^{(2)}(z_S, 0) = 0$ by studying the linearization of $\partial_t w = J \nabla \mathcal H^{(2)}(w)$ along an arbitrary
  solution $w(t)$ of the form $w(t) = (w_S(t), 0)$. 
  First we need to make some preliminary considerations.
Let $t \mapsto q(t) \in M^o_S$ be a solution of the BO equation $\partial_t q = \partial_x \nabla H^{bo}(q)$
and denote by $t \mapsto z(t) := (z_S(t), 0)$ the corresponding solution in Birkhoff coordinates, defined by $q(t) = \Psi^{bo}(z(t))$.
It satisfies $\partial_t z(t) = J \Omega(I_S) [z(t)]$ (cf. \eqref{BO in Birkhoff}).
Furthermore, let $\widehat q(t)$ be the solution of the equation, obtained by  linearizing the BO equation along $q(t)$, 
$$
\partial_t \widehat q(t) = \partial_x d \nabla H^{bo}(q(t)) [\widehat q(t)] \,,
$$ 
with initial data $\widehat q^0 := d\Psi^{bo}(z_S(0), 0)[ 0, \widehat z_{\bot}^0]$ and  $\widehat z_{\bot}^0 \in h^2_\bot$. 
Similarly, denote by $\widehat z(t)$ the solution of the equation, obtained by linearizing $\partial_t z = J \Omega(I_S) [z]$
along the solution $z(t)$ with initial data $\widehat z^0 = (0, \widehat z_{\bot}^0)$,
$\partial_t \widehat z (t) =  J d \nabla  \mathcal H^{bo}(z(t)) [\widehat z (t)]$.
Since  $\partial_t z(t) = J \Omega(I_S) [z(t)]$ one concludes that (cf. \eqref{linearized BO in Bikrhoff})
\begin{equation}\label{equation widehat z bot}
 \widehat z(t) = (0, \widehat z_\bot(t))\,, \qquad \partial_t \widehat z_\bot (t) =   J_\bot \Omega_\bot(I_S) [\widehat z_\bot (t)] \,.
\end{equation}
Since $\Psi^{bo}$ is symplectic and $\mathcal H^{bo} = H^{bo} \circ \Psi^{bo}$, one has  
$\widehat q(t) = d\Psi^{bo}(z_S(t), 0)[\widehat z(t)]$. 
Recall that for any $z_S \in \mathcal V'_S$,  $\Psi_L(z_S, 0) = \Psi^{bo} (z_S, 0)$ (cf. definition \eqref{definition Psi_L bo} of $\Psi_L$) 
and $\Psi_C(z_S, 0) = (z_S, 0)$ 
(cf. Corollary \ref{proposizione espansione taylor correttore simplettico}),
implying that $ \Psi(z_S, 0) = \Psi^{bo}(z_S, 0)$ and hence $q(t) = \Psi (z_S(t), 0)$ for any $t$.
Since  $\Psi : \mathcal V' \to H^0_0$ is symplectic and $\mathcal H^{(2)} = H^{bo}\circ \Psi$, 
one sees that $z(t) = (z_S(t), 0)$ is also a solution of the equation 
$\partial_t w = J \nabla {\cal H}^{(2)}(w)$.
 With these preliminary considerations made, we are ready to prove that $d_\bot \nabla_\bot {\cal P}_2^{(2)}(z_S, 0)$ vanishes.
 To this end consider the solution $\widehat w(t)$ of the equation obtained by linearizing  $\partial_t w = J \nabla {\cal H}^{(2)}(w)$ along the solution $z(t) = (z_S(t), 0)$
 with initial data $\widehat w^0 = (0, \widehat z_{\bot}^0)$. Again using that the map $\Psi$ is symplectic and $\mathcal H^{(2)} = H^{bo}\circ \Psi$,
 it follows that $d\Psi(z(t))[\widehat w(t)]$ solves the linearized BO equation. Since $d\Psi(z(0)) = d\Psi^{bo}(z(0))$ and $\widehat w^0 = \widehat z^0$,
 one then concludes from the uniqueness of the initial value problem that  $d\Psi(z(t))[\widehat w(t)] = d\Psi^{bo}(z(t))[\widehat z(t)]$ and hence 
 $\widehat w(t) = \widehat z(t)$ for any $t$. It means that $ \widehat z(t)$ satisfies also the linear equation
 $$
 \partial_t \widehat z (t) =  J d \nabla  \mathcal H^{(2)}(z(t)) [\widehat z (t)]\,.
 $$
 In view of the expansion \eqref{cal H2 nls} of $\mathcal H^{(2)}$ one then infers that
$$
\partial_t \widehat z_\bot(t) = J_\bot \Omega_\bot (I_S)[\widehat z_\bot(t)] + J_\bot d_\bot \nabla_\bot {\cal P}^{(2)}_2(z_S(t), 0)[\widehat z_\bot(t)]\,. 
$$
Comparing the latter identity with \eqref{equation widehat z bot} one concludes that in particular, 
$d_\bot \nabla_\bot {\cal P}^{(2)}_2 (z_S(0), 0) = 0$. Since the initial data  $z_S(0) \in \mathcal V_S'$ can be chosen arbitrarily,
we thus have $d_\bot \nabla_\bot {\cal P}^{(2)}_2 (z_S, 0) = 0$ for any $z_S \in \mathcal V_S'$ as claimed. 
 \end{proof}
In summary, we have proved the following results on the Hamiltonian $\mathcal {\cal H}^{(2)}= H^{bo} \circ \Psi$.  
\begin{theorem} \label{stime finali grado 3 perturbazione}
The Hamiltonian $\mathcal {\cal H}^{(2)}: \mathcal V' \cap h^1_0 \to \R $ has an expansion of the form
\begin{equation}\label{forma finalissima hamiltoniana trasformata}
{\cal H}^{(2)}(z) = H^{bo}(q) + \frac12 \langle {\Omega}_\bot(I_S)[z_\bot], \, z_\bot \rangle + {\cal P}_3^{(2)}(z)
\end{equation}
where $\Omega_\bot(I_S)$ is given by \eqref{splitting Omega} and the remainder term  ${\cal P}_3^{(2)}$, defined by  \eqref{definizione cap P2 (3)},
 satisfies the following:
 ${\cal P}_3^{(2)} : \mathcal V' \cap h^1_0  \to \R$ is real analytic and for any integer $N \ge 1$,  its gradient $\nabla {\cal P}_3^{(2)}(z)$ 
 admits the asymptotic expansion $\big( 0, \, \mathcal{OP}(z; \nabla {\cal P}_3^{(2)}) \big)+ {\cal R}_N( z; \nabla {\cal P}_3^{(2)})$ where
  $$
  \mathcal{OP}(z; \nabla {\cal P}_3^{(2)}) = 
  {\cal F}^+_{N_S} \circ \sum_{k = 0}^N  T_{a^+_k(z; \nabla {\cal P}_3^{(2)})}  D^{- k} [({\cal F}^+_{N_S})^{- 1} z_\bot] 
  +  {\cal F}^-_{N_S} \circ \sum_{k = 0}^N  T_{a^-_k(z; \nabla {\cal P}_3^{(2)})} (- D)^{- k} [({\cal F}^-_{N_S})^{- 1}z_\bot]
  $$ 
  with the property that there there exists an integer  $\sigma_N \ge N$ (loss of regularity) so that for any $s \ge 0$, $0 \le k \le N$, the maps 
  $$
  \mathcal V'  \cap h^{s + \sigma_N}_0 \to H^s_\C, \, z \mapsto a^\pm_k( z; \nabla{\cal P}_3^{(2)})\,, \qquad
  \mathcal V' \cap h^{s \lor \sigma_N}_0 \to h^{s + N + 1}_0, \, z \mapsto {\cal R}_N( z; \nabla {\cal P}_3^{(2)})
  $$
   are real analytic and $a^-_k( z; \nabla {\cal P}_3^{(2)}) = \overline{a^+_k( z;  \nabla {\cal P}_3^{(2)})}$.
    Furthermore, they satisfy the following estimates: 
    for any $s \ge 0,$ $z \in \mathcal V' \cap h^{s + \sigma_N}_0$ with  $\| z \|_{\sigma_N} \leq 1$,
  and $\widehat z_1, \ldots, \widehat z_l \in h^{s + \sigma_N}_0$, $l \ge 1$,
  $$
  \begin{aligned}
  & \|   a^\pm_k(z; \nabla {\cal P}_3^{(2)})  \|_s \lesssim_{s, N} \| z_\bot \|_{s + \sigma_N}\,, \\
  & \| d^l a^\pm_k( z; \nabla {\cal P}_3^{(2)}) [\widehat z_1, \ldots, \widehat z_l] \|_s \lesssim_{s, N, l} 
  \sum_{j = 1}^l \| \widehat z_j\|_{s + \sigma_N} \prod_{i \neq j} \| \widehat z_j\|_{\sigma_N} + \| z_\bot \|_{s + \sigma_N} \prod_{j = 1}^l \| \widehat z_j\|_{\sigma_N}\,.
  \end{aligned}
  $$
  Similarly, for any $s \ge 1$ and $z \in \mathcal V' \cap h^{s \lor \sigma_N}_0$ with $\| z_\bot \|_{\sigma_N} \leq 1$,  $\widehat z\in h^{s \lor \sigma_N}_0$,   
  $$
    \begin{aligned}
  & \| {\cal R}_N( z; \nabla {\cal P}_3^{(2)})\|_{s + N + 1} \lesssim_{s, N} \| z_\bot \|_{s \lor \sigma_N} \| z_\bot \|_{\sigma_N} \,, \\
  &\| d {\cal R}_N( z; \nabla {\cal P}_3^{(2)}) [\widehat z]\|_{s + N + 1} \lesssim_{s, N} \| z_\bot \|_{\sigma_N} \| \widehat z\|_{s \lor \sigma_N} + \| z_\bot \|_{s \lor \sigma_N} \| \widehat z\|_{\sigma_N}\, . 
  \end{aligned}
  $$
 If in addition  $\widehat z_1, \ldots, \widehat z_l \in h^{s \lor \sigma_N}_0$, $ l \geq 2$, then
 $$
 \| d^l {\cal R}_N( z; \nabla {\cal P}_3^{(2)}) [\widehat z_1, \ldots, \widehat z_l] \|_{s + N + 1} \lesssim_{s, N, l} 
  \sum_{j = 1}^l \|\widehat z_j \|_{s \lor \sigma_N} \prod_{i \neq j} \| \widehat z_i\|_{\sigma_N} + \| z_\bot \|_{s \lor \sigma_N} \prod_{j = 1}^l \| \widehat z_j\|_{\sigma_N}\,. 
  $$
\end{theorem}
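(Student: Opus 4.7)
The proof is essentially an assembly of the results already established in this section, so the plan is mostly to verify that the pieces fit together and produce the stated expansion and tame estimates. Starting from the identity \eqref{cal H2 nls}, namely
\[
{\cal H}^{(2)}(z) = {\cal H}^{bo}(I_S) + \tfrac12 \langle \Omega_\bot(I_S)[z_\bot], z_\bot \rangle + {\cal P}_2^{(2)}(z) + {\cal P}_3^{(2)}(z),
\]
the first step is to invoke Lemma \ref{cancellazione finale termini quadratici} to conclude that ${\cal P}_2^{(2)} \equiv 0$ on ${\cal V}'$, which gives \eqref{forma finalissima hamiltoniana trasformata} with ${\cal H}^{bo}(I_S) = H^{bo}(q)$ for $q = \Psi^{bo}(z_S,0)$. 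This reduces everything to establishing the claimed para-differential expansion and tame estimates for $\nabla {\cal P}_3^{(2)}$, where by definition \eqref{definizione cap P2 (3)}
\[
{\cal P}_3^{(2)}(z) = {\cal P}_3^{(2a)}(z) + {\cal P}_3^{(2b)}(z) + {\cal P}_3^{(2c)}(z).
\]

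The second step is to handle each summand separately. For $\nabla {\cal P}_3^{(2a)}$, Lemma \ref{stime tame h3 nls} already provides estimates in $h^{s+N+1}_0$ of the required tame form; since ${\cal P}_3^{(2a)}$ is supported on the finite set $S$ through $\nabla_S \mathcal H_S^{bo}(z)$ composed with the $(N+1)$-smoothing remainders ${\cal R}_{N,2}, {\cal R}_{N,3}$ coming from Corollary \ref{proposizione espansione taylor correttore simplettico}, its gradient contributes only to ${\cal R}_N(z;\nabla {\cal P}_3^{(2)})$ (no para-differential part is generated). For $\nabla {\cal P}_3^{(2b)}$, Lemma \ref{proprieta hamiltoniana cal P 3 (2b)} supplies the expansion directly with pseudo-differential coefficients $a^\pm_k(z;\nabla {\cal P}_3^{(2b)})$ and remainder, satisfying estimates of the form $\| a_k^\pm\|_s \lesssim \|z_\bot\|_0^2$; these are absorbed into the para-differential piece via $T_{a_k^\pm} = a_k^\pm \cdot \mathrm{Id} - R^{(B)}_k$ using Lemma \ref{primo lemma paraprodotti}, where the Bony remainder $R^{(B)}_k$ is $\infty$-smoothing and hence goes into ${\cal R}_N(z;\nabla {\cal P}_3^{(2)})$ without loss. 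For $\nabla {\cal P}_3^{(2c)}$, Lemma \ref{composizione cal P 12 Psi C} already yields the required para-differential expansion with some loss of regularity $\sigma_N^{(c)}\geq N$ and matching tame bounds.

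The third step is the bookkeeping: one defines
\[
a_k^\pm(z;\nabla {\cal P}_3^{(2)}) := a_k^\pm(z;\nabla {\cal P}_3^{(2b)}) + a_k^\pm(z;\nabla {\cal P}_3^{(2c)}),
\]
and sets ${\cal R}_N(z;\nabla {\cal P}_3^{(2)})$ to be the sum of the three remainders together with the Bony corrections and the contribution of $\nabla {\cal P}_3^{(2a)}$. One then takes $\sigma_N := \max\{\sigma_N^{(c)}, N+c_0\}$ with $c_0$ absorbing the finite loss coming from passing between pseudo-products and paraproducts via Lemma \ref{primo lemma paraprodotti} and from composition via Lemma \ref{lemma composizione pseudo}. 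Reality and symmetry $a_k^- = \overline{a_k^+}$ are inherited from the corresponding properties in Lemmas \ref{proprieta hamiltoniana cal P 3 (2b)} and \ref{composizione cal P 12 Psi C}. The tame estimates for $a_k^\pm$ and ${\cal R}_N$ follow by summing the bounds of the three contributions and applying the interpolation Lemma \ref{lemma interpolation} to reconcile the different norm indices ($\|\cdot\|_s\|\cdot\|_0$ type for ${\cal P}_3^{(2a)}$, ${\cal P}_3^{(2b)}$ versus $\sigma_N$-shifted bounds for ${\cal P}_3^{(2c)}$); the latter always dominates once $\sigma_N \geq \max\{N, \sigma_N^{(c)}\}$ is chosen large enough, which is why the theorem's bound is stated with $\| \cdot \|_{s\lor\sigma_N}$ and $\|\cdot\|_{s+\sigma_N}$.

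The main obstacle, though essentially absorbed into the preparatory Lemmas \ref{proprieta hamiltoniana cal P 3 (2b)} and \ref{composizione cal P 12 Psi C}, is the analysis of $\nabla {\cal P}_3^{(2b)}$: the quadratic form ${\cal H}_\Omega$ involves the unbounded factor $|D_\bot|$, and differentiating ${\cal H}_\Omega\circ \Psi_C$ in principle yields contributions in which $|D_\bot|$ acts on $\pi_\bot\tilde\Psi_C(z)$, which is only finitely smoothing. The integration by parts trick encoded in the identities \eqref{cappello 10}--\eqref{cappello 20}, together with the commutator reduction $\mathcal T(z_S) = \Psi_1(z_S) \ii D_\bot |D_\bot| - \partial_x|\partial_x|\Psi_1(z_S)$ of Lemma \ref{lemma cal T(q)}, is what reveals the gain of one derivative and makes $\nabla {\cal P}_3^{(2b)}$ of order $0$ rather than $1$; once this cancellation is in place the rest of the proof is a mechanical combination of the expansion lemmas.
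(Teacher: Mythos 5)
Your proposal follows exactly the paper's own route: it invokes \eqref{cal H2 nls} together with Lemma \ref{cancellazione finale termini quadratici} for the normal form, then assembles the expansion of $\nabla {\cal P}_3^{(2)}$ from Lemmata \ref{stime tame h3 nls}, \ref{proprieta hamiltoniana cal P 3 (2b)}, \ref{composizione cal P 12 Psi C} and Lemma \ref{primo lemma paraprodotti}, which is precisely the paper's argument. Your elaboration of the bookkeeping (sorting each contribution into the para-differential part versus the smoothing remainder, converting the multiplication operators from $\nabla\mathcal P_3^{(2b)}$ into paraproducts via the Bony decomposition, and choosing $\sigma_N$ large enough to absorb the several finite losses) is an accurate unfolding of what those citations encode, and your closing remark about the $|D_\bot|$ cancellation correctly identifies the one non-mechanical point, already handled inside Lemma \ref{proprieta hamiltoniana cal P 3 (2b)} via \eqref{cappello 10}--\eqref{cappello 20} and Lemma \ref{lemma cal T(q)}.
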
 
\begin{proof}
The identity \eqref{forma finalissima hamiltoniana trasformata} folllows from formula \eqref{cal H2 nls} and Lemma \ref{cancellazione finale termini quadratici}.
The claimed asymptotic expansion of  $\nabla {\cal P}_3^{(2)}$ and its properties follow from
Lemmata \ref{stime tame h3 nls}, \ref{proprieta hamiltoniana cal P 3 (2b)}, \ref{composizione cal P 12 Psi C} and Lemma \ref{primo lemma paraprodotti}. 
\end{proof}


\section{Summary of the proofs of Theorem \ref{modified Birkhoff map} and Theorem \ref{modified Birkhoff map 2}}\label{synopsis of proof}

In this section we summarize the proofs of Theorem \ref{modified Birkhoff map}, of its addendum, and of Theorem \ref{modified Birkhoff map 2}. 
First recall that in view of the envisioned applications, these theorems are formulated in terms of action angle coordinates on the submanifold $M_S^o$ 
of proper $S$-gap potentials.
 Denote by $\Xi $ the map relating action angle variables and complex Birkhoff coordinates,
$$
\Xi: \T^{S_+} \times \R_{>0}^{S_+} \times h^0_\bot \to h_S \times h^0_\bot, \,  (\theta_S, I_S, z_\bot) \mapsto (z_S(\theta_S, I_S), z_\bot)\,, 
$$
where 
$$
z_S(\theta_S, I_S) = (z_n(\theta_S, I_S))_{n \in S} \, , \qquad 
z_{\pm n} = \sqrt{2\pi n I_n} e^{\mp\ii \theta_n}\,, \quad \forall  n \in S_+ \, .
$$
Clearly, $\Xi$ is symplectic and, for any $s \geq 0$, the map  $\Xi : \T^{S_+} \times \R_{>0}^{S_+} \times h^s_\bot \to h_S \times h^s_\bot, $ is real analytic.
Furthermore, in view of the definition  \eqref{definition reversible structure for actions angles}, the map $\Xi$ preserves the reversible structure.
Hence the claimed results for the map $\Psi_L \circ \Psi_C \circ \Xi$ follow from the corresponding ones for the map $\Psi_L \circ \Psi_C$.
In what follows we summarize the proofs of the results for $\Psi_L \circ \Psi_C$ corresponding to the ones claimed for  $\Psi_L \circ \Psi_C \circ \Xi$.

\smallskip

\noindent 
{\em Proof of Theorem \ref{modified Birkhoff map}.}  For notational convenience, we denote  the composition $\Psi_L \circ \Psi_C$ by $\Psi$
(cf. the discussion in the paragraph above with regard to the map $\Psi$ of Theorem \ref{modified Birkhoff map}).
By \eqref{definition Psi_X {tau_0, tau}}, $\Psi$ is defined on the neighborhood $\mathcal V' = \mathcal V'_S \times \mathcal V'_\bot$
where $\mathcal V'_S$ is a bounded neighborhood of any given compact subset $\mathcal K \subset h_S$ and $\mathcal V'_\bot$ is a ball in $h^0_\bot$ 
of radius smaller than $1$, centered at $0$. The expansion of $\Psi$, corresponding to the one of {\bf (AE1)}, 
follows from the expansion for the map $\Psi_L$, provided by Corollary  \ref{pseudodiff expansion Psi_L},
and the one for the map $\Psi_C$, provided by Theorem \ref{espansione flusso per correttore}(ii).

\noindent
The expansion of the transpose $d\Psi(z)^\top$ of the derivative $d\Psi(z)$, corresponding to the one of {\bf (AE2)}, 
follows from the fact that $\Psi: \mathcal V'  \to L^2_0$ is symplectic, meaning that 
for any $z \in \mathcal V' $, the operator $d\Psi(z)^\top: H^1_0 \to  h^1_0$ satisfies $d\Psi(z)^\top= J^{-1} (d\Psi(z))^{-1} \partial_x$. 
The expansion of $\Psi(z)$ in {\bf (AE1)} then leads to an expansion 
of $d\Psi(z)$ and in turn of $(d\Psi(z))^{-1}$ and hence of $d\Psi(z)^\top$. The reality conditions of the coefficients in the various expansions
follow from the way they are constructed.



\noindent
The expansion of the Hamiltonian $\mathcal H^{(2)} = H^{bo} \circ \Psi$ and of the remainder term ${\cal P}_3^{(2)}$
in the Taylor expansion \eqref{forma finalissima hamiltoniana trasformata}, 
corresponding to the one in {\bf (AE3)}, are provided in Theorem \ref{stime finali grado 3 perturbazione}.
By arguing as in Section \ref{Hamiltoniana trasformata}, one sees that $\mathcal H^{(mo, 2)}(z) := H^{mo} \circ \Psi (z)$ has a Taylor expansion of the form
$$
\mathcal H^{(mo, 2)}(z) = \frac 12 \langle z_S , z_S \rangle + \frac 12 \langle z_\bot , z_\bot \rangle + \mathcal P^{(mo, 2)}_3(z) 
$$
As in the case of the Benjamin-Ono Hamiltonian, one obtains  expansions of the Hamiltonian $\mathcal H^{(mo, 2)}$
and of the remainder term $ \mathcal P^{(mo, 2)}_3$, corresponding to the ones stated in {\bf (AE3)}. Actually, in this case
the computations considerably simplify.
\hfill $\square$

\smallskip

\noindent
{\em Proof of Addendum to Theorem \ref{modified Birkhoff map}.} Clearly, the Fourier transform $\mathcal F$ and its inverse preserve 
the reversible structure and by Proposition \ref{proposizione 1 reversibilita}, so do the Birkhoff map $\Phi^{bo}$  and its inverse $\Psi^{bo}$.
Furthermore,  by item (ii) of the Addendum to Theorem 2.1,
and by the Addendum to Theorem 3.1,
also the maps $\Psi_L$  and $\Psi_C$ and hence $\Psi_L \circ \Psi_C$ preserve the reversible structure, as do the coefficients and the remainder terms
in their expansions as well as the transpose of their derivatives. 

\noindent
Clearly, the BO Hamiltonian $H^{bo}$ and $H^{mo}$ are reversible and therefore so are $\mathcal H^{(2)} = H^{bo} \circ \Psi$ and $\mathcal H^{(mo,2)} = H^{mo} \circ \Psi$.
By \eqref{forma finalissima hamiltoniana trasformata} one then concludes that also the remainders ${\cal P}_3^{(2)}$ and ${\cal P}_3^{(mo,2)}$ are reversible.
\hfill $\square$

\smallskip

\noindent
{\em Proof of Theorem \ref{modified Birkhoff map 2}.} The estimates of the coefficients and the remainder in the expansion of $\Psi = \Psi_L \circ \Psi_C$, corresponding to the ones of {\bf (Est1)}, 
follow from the estimates of the coefficients and the remainder in the expansion of the map $\Psi_L$, provided by Corollary \ref{pseudodiff expansion Psi_L} ,
and the ones of the coefficients and the remainder in the expansion of the map $\Psi_C$, provided by Theorem \ref{espansione flusso per correttore}.

The estimates of the coefficients and the remainder in the expansion of $d\Psi(z)^\top$, corresponding to the one of {\bf (Est2)},
follow from the fact that $\Psi: \mathcal V'  \to L^2_0$ is symplectic, meaning that for any $z \in \mathcal V' $, 
$d\Psi(z)^\top: H^1_0 \to h^1_0$ satisfies  $d\Psi(z)^\top = J^{-1} (d\Psi(z))^{-1} \partial_x$ 
and the estimates  {\bf (Est1)} of the coefficients and the remainder in the expansion of $\Psi(z)$ which lead to corresponding estimates
of the coefficients and the remainder in the expansion of $d\Psi(z)$ and in turn of $(d \Psi(z))^{-1}.$

The estimates of the remainder term $ {\cal P}_3^{(2)}$ in the expansion of the Hamiltonian $\mathcal H^{(2)} = H^{bo} \circ \Psi$, corresponding to {\bf (Est3)}, 
are provided by Theorem \ref{stime finali grado 3 perturbazione}. The ones for the remainder term $ {\cal P}_3^{(mo,2)}$ are derived in the same way.
\hfill $\square$

\appendix

\section{Spectral theory of the Lax operator $L_u$}\label{spectral theory}

In this appendix we review the spectral theory of the Lax operator of the Benjamin-Ono equation \eqref{1.1},
\begin{equation}\label{Lax operator}
L_u f:= - \ii \partial_x f - \Pi[uf]\, , \qquad f \in \mbox{dom}(L_u):= H^1_+ \, , \qquad H^1_+:= H^1 \cap H^0_+ \, ,
\end{equation} 
with potential $u \in L^2 \equiv H^0$, where
$L^2_+ \equiv  H^0_+ := \{ f \in L^2 : \, \langle f | e^{\ii nx} \rangle = 0 \ \forall \, n < 0\}$ denotes the Hardy space of $\T$
and $\Pi: L^2 \to L^2_+,  f \mapsto \sum_{n \ge 0} \widehat f(n) e^{\ii nx}$ the Szeg\H{o} projector.
The operator $L_u$ appears in the Lax pair formulation of \eqref{1.1}, $\partial_t L_u = B_u L_u - L_u B_u$, where $B_u$ is a certain skew adjoint
pseudo-differential operator -- see \cite[Remark 2.1, Appendix A]{GK1}. For the convenience of the reader, we recall some of the notations,
introduced in the main body of the paper.

For any $u \in L^2$, the Lax operator $L_u$ is the pseudo-differential operator of order one, acting on $L^2_+$, 
which is self-adjoint, bounded from below, and has compact resolvent (cf. \cite{GK1}, \cite{GKT2}). 
Hence its spectrum $\rm{spec}(L_u)$ consists of an unbounded sequence of real eigenvalues, each of finite multiplicity, which
can be listed in increasing order,
$$
\lambda_0(u) \le \lambda_1(u) \le \lambda_2(u) \le \cdots
$$
In \cite{GK1}, \cite{GKT1}, the following results are proved.
\begin{theorem}\label{spec Lax operator}
$(i)$ For any $u \in L^2$, the eigenvalues $\lambda_n(u)$, $n \ge 0$, of $L_u$ are separated by at least one, 
$$
\gamma_n(u) := \lambda_n(u) - \lambda_{n-1}(u) -1 \ge 0 \, , \qquad \forall \, n \ge 1 \, .
$$
In particular, $\rm{spec}(L_u)$ is simple. Furthermore, for any $n \ge 0$,
$$ 
\lambda_n(u) = n - \sum_{k \ge n+1} \gamma_k(u) \, , 
$$
and the following trace formulas hold, 
$$
\langle u | 1 \rangle = - \lambda_0(u) - \sum_{n \ge 1} \gamma_n(u) \, , \qquad \quad
\| u - \langle u | 1 \rangle \|^2 = 2 \sum_{n \ge 1} n \gamma_n(u) \, .
$$
$(ii)$ Conversely, for any sequence $(r_n)_{n \ge 1}$ of nonnegative numbers, satisfying $\sum_{n \ge 1} n r_n < \infty$,
there exists $u \in L^2$ with $\langle u | 1 \rangle = 0$ so that $\gamma_n(u) = r_n$ for any $n \ge 1$. It means that such sequences are a moduli
space for the spectra of the Lax operators \eqref{Lax operator}.\\
$(iii)$ For any potential $u$ in $L^2$ and any $s \ge 0$, $\sum_{n \ge 1} n^{1 + 2s} \gamma_n(u) < \infty$ if and only if
$u$ is in $H^s$. \\
$(iv)$ For any $n \ge 0$, $\lambda_n : L^2 \to \R$ is real analytic. Hence for any $n \ge 1$, $\gamma_n : L^2 \to \R$ is real analytic as well.\\
$(v)$ For any $u \in L^2$, $u$ is constant if and only if $\gamma_n(u) = 0$ for any $n \ge 1$.
\end{theorem}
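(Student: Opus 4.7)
The plan is to derive all five assertions from the algebraic and analytic structure of $L_u$ on the Hardy space $L^2_+$, following the framework of Gérard-Kappeler \cite{GK1} and Gérard-Kappeler-Topalov \cite{GKT1}. For (i), the separation property $\gamma_n(u) \geq 0$ would be established via a commutator identity between $L_u$ and the shift operator $S : L^2_+ \to L^2_+$, $f \mapsto e^{\ii x} f$, showing that $L_u S - S L_u$ equals $S$ up to a rank-one correction; this makes the restriction of $L_u$ to the invariant subspace $S(L^2_+)$ have spectrum $\{\lambda_n(u) + 1\}_{n \geq 0}$, and a classical rank-one interlacing argument then yields $\lambda_{n+1}(u) \geq \lambda_n(u) + 1$. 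The identity $\lambda_n(u) = n - \sum_{k \geq n+1} \gamma_k(u)$ is obtained by telescoping and observing that $\lambda_n(u) - n \to 0$, which follows since $L_u - L_0$ is a bounded Toeplitz-type perturbation. The trace formulas fall out upon computing the diagonal matrix coefficients $\langle L_u e^{\ii nx} | e^{\ii nx} \rangle = n - \langle u | 1 \rangle$, comparing with regularised sums $\sum_n (\lambda_n - n)$ and $\sum_n n(\lambda_n - (n-\gamma_n))$, and interchanging summation using the decay provided (independently) by (iii).

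For (ii) and (iii) I would invoke the Birkhoff map recalled in Appendix \ref{Birkhoff map for BO}. The assignment $u \mapsto (I_n(u))_{n \geq 1}$ is, up to the normalisation $I_n = \gamma_n / (\text{factor})$ made precise in Theorem \ref{Theorem Birkhoff coordinates BO}, the gap-length map, and the cited theorem yields both the bijection claimed in (ii) and the norm equivalences $\sum n^{1+2s} \gamma_n(u) \asymp \|u - \langle u | 1 \rangle\|_{H^s}^2$ of (iii). The case $s = 0$ of (iii) coincides with the second trace formula in (i), while higher $s$ follows by induction using the higher conservation laws of the BO hierarchy, each being a real-analytic function of the actions whose leading part is a multiple of $\sum n^{1+2k} I_n$ (cf.\ \cite[Proposition 8.1]{GK1}). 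Real analyticity (iv) then follows from the affine (hence real analytic) dependence $u \mapsto L_u \in \mathcal{B}(H^1_+, L^2_+)$ combined with the simplicity of the spectrum from (i), via Kato's analytic perturbation theory. Item (v) is immediate: vanishing of all $\gamma_n(u)$ forces $\|u - \langle u | 1\rangle\| = 0$ by the second trace formula, and the converse is a direct computation showing $L_c e^{\ii nx} = (n - c) e^{\ii nx}$, so $\gamma_n(c) = 0$ for every $n \geq 1$.

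The main obstacle I anticipate is (iii) at $s \geq 1$: both the absolute convergence needed to close the trace formulas in (i) and the surjectivity in (ii) rest on the quantitative decay in $n$ that (iii) expresses, so the norm equivalences must be established via the BO hierarchy in a careful, independent step rather than deduced from (i)-(ii). Constructing sufficiently many conservation laws and identifying their leading dependence on $(I_n)$ to run the induction on $s$ is the technical heart of the theorem; once in place, the remaining items assemble cleanly, and the asymptotic expansion $\lambda_n(u) = n - \sum_{k \geq n+1} \gamma_k(u)$ becomes a purely formal consequence of the interlacing already proved.
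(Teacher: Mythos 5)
This theorem is not proved in the paper: Appendix~\ref{spectral theory} explicitly states that the results are quoted from \cite{GK1}, \cite{GKT1}, \cite{GKT2}, so there is no internal argument to compare against. I will therefore evaluate your proposal on its own terms.

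Your approach to the interlacing $\lambda_{n+1}\ge\lambda_n+1$ via a commutator-type identity between $L_u$ and the shift $S:f\mapsto e^{\ii x}f$ is indeed the correct idea and is what \cite{GK1} does; likewise (iv) by Kato's perturbation theory from simplicity and affine dependence, and (v) from the second trace formula plus the direct computation $L_c e^{\ii nx}=(n-c)e^{\ii nx}$, are both sound. However, two steps have genuine problems.

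First, in part~(i) you assert that $\lambda_n(u)-n\to 0$ ``since $L_u-L_0$ is a bounded Toeplitz-type perturbation.'' This does not follow: a bounded (or even relatively compact) perturbation of $L_0=D$ only yields $|\lambda_n(u)-n|\le C$, not convergence to zero. To get $\lambda_n(u)-n\to 0$ (equivalently, $\sum_{k\ge 1}\gamma_k(u)<\infty$ together with identifying the limit via the trace formula) one needs a quantitative argument: in \cite{GK1} this comes from a min--max upper bound on $\lambda_n$ combined with the regularised trace identity for $T_u$, and these are precisely the trace formulas you list afterward. As stated, your ordering is circular: the asymptotics of $\lambda_n$, the summability of $\gamma_n$, and the trace formulas must be established jointly, not deduced one from the other.

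Second, and more seriously, parts~(ii) and~(iii) cannot be derived from Theorem~\ref{Theorem Birkhoff coordinates BO}. That theorem is logically \emph{downstream} of the present one: the Birkhoff coordinates $\zeta_n(u)=\langle 1|f_n(\cdot,u)\rangle/\sqrt{\kappa_n(u)}$ reviewed in Appendix~\ref{Birkhoff map for BO} are defined using the eigenvalue separation, the gap lengths, the normalisation $\kappa_n$, the regularity characterisation $(iii)$, and the surjectivity $(ii)$ of the gap map. So invoking properties of $\Phi$ (bijectivity onto $h^{s+1/2}_+$, the identity $|\zeta_n|^2=\gamma_n$) to prove $(ii)$ and $(iii)$ assumes exactly what you are trying to establish. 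In the references, $(ii)$ is proved by an explicit inverse construction (building a finite-gap potential with prescribed gap lengths and taking limits), and $(iii)$ is proved by direct analysis of the eigenfunctions and resolvent of $L_u$; your proposed route via the BO hierarchy conservation laws is not in principle circular, since those can be obtained from a trace expansion independently of the Birkhoff map, but that independent construction and the resulting norm equivalences are the real technical content and cannot be replaced by a citation to the Birkhoff normal form.
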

\begin{remark} 
$(i)$ It has been shown in \cite[Appendix C]{GK1} that the spectrum of $L_u$, when considered as an operator on 
the Hardy space of the real line, consists of the union of bands of length one, $\cup_{n \ge 0}[\lambda_{n}, \lambda_{n} + 1]$.
These bands are separated by the gaps $(\lambda_{n-1} +1, \lambda_{n})$, $n \ge 1$. For any $n \ge 1$, $\gamma_n(u)$
is the length of the gap $(\lambda_{n-1} +1, \lambda_{n})$ and is referred to as the nth gap length of the spectrum of $L_u$.
$(ii)$ In \cite{GKT1}, Theorem \ref{spec Lax operator} has been extended to $L_u$ with potentials $u \in H^s$, $-1/2 < s < 0$.
For any $u\in H^s$, $\sum_{n \ge 1} n^{1+2s}\gamma_n(u) < \infty$.
\end{remark}

For any $u \in L^2$, let  $h_n(\cdot, u)$, $n \ge 0$, be an orthonormal basis of eigenfunctions of $L_u$. Note that for any $n \ge 0$, 
$h_n(\cdot, u)$ is in $H^1_+$ and $\|h_n(\cdot, u)\| =1$. Since $\lambda_n(u)$ is simple, 
$h_n(\cdot, u)$ is then uniquely determined up to a phase factor.
By \cite[Lemma 2.5, Lemma 2.7]{GK1}, 
$$
\langle 1 | h_0(\cdot, u) \rangle \ne 0 \, , \qquad \langle h_n(\cdot, u) | e^{\ii x}h_{n-1}(\cdot, u) \rangle \ne 0 \, , \quad \forall \, n \ge 1.
$$
\begin{definition}
For any $u \in L^2$, we denote by $f_n(\cdot, u)$, $n \ge 0$, the orthonormal basis of $L^2_+$ of eigenfunctions of $L_u$,
corresponding to the eigenvalues $(\lambda_n(u))_{n \ge 0}$, uniquely determined by the normalization conditions
\begin{equation}\label{normalized ef}
\langle 1 | f_0(\cdot, u) \rangle > 0 \, , \qquad \langle f_n(\cdot, u) | e^{\ii x}f_{n-1}(\cdot, u) \rangle > 0 \, , \quad \forall \, n \ge 1.
\end{equation}
\end{definition}
By \cite[Corollary 3.4]{GK1} one concludes that for any $u \in L^2$ and $n \ge 1$, $\gamma_n(u) = 0$ if and only if $\langle 1 | f_n(\cdot, u) \rangle = 0$.
More generally, the following holds.
\begin{lemma}\label{def kappa}
$(i)$ For any $u \in L^2$ and any $n \ge 1$,
\begin{equation}\label{product formula kappa}
| \langle 1 | f_n(\cdot, u) \rangle |^2 = \gamma_n(u) \kappa_n(u)\, , \qquad
\kappa_n(u):= \frac{1}{\lambda_n(u) - \lambda_0(u)} \prod_{p \ne n} \big(1 - \frac{\gamma_p(u)}{\lambda_p(u) - \lambda_n(u)}  \big) \, ,
\end{equation}
where the infinite product in \eqref{product formula kappa} is absolutely convergent.
Furthermore, for any $n \ge 1$ and $u \in L^2$, $\gamma_n(u) = 0$ if and only if $f_n(\cdot, u) = e^{ix} f_{n-1}(\cdot, u)$. \\
$(ii)$ For any $u \in L^2$ and $n \ge 1$, $\kappa_n(u)$, defined in \eqref{product formula kappa}, satisfies $\kappa_n(u) > 0$ 
and there exists a constant $C \ge 1$ so that
$$
\sup_{n \ge 1}  \, n \kappa_n(u) \le C \, ,  \qquad \sup_{n \ge 1} \,  \frac{1}{n \kappa_n(u)} \le C \, .
$$ 
In particular, $\sqrt{\kappa_n(u)}$, $n \ge 1$, is well-defined and $\sqrt{\kappa_n(u)} > 0$, where $\sqrt{\cdot} \equiv \sqrt[+]{\cdot}$
denotes the principal branch of the square root.\\
$(iii)$ For any $n \ge 1$, $f_n : L^2 \to H^1_+$ and $\kappa_n: L^2 \to \R$ are real analytic.
\end{lemma}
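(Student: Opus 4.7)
} My plan is to derive all three parts from a single generating-function computation in the style of \cite{GK1}. Consider the resolvent quantity
$$
G(\lambda) := \langle (\lambda - L_u)^{-1} 1 \, | \, 1 \rangle = \sum_{n \ge 0} \frac{|\langle 1 | f_n(\cdot, u)\rangle|^2}{\lambda - \lambda_n(u)}\,, \qquad \lambda \in \C \setminus \mathrm{spec}(L_u)\,,
$$
where the spectral decomposition on the right uses that $(f_n(\cdot, u))_{n \ge 0}$ is an orthonormal basis of $L^2_+$. First I would establish the identity
\begin{equation}\label{plan product G}
G(\lambda) = \frac{1}{\lambda - \lambda_0(u)} \prod_{n \ge 1} \frac{\lambda - \lambda_n(u) + 1}{\lambda - \lambda_n(u)}\,,
\end{equation}
by exploiting the action of the shift $S: f \mapsto e^{\ii x}f$ on $L^2_+$ combined with the Lax operator structure: one shows that $S$ intertwines $L_u$ with $L_u - 1$ on the subspace orthogonal to $1$, leading to the zeros and poles in \eqref{plan product G} matching precisely the levels $(\lambda_n - 1)$ and $(\lambda_n)$. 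Absolute convergence of the product on the right is a consequence of the interlacing $\lambda_n \ge n - \sum_{k \ge 1}\gamma_k(u)$ and the summability $\sum_k \gamma_k(u) < \infty$ provided by Theorem \ref{spec Lax operator}(i).

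Computing the residue of \eqref{plan product G} at $\lambda = \lambda_n(u)$, $n \ge 1$, and comparing to the spectral sum yields
$$
|\langle 1 | f_n(\cdot, u)\rangle|^2 = \frac{1}{\lambda_n - \lambda_0}\cdot \big(\lambda_n - \lambda_{n-1} - 1\big)\prod_{p \ne n, \, p \ge 1}\Big(1 - \frac{\gamma_p}{\lambda_p - \lambda_n}\Big)\,,
$$
which is precisely $\gamma_n(u)\kappa_n(u)$. The \emph{iff} statement is then immediate in one direction: if $\gamma_n(u) = 0$ then $\langle 1 | f_n \rangle = 0$, and a direct calculation shows that $e^{\ii x} f_{n-1}$ lies in the kernel of $L_u - \lambda_n$ whenever $\langle 1 | f_{n-1}\rangle$ is compatible with this constraint; by simplicity of $\lambda_n$ (Theorem \ref{spec Lax operator}(i)) and the normalization \eqref{normalized ef}, the identification $f_n = e^{\ii x} f_{n-1}$ follows. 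The converse is trivial since $L_u(e^{\ii x} f_{n-1}) - (\lambda_{n-1}+1) e^{\ii x}f_{n-1}$ vanishes only when the gap closes.

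For item (ii), I would telescope the product in $\kappa_n$. Writing $\gamma_p = \lambda_p - \lambda_{p-1} - 1$ the $p$-th factor rewrites as $(\lambda_{p-1} + 1 - \lambda_n)/(\lambda_p - \lambda_n)$ for all $p \ne n$, and an explicit rearrangement separating $1 \le p \le n-1$ and $p \ge n+1$ telescopes to
$$
\kappa_n(u) \;=\; \frac{1}{\lambda_n(u) - \lambda_0(u)} \cdot \frac{1}{\lambda_{n+1}(u) - \lambda_n(u)}\cdot \prod_{p=1}^{n-1} \frac{\lambda_{p-1} + 1 - \lambda_n}{\lambda_p - \lambda_n} \cdot \prod_{p \ge n+1}\frac{\lambda_p + 1 - \lambda_n}{\lambda_{p+1} - \lambda_n}\,,
$$
(or a closely related form). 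Each factor in the two remaining products is of the form $1 + O(\gamma_k/(k-n))$ with differences of $\lambda$'s bounded below by $|p - n|$ via the spacing $\lambda_p - \lambda_{p-1} \ge 1$, which gives positivity factor by factor and uniform two-sided bounds $c \le n\kappa_n(u) \le C$ once one uses $\lambda_n - \lambda_0 = n + O(1)$ and $\lambda_{n+1} - \lambda_n = 1 + O(1)$, with the $O(1)$ terms controlled by the totally convergent $\sum_k \gamma_k(u)$. This is the step I expect to be the main obstacle: obtaining sharp, uniform-in-$n$ control of the tail products requires carefully grouping the perturbation terms $\gamma_k/(\lambda_k - \lambda_n)$ and exploiting $\sum_k k \gamma_k < \infty$ (Theorem \ref{spec Lax operator}(i)) to get summability of the logarithmic corrections. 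These bounds are locally uniform on $L^2$ by continuity of $u \mapsto (\gamma_n(u))_{n \ge 1}$ in the weighted $\ell^1$-norm.

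For item (iii), real analyticity of each $\lambda_n(u)$ is provided by Theorem \ref{spec Lax operator}(iv); since $\lambda_n$ is simple, Kato--Rellich perturbation theory gives real analyticity of the spectral projector $P_n(u)$ onto $\ker(L_u - \lambda_n(u))$ as a map $L^2 \to \mathcal{B}(L^2_+)$. The normalized eigenfunction is then recovered as $f_n(\cdot, u) = e^{\ii \phi_n(u)} P_n(u)[h]/\|P_n(u)[h]\|$ for a suitably chosen reference $h \in L^2_+$ (namely $h = 1$ for $n = 0$ and $h = e^{\ii x}f_{n-1}$ for $n \ge 1$), with the phase $\phi_n(u) \in \R$ uniquely determined by the normalization conditions \eqref{normalized ef}. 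The openness of these conditions (guaranteed by the strict positivity stated before the definition of $f_n$) ensures that $\phi_n$ is locally a real analytic function of $u$, hence $f_n : L^2 \to H^1_+$ is real analytic. Real analyticity of $\kappa_n : L^2 \to \R$ then follows directly from the product formula in item (i), since the convergence is locally uniform and every factor is a real analytic function of $u$ that remains bounded away from zero on compact subsets of $L^2$.
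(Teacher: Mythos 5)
The paper itself offers no proof of this lemma; it is imported from \cite{GK1} (notably \cite[Lemma 2.5, Corollary 3.4]{GK1}), and your generating-function strategy is indeed the correct one from that reference. However, the central identity \eqref{plan product G} is wrong, and the error is not cosmetic. The zeros of $G(\lambda)$ are the eigenvalues of $L_u$ compressed to $\{1\}^{\perp}$, and by the trace/commutator structure in \cite{GK1} these sit at $\lambda_{p-1}(u)+1 = \lambda_p(u)-\gamma_p(u)$, not at $\lambda_p(u)-1$. The correct formula is
\[
G(\lambda) \;=\; \frac{1}{\lambda-\lambda_0(u)}\prod_{p\geq 1}\Bigl(1-\frac{\gamma_p(u)}{\lambda_p(u)-\lambda}\Bigr)
\;=\;\frac{1}{\lambda-\lambda_0(u)}\prod_{p\geq 1}\frac{\lambda-\lambda_{p-1}(u)-1}{\lambda-\lambda_p(u)}\,,
\]
whose factors are $1+O(\gamma_p/p)$ and therefore converge absolutely because $\sum_p\gamma_p(u)<\infty$. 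Your version has factors $1+\tfrac{1}{\lambda-\lambda_p}$, i.e.\ $1+O(1/p)$, so the infinite product you wrote down does not converge at all. The heuristic "$S$ intertwines $L_u$ with $L_u-1$" is also imprecise in exactly the place that matters: the actual relation $S^{\top}L_uS = L_u+1+\text{(rank one)}$ carries a rank-one correction, and it is precisely this correction that shifts the zeros from $\lambda_p-1$ to $\lambda_{p-1}+1$. Your residue computation is moreover internally inconsistent with the formula you stated: the $p=n$ factor of your product contributes $1$, not $\gamma_n=\lambda_n-\lambda_{n-1}-1$, at $\lambda=\lambda_n$; the factor $\gamma_n$ that you write down only appears once the correct numerator $\lambda-\lambda_p+\gamma_p$ is used.

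The remainder of your argument is sound. In item (ii), your telescoped rewriting of $\kappa_n$ is correct, and the factor-by-factor estimates using $\lambda_p-\lambda_n\geq |p-n|$ and $\sum_k\gamma_k(u)<\infty$ (note that $\sum_k k\gamma_k<\infty$ is not actually needed for this step) do give the uniform two-sided bounds on $n\kappa_n(u)$. In item (iii), perturbation theory for the simple eigenvalue $\lambda_n(u)$ together with the openness of the normalization conditions \eqref{normalized ef} gives real analyticity of $f_n$; the recursion through $f_{n-1}$ should be flagged as an induction, but this is a minor presentational point.
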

Next, we compare the spectrum of $L_{u_*}$ with the one of $L_u$, where for any $u \in L^2$,
\begin{equation}\label{def u_*}
u_*(x) := u(-x) \, .
\end{equation}
By \cite[Proposition 4.5]{GK1}, eigenvalues and eigenfunctions of $L_{u_*}$ and $L_u$ are related as follows.
\begin{lemma}\label{even symmetry}
$(i)$ For any $u \in L^2$ and $n \ge 0$,
\begin{equation}\label{Sym 1}
\lambda_n(u_*) = \lambda_n(u)\, , \qquad f_n(x, u_*) = \overline{f_n(-x, u)} \, .
\end{equation}
$(ii)$ As a consequence,
\begin{equation}\label{Sym 2}
\gamma_n(u_*) = \gamma_n(u)\, , \quad \kappa_n(u_*) = \kappa_n(u)  \, , \qquad \forall \, n \ge 1 \, , \ u \in L^2 \, ,
\end{equation}
and (when combined with Lemma \ref{def kappa}(iii)) $f_n : L^2 \to H^1_+, \, u \mapsto \overline{f_n(- \, \cdot , u)} = f_n( \cdot, u_*)$ as well as
\begin{equation}\label{complex conjugate}
 L^2 \to \C , \, u \mapsto \langle 1 | f_n(\cdot, u) \rangle = \frac{1}{2\pi} \int_0^{2\pi} f_n(x, u_*) d x \, ,
\quad L^2 \to \C , \, u \mapsto \overline{\langle 1 | f_n(\cdot, u) \rangle} = \frac{1}{2\pi} \int_0^{2\pi} f_n(x, u) d x \, ,
\end{equation}
are real analytic.
\end{lemma}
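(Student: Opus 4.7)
My plan centers on the anti-linear isometry $J : L^2 \to L^2$, $Jf := \overline{f(-\,\cdot\,)}$. I would first verify that $J$ restricts to an $\R$-linear isometry $L^2_+ \to L^2_+$ (hence $H^1_+ \to H^1_+$) and that it commutes with the Szeg\H{o} projector, $\Pi \circ J = J \circ \Pi$; both statements are immediate on the Fourier side, where $J$ sends $\sum_n \widehat{f}(n) e^{\ii nx}$ to $\sum_n \overline{\widehat{f}(n)}\, e^{\ii nx}$. Using that $u$ is real-valued so that $u(-x)\overline{f(-x)} = \overline{(uf)(-x)}$, a short computation then yields the key conjugation identity
\begin{equation*}
L_{u_*} \circ J = J \circ L_u \qquad \text{on } H^1_+\, ,
\end{equation*}
which drives the whole argument.

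From this identity, if $L_u f_n(\cdot,u) = \lambda_n(u) f_n(\cdot,u)$, then $J f_n(\cdot,u)$ is an $L^2_+$-normalized eigenfunction of $L_{u_*}$ with eigenvalue $\lambda_n(u)$ (which is real). Since by Theorem \ref{spec Lax operator}(i) both spectra are simple and enumerated in increasing order, this forces $\lambda_n(u_*) = \lambda_n(u)$ and $f_n(\cdot,u_*) = e^{\ii\theta_n(u)} J f_n(\cdot,u)$ for some phase. To pin down that phase, I would verify that $J$ preserves the normalization conditions \eqref{normalized ef}: a change of variables $y=-x$ together with the anti-linearity of $J$ gives
\begin{equation*}
\langle 1 \,|\, Jf_0(\cdot,u)\rangle = \overline{\langle 1 \,|\, f_0(\cdot,u)\rangle}\,, \qquad
\langle Jf_n(\cdot,u) \,|\, e^{\ii x}Jf_{n-1}(\cdot,u)\rangle = \overline{\langle f_n(\cdot,u) \,|\, e^{\ii x}f_{n-1}(\cdot,u)\rangle}\, ,
\end{equation*}
and by the very definition of the $f_n(\cdot,u)$ both right-hand sides are positive reals. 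Hence $e^{\ii\theta_n(u)} \equiv 1$, proving item (i).

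Item (ii) is then largely bookkeeping. The equalities $\gamma_n(u_*)=\gamma_n(u)$ are immediate from $\lambda_n(u_*)=\lambda_n(u)$, and substituting these into the product formula \eqref{product formula kappa} yields $\kappa_n(u_*)=\kappa_n(u)$. The real analyticity of $u \mapsto f_n(\cdot,u_*) = J f_n(\cdot,u)$ as a map $L^2 \to H^1_+$ follows by precomposing the real analytic map $f_n$ of Lemma \ref{def kappa}(iii) with the $\R$-linear bounded involution $u \mapsto u_*$. Finally, the two identities in \eqref{complex conjugate} are obtained from $f_n(x,u_*) = \overline{f_n(-x,u)}$ by integrating over $[0,2\pi]$ and using $2\pi$-periodicity to substitute $y=-x$; their real analyticity follows by post-composing $f_n : L^2 \to H^1_+$ with the bounded $\C$-linear functional $f \mapsto \frac{1}{2\pi}\int_0^{2\pi} f(x)\,dx$.

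The only non-trivial points in this plan are the verification of $\Pi \circ J = J \circ \Pi$ and of the conjugation identity $L_{u_*}J = JL_u$ (both relying essentially on $u$ being real-valued), and tracking the anti-linearity of $J$ through the normalization conditions so that the phases are correctly pinned down at $+1$.
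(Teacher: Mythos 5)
Your proof is correct, and it is the natural (and, as far as one can tell, standard) argument. Note that the paper does not supply its own proof of this lemma; it simply cites \cite[Proposition 4.5]{GK1}, so there is no internal proof to compare against. Your argument via the anti-linear operator $J f := \overline{f(-\,\cdot\,)}$ and the conjugation identity $L_{u_*}\circ J = J\circ L_u$ is almost certainly what that reference does. All the key steps check out: $J$ commutes with $\Pi$ on the Fourier side, $J$ intertwines $D$ with itself and $T_u$ with $T_{u_*}$ using that $u$ is real-valued, simplicity of the spectrum together with reality of $\lambda_n$ pins down $Jf_n(\cdot,u)$ to a phase multiple of $f_n(\cdot,u_*)$, and the inductive preservation of the normalization conditions \eqref{normalized ef} under $J$ (with the conjugate of a positive number still positive) forces that phase to be $1$. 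Item (ii) then follows by bookkeeping, as you say, with the real analyticity statements obtained by composing the real-analytic map $u\mapsto f_n(\cdot,u)$ of Lemma \ref{def kappa}(iii) with the bounded $\R$-linear involutions $u\mapsto u_*$ (precomposition) and $f\mapsto Jf$ or $f\mapsto\frac{1}{2\pi}\int_0^{2\pi}f\,dx$ (postcomposition). One small phrasing quibble: calling $J$ an ``$\R$-linear isometry'' is correct only once one regards $L^2_+$ as a real Hilbert space; since you introduce $J$ as anti-linear to begin with, it is clear what you mean, but it would be cleaner to say ``anti-linear (hence $\R$-linear) isometry'' throughout.
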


We finish this appendix with a discussion of finite gap potentials. Let
$$
L^2_0 := \{ u \in L^2 : \, \frac{1}{2\pi} \int_0^{2\pi} u \, d x = 0 \} .
$$
\begin{definition}\label{(proper) S gap}
An element $u$  in $L^2_0$ is said to be a finite gap potential if the set $S_u:= \{ n \in \N : \, \gamma_n(u) > 0 \}$ is finite.
For any given finite subset $S_+ \subset \N$, $u$ is said to be a $S$-gap potential if $S_u = S_+$
where $S$ is defined as
$S:= S_+ \cup (-S_+)$.
The set of $S$-gap potentials in $L^2_0$
is denoted by $M_S$ and the subset $M^o_S:= \{ q \in M_S : \, \gamma_n(u) > 0 \ \forall n \in S_+ \}$ is referred
to as the set of proper $S$-gap potentials.
\end{definition}
For any integer $N \ge 1$, let
$$
\mathcal U_N := \{ u \in L^2_0 : \, \gamma_N(u) > 0, \, \gamma_n(u) = 0 \  \forall \, n > N  \} \, .
$$
Note that the set of finite gap potentials in $L^2_0$ is given by the disjoint union $ \cup_{N \ge 1} \mathcal U_N \cup \{ 0 \}$
and that by Theorem \ref{spec Lax operator}, $\mathcal U_N \subset \cap_{s \ge 0} H^s_0$ for any $N \ge 1$.
Furthermore, by \cite[Section 7]{GK1}, any $u \in \mathcal U_N$ is of the form 
\begin{equation}\label{formula finite gap}
u(x) = \sum_{j=1}^{N} \big(  \frac{1 - r_j^2}{1 - 2r_j \cos(x +\alpha_j)+ r_j^2} -1  \big)\, , 
\qquad 0 < r_j < 1, \  0 \le \alpha_j < 2\pi \, , \quad \forall \, 1 \le j \le N \, .
\end{equation}
In \cite[Section 7]{GK1} (cf. also \cite[Appendix A]{GKT3}) the following is proved.
\begin{lemma}\label{results finite gap potentials}
$(i)$ For any $s > -1/2$, the disjoint union $ \cup_{N \ge 1} \mathcal U_N \cup \{ 0 \}$ is dense in $H^s_0$.\\
$(ii)$ For any $u \in \mathcal U_N$, $N \ge 1$, one has for any $n \ge N$,
$$
\lambda_n(u) = n \, , \qquad f_n(x, u) = e^{\ii nx} g_\infty(x, u)\, , \qquad g_\infty(x, u):= e^{\ii \partial_x^{-1} u(x)} \, .
$$
$(iii)$ For $u = 0$, one has $\lambda_0(0) = 0$, $f_0(x, 0) = 1$, $g_\infty(x, 0) = 1$, and for any $n \ge 1$,
$$
\lambda_n(0) = n \, , \qquad f_n(x, 0) = e^{\ii nx}\, ,  \qquad n\kappa_n(0) = 1 \, .
$$
Furthermore, $\nabla \langle 1 | f_n \rangle  (x, 0) = - \frac{1}{n} e^{-\ii nx}$ 
for any $n \ge 1$ (cf. \cite[Remark 5.5]{GK1}).
\end{lemma}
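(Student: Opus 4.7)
The plan is to proceed in three stages, with part (iii) following as a specialization once (i) and (ii) are in hand.

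For part (i), I would use the Birkhoff map $\Phi^{bo} : H^s_0 \to h^s_0$ — a real-analytic diffeomorphism for $s \ge 0$, extended to $s > -1/2$ in \cite{GKT1} — to reduce density to the sequence-space side. Given $u \in H^s_0$, set $z := \Phi^{bo}(u)$ and let $z^{(N)} \in h^s_0$ be the truncation of $z$ to modes with $|n| \le N$. Then $\Psi^{bo}(z^{(N)})$ lies in $\mathcal U_{N'} \cup \{0\}$ for some $N' \le N$ (the largest index with $z_n(u) \ne 0$), and $z^{(N)} \to z$ in $h^s_0$ forces $\Psi^{bo}(z^{(N)}) \to u$ in $H^s_0$ by continuity of the inverse Birkhoff map.

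For the eigenvalue statement in part (ii), the trace formula of Theorem \ref{spec Lax operator}(i) is decisive: since all gaps $\gamma_k(u)$ with $k > N$ vanish for $u \in \mathcal U_N$, one reads off $\lambda_n(u) = n - \sum_{k \ge n+1}\gamma_k(u) = n$ for every $n \ge N$. For the eigenfunction statement, I would verify directly that $\psi_n(x) := e^{\ii nx} g_\infty(x,u)$ is an $L^2$-normalized solution of $L_u \psi_n = n\psi_n$. Differentiating $g_\infty = e^{\ii \partial_x^{-1} u}$ gives $\partial_x g_\infty = \ii u\, g_\infty$, so $-\ii \partial_x \psi_n = n \psi_n + u\psi_n$ and
\[
L_u \psi_n \;=\; n \psi_n \;+\; (I - \Pi)(u\psi_n).
\]
It remains to show $u\psi_n \in L^2_+$ when $n \ge N$. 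Using the parametrization of $u \in \mathcal U_N$ as a sum of shifted Poisson kernels and setting $w = e^{\ii x}$, I would derive the Blaschke identity
\[
e^{\ii Nx} g_\infty(x,u) \;=\; \prod_{j=1}^N \frac{w - r_j e^{-\ii\alpha_j}}{1 - r_j e^{\ii\alpha_j} w},
\]
so that $e^{\ii Nx} g_\infty$ is a degree-$N$ Blaschke product, unimodular on $\T$ and holomorphic inside the unit disk. Splitting $u = \Pi u + \overline{\Pi u}$, the piece $\Pi u \cdot \psi_n$ is immediately in $L^2_+$ (its Fourier support is nonnegative for $n \ge N$), while $\overline{\Pi u} = \sum_j r_j e^{-\ii\alpha_j}/(w - r_j e^{-\ii\alpha_j})$ has simple poles at $w = r_j e^{-\ii\alpha_j}$ that are exactly cancelled by the zeros of the Blaschke numerator; a term-by-term check then shows $\overline{\Pi u} \cdot \psi_n \in L^2_+$ as well. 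Unimodularity $|g_\infty| \equiv 1$ yields $\|\psi_n\| = 1$ for free.

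The main technical hurdle is fixing the phase so that $f_n = \psi_n$ and not merely $f_n = c\,\psi_n$ for some $c \in \T$. Simplicity of $\lambda_n(u) = n$ together with the recursion $f_{n+1} = e^{\ii x} f_n$ (forced by $\gamma_{n+1}(u) = 0$ and Lemma \ref{def kappa}(i)) reduces this to determining a single constant $c = c(u)$. I would argue by real-analytic continuation: the map $u \mapsto c(u)$ is real analytic on the connected submanifold $\mathcal U_N$ with values in $\T$, and the explicit parametrization of $\mathcal U_N$ connects any $u$ continuously to the degenerate limit $r_j \to 0^+$, where the construction collapses to $\psi_n = e^{\ii nx}$; the sign normalizations $\langle 1 \mid f_0 \rangle > 0$ and $\langle f_m \mid e^{\ii x} f_{m-1}\rangle > 0$, propagated up the chain through the single genuine gap $\gamma_N(u) > 0$, then force $c \equiv 1$. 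Part (iii) is a direct specialization: $\partial_x^{-1} 0 = 0$ gives $g_\infty(\cdot,0) \equiv 1$, hence $f_n(x,0) = e^{\ii nx}$ and $\lambda_n(0) = n$, while the product formula of Lemma \ref{def kappa}(i) collapses — since every $\gamma_p(0) = 0$ — to $\kappa_n(0) = 1/(\lambda_n(0) - \lambda_0(0)) = 1/n$. Finally, the gradient identity at $u = 0$ is obtained from standard first-order perturbation theory: the variation in direction $h \in L^2$ reads
\[
\delta f_n \;=\; \ii \xi_n f_n \;+\; \sum_{\ell \ne n} \frac{\langle h f_n \mid f_\ell\rangle}{\lambda_\ell - \lambda_n}\, f_\ell,
\]
and evaluating at $u = 0$ with $f_n = e^{\ii nx}$ and $\lambda_n = n$, the $\ell = 0$ contribution to $\langle 1 \mid \delta f_n\rangle$ singles out the Fourier coefficient $-\hat h(n)/n$; identifying the gradient with respect to the bilinear pairing yields $\nabla \langle 1 \mid f_n\rangle(\cdot, 0) = -e^{-\ii nx}/n$.
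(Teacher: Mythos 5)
Your reduction of part (i) to truncation in Birkhoff coordinates is fine (for $-1/2<s<0$ you could even avoid invoking the extension of $\Psi^{bo}$ to negative Sobolev exponents: density of the finite gap potentials in $L^2_0$ already gives density in $H^s_0$ for $s\le 0$ since $\|\cdot\|_s\le\|\cdot\|_0$). The trace formula of Theorem \ref{spec Lax operator}(i) does give $\lambda_n(u)=n$ for $n\ge N$, your Blaschke identity $e^{\ii Nx}g_\infty=\prod_j\frac{w-r_je^{-\ii\alpha_j}}{1-r_je^{\ii\alpha_j}w}$ is correct, and the ensuing verification that $\psi_n:=e^{\ii nx}g_\infty$ is a smooth, unimodular (hence $L^2$-normalized) element of $H^1_+$ with $L_u\psi_n=n\psi_n$ is sound; together with simplicity of the spectrum and Lemma \ref{def kappa}(i) (which forces $f_m=e^{\ii x}f_{m-1}$ for all $m>N$) this yields $f_n=c\,\psi_n$ for all $n\ge N$ with a single constant $c=c(u)\in\T$. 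Part (iii) and the gradient computation at $u=0$ are also correct. Note that the paper itself gives no proof of this lemma but cites \cite[Section 7]{GK1}, so your Blaschke-product argument is a genuine self-contained alternative for everything except the point below.

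The genuine gap is the phase determination $c(u)\equiv 1$, and the argument you sketch for it does not work. The normalization conditions \eqref{normalized ef} at levels $m>N$ impose no constraint on $c$: once $f_m=e^{\ii x}f_{m-1}$, one has $\langle f_m\,|\,e^{\ii x}f_{m-1}\rangle=\|f_{m-1}\|^2=1>0$ for every choice of phase. The only condition that pins the phase of $f_N$ is $\langle f_N\,|\,e^{\ii x}f_{N-1}\rangle>0$, and your construction gives no information whatsoever about $f_{N-1},\dots,f_0$, so "propagating the sign normalizations up the chain" is not substantiated. The continuation argument does not close this either: $c$ is merely a continuous (real analytic) $\T$-valued function on $\mathcal U_N$, and such a function is not forced to be constant by connectedness; knowing that $c(u_t)\to 1$ as the path degenerates to $u=0$ (a point outside $\mathcal U_N$) says nothing about $c$ at the original potential unless you prove local constancy, which you do not. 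That constancy cannot be had "for free" is illustrated by the closely related quantity $\langle 1\,|\,f_N\rangle=\sqrt{\kappa_N}\,\zeta_N$, whose phase is exactly the angle variable and genuinely varies over $\mathcal U_N$; correspondingly $\langle 1\,|\,\psi_N\rangle=\prod_j(-r_je^{-\ii\alpha_j})$ varies with the parameters, and the content of the lemma is precisely that these two phases coincide. Establishing this requires either explicit control of the lower eigenfunctions $f_0,\dots,f_{N-1}$ of a finite gap potential or the generating-function identities of \cite[Section 7]{GK1} -- which is what the paper cites in place of a proof. As it stands, your argument proves the eigenfunction formula only up to a constant phase factor.
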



\section{Birkhoff map}\label{Birkhoff map for BO}
In this appendix we review the definition and properties of the Birkhoff coordinates, needed in this paper. 
We refer to \cite{GK1} - \cite{GKT5} for proofs of the results stated and for more details in these matters.
In \cite{GKT6}, most of the results obtained on the Birkhoff map $\Phi$ are summarized.
We point out that in the main body of the paper, we exclusively use a rescaled version $\Phi^{bo}$ of the Birkhoff map $\Phi$,
which we also refer to as Birkhoff map.
For the convenience of the reader, we recall some of the notations, introduced in the main body of the paper.

The Benjamin-Ono equation \eqref{1.1} is known to be globally in time wellposed on $H^s$ for any $s > -1/2$. See \cite{GKT1} and references therein.
For our purposes, it suffices to consider \eqref{1.1} on $H^s$ for $s \ge 0$.
Furthermore, since $\langle u | 1 \rangle = \frac{1}{2\pi} \int_0^{2\pi} u \, dx$ is a prime integral of \eqref{1.1}, we may assume that $\langle u | 1 \rangle = 0$. 
Indeed, for any solution $u(t, x)$ of \eqref{1.1}
in $H^s$, $s > - 1/2$, and any $c \in \R$, $c + u(t, x - 2ct)$ is again a solution of \eqref{1.1} in $H^s$.
Denote by $S(t)$, $t \in \R$, the flow map of \eqref{1.1} on $L^2_0$, meaning that 
$S(t)u$ denotes the solution of \eqref{1.1} with initial value $S(0) u = u \in L^2_0$, where 
$$
L^2_0 :=  \{ u \in L^2 : \, \langle u | 1 \rangle = 0 \} \, , \qquad
H^s_0 := H^s \cap L^2_0 \, , \quad \forall \, s \ge 0 \, .
$$

By Lemma \ref{even symmetry}, for any $n \ge 1$, the map
$$
L^2_0 \to \C , \, u \mapsto \zeta_n(u) := \frac{\langle 1 | f_n(\cdot, u) \rangle}{\sqrt{\kappa_n(u)}}
$$
is well-defined and satisfies $|\zeta_n(u) |^2 = \gamma_n(u) $. Hence by Theorem \ref{spec Lax operator}(iii)
one has for any $u \in L^2_0$ and $s \ge 0$,
$$
u \in H^s_0 \quad \mbox{if and only if} \quad \zeta(u):= (\zeta_n(u))_{n \ge 1} \in h^{s+\frac 12}_+ \, .
$$
For any $C^1$-functionals $F$, $G : L^2_0 \to \C$ with sufficiently regular $L^2_0$-gradient $\nabla F$, $\nabla G$, we denote
by $\{ F, G\}$ the Poisson bracket, due to Gardner and Faddeev$\&$Zakharov,
\begin{equation}\label{Gardner bracket B}
\{ F, G\}(u) := \langle \partial_x \nabla F , \,  \nabla G\rangle (u)= \frac{1}{2\pi} \int_0^{2\pi} \partial_x( \nabla F)(u) \cdot \nabla G(u) d x \, .
\end{equation}
For any $s \in \R$, let
$$
h^s_+ := \{ w = (w_n)_{n \ge 1} \subset \C : \, \| w \|_s < \infty \} \, , \qquad
\|w\|_s := (\sum_{n \ge 1} n^{2s} |w_n|^2)^{\frac 12} \, .
$$
%
\begin{theorem}\label{Theorem Birkhoff coordinates BO}
The map
$$
\Phi : \bigsqcup_{s \ge 0} H^s_{0} \to  \bigsqcup_{s \ge 0} h_+^{s+\frac 12}, \ u \mapsto \zeta(u) = (\zeta_n(u))_{n \ge 1} \, ,
$$
has the following properties:\\
(NF1) For any $s \ge 0$, $\Phi : H^s_{r,0} \to h_+^{s+1/2}$ is a real analytic diffeomorphism and $\Phi$ and its inverse $\Phi^{-1}$ map bounded
subsets to bounded ones.\\
(NF2) The Poisson brackets between the functionals $\zeta_n(u)$, $\overline{ \zeta_n(u)}$, $n \ge 1$,
are well-defined and one has (cf. \cite[Corolllary 7.3]{GK1}),
$$
\{\zeta_n, \overline{\zeta_k} \}(u) = -\ii \delta_{n,k}\, , \quad
\{\zeta_n, \zeta_k \}(u) = 0\, , \qquad  \forall \, n, k \ge 1 \, .
$$
In particular,
$$
\{\gamma_n, \, \gamma_k \}(u)  = \{|\zeta_n|^2, \, |\zeta_k|^2 \}(u) = 0\, , \qquad  \forall \, n, k \ge 1 \, .
$$
(NF3) For any $u \in L^2_{0}$, $n \ge 1$, and $t \in \R$,
\begin{equation}\label{formula frequencies intro}
\zeta_n(\mathcal S(t)u) = e^{\ii t \omega^{bo}_n } \zeta_n(u) \, , \qquad
\omega^{bo}_n \equiv \omega^{bo}_n(u) := n^2 - 2\sum_{k=1}^n k \gamma_k(u) - 2n \sum_{k > n}  \gamma_k(u) \, .
\end{equation}
For any $n \ge 1$, $|\zeta_n(\mathcal S(t)u)|^2$ is independent of $t$ and $\zeta(\mathcal S(t)u) = (\zeta_n(\mathcal S(t)u))_{n \ge 1}$
evolves on the torus
\begin{equation}\label{def Tor}
\mbox{Tor}(\Phi(u)) := \{w = (w_n)_{n \ge 1} \in h^{1/2}_+ \, : \, |w_n|^2 = \gamma_n(u) \ \forall \, n \ge 1  \} \, .
\end{equation}
(NF4) The differential of $\Phi$ at $0$ is given by (cf. Lemma \ref{results finite gap potentials}(iii))
$$
d_0\Phi: L^2_0 \to h^{\frac 12}_+ \, , \ u \mapsto (- \frac{u_n}{\sqrt n})_{n \ge 1} \, .
$$
The map $\Phi$ is referred to as the Birkhoff map for the Benjamin-Ono equation and $\zeta_n(u)$, $n \ge 1$,
as the Birkhoff coordinates of $u$. Furthermore, $\gamma_n(u)$, $n \ge 1$, are referred to as action variables.
\end{theorem}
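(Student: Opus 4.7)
The plan is to prove (NF1)--(NF4) by stitching together the spectral--theoretic results of Appendix \ref{spectral theory} (Theorem \ref{spec Lax operator}, Lemmata \ref{def kappa}, \ref{even symmetry}, \ref{results finite gap potentials}) with the Lax pair formulation of the Benjamin--Ono equation. The individual Birkhoff coordinates $\zeta_n(u)=\langle 1\,|\,f_n(\cdot,u)\rangle/\sqrt{\kappa_n(u)}$ are real analytic maps $L^2_0\to\C$, since $f_n$ and $\kappa_n$ are (Lemma \ref{def kappa}(iii)) and $\kappa_n>0$, and their behavior under the flip $u\mapsto u_*$ provided by Lemma \ref{even symmetry} will be used repeatedly. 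The four properties will be established in order; (NF4) is then fed back into the proof of (NF1).

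For (NF1) I would first establish the sequence--space regularity by combining the identity $|\zeta_n(u)|^2=\gamma_n(u)\kappa_n(u)$ of Lemma \ref{def kappa}(i) with the uniform two-sided bound on $n\kappa_n(u)$ of Lemma \ref{def kappa}(ii) (locally uniform in $u$) and with the characterization $u\in H^s_0\iff\sum_n n^{1+2s}\gamma_n(u)<\infty$ of Theorem \ref{spec Lax operator}(iii); this yields continuity $\Phi:H^s_0\to h^{s+1/2}_+$ together with tame bounds mapping bounded sets to bounded sets. Real analyticity as a map into the sequence space then follows from locally uniform convergence of the Taylor series of the components. Surjectivity rests on Theorem \ref{spec Lax operator}(ii) together with the fact that torus rotations act freely and transitively on the phases of the $\zeta_n$ at fixed action levels, so every point of $h^{s+1/2}_+$ is attained. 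Injectivity proceeds by a density argument: on the dense set $\bigcup_N\mathcal U_N\cup\{0\}$ of finite gap potentials (Lemma \ref{results finite gap potentials}(i)) the explicit reconstruction \eqref{formula finite gap} together with the data $(\gamma_n)$ and the angles determines $u$ uniquely, and the assertion is propagated to all of $H^s_0$ by continuity. That $\Phi$ is a local diffeomorphism is a consequence of (NF4), which shows $d_0\Phi$ is an isomorphism; the inverse function theorem, combined with the global invertibility above, upgrades this to a global diffeomorphism.

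For (NF2) and (NF3) the Lax pair is the pivotal tool. Since the eigenvalues $\lambda_n(u)$ are simultaneously prime integrals of each of the corresponding BO-type flows generated by them, they Poisson--commute in the Gardner bracket \eqref{Gardner bracket B}, hence so do the $\gamma_n=|\zeta_n|^2$, which takes care of the diagonal bracket relations. For the cross brackets $\{\zeta_n,\overline{\zeta_k}\}=-i\delta_{nk}$ I would compute $\nabla\zeta_n(u)$ explicitly in the eigenbasis $(f_\ell(\cdot,u))_{\ell\ge 0}$ by first-order perturbation theory, analogous to the calculation carried out in Lemma \ref{lemma zn nabla q bo} of the main text; the orthogonality of the $f_\ell$ together with the identity $|\langle 1|f_n\rangle|^2=\gamma_n\kappa_n$ collapses the resulting double sum to the stated Kronecker delta. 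For (NF3) the Lax pair $\partial_t L_{u(t)}=[B_{u(t)},L_{u(t)}]$ implies that each $\lambda_n$ is a prime integral of \eqref{1.1} and that $f_n(\cdot,u(t))=e^{-i\alpha_n(t)}U(t)f_n(\cdot,u(0))$ for the unitary group $U(t)$ generated by $B_{u(t)}$; extracting the Fourier mode $\langle 1\,|\,f_n(\cdot,u(t))\rangle$ and identifying the phase by using that $B_u$ on $H^1_+$ equals $\partial_x^2$ modulo lower order Toeplitz pieces yields the frequency formula $\omega_n^{bo}=n^2-2\sum_{k\le n}k\gamma_k-2n\sum_{k>n}\gamma_k$, and the torus invariance claim is automatic from conservation of $|\zeta_n|^2=\gamma_n$.

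Finally, for (NF4) I would carry out first--order perturbation theory at $u=0$, where $f_n(x,0)=e^{\ii nx}$ and $\kappa_n(0)=1/n$ by Lemma \ref{results finite gap potentials}(iii). Plugging $u=0$, $v\in L^2_0$ into the variational formula $\delta f_n=\ii\xi_n f_n+\sum_{\ell\neq n}\frac{\langle vf_n|f_\ell\rangle}{\lambda_\ell-\lambda_n}f_\ell$ used in Lemma \ref{lemma zn nabla q bo} and using that $\langle 1|f_\ell(\cdot,0)\rangle=\delta_{\ell,0}$ collapses the sum to the single term $\ell=0$, giving $\langle 1\,|\,d_0 f_n[v]\rangle=-v_{-n}/n$ and hence $d_0\zeta_n[v]=-v_{-n}/\sqrt{n}$, which agrees with the stated expression via the reality constraint $v_{-n}=\overline{v_n}$ and the Fourier convention \eqref{Fourier transform}. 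The hardest part of the program will be the global diffeomorphism claim in (NF1): local bijectivity follows from (NF4) via the inverse function theorem, but patching these local inverses into a single real analytic bijection onto all of $h^{s+1/2}_+$ with tame estimates requires the full structural content of Theorem \ref{spec Lax operator}(ii)--(iii) and a careful continuation argument along paths of finite gap potentials, where Lemma \ref{even symmetry} and the positivity of $\kappa_n$ are essential to rule out degeneracies.
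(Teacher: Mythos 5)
This theorem is not proved in the paper at all: Appendix \ref{Birkhoff map for BO} is explicitly a review, and the statement is quoted from the works of G\'erard--Kappeler(--Topalov) (\cite{GK1}, \cite{GKT1}--\cite{GKT5}). So the only fair comparison is between your sketch and the actual construction in those references, and measured against that, your proposal has genuine gaps precisely at the points that make the result deep.

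For (NF1), three steps would fail as written. First, surjectivity does not follow from Theorem \ref{spec Lax operator}(ii) plus ``torus rotations act freely and transitively on the phases'': part (ii) only realizes arbitrary admissible gap sequences $(\gamma_n)$, i.e.\ the actions, and the assertion that the isospectral set $\mbox{Iso}(u)$ is a torus on which the angle flows act transitively is itself essentially the content of the cited theorem, not an available input. Second, injectivity is not a property that passes from the dense set of finite gap potentials to all of $H^s_0$ ``by continuity''; limits of injective restrictions need not be injective, so the density argument via \eqref{formula finite gap} proves nothing off the finite gap set. Third, (NF4) gives invertibility of $d\Phi$ only at $u=0$; to invoke the inverse function theorem at an arbitrary $u$ you need $d_u\Phi:H^s_0\to h^{s+1/2}_+$ to be a linear isomorphism for every $u$, which in the original works is obtained from the canonical relations (NF2) combined with a one-smoothing/Fredholm-type analysis of $d_u\Phi$ relative to the Fourier transform --- none of which appears in your outline. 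For (NF2), your argument for $\{\gamma_n,\gamma_k\}=0$ is circular: saying that the $\lambda_n$ are prime integrals of the flows generated by the $\lambda_k$ is just a restatement of $\{\lambda_n,\lambda_k\}=0$. The actual proof computes $\nabla\lambda_n$ and $\nabla\langle 1|f_n\rangle$ explicitly and must track the phase normalization \eqref{normalized ef} (the $\xi_n$-terms) and the derivative of $\kappa_n$; these contributions are exactly what your ``collapse of the double sum'' glosses over, and they are where the factor $-\ii\,\delta_{nk}$ comes from. The computation of (NF4) at $u=0$ and the general strategy for (NF3) are reasonable in spirit (and consistent with Lemma \ref{results finite gap potentials}(iii) and Lemma \ref{lemma zn nabla q bo}), but as a whole the proposal is an outline of the cited program with its hardest steps missing, not a proof.
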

\begin{remark}\label{extension + sym}
$(i)$ For any $-1/2 < s < 0$, $\Phi$ can be extended as a real analytic map, 
$H^s_0 \to h^{s+\frac 12}_+$, but it does not extend to $H^{-\frac 12}_0$  -- see \cite{GKT1}, \cite{GKT3}, \cite{GKT4}.\\
$(ii)$ For any $u \in L^2_0$, $u_*(x) = u(-x)$ is also in $L^2_0$. It then follows from \eqref{Sym 1}- \eqref{Sym 2} that
\begin{equation}\label{Sym zeta_n}
\zeta_n(u_*) = \overline{\zeta_n(u)} \, , \qquad  \forall \, n \ge 1 \, , \ u \in L^2_0 .
\end{equation}
\end{remark}
\begin{corollary}\label{restriction to finite gap}
For any finite subset $S_+ \subset \N$ and $S:= S_+ \cup (-S_+)$, 
the set of $S$-gap potentials and the one of proper $S$-gap potentials, introduced in Definition \ref{(proper) S gap},
are given by  
$$
M_S = \Phi^{-1} \{ (w_n)_{n \ge 1} : \, w_n = 0 \ \  \forall n \in \N \setminus S_+\} \, , \qquad 
M^0_S = \Phi^{-1} \{ (w_n)_{n \ge 1} : \, w_n \ne 0 \ {\mbox{ if and only if }} \   n \in S_+ \} .
$$
$M_S$ and $M^0_S$ are real analytic, symplectic submanifolds of $L^2_0$ of dimension $S_+$, where
$L^2_0$ is endowed with the symplectic form $\Lambda_G$, induced by the Poisson bracket \eqref{Gardner bracket B},
$$
\Lambda_G [u , v] = \langle   u , \,  \partial_x^{-1} v \rangle =\frac{1}{2\pi} \int_0^{2\pi} u(x)   \partial_x^{-1} v(x) d x   \,, 
\qquad \forall  u,  v  \in L^2_0\,.
$$
\end{corollary}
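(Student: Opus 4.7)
\medskip
\noindent
\textbf{Proof proposal.} The plan is to reduce everything to the Birkhoff map $\Phi$ and to exploit its properties stated in Theorem \ref{Theorem Birkhoff coordinates BO}, together with the identity $|\zeta_n(u)|^2 = \gamma_n(u)$ for all $n \ge 1$, $u \in L^2_0$.

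\medskip
\noindent
First I would verify the characterization of $M_S$ and $M_S^o$. By Lemma \ref{def kappa}(i) one has $|\langle 1 \,|\, f_n(\cdot, u)\rangle|^2 = \gamma_n(u) \kappa_n(u)$ with $\kappa_n(u) > 0$ on all of $L^2_0$ (Lemma \ref{def kappa}(ii)). Since $\zeta_n(u) = \langle 1\,|\,f_n(\cdot,u)\rangle / \sqrt{\kappa_n(u)}$, it follows that $|\zeta_n(u)|^2 = \gamma_n(u)$ for every $n \ge 1$ and $u \in L^2_0$. In particular, $\zeta_n(u) = 0$ if and only if $\gamma_n(u) = 0$. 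Unwinding the definitions of $M_S$ and $M_S^o$ from Definition \ref{(proper) S gap} (``$\gamma_n(u) = 0$ for $n \notin S_+$'' and, respectively, ``$\gamma_n(u) > 0$ if and only if $n \in S_+$'') yields the claimed preimage descriptions under $\Phi$.

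\medskip
\noindent
Next I would establish the real analytic submanifold structure. By Theorem \ref{Theorem Birkhoff coordinates BO}(NF1), $\Phi : L^2_0 \to h^{1/2}_+$ is a real analytic diffeomorphism. The target set
\[
\mathcal{N}_S := \bigl\{ w = (w_n)_{n \ge 1} \in h^{1/2}_+ \,:\, w_n = 0 \ \ \forall\, n \in \mathbb{N}\setminus S_+ \bigr\}
\]
is a closed, finite dimensional ($2|S_+|$-real-dimensional) linear subspace of $h^{1/2}_+$, hence a real analytic, closed embedded submanifold, with a natural complementary closed subspace $\mathcal{N}_S^\perp := \{ w : w_n = 0 \ \forall n \in S_+\}$. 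Its preimage $M_S = \Phi^{-1}(\mathcal{N}_S)$ is therefore a real analytic embedded submanifold of $L^2_0$ of real dimension $2|S_+|$. The open subset $\mathcal{N}_S^o \subset \mathcal{N}_S$ where all coordinates indexed by $S_+$ are non-zero is open in $\mathcal{N}_S$, so $M_S^o = \Phi^{-1}(\mathcal{N}_S^o)$ is an open subset of $M_S$, hence also a real analytic submanifold of the same dimension. Moreover, by Lemma \ref{results finite gap potentials} (the explicit formula \eqref{formula finite gap} and the $C^\infty$-regularity of finite gap potentials), $M_S \subset \cap_{s\ge 0} H^s_0$, so the submanifold structure is compatible with all $H^s_0$ scales.

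\medskip
\noindent
Finally, I would verify that $M_S$ (and hence its open subset $M_S^o$) is symplectic with respect to $\Lambda_G$. By Theorem \ref{Theorem Birkhoff coordinates BO}(NF2), the Poisson brackets of the Birkhoff coordinates are canonical: $\{\zeta_n, \overline{\zeta_k}\} = -\ii\delta_{n,k}$ and $\{\zeta_n, \zeta_k\} = 0$. Consequently $\Phi$ pulls $\Lambda_G$ back to the canonical symplectic form $\Lambda_\Phi = \ii \sum_{n\ge 1} d\zeta_n \wedge d\overline{\zeta_n}$ on $h^{1/2}_+$. For $u \in M_S$, the tangent space $T_u M_S$ is mapped by $d_u\Phi$ isomorphically onto $\mathcal{N}_S$ viewed as a subspace of $T_{\Phi(u)} h^{1/2}_+$, and the restriction of $\Lambda_\Phi$ to $\mathcal{N}_S$ is simply $\ii \sum_{n \in S_+} d\zeta_n \wedge d\overline{\zeta_n}$, which is non-degenerate on the $2|S_+|$-dimensional space $\mathcal{N}_S$. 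Transporting back by $(d_u\Phi)^{-1}$, the restriction $\Lambda_G|_{T_u M_S}$ is non-degenerate for every $u \in M_S$, so $M_S$ is a symplectic submanifold of $L^2_0$, and likewise for the open subset $M_S^o$.

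\medskip
\noindent
The only mildly delicate step is the last one: one must ensure that the canonicity of the brackets in (NF2), which is stated for functionals on $L^2_0$, indeed corresponds to the symplectic two-form $\Lambda_G$ via the standard relation $\Lambda_G(X_F, X_G) = \{F, G\}$ and to $\Lambda_\Phi$ via the pullback $\Phi^* \Lambda_G = \Lambda_\Phi$. This is however immediate from the definition \eqref{Gardner bracket B} of $\{\cdot,\cdot\}$ and the definition of $\Lambda_G$; everything else in the proof is purely structural.
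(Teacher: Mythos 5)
Your proof is correct and fills in the natural verification that the paper leaves implicit (the corollary is stated without proof, as a direct consequence of Theorem \ref{Theorem Birkhoff coordinates BO}). Two small remarks. First, the key point you flag at the end — that $\Phi^* \Lambda_G$ equals the canonical constant-coefficient two-form determined by the brackets in (NF2) — is indeed what makes the argument work, and it is important for the symplectic non-degeneracy to hold at \emph{every} point of $M_S$, not just on $M_S^o$: since the restriction of $\Phi^* \Lambda_G$ to $\mathcal{N}_S$ has constant coefficients, non-degeneracy is automatic even at points where some $\zeta_n$ with $n \in S_+$ vanish, which is exactly why $M_S$ (and not merely the open dense part $M_S^o$) is symplectic. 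Second, a cosmetic point of bookkeeping: the paper's phrase ``dimension $S_+$'' is shorthand (presumably for $|S_+|$ complex dimensions, equivalently $2|S_+|$ real ones), and your explicit count of $2|S_+|$ real dimensions is the correct reading. No gaps; the argument goes through.
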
 
We finish this appendix with a discussion of the isospectral set $\mbox{Iso}(u)$, containing $u \in L^2_0$, defined as
$$
\mbox{Iso}(u) := \{ v \in L^2_0 : \, \lambda_n(v) = \lambda_n(u) \ \forall \, n \ge 0 \} .
$$
\begin{corollary} For any $u \in L^2_0$, 
$$
\mbox{Iso}(u) = \Phi^{-1} (\mbox{Tor}(\Phi(u)))
$$
where $\mbox{Tor}(\Phi(u))$ is given by \eqref{def Tor}. In particular, 
 $\mbox{Iso}(u) $ can be viewed as a torus of dimension $| \{ n \ge 1 : \, \gamma_n(u) > 0\} |$. 
\end{corollary}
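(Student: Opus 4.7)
The plan is to reduce the identity $\mathrm{Iso}(u) = \Phi^{-1}(\mathrm{Tor}(\Phi(u)))$ to a purely spectral reformulation of isospectrality, and then to deduce the torus structure from the explicit description of $\mathrm{Tor}(\Phi(u))$ together with the fact that $\Phi$ is a real analytic diffeomorphism.

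First I would rewrite membership in $\mathrm{Iso}(u)$ in terms of gap lengths. By definition, $v \in \mathrm{Iso}(u)$ means $\lambda_n(v) = \lambda_n(u)$ for every $n \ge 0$. Taking differences and using the defining relation $\gamma_n = \lambda_n - \lambda_{n-1} - 1$ of Theorem \ref{spec Lax operator}(i) gives immediately $\gamma_n(v) = \gamma_n(u)$ for every $n \ge 1$. Conversely, the same theorem supplies the representation $\lambda_n = n - \sum_{k \ge n+1} \gamma_k$, so equality of all gap sequences implies equality of all eigenvalues. (There is no role here for $\lambda_0$ up to the choice of mean value: we are working in $L^2_0$, and by the trace formula $\langle u | 1 \rangle = -\lambda_0(u) - \sum_{n \ge 1} \gamma_n(u)$ of Theorem \ref{spec Lax operator}(i), the value of $\lambda_0$ on $L^2_0$ is determined by the sequence $(\gamma_n)_{n \ge 1}$.) Thus
\[
\mathrm{Iso}(u) = \{\, v \in L^2_0 \, : \, \gamma_n(v) = \gamma_n(u) \ \ \forall \, n \ge 1 \,\}.
\]

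Next I would translate this into the Birkhoff coordinates. Since $\gamma_n(v) = |\zeta_n(v)|^2$ for all $n \ge 1$ (see Lemma \ref{def kappa}(i) together with the definition of $\zeta_n$ in Appendix \ref{Birkhoff map for BO}), the condition $\gamma_n(v) = \gamma_n(u)$ for all $n$ is exactly $\Phi(v) \in \mathrm{Tor}(\Phi(u))$, where $\mathrm{Tor}(\Phi(u))$ was defined in \eqref{def Tor}. This establishes $\mathrm{Iso}(u) = \Phi^{-1}(\mathrm{Tor}(\Phi(u)))$.

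Finally, for the torus structure, I would factor
\[
\mathrm{Tor}(\Phi(u)) \;=\; \prod_{n \ge 1} \{\, w_n \in \C \, : \, |w_n|^2 = \gamma_n(u) \,\} \;\cap\; h^{1/2}_+ .
\]
For each $n$ with $\gamma_n(u) = 0$ the $n$th factor is the single point $\{0\}$; for each $n$ with $\gamma_n(u) > 0$ it is the circle of radius $\sqrt{\gamma_n(u)}$ in $\C$. Hence $\mathrm{Tor}(\Phi(u))$ is homeomorphic (indeed diffeomorphic) to $(S^1)^{d}$ with $d := |\{n \ge 1 : \gamma_n(u) > 0\}|$; it automatically lies in $h^{1/2}_+$ because $\sum_n n \gamma_n(u) < \infty$ by Theorem \ref{spec Lax operator}(iii) and all elements have the same modulus sequence as $\Phi(u)$. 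By Theorem \ref{Theorem Birkhoff coordinates BO} (NF1) the map $\Phi$ is a real analytic diffeomorphism from $L^2_0$ onto $h^{1/2}_+$, so $\Phi^{-1}$ transports this torus to $\mathrm{Iso}(u)$, giving the claimed torus structure.

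The only mildly nontrivial point is the spectral reformulation step, i.e.\ checking that coincidence of all $\gamma_n$ forces coincidence of all $\lambda_n$ (not just up to a common shift); this is the place where the trace formula for $\langle u | 1 \rangle$ and the restriction to mean‑zero potentials are used. Everything else is bookkeeping once Theorem \ref{Theorem Birkhoff coordinates BO} and Theorem \ref{spec Lax operator} are in hand.
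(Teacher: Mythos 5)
Your argument is correct and coincides with the (implicit) proof the paper intends: the corollary is an immediate consequence of the identity $|\zeta_n(v)|^2=\gamma_n(v)$, the formula $\lambda_n = n-\sum_{k\ge n+1}\gamma_k$ for mean-zero potentials, the definition \eqref{def Tor}, and the fact that $\Phi:L^2_0\to h^{1/2}_+$ is a real analytic diffeomorphism, which is exactly the chain you spell out. The only cosmetic caveat is that for infinitely many open gaps the identification with a product of circles should be read in the sense of ``can be viewed as a torus'' (homeomorphism with the product topology), as the corollary itself states, rather than a literal finite-dimensional diffeomorphism.
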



\section{Reversibility structure}\label{AppendixReversability}

In this appendix we prove that the Birkhoff map $\Phi^{bo}$, defined in \eqref{def Phi^bo}, and hence also its inverse $\Psi^{bo}$, preserve the reversible structure,
defined by the maps 
\begin{equation}\label{reversible structure appendix}
 S_{rev}: L^2_0 \to L^2_0\,, \ ( S_{rev} q) := q_* \, , \quad  q_*(x) = q(-x)\, ,
 \qquad \mathcal S_{rev}: h^0_0 \to h^0_0\,,  \ (\mathcal S_{rev} w)_n := w_{-n}\,, \quad \forall n \ne 0 \, .
\end{equation}
\begin{proposition}\label{proposizione 1 reversibilita}
For any $q \in L^2_0$,
\begin{equation}\label{reversability BO}
\Phi^{bo} ( { S}_{rev}(q)) = \mathcal S_{rev} ( \Phi^{bo}(q))\,. 
\end{equation}
As a consequence, ${ S}_{rev} \circ \Psi^{bo} = \Psi^{bo} \circ \mathcal S_{rev}$ and by the chain rule, for any $q \in L^2_0(\T)$ and $w \in h^0_0$,
$$
(d_{{ S}_{rev}(q)} \Phi^{bo}) \circ { S}_{rev} = \mathcal S_{rev} \circ d_q \Phi^{bo}\,, \qquad  (d_{{\mathcal S}_{rev}(w)} \Psi^{bo}) \circ { \mathcal S}_{rev} =  S_{rev} \circ d_w \Psi^{bo}\,.
$$ 
\end{proposition}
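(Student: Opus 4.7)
The plan is to reduce \eqref{reversability BO} to a componentwise identity and then to read it off from the spectral symmetry relations recorded in Lemma \ref{even symmetry} combined with the explicit formula for the Birkhoff coordinates. Concretely, by the definitions \eqref{def Phi^bo} and \eqref{definition z_n}, it suffices to verify that
\begin{equation*}
z_n(q_\ast) = z_{-n}(q) \qquad \forall\, n \ne 0,\ q \in L^2_0,
\end{equation*}
since by \eqref{reversible structure appendix} this is precisely the statement that $\Phi^{bo}(S_{rev}q) = \mathcal S_{rev}(\Phi^{bo}(q))$. Because $z_n = \sqrt{n}\,\zeta_n$ and $z_{-n} = \sqrt{n}\,\overline{\zeta_n}$ for $n \ge 1$, the identity reduces further to showing $\zeta_n(q_\ast) = \overline{\zeta_n(q)}$ for every $n \ge 1$.

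First I would unfold the definition $\zeta_n(q) = \langle 1 \,|\, f_n(\cdot,q)\rangle / \sqrt{\kappa_n(q)}$ from Appendix \ref{spectral theory}. Lemma \ref{even symmetry}(ii) gives $\kappa_n(q_\ast) = \kappa_n(q)$, so the denominator is automatic. For the numerator I would substitute the symmetry $f_n(x,q_\ast) = \overline{f_n(-x,q)}$ from Lemma \ref{even symmetry}(i) into the standard inner product \eqref{standard inner products}:
\begin{equation*}
\langle 1 \,|\, f_n(\cdot,q_\ast)\rangle = \frac{1}{2\pi}\int_0^{2\pi} \overline{f_n(x,q_\ast)}\,dx = \frac{1}{2\pi}\int_0^{2\pi} f_n(-x,q)\,dx,
\end{equation*}
and then change variables $y=-x$ together with $2\pi$-periodicity to rewrite this as $\frac{1}{2\pi}\int_0^{2\pi} f_n(y,q)\,dy = \overline{\langle 1 \,|\, f_n(\cdot,q)\rangle}$. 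This yields $\zeta_n(q_\ast) = \overline{\zeta_n(q)}$ and hence the componentwise identity for both $n\ge 1$ and $n\le -1$. Note that no verification of the normalization conditions \eqref{normalized ef} for $f_n(\cdot,q_\ast)$ is needed here, since Lemma \ref{even symmetry} already provides the correctly normalized eigenfunction.

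Finally, I would derive the consequences listed after \eqref{reversability BO} by purely formal manipulations: since $S_{rev}$ and $\mathcal S_{rev}$ are involutions and $\Psi^{bo} = (\Phi^{bo})^{-1}$, composing \eqref{reversability BO} on both sides with $\Psi^{bo}$ gives $S_{rev}\circ\Psi^{bo} = \Psi^{bo}\circ\mathcal S_{rev}$, and differentiating both intertwining identities using the chain rule (and the linearity of $S_{rev}$, $\mathcal S_{rev}$) yields the two stated relations for $d\Phi^{bo}$ and $d\Psi^{bo}$. There is no real obstacle in this proof; the only point requiring a small sanity check is the change of variables combined with the conjugation in the standard inner product, which is where the map $(\cdot)_\ast$ passes from an involution on functions to complex conjugation on Birkhoff coordinates.
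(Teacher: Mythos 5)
Your proof is correct and takes essentially the same route as the paper's: the paper's proof simply invokes Remark \ref{extension + sym}(ii), which records the identity $\zeta_n(q_*) = \overline{\zeta_n(q)}$ as a consequence of Lemma \ref{even symmetry}, and then concludes from the definitions \eqref{definition z_n} and \eqref{def Phi^bo}. You re-derive that identity directly from \eqref{Sym 1}--\eqref{Sym 2} instead of citing the remark, and spell out the change-of-variables computation and the chain-rule consequences in more detail, but the underlying argument is identical.
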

\begin{proof} By Remark \ref{extension + sym}(ii) and the definition  \eqref{definition z_n} of $(z_n(q))_{n \ne 0}$, one infers that $z_n(q_*) = z_{-n}(q)$ 
for any $n \ne 0$ and $q \in L^2_0$.
Identity \eqref{reversability BO} then follows from the definition \eqref{def Phi^bo} of $\Phi^{bo}$.
\end{proof}


\section{Properties of pseudodifferential and paradifferential calculus}\label{appendice B}
In this appendix we record some well known facts about pseudodifferential and paradifferential calculus on the torus $\T$. We refer to \cite{Met} for further details.
For the convenience of the reader, we recall some of the notations, introduced in the main body of the paper.

Let $\chi \in C^\infty(\R^2, \R)$ be an admissible cut-off function. It means that $\chi$ is an even function
and that there exist
$0 < \e_1 < \e_2 < 1$ so that for any $(\vartheta, \eta) \in \R^2$ and $\alpha, \beta \in \Z_{\ge 0}$,
\begin{equation}\label{cut-off paraprodotto}
\chi(\vartheta, \eta) = 1, \quad \forall |\vartheta| \leq \e_1 +  \e_1|\eta|\,, \qquad \chi(\vartheta, \eta) = 0, \quad \forall |\vartheta| \geq \e_2 +  \e_2 |\eta|\,,
\end{equation}
\begin{equation}\label{order 0 cut-off}
|\partial^\alpha_\vartheta \partial^\beta_\eta \chi(\vartheta, \eta)) | \le C_{\alpha, \beta} (1 +  |\eta|)^{-\alpha -\beta}\,.
\end{equation}
For any $a \in H^1_\C$, the paraproduct $T_a u$ of the function $a$  with $u \in L^2_\C$ (with respect to the cut-off function $\chi$) is defined as
\begin{equation}\label{definition prarproduct}
(T_a u) (x) := \sum_{k, n \in \Z} \chi(k, n) a_k u_n e^{\ii (k + n) x} \, ,
\end{equation}
where $u_n$, $n \in \Z$, denote the Fourier coefficients of $u$, $u_n = \frac{1}{2\pi}\int_0^{2\pi} u(x) e^{-  \ii n x}\, d x$. 
Note that if $a$, $u$ are real valued, $T_a u$ is real valued as well since $\chi$ is real valued and even.
Given any $s, s' \in \Z$, we denote by ${\cal B}(H^s_\C, H^{s'}_\C)$ the Banach space of all bounded linear operators
$H^s_\C \to H^{s'}_\C$, endowed with the operator norm $\| \cdot \|_{{\cal B}(H^s_\C, H^{s'}_\C)}$. 
In case $s=s',$ we also write ${\cal B}(H^s_\C)$ instead of ${\cal B}(H^s_\C, H^{s}_\C)$. Given any linear operator $A \in {\cal B}(H^s_\C, H^{s'}_\C)$,
we denote by $A^\top$ the transpose of $A$ with respect to the standard $L^2$-inner product. It is an element in ${\cal B}((H^{s'}_\C)^*, (H^{s}_\C)^*)$
where $(H^{s}_\C)^*$ denotes the dual of $H^{s}_\C$.

\begin{lemma}\label{primo lemma paraprodotti}
$(i)$ For any $s \in \Z_{\ge 0}$ and $a \in H^1_\C,$ the linear operator $T_a: u \mapsto  T_a u$ is in ${\cal B}(H^s_\C)$. Furthermore 
the linear map $H^1_\C \to  {\cal B}(H^s_\C),$ $a \mapsto T_a$, is bounded,  $\| T_a\|_{{\cal B}(H^s_\C)} \lesssim_s \| a \|_{1}$. 

\noindent
$(ii)$ Let $a \in H^{s_1}_\C, b \in H^{s_2}_\C$ and $s_1, s_2 \in \Z_{ \geq 1}$. Then 
$$
a b = T_a b + T_b a + {\cal R}^{(B)}(a, b) \, ,
$$
where the bilinear map ${\cal R}^{(B)}: H^{s_1}_\C \times H^{s_2}_\C \to H^{s_1 + s_2 - 1}_\C$, $(a, b) \mapsto {\cal R}^{(B)}(a, b)$, is continuous 
and satisfies the estimate 
$$
\| {\cal R}^{(B)}(a, b)\|_{s_1 + s_2 -  1} \lesssim_{s_1, s_2} \| a \|_{s_1} \| b \|_{s_2} \, . 
$$

\noindent
$(iii)$ Let $a \in H^\rho_\C$ with $\rho \in \Z_{ \ge 2}$. Then for any $s \geq 0$, $T_a^\top - T_{ a} \in {\cal B}(H^s_\C, H^{s + \rho - 1}_\C)$ and
$$
\| T_a^\top - T_{ a}\|_{{\cal B}(H^s_\C, H^{s + \rho - 1}_\C)}  \lesssim_{s, \rho}  \| a \|_{\rho} \, .
$$

\noindent
$(iv)$ Let $a, b \in H^\rho_\C$ with $\rho \in \Z_{ \ge 1}$. Then for any $s \geq 0$, $T_a \circ T_b - T_{ab} \in {\cal B}\big( H^s_\C , H^{s + \rho - 1}_\C \big)$ and 
$$
\| T_a \circ T_b - T_{ab} \|_{{\cal B}(H^s_\C, H^{s + \rho - 1}_\C)} \lesssim_{s, \rho} \| a \|_\rho \| b \|_\rho \, . 
$$
\end{lemma}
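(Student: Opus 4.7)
\begin{pf}[Proof plan for Lemma \ref{primo lemma paraprodotti}]
All four statements are classical facts of the paradifferential calculus, and the plan is to reduce each of them to a careful bookkeeping of the Fourier representations together with the support and decay conditions \eqref{cut-off paraprodotto}--\eqref{order 0 cut-off} on the symbol $\chi$. Writing $a = \sum_{k} a_k e^{\ii k x}$, $u = \sum_{n} u_n e^{\ii n x}$, the basic formula
\[
(T_a u)_\ell \; = \; \sum_{n \in \Z} \chi(\ell - n, n)\, a_{\ell - n}\, u_n \,, \qquad \ell \in \Z \,,
\]
will be the starting point throughout.

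For item $(i)$, I would exploit that $\chi(\ell - n, n) \neq 0$ forces $|\ell - n| \leq \varepsilon_2 (1 + |n|)$, so $\langle \ell \rangle \lesssim \langle n \rangle$ on the support. Hence
\[
|(T_a u)_\ell|^2 \langle \ell \rangle^{2s} \lesssim_s \Big( \sum_n |a_{\ell-n}|\, |u_n| \langle n \rangle^s \Big)^2,
\]
and after a Cauchy--Schwarz in $n$ (splitting one factor of $|a_{\ell-n}|$ as $|a_{\ell-n}|^{1/2} \cdot |a_{\ell-n}|^{1/2} \langle \ell - n\rangle^{1/2} \langle \ell - n\rangle^{-1/2}$) and summing in $\ell$ via Fubini, one arrives at $\|T_a u\|_s \lesssim_s \|a\|_1 \|u\|_s$, using $\sum_\ell \langle \ell - n \rangle^{-1} \lesssim 1$ uniformly in $n$ only after checking that the $\ell$-sum is really controlled; the clean way is to split $1 = \chi_1(\ell - n) + (1 - \chi_1(\ell - n))$ with $\chi_1$ supported in a neighborhood of $0$ and perform Young's convolution inequality. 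The estimate is standard and only requires the uniform $H^1$ control of $a$.

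For item $(ii)$, the identity $1 = \chi(k,n) + \chi(n,k) + \rho(k,n)$ with $\rho(k,n) := 1 - \chi(k,n) - \chi(n,k)$ decomposes the product
\[
(ab)_\ell = \sum_{k + n = \ell} a_k b_n = (T_a b)_\ell + (T_b a)_\ell + \mathcal R^{(B)}(a,b)_\ell \,.
\]
On the support of $\rho$, both $|k|$ and $|n|$ are comparable to $|\ell|$ (indeed, $\rho(k,n) \neq 0$ forces $|k| \geq \varepsilon_1(1+|n|)$ \emph{and} $|n| \geq \varepsilon_1(1+|k|)$), which supplies the smoothing gain: one has $\langle \ell \rangle^{s_1 + s_2 - 1} \lesssim \langle k \rangle^{s_1} \langle n \rangle^{s_2} \langle \ell \rangle^{-1}$ on this frequency diagonal, and a Cauchy--Schwarz/Young argument yields the claimed bound on $\|\mathcal R^{(B)}(a,b)\|_{s_1+s_2-1}$.

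Items $(iii)$ and $(iv)$ are the harder part, and I expect the main technical obstacle to lie there: extracting a gain of $\rho - 1$ derivatives from the difference of two symbols via Taylor expansion. Concretely, for $(iii)$ a direct Fourier computation gives
\[
(T_a^\top v - T_a v)_m \; = \; \sum_r \bigl[ \chi(m-r,-m) - \chi(m-r,r) \bigr]\, a_{m-r}\, v_r \,,
\]
and the plan is to apply the fundamental theorem of calculus in the second argument of $\chi$, writing
\[
\chi(m-r,-m) - \chi(m-r,r) = -(m+r) \int_0^1 (\partial_\eta \chi)(m-r,\, r - t(m+r))\, dt \,.
\]
The support condition forces $|m-r| \lesssim \min(\langle m\rangle, \langle r\rangle)$, so in particular $|m+r| \lesssim \langle r \rangle$ on the support; combined with the symbol bound $|\partial_\eta^\beta \chi| \lesssim \langle \eta \rangle^{-\beta}$, iterating this Taylor expansion $\rho - 1$ times transfers $\rho - 1$ factors of $\langle m - r \rangle$ from denominator to numerator and produces the announced gain. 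For $(iv)$, substituting the definition into $(T_a T_b u)_\ell$ gives a double symbol $\chi(\ell-m,m)\chi(m-n,n)$, and the task is to compare it with $\chi(\ell-n,n)$: after the change of variables $k = \ell - m$, one Taylor-expands $\chi(k, \ell - k)$ around $\chi(k, n)$ (valid because on the joint support $|\ell - k - n|$ is controlled by $\min(\langle k\rangle, \langle n\rangle)$), producing again a gain of $\rho - 1$ derivatives via the decay of $\partial_\eta^\beta \chi$. The remainder is then estimated by the same convolution/Cauchy--Schwarz bookkeeping as in $(i)$, but now the $\rho$ derivatives absorbed from $a$, respectively from $a$ and $b$, give the regularity index $s + \rho - 1$ and the bilinear bound in the $H^\rho_\C$ norms.
\end{pf}
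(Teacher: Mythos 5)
The paper does not prove Lemma \ref{primo lemma paraprodotti}: Appendix \ref{appendice B} records it as one of ``some well known facts about pseudodifferential and paradifferential calculus on the torus'' with a pointer to \cite{Met}. So there is no in-paper proof to compare against, and the question is whether your sketch is sound.

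Your arguments for $(i)$ and $(ii)$ are essentially the standard ones. One imprecision in $(ii)$: the Bony remainder symbol $\rho(k,n) = 1-\chi(k,n)-\chi(n,k)$ is \emph{not} supported solely where $|k|\geq\varepsilon_1(1+|n|)$ \emph{and} $|n|\geq\varepsilon_1(1+|k|)$; for example $\rho(0,0)=-1$, and more generally $\rho\neq 0$ on a bounded neighbourhood of the origin where both $\chi$'s equal $1$. The correct statement is that away from a bounded low-frequency set the support of $\rho$ lies in the diagonal region $|k|\sim|n|$; the extra bounded piece is harmless, but the argument should acknowledge it.

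The genuine gap is in $(iii)$, and it carries over to $(iv)$. Your FTC step
\[
\chi(m-r,-m)-\chi(m-r,r)=-(m+r)\int_0^1(\partial_\eta\chi)(m-r,\,r-t(m+r))\,dt
\]
followed by ``iterating Taylor and using $|\partial_\eta^\beta\chi|\lesssim\langle\eta\rangle^{-\beta}$'' does not yield a gain. On the support of either $\chi$, the frequencies $m$ and $r$ have the same sign (otherwise $|m-r|$ would exceed $\varepsilon_2\max(\langle m\rangle,\langle r\rangle)$), so the path $\eta(t)=r-t(m+r)$ runs from $r$ to $-m$ and \emph{passes through $\eta=0$}; the factor $\langle\eta\rangle^{-\beta}$ has no uniform smallness along the path. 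Moreover, each Taylor term $(m+r)^j\,\partial_\eta^j\chi(m-r,r)$ is only $O(1)$ on its support, because there $|\eta|\sim|m-r|$ while $|m+r|\sim\langle r\rangle\sim\langle m-r\rangle$; the two powers cancel. Iterating never improves this. What actually makes $(iii)$ work is a support cancellation your sketch never states: since $\chi\equiv1$ on $|\vartheta|\le\varepsilon_1(1+|\eta|)$, both $\chi(m-r,-m)$ and $\chi(m-r,r)$ equal $1$ when $|m-r|\le\varepsilon_1(1+\min(|m|,|r|))$, so their difference $\Delta(m,r)$ \emph{vanishes} there; on the remaining support one has $\langle m-r\rangle\gtrsim\min(\langle m\rangle,\langle r\rangle)$, which together with $|m-r|\lesssim\max(\langle m\rangle,\langle r\rangle)$ and $\varepsilon_2<1$ forces $\langle m-r\rangle\sim\langle m\rangle\sim\langle r\rangle$. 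Once this is observed, the estimate is immediate: $\Delta$ is bounded, $\langle m\rangle^{s+\rho-1}\lesssim\langle m-r\rangle^{\rho-1}\langle r\rangle^{s}$ on the support, and Young's inequality with $\|\langle\cdot\rangle^{\rho-1}a_\cdot\|_{\ell^1}\lesssim\|a\|_\rho$ (Cauchy--Schwarz using $\sum_k\langle k\rangle^{-2}<\infty$) closes the bound. No Taylor expansion is needed; the entire gain comes from the $H^\rho$ decay of $a_{m-r}$ on a region where $\langle m-r\rangle$ controls the output frequency. Similarly for $(iv)$: Taylor-expanding $\chi(k,\ell-k)$ about $\chi(k,n)$ and invoking the symbol decay gives only $O(1)$ bounds, since the increment $|m-n|$ is comparable to $\langle n\rangle$ on the relevant support; the decisive step is again to identify where the kernel difference cancels (both $\chi$'s are $1$) and to read off $\langle\ell-m\rangle\gtrsim\langle\ell\rangle$ or $\langle m-n\rangle\gtrsim\langle\ell\rangle$ on the remainder of the support.
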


Recall that for any $k \ge 1$, $\partial_x^{-k} : H^s_{ \C} \to H^{s+k}_{0,\C}$ is  the bounded linear operator, defined by
$$
\partial_x^{-k}[e^{ \ii nx}] = \frac{1}{( \ii n)^k} e^{\ii nx}\, , \quad \forall n \ne 0 \, ,  
\qquad \quad   \partial_x^{-k}[1] = 0 \, .
$$ 
\begin{lemma}\label{lemma composizione pseudo}
$(i)$ Let $k, \ell \in \Z_{\geq 0}$ and $a \in C^\infty(\T, \C)$. Then for any $s \in \Z_{\ge 0}$ and $N \in \N$ with $N \geq  k+ \ell$, the composition
$\partial_x^{- k} \circ a \partial_x^{- \ell}$ is a bounded linear operator  $H^s_\C \to H^{s+k+\ell}_{0,\C}$, admitting an expansion of the form
$$
\partial_x^{- k} \circ a  \partial_x^{- \ell} =  \sum_{j = 0}^{N - k - \ell} C_j(k, \ell) \,  (\partial_x^j a ) \, \partial_x^{- k -\ell - j} + {\cal R}^{\psi do}_{N, k, \ell}(a) \, ,
$$
where $C_j(k, \ell)$, $0 \le j \le N-k-\ell$, are real constants with $C_0(k, \ell) = 1$, and the remainder ${\cal R}^{\psi do}_{N, k, \ell}(a)$ is a bounded linear operator 
$H^s_\C \to  H^{s + N + 1}_{0,\C}$,
 satisfying the estimate 
\begin{equation}\label{stima resti lemma astratto OPS}
\| {\cal R}^{\psi do}_{N, k, \ell}(a) \|_{{\cal B}(H^s_\C, H^{s + N + 1}_{0,\C})} \lesssim_{s, N}  \| a \|_{s + 2N }\,.
\end{equation} 
In particular for any $N \ge 1$,
$
\partial_x^{-1} \circ a =  \sum_{j= 0}^{N -1} (-1)^j\,  
(\partial_x^j a ) \, \partial_x^{- 1 - j} + 
{\cal R}^{\psi do}_{N, 1 , 0}(a)$ where
for any $h \in H^s$,
$$
{\cal R}^{\psi do}_{N, 1 , 0}(a)[h] = 
(-1)^N \partial_x^{-1} [(\partial_x^N a) \partial_x^{-N} h] + 
(\partial_x^{-1}  a) \langle h | 1 \rangle \, .
$$

\noindent
$(ii)$ Let $k, \ell \in \Z_{\geq 0}$ and $N \geq k + \ell$. There exists a constant $\sigma_N > N - k - \ell +1$ so that for any $a \in H^{\sigma_N}_\C$
and any $s \in \Z_{\ge 0}$, the composition $\partial_x^{- k} \circ T_a  \circ \partial_x^{- \ell}$ is a bounded linear operator 
$H^s_\C \to H^{s+k+\ell}_\C$ which admits an expansion of the form
$$
\partial_x^{- k} \circ T_a  \circ \partial_x^{- \ell} =  
\sum_{j= 0}^{N - k - \ell} C_j(k, \ell) T_{\partial_x^j a} \partial_x^{- k - \ell - j} + {\cal R}_{N, k, \ell}^{(B)}(a) \, ,
$$
where $C_j(k, \ell)$, $1 \le j \le N-k-\ell$, are the same constants as in item (i) and for any $s \geq 0$, the remainder ${\cal R}_{N, k, \ell}^{(B)}(a)$
is a bounded linear operator $H^s_\C \to  H^{s + N+1}_\C$, satisfying the estimate 
\begin{equation}\label{stima resti lemma astratto OPS}
\| {\cal R}_{N, n, k}^{(B)}(a) \|_{{\cal B}(H^s_\C, H^{s + N + 1}_\C)} \lesssim_{s, N}  \| a \|_{ \sigma_N } \, . 
\end{equation}
\end{lemma}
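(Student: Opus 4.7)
The plan for part (i) is to work directly in Fourier series. Writing $a = \sum_{\mu \in \Z} a_\mu e^{i\mu x}$ and $h = \sum_{n \in \Z} h_n e^{inx}$, the operator $\partial_x^{-k} \circ a \, \partial_x^{-\ell}$ sends $h$ to the bilinear Fourier multiplier
\begin{equation*}
(\partial_x^{-k} \circ a \, \partial_x^{-\ell})h \;=\; \sum_{\substack{n \ne 0 \\ \mu + n \ne 0}} \frac{a_\mu \, h_n}{(in)^\ell \, (i(\mu+n))^k} \, e^{i(\mu+n)x}.
\end{equation*}
The expansion arises from Taylor-expanding the symbol $(i(\mu+n))^{-k} = (in)^{-k}(1+\mu/n)^{-k}$ in the variable $\mu/n$:
\begin{equation*}
(1+\mu/n)^{-k} \;=\; \sum_{j=0}^{N-k-\ell} \binom{-k}{j}(\mu/n)^j \;+\; \rho_{N,k,\ell}(\mu, n),
\end{equation*}
with integral Taylor remainder satisfying $|\rho_{N,k,\ell}(\mu, n)| \lesssim_N |\mu|^{N-k-\ell+1}/|n|^{N-k-\ell+1}$ uniformly for $|\mu| \le |n|/2$. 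This identifies the constants as $C_j(k, \ell) := \binom{-k}{j}$, in particular $C_0(k, \ell) = 1$, and the corresponding main part of the operator as $\sum_{j=0}^{N-k-\ell} C_j(k, \ell)\,(\partial_x^j a)\, \partial_x^{-k-\ell-j}$, since $(i\mu)^j/(in)^{k+\ell+j}$ is precisely the bilinear Fourier symbol of $(\partial_x^j a) \cdot \partial_x^{-k-\ell-j}$.

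To estimate the remainder $\mathcal R^{\psi do}_{N, k, \ell}(a)$ as a bounded operator $H^s_\C \to H^{s+N+1}_{0,\C}$, I would split the bilinear Fourier sum according to the two regions $\{|\mu| \le |n|/2\}$ and $\{|\mu| > |n|/2\}$. On the first region, the Taylor bound directly gives a Fourier kernel bounded by $|\mu|^{N-k-\ell+1}|n|^{-N-1}$, so that the resulting operator gains $N+1$ derivatives on $h$ at the cost of $N-k-\ell+1$ derivatives on $a$; a Schur/Young-type estimate on the double sum yields an $H^s \to H^{s+N+1}$ bound controlled by $\|a\|_{s+N+1}\|h\|_s$. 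On the second region, $|\mu + n|$ and $|\mu|$ are comparable, so the Fourier kernel is bounded by $|n|^{-\ell}|\mu|^{-k}$ and the rapid decay of $a_\mu$ encoded in higher Sobolev norms absorbs the loss, giving the same $H^{s+N+1}$ output at the expense of $\|a\|_{s+2N}$. The explicit formula for the case $k=1$, $\ell=0$ is recovered by iterating the commutator identity $\partial_x^{-1} M_a = M_a \partial_x^{-1} - \partial_x^{-1} M_{\partial_x a}\partial_x^{-1}$, which is valid modulo rank-one corrections arising from $\partial_x^{-1}[1] = 0$; careful bookkeeping of the constant-mode contributions through the $N$-fold iteration shows that they telescope into the single term $(\partial_x^{-1} a)\langle h \mid 1\rangle$.

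For part (ii), the key observation is that the paraproduct Fourier kernel $\chi(\mu, n)\, a_\mu$ is supported where $|\mu| \le \epsilon_2 + \epsilon_2 |n|$, eliminating the problematic high-$|\mu|$ region and making the Taylor expansion of $(1+\mu/n)^{-k}$ genuinely convergent on the support of $\chi$. Combining the same Taylor-expansion argument as in part (i) with the paraproduct boundedness and composition estimates of Lemma \ref{primo lemma paraprodotti}, the expansion $\partial_x^{-k}\circ T_a \circ \partial_x^{-\ell} = \sum_{j=0}^{N-k-\ell} C_j(k,\ell)\, T_{\partial_x^j a}\, \partial_x^{-k-\ell-j} + \mathcal R^{(B)}_{N, k, \ell}(a)$ emerges with the remainder bounded as stated. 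The regularity loss $\sigma_N > N - k - \ell + 1$ accounts for the $N-k-\ell+1$ derivatives of $a$ appearing in the Taylor remainder, plus the margin required to apply Lemma \ref{primo lemma paraprodotti}(i), which controls paraproduct norms only in terms of $\|a\|_1$.

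The main obstacle will be the careful control of the rank-one correction terms in part (i) together with the tame estimate. These corrections arise because $\partial_x^{-1}$ annihilates constants: each commutation of $\partial_x^{-1}$ past a multiplication operator produces stray projections $\Pi_0$ on the zero mode, and tracking these through the iteration — and showing that they telescope into the single clean rank-one term $(\partial_x^{-1} a)\langle h \mid 1\rangle$ stated in the lemma — is the bookkeeping heart of the argument. The sharp loss $\|a\|_{s+2N}$ (rather than the naive $\|a\|_{s+N+1}$) reflects the fact that a Schur-type estimate with weights balanced between the low- and high-frequency-in-$\mu$ regions is needed, since neither region alone yields the full $N+1$ derivatives of gain.
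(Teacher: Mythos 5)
The paper states this lemma without proof, citing M\'etivier, so there is no author-written argument to compare against; your direct Fourier-series approach is indeed the natural way to establish it. That said, there are two genuine gaps in the sketch as written.

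\textbf{The explicit remainder formula.} You assert that iterating the commutator identity and ``careful bookkeeping of the constant-mode contributions'' yields the stated formula $\mathcal R^{\psi do}_{N,1,0}(a)[h] = (-1)^N\partial_x^{-1}[(\partial_x^N a)\partial_x^{-N}h] + (\partial_x^{-1}a)\langle h\mid 1\rangle$. Carry the bookkeeping out and you will find this is not what comes out. Using $\partial_x^{-1}\partial_x g = g - \langle g\mid 1\rangle$, one iteration gives
$\partial_x^{-1}[ah] = a\,\partial_x^{-1}h - \partial_x^{-1}[(\partial_x a)\partial_x^{-1}h] + \langle h\mid 1\rangle\,\partial_x^{-1}a - \langle a\,\partial_x^{-1}h\mid 1\rangle$,
and the last term is a genuine constant correction. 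Iterating $N$ times and simplifying the intermediate constants via integration by parts ($\langle(\partial_x^j a)\partial_x^{-j-1}h\mid 1\rangle = (-1)^j\langle a\,\partial_x^{-1}h\mid 1\rangle$), the exact remainder is
$\mathcal R^{\psi do}_{N,1,0}(a)[h] = (-1)^N\partial_x^{-1}[(\partial_x^N a)\partial_x^{-N}h] + (\partial_x^{-1}a)\langle h\mid 1\rangle - N\,\langle a\,\partial_x^{-1}h\mid 1\rangle$.
The additional term is rank one and constant-valued, so it is harmless for all the smoothing and tame estimates; but it is not zero in general (take $a=\sin x$, $h=\cos x$, $N=1$: the left-hand side is $-\tfrac14\cos 2x$, while the formula without the last term produces $\tfrac14\cos 2x$, the discrepancy being exactly $\langle\sin^2 x\mid 1\rangle=\tfrac12$). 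So this term must appear; relatedly, the remainder cannot in general take values in the mean-zero space $H^{s+N+1}_{0,\C}$, only in $H^{s+N+1}_\C$. Your sketch should either track this term explicitly or state clearly that the identity holds modulo a constant-valued rank-one operator.

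\textbf{The high-frequency region in part (i).} On $\{|\mu|>|n|/2\}$ you assert that $|\mu+n|$ and $|\mu|$ are comparable, which fails badly: take $\mu$ near $-n$, so $|\mu|\sim|n|$ is large but $|\mu+n|$ is of order $1$. The claimed kernel bound $|n|^{-\ell}|\mu|^{-k}$ therefore does not hold; the kernel is $|n|^{-\ell}|\mu+n|^{-k}$ and is large precisely when the output frequency $\mu+n$ is small. The correct observation is that the smallness of $|\mu+n|$ also kills the output weight $\langle\mu+n\rangle^{s+N+1}$, and the required bound follows from a Schur-type estimate after writing $\langle\mu+n\rangle\lesssim\langle\mu\rangle\langle n\rangle$ and using the decay of $a_\mu$ (which is why the generous exponent $\|a\|_{s+2N}$ is needed rather than $\|a\|_{s+N+1}$). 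As written, the argument on this region does not go through and needs to be redone with the correct kernel. The same remark applies to the corresponding terms that arise when subtracting the Taylor polynomial restricted to the high-$\mu$ region, each of which can likewise produce small output frequencies and must be estimated accordingly.

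Apart from these two points the plan is sound: the Taylor expansion of $(1+\mu/n)^{-k}$ correctly produces the constants $C_j(k,\ell)=\binom{-k}{j}$ (which, as you note, only depend on $k$), the identification of the $j$-th Taylor term with $(\partial_x^j a)\partial_x^{-k-\ell-j}$ is right, and in part (ii) the observation that the support of $\chi$ forces $|\mu|\le\epsilon_2(1+|n|)$ and hence eliminates the problematic region entirely is precisely the reason the paradifferential version of the composition requires only finitely much regularity on $a$.
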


The next results concern Hankel operators which are defined as follows.
For any $s \ge 0$, let $H^s_\pm := H^s_\C \cap L^2_\pm$ where $L^2_\pm$ denote the Hardy spaces of $\T$, given by
$$
L^2_\pm \equiv  H^0_\pm := \{ f \in L^2_\C : \, \langle f \, | \, e^{\ii nx} \rangle = 0 \ \forall \, \pm n < 0\} .
$$
Furthermore, denote by 
$$
\Pi \equiv \Pi^+: L^2_\C \to L^2_+,  \, f \mapsto \sum_{n \ge 0} \widehat f(n) e^{\ii nx}\, , \qquad
\Pi^-: L^2_\C \to L^2_-, \,  f \mapsto \sum_{n \le 0} \widehat f(n) e^{\ii nx}\, ,
$$ the Szeg\H{o} projectors.
For any $u \in H^s_\C$ with $s > 1/2$, the Hankel operators $H_u \equiv H^+_u$ and $H^-_u$ are given by
$$
H^+_u : L^2_- \to L^2_+, \, f \mapsto \Pi^+(uf) \, , \qquad  H^-_u : L^2_+ \to L^2_-, \, f \mapsto \Pi^-(uf) \, .
$$
The following results follow from smoothing properties of Hankel operators, obtained in \cite{GKT5}.
For the convenience of the reader we include the proof, given in  \cite{GKT5}.
\begin{lemma}\label{smoothing Hankel}
For any $u \in H^{s+\alpha}_\C$ with $\alpha \ge 0$ and $f \in H^s_-$, the following holds:
\begin{itemize}
\item[(i)] If $s > 1/2$, then $\| H^+_u f \|_{s + \alpha} \lesssim_{s, \alpha} \|u\|_{s+\alpha} \, \| f \|_s$ .
\item[(ii)] If $s=1/2$ and $\e > 0$, then $\| H^+_u f \|_{\frac 12 + \alpha - \varepsilon} \lesssim_{ \alpha} \|u\|_{\frac 12 +\alpha} \, \| f \|_{\frac 12}$ .
\item[(iii)] If $0 \le  s < 1/2$ and $\alpha \ge 1/2 - s$, then  
$$
\| H^+_u f \|_{s + \beta} \lesssim_{s, \alpha} \|u\|_{s+\alpha} \, \| f \|_s \, , \qquad  \beta:=  \alpha - (1/2 - s) \, .
$$
\end{itemize}
Corresponding results hold for the operator $H_u^-$.
\end{lemma}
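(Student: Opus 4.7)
The plan is to pass to Fourier coordinates. For $f \in L^2_-$ and $n \ge 0$, the definitions of $\Pi^+$ and $H^+_u$ give
\[
\widehat{H^+_u f}(n) \ = \ \sum_{m \le 0} \widehat u(n-m)\, \widehat f(m) \ = \ \sum_{k \ge n} \widehat u(k)\, \widehat f(n-k),
\]
after substituting $k = n - m$. The decisive structural feature is that the sum is restricted to $k \ge n$: only Fourier modes of $u$ with frequency at least $n$ contribute to the $n$-th mode of $H^+_u f$. This one-sidedness, coming from $f$ being anti-holomorphic and $\Pi^+$ being holomorphic, is the entire source of the smoothing.

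For case (i), $s > 1/2$, I would apply Cauchy--Schwarz with weight $\langle n - k\rangle^{-2s}$:
\[
|\widehat{H^+_u f}(n)|^2 \ \le \ \Big(\sum_{k \ge n} \langle n-k\rangle^{-2s}\Big) \cdot \Big(\sum_{k \ge n}\langle n-k\rangle^{2s}|\widehat u(k)|^2|\widehat f(n-k)|^2\Big).
\]
The first factor is a finite constant since $s > 1/2$, and the elementary bound $\langle n\rangle \le \langle k\rangle$ (valid for $0 \le n \le k$) lets $\langle n\rangle^{2(s+\alpha)}$ be absorbed into $\langle k\rangle^{2(s+\alpha)}$. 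Swapping the order of summation then immediately yields $\|u\|_{s+\alpha}^2\|f\|_s^2$. Case (ii), $s = 1/2$, is identical except for the weight $\langle n-k\rangle^{-1-2\varepsilon}$, which is where the loss of $\varepsilon$ derivatives enters.

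The main obstacle is case (iii), $0 \le s < 1/2$, where the weight $\langle n-k\rangle^{-2s}$ fails to be summable and only the reduced smoothing $\beta = \alpha - (1/2 - s)$ is claimed. The strategy is to distribute the shortfall $\langle k\rangle^{-(1/2-s)}$ across the two Cauchy--Schwarz factors via a Schur-type estimate. Writing $c_k := \langle k\rangle^{s+\alpha}\widehat u(k)$ and $d_j := \langle j\rangle^s\widehat f(-j)$ (both $\ell^2$ sequences of norm $\|u\|_{s+\alpha}$ and $\|f\|_s$), the task reduces to verifying
\[
\sup_{n \ge 0}\ \langle n\rangle^{4s + 2\alpha - 1}\!\sum_{j \ge 0}\langle n+j\rangle^{-2(s+\alpha)}\langle j\rangle^{-2s}\ < \ \infty .
\]
Splitting the inner sum at $j = n$ and using $\alpha \ge 1/2 - s$, which forces $4s+2\alpha \ge 1 + 2s > 1$, handles the regime $s > 0$ as well as the regime $s = 0$, $\alpha > 1/2$. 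Summing over $n$ is then completed via the identity $\sum_{n\ge 0}\sum_{j\ge 0} |c_{n+j}|^2 |d_j|^2 = \|c\|_{\ell^2}^2\|d\|_{\ell^2}^2$. The borderline instance $s = 0$, $\alpha = 1/2$ produces a logarithmic divergence in the Schur sum that cannot be overcome by elementary means; there I would separately invoke the embedding $H^{1/2}(\T) \hookrightarrow \mathrm{BMO}(\T)$ together with Nehari's theorem $\|H^+_u\|_{L^2 \to L^2} \lesssim \|u\|_{\mathrm{BMO}}$ to recover the critical estimate. The corresponding statements for $H^-_u$ follow either by running the same argument with positive and negative frequencies swapped, or by the $L^2$-duality $\langle H^+_u f, g\rangle = \langle f, H^-_u g\rangle$.
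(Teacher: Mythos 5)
Your (i) and (ii) arguments are close to the paper's, though your Cauchy--Schwarz split is different in a way that turns out to matter: you pair the bare weight $\langle n-k\rangle^{-s}$ against $\langle n-k\rangle^{s}\widehat u(k)\widehat f(n-k)$, so that you need $\sum_{j}\langle j\rangle^{-2s}<\infty$, whereas the paper pairs $\langle p\rangle^{-s}\widehat u(n+p)$ against $\langle p\rangle^{s}\widehat f(-p)$, which produces $|\widehat g(n)|^2\le\|f\|_s^2\sum_{p\ge 0}\langle p\rangle^{-2s}|\widehat u(n+p)|^2$ for \emph{every} $s\ge 0$ with no summability condition, and therefore carries over verbatim to case (iii). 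Indeed, after reindexing by $\ell=n+p$ one gets
$$
\|g\|_{s+\beta}^2\ \le\ \|f\|_s^2\sum_{\ell\ge 0}|\widehat u(\ell)|^2\sum_{\substack{p+n=\ell\\p,n\ge 0}}\frac{\langle n\rangle^{2(s+\beta)}}{\langle p\rangle^{2s}}\,,
$$
and the key observation is that this inner sum need not be uniformly bounded in $\ell$: by $\langle n\rangle\le\langle\ell\rangle$ and $\sum_{0\le p\le\ell}\langle p\rangle^{-2s}\lesssim_s\langle\ell\rangle^{1-2s}$ (valid for all $0\le s<1/2$, including $s=0$), together with $\beta+1/2=s+\alpha$, it is $\lesssim_s\langle\ell\rangle^{2(s+\alpha)}$, which is absorbed exactly by $\|u\|_{s+\alpha}^2$. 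Your choice of split leaves you no such freedom, so you pass to a symmetric Schur split, which forces the kernel $\langle n\rangle^{4s+2\alpha-1}\sum_{j\ge 0}\langle n+j\rangle^{-2(s+\alpha)}\langle j\rangle^{-2s}$ to be bounded \emph{uniformly} in $n$ --- a strictly stronger requirement that, as you correctly observe, fails logarithmically at the endpoint $s=0$, $\alpha=1/2$. Your patch via $H^{1/2}(\T)\hookrightarrow\mathrm{BMO}(\T)$ and Nehari's theorem is legitimate, but the assertion that the endpoint ``cannot be overcome by elementary means'' is not accurate: the paper dispatches it with the same one-line computation as the rest of the range, so both the Schur detour and the BMO/Nehari machinery are avoidable. (A small slip in passing: with the Hermitian $L^2$ pairing the adjoint of $H^+_u$ is $H^-_{\bar u}$, not $H^-_u$; this is harmless since $\|\bar u\|_{s+\alpha}=\|u\|_{s+\alpha}$, but the alternative you also list --- rerunning the argument with positive and negative frequencies interchanged --- is the cleaner one.)
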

\begin{proof}
Since the estimates for $H_u^-$ can be proved in the same way as the ones for $H_u^+$, we only consider the latter ones.
Let $u = \sum_{k \in \Z} \widehat u(k) e^{ikx} \in H^{s+\alpha}_\C$ and $f = \sum_{p \ge 0} \widehat f(-p) e^{-ipx} \in H^s_-$. 
Then with $n := - (k+p)$,
$$
g(x) := \Pi \big( \sum_{k \in \Z, p \ge 0}  \widehat u(-k) \widehat f(-p) e^{-(k+p)x}  \big) = \sum_{n \ge 0} \widehat g(n) e^{inx} 
$$
where
$$
 \widehat g(n) := \sum_{p \ge 0} \widehat u(n+p) \widehat f(-p) \, , \qquad \forall \, n \ge 0 \, .
$$
By Cauchy Schwarz, one obtains
$$
 | \widehat g(n) |^2 \le \|f \|_s^2 \sum_{p \ge 0} \frac{1}{\langle p \rangle^{2s} }|\widehat u(n+p) |^2 
$$
and thus
$$
\| g \|^2_{s+\alpha} = \sum_{n \ge 0} \langle n \rangle^{2(s+\alpha)}  | \widehat g(n) |^2 
\le  \|f \|_s^2 \sum_{\ell \ge 0} |\widehat u(\ell) |^2   \sum_{p + n = \ell} \frac{\langle n \rangle^{2(s+\alpha)}}{\langle p \rangle^{2s} }
$$
(i) In the case $s > 1/2$, one has $2s > 1$ and hence
$$
\sum_{p + n = \ell} \frac{\langle n \rangle^{2(s+\alpha)}}{\langle p \rangle^{2s} } 
\le \langle \ell \rangle^{2(s+\alpha)} \sum_{0 \le p \le \ell} \frac{1}{\langle p \rangle^{2s}}
\lesssim_s \langle \ell \rangle^{2(s+\alpha)} \big(1 + \frac{1}{\langle \ell \rangle^{2s -1}} \big) ,
$$
yielding
$$
\| g \|_{s+\alpha} \lesssim_s \|f \|_s  \, \| u \|_{s+\alpha} .
$$
(ii) In the case $s = 1/2$, one has $2s = 1$ and hence
$$
\sum_{p + n = \ell} \frac{\langle n \rangle^{2(s+ \alpha - \varepsilon)}}{\langle p \rangle^{2s} } 
\le \langle \ell \rangle^{2(\frac 12+\alpha - \varepsilon)} \sum_{0 \le p \le \ell} \frac{1}{\langle p \rangle}
\lesssim  \langle \ell \rangle^{2(\frac 12+\alpha)}  \langle \ell \rangle^{ - 2\e} \log \langle \ell \rangle ,
$$
yielding
$$
\| g \|_{\frac 12+\alpha - \e} \lesssim \|f \|_\frac 12  \, \| u \|_{\frac 12+\alpha} .
$$
(iii) In the case $0 \le s < 1/2$, one has $2s < 1$ and hence $1-2s > 0$. Therefore
$$
\sum_{p + n = \ell} \frac{\langle n \rangle^{2(s+\beta)}}{\langle p \rangle^{2s} } 
\le \langle \ell \rangle^{2(s+\beta)} \sum_{0 \le p \le \ell} \frac{1}{\langle p \rangle^{2s}}
\lesssim_s  \langle \ell \rangle^{2(s+\beta)} \langle \ell \rangle^{2(\frac 12 - s)} =  \langle \ell \rangle^{2(s + \alpha)} \, ,
$$
where we used that by definition $\beta + 1/2 = s + \alpha$.  It thus follows that
$\| g \|_{s+ \beta} \lesssim_s \|f \|_s  \, \| u \|_{s+\alpha}$.
\end{proof}

\begin{corollary}\label{Hankel infinitely smoothing}
Assume that $u \in \cap_{k \ge 0} H^k_\C$. Then $H^+_u$ and $H^-_u$ are infinitely smoothing, i.e.,  
$$
H^+_u \in \mathcal B(H^s_- , \,  H^{s+ N}_+) \, , \qquad  H^-_u \in \mathcal B (H^s_+ , \,  H^{s+N}_- ) \, , 
\qquad \forall \, s \ge 0, \ N \ge 1 \, .
$$
\end{corollary}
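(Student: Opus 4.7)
The plan is to derive this directly from the quantitative estimates of Lemma \ref{smoothing Hankel} by a simple case analysis on $s$. Since $u \in \bigcap_{k \ge 0} H^k_\C$, the $H^{s+\alpha}$-norm of $u$ is finite for every $\alpha \ge 0$, so the smoothness of $u$ never constrains us; only the integrability threshold $s = 1/2$ forces a small adjustment.

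Fix $s \ge 0$ and $N \ge 1$. First I would handle the case $s > 1/2$: apply item (i) of Lemma \ref{smoothing Hankel} with $\alpha = N$ to get
\[
\| H^+_u f \|_{s+N} \lesssim_{s, N} \| u \|_{s+N} \, \| f \|_s \, ,
\]
which is exactly what is needed. Next, for $0 \le s < 1/2$, apply item (iii) with $\alpha = N + (1/2 - s)$; note that $\alpha \ge 1/2 - s$ since $N \ge 1$, and that with this choice $\beta = \alpha - (1/2 - s) = N$, so
\[
\| H^+_u f \|_{s + N} \lesssim_{s, N} \| u \|_{N + 1/2} \, \| f \|_s \, .
\]
Finally, for the borderline case $s = 1/2$, apply item (ii) with $\alpha = N + \varepsilon$ for any fixed $\varepsilon \in (0, 1)$ to absorb the $\varepsilon$-loss, yielding
\[
\| H^+_u f \|_{1/2 + N} \le \| H^+_u f \|_{1/2 + (N + \varepsilon) - \varepsilon} \lesssim_{N} \| u \|_{1/2 + N + \varepsilon} \, \| f \|_{1/2} \, .
\]
In each case the constant and the norm of $u$ on the right-hand side are finite by the hypothesis $u \in \bigcap_{k \ge 0} H^k_\C$, so $H^+_u \in \mathcal B(H^s_-, H^{s+N}_+)$.

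The argument for $H^-_u$ is entirely analogous, since Lemma \ref{smoothing Hankel} asserts in its last sentence that corresponding bounds hold for $H^-_u$; alternatively one could invoke the relation between $H^-_u$ and the transpose of $H^+_{\bar u}$ with respect to the standard $L^2$ pairing. There is essentially no obstacle here — the only subtlety is that the threshold $s = 1/2$ cannot be treated by taking $\alpha = N$ directly because of the logarithmic blow-up implicit in item (ii) of Lemma \ref{smoothing Hankel}, but this is trivially overcome by the arbitrary Sobolev regularity of $u$.
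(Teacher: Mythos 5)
Your proof is correct and is precisely the direct derivation from Lemma \ref{smoothing Hankel} that the paper leaves implicit (the corollary is stated without a written proof, as an immediate consequence of the lemma). The case split on $s$ versus $1/2$ is the natural way to invoke the three items, and each application is valid: $\alpha = N$ in item (i), $\alpha = N + (1/2-s)$ with $\beta = N$ in item (iii), and $\alpha = N+\varepsilon$ to absorb the $\varepsilon$-loss in item (ii), all of which are finite norms of $u$ by the standing hypothesis.
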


\smallskip

Finally, we record the following well known tame estimates of products of functions in Sobolev spaces. 
\begin{lemma}\label{lemma interpolation}
For any $s \in \Z_{\ge 1}$,
$$
\| u v \|_s \lesssim_s \| u \|_s \| v \|_1 + \| u \|_1 \| v \|_s\,, \quad \forall u, v \in H^s_\C\,.
$$
\end{lemma}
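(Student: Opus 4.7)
The plan is to work entirely on the Fourier side, where the statement reduces to a classical Moser-type tame bound. Writing $\widehat{uv}(n) = \sum_{k\in\mathbb{Z}} \widehat u(k)\,\widehat v(n-k)$, I would first invoke the elementary Peetre-type inequality: for any $s\ge 0$ there is a constant $C_s>0$ such that
\begin{equation*}
\langle n\rangle^s \le C_s\bigl(\langle k\rangle^s + \langle n-k\rangle^s\bigr), \qquad \forall\, k,n\in\mathbb{Z}.
\end{equation*}
Applied to the convolution formula above, this yields
\begin{equation*}
\langle n\rangle^s |\widehat{uv}(n)| \le C_s\Bigl(\sum_k \langle k\rangle^s|\widehat u(k)|\cdot|\widehat v(n-k)| + \sum_k |\widehat u(k)|\cdot\langle n-k\rangle^s|\widehat v(n-k)|\Bigr).
\end{equation*}

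Next I would take the $\ell^2$-norm in $n$ of both sides and apply Young's inequality for convolutions in the form $\ell^2 \ast \ell^1 \hookrightarrow \ell^2$. This bounds the first term by $\|\langle\cdot\rangle^s\widehat u\|_{\ell^2}\cdot \|\widehat v\|_{\ell^1} = \|u\|_s\,\|\widehat v\|_{\ell^1}$ and, symmetrically, the second by $\|\widehat u\|_{\ell^1}\,\|v\|_s$. It then only remains to control the $\ell^1$-norm of the Fourier coefficients by the $H^1$-norm: by Cauchy--Schwarz,
\begin{equation*}
\|\widehat v\|_{\ell^1} \;=\; \sum_{k\in\mathbb{Z}} \langle k\rangle^{-1}\cdot\langle k\rangle|\widehat v(k)| \;\le\; \Bigl(\sum_{k\in\mathbb{Z}}\langle k\rangle^{-2}\Bigr)^{1/2}\|v\|_1 \;\lesssim\; \|v\|_1,
\end{equation*}
and analogously for $\widehat u$. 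Combining these estimates gives the desired bound $\|uv\|_s \lesssim_s \|u\|_s\|v\|_1 + \|u\|_1\|v\|_s$.

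There is no real obstacle in this argument; the only point worth flagging is that the finiteness of $\sum_{k\in\mathbb{Z}}\langle k\rangle^{-2}$ is precisely the one-dimensional feature that makes $H^1(\mathbb{T})$ embed into the space of functions with absolutely summable Fourier coefficients, which is what upgrades a naive bilinear estimate to the tame one. Alternatively, one could deduce the same conclusion from the Bony decomposition of Lemma~\ref{primo lemma paraprodotti}(ii), bounding $T_uv$ and $T_vu$ by Lemma~\ref{primo lemma paraprodotti}(i) and observing that the remainder $\mathcal{R}^{(B)}(u,v)$ gains regularity; but the direct Fourier argument above is shorter and self-contained.
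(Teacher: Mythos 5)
Your proof is correct; both the direct Fourier-side argument via the Peetre inequality, Young's convolution inequality, and the $H^1(\T)\hookrightarrow\mathcal{F}\ell^1$ embedding, and the alternative you sketch via the Bony decomposition of Lemma~\ref{primo lemma paraprodotti}, are sound. The paper itself states this lemma without proof, labelling it a ``well known tame estimate,'' so there is no proof in the text to compare against; your argument is exactly the standard one that the authors are implicitly invoking.
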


\vspace{1.0cm}

\noindent
T. Kappeler, 
Institut f\"ur Mathematik, 
Universit\"at Z\"urich, Winterthurerstrasse 190, CH-8057 Z\"urich;\\
${}\qquad$  email: thomas.kappeler@math.uzh.ch \\

\noindent
R. Montalto, 
Department of Mathematics,  
University of Milan, Via Saldini 50, 20133, Milano, Italy;\\
${}\qquad$ email: riccardo.montalto@unimi.it


\begin{thebibliography}{99}

\bibitem{Ben} {\sc T. Benjamin}, {\em Internal waves of permanent form in fluids of great depth},
J. Fluid Mech. 29(1967), 559-592.

\bibitem{DA} {\sc R. Davis, A. Acrivos}, {\em Solitary internal waves in deep water}, 
J. Fluid Mech.  29(1967), 593-607.

\bibitem{GK1} {\sc P. G\'erard, T. Kappeler}, 
{\em On the integrability of the Benjamin-Ono equation on the torus},
Comm. Pure Appl. Math. 74(2021), no. 8, 1685-1747.

\bibitem{GKT1} {\sc P. G\'erard, T. Kappeler, P. Topalov}, 
{\em Sharp well-posedness results for the Benjamin-Ono equation in $H^{s}(\mathbb T, \mathbb R)$ and qualitative
properties of its solutions}, to appear in Acta Math., arXiv:2004.04857.

\bibitem{GKT2} {\sc P. G\'erard, T. Kappeler, P. Topalov},
{\em On the spectrum of the Lax operator of the Benjamin-Ono equation on the torus}, J. Funct. Anal. 279(2020), no. 12,
108762, 75 pp. 

\bibitem{GKT6} {\sc P. G\'erard, T. Kappeler, P. Topalov}, 
{\em On the Benjamin--Ono equation on $\mathbb T$ and its periodic and quasiperiodic solutions},
to appear in J. of Spectr. Theory, arXiv:2103.0929.

\bibitem{GKT3} {\sc P. G\'erard, T. Kappeler, P. Topalov}, 
{\em On the analytic Birkhoff normal form of the Benjamin-Ono equation and applications},
arXiv:2103.0798.

\bibitem{GKT4} {\sc P. G\'erard, T. Kappeler, P. Topalov}, 
{\em Analytic extension of the Birkhoff map of the Benjamin-Ono equation in the large},
preprint.

\bibitem{GKT5} {\sc P. G\'erard, T. Kappeler, P. Topalov}, 
{\em On smoothing properties and Tao's gauge transform of the Benjamin-Ono equation on the torus}, arXiv:2109.00610.

\bibitem{GK} {\sc B. Gr\'ebert, T. Kappeler}, {\em The defocusing NLS equation and its normal form}, 
 EMS Series of Lectures in Mathematics, European Mathematical Society (EMS), Z\"urich, 2014.

\bibitem{Kappeler-Montalto} {\sc T. Kappeler, R. Montalto}, {\em Canonical coordinates with tame estimates for the defocusing NLS equation on the circle},
 Int. Math. Res. Not. IMNR, 2018, issue 5, 6 March 2018, 1473-1531.
 
\bibitem{KM1} {\sc T. Kappeler, R. Montalto}, {\em Normal form coordinates for the KdV equation having expansions in terms of pseudodifferential operators}, 
Comm. in Math. Phys. 375(2020), no. 1, 833-913.

\bibitem{KM2} {\sc T. Kappeler, R. Montalto}, {\em On the stability of periodic multi-solitons of the KdV equation}, 
Comm. Math. Phys. 385(2021), no. 3, 1871-1956.


\bibitem{KP} {\sc T. Kappeler, J. P{\"o}schel}, {\em KdV {\&} KAM},  Ergebnisse der Mathematik und ihrer Grenzgebiete. 3. Folge. 
A Series of Modern Surveys in Mathematics, 45. Springer-Verlag, Berlin, 2003.

\bibitem{KST2} {\sc T. Kappeler, B. Schaad, P. Topalov}, {\em Qualitative features of periodic solutions of KdV}, 
Comm. in PDEs 38(2013), no. 9, 1626-1673.

\bibitem{KST3} {\sc T. Kappeler, B. Schaad, P. Topalov}, {\em Semi-linearity  of the nonlinear Fourier transform of the defocusing NLS equation},
Int. Math. Res. Notices, IMRN vol. 2016, no. 23, 7212-7229.

\bibitem{Krichever} {\sc I. Krichever}, {\em Perturbation theory in periodic problems for two-dimensional integrable systems}, 
Soviet Scientific Reviews C. Math. Phys. 9(1991), 1-103.


\bibitem{K} {\sc S. Kuksin}, {\em Analysis of Hamiltonian PDEs}, Oxford University Press, 2000.

\bibitem{Kuksin-Perelman} {\sc S. Kuksin, G. Perelman}, {\em Vey theorem in infinite dimensions and its applications to KdV}, 
Disc. Cont. Dyn. Syst. Serie A, 27(2010), 1-24. 

\bibitem{Met} {\sc G. M\'etivier,} {\em Para-differential calculus and applications to the Cauchy problem for nonlinear systems},
Publications of the Scuola Normale Superiore, Book 5, Edizioni della Normale, 2008.

\bibitem{S} {\sc J.-C. Saut}, {\em Benjamin-Ono and intermediate long wave equations: modeling, IST, and PDE},
Fields Institute Communications, 83, Springer, 2019.

\end{thebibliography}
\end{document}